\documentclass[reqno, 12pt]{amsart}
\usepackage[letterpaper,hmargin=1in,vmargin=1in]{geometry}
\usepackage{amsmath,amssymb,amsthm}
\usepackage{verbatim,epsfig,graphics}
\usepackage{graphicx}
\usepackage{epstopdf}
\usepackage{color}
\newtheorem{theorem}{Theorem}
\newtheorem{proposition}[theorem]{Proposition}
\newtheorem{corollary}[theorem]{Corollary}
\newtheorem{lemma}[theorem]{Lemma}

\theoremstyle{remark} \newtheorem{remark}{Remark}
\newcommand{\ip}[2]{\langle #1 , #2 \rangle}

\newcommand{\mc}{\mathcal}
\newcommand{\rr}{\mathbb{R}}
\newcommand{\nn}{\mathbb{N}}
\newcommand{\cc}{\mathbb{C}}

\newcommand{\zz}{\mathbb{Z}}
\newcommand{\sph}{\mathbb{S}}

\newcommand{\la}{\lambda}
\newcommand{\eps}{\epsilon}

\newcommand{\pl}{\partial}
\newcommand{\x}{\times}

\newcommand{\til}{\widetilde}
\newcommand{\bbar}{\overline}

\newcommand{\cjd}{\rangle}
\newcommand{\cjg}{\langle}

\newcommand{\demi}{\frac{1}{2}}
\newcommand{\ndemi}{\frac{n}{2}}
\newcommand{\tra}{\textrm{Tr}}
\newcommand{\Ima}{\textrm{Im}}

\newcommand{\zf}{\textrm{zf}}
\newcommand{\bfo}{\textrm{bf}_0}
\newcommand{\rbo}{\textrm{rb}_0}
\newcommand{\lbo}{\textrm{lb}_0}
\newcommand{\lb}{\textrm{lb}}
\newcommand{\rb}{\textrm{rb}}
\newcommand{\bfa}{\textrm{bf}}
\newcommand{\bfc}{\textrm{bf}}
\newcommand{\sca}{\textrm{sc}}

\newcommand\Id{\operatorname{Id}}

\newcommand\RR{\mathbb{R}}

\newcommand\extunion{\overline{\cup}}

\newcommand\MMksc{M^2_{k, \sca}}

\renewcommand\Re{\operatorname{Re}}

\newcommand\Mbar{M}

\begin{document}
\title[Resolvent at low energy and Riesz transform]{Low energy resolvent for the Hodge Laplacian: Applications to Riesz transform, Sobolev estimates and analytic torsion}
\author{Colin Guillarmou}
\address{DMA, ENS, 45 rue d'Ulm, 75005 Paris.}     
\email{cguillar@dma.ens.fr}
\author{David A. Sher}
\address{Department of Mathematics, University of Michigan, 2074 East Hall, 530 Church Street, Ann Arbor, MI 48109-1043.}
\email{dsher@umich.edu}
\subjclass[2000]{}
%

\begin{abstract}
On an asymptotically conic manifold $(M,g)$, we analyze the asymptotics of the integral 
kernel of the resolvent $R_q(k):=(\Delta_q+k^2)^{-1}$ of the Hodge Laplacian $\Delta_q$ on $q$-forms as the spectral parameter $k$ approaches zero, assuming that $0$ is not a resonance.  
The first application we give is an $L^p$ Sobolev estimate for $d+\delta$ and $\Delta_q$.
Then we obtain a complete characterization of the range of $p>1$ 
for which the Riesz transform for $q$-forms $T_q=(d+\delta)\Delta_q^{-1/2}$ is bounded on $L^p$.
Finally, we obtain an asymptotic formula for the analytic torsion of a family of smooth compact Riemannian manifolds $(\Omega_\eps,g_\eps)$ degenerating to a compact manifold $(\Omega_0,g_0)$ with a conic singularity as $\eps\to 0$.
\end{abstract}
\maketitle

\section{Introduction}

Let $(M,g)$ be an $n$-dimensional \emph{asymptotically conic} manifold with cross-section 
a closed Riemannian manifold $(N,h_0)$. Such a manifold is the interior of a smooth compact manifold $\bbar{M}$ with boundary $\pl\bbar{M}=N$, equipped with a complete smooth metric $g$ with the following property: there exists a smooth boundary defining function $x\in C^{\infty}(\bbar{M})$ (i.e. $\{x=0\}=N$ and $dx|_N$ does not vanish) such that near $x=0$, 
the metric $g$ can be written in the form
\[g= \frac{dx^2}{x^4}+\frac{h(x)}{x^2}\]
with $h(x)$ a smooth family of metrics on $N$ such that $h(0)=h_0$. Notice that, setting $x=r^{-1}$, a neighbourhood of 
$\pl\bbar{M}$ equipped with $g$ is asymptotic to the metric cone $(\rr_r^+\x N , dr^2+r^2h_0)$ as $r\to \infty$. 
In the special case where $N=\sph^{n-1}$ with the usual metric (or is a disjoint union of copies of $\sph^{n-1}$), we say that $(M,g)$ is \emph{asymptotically Euclidean}.
For technical purposes, we assume
\begin{equation}\label{assumhx}
h(x)-h_0=\mc{O}(x^{n_0}), \quad n_0\geq 3
\end{equation}
and we say that $(M,g)$ is asymptotically conic to order $n_0$. 

Let $d$ be the exterior derivative acting on differential forms and $\delta$ its formal adjoint.  
The Hodge Laplacian on $q$-forms is defined by $\Delta_q=d\delta+\delta d$ and its spectrum is 
$[0,\infty)$. For $\Re(k)>0$, the resolvent $R_q(k)=(\Delta_q+k^2)^{-1}$ is well defined as a bounded operator on 
$L^2(M,dg)$. In this article, we analyze the behaviour of this operator as $k>0$ goes to $0$ by using a parameter-dependent pseudo-differential calculus adapted to the geometry, which was introduced by the first author in collaboration with Hassell \cite{gh1}. The pseudo-differential calculus $\Psi^*_k(M)$ of \cite{gh1}  is recalled in Section \ref{pdcle} below. It is described through Schwartz kernels of operators: a $k$-dependent family of operators $A(k)$ lies in the calculus when it has a Schwartz kernel $A(k;z,z')$ which is a polyhomogeneous conormal distribution on a manifold $\MMksc$ obtained by a sequence of blow-ups from $[0,1]_k\x \bbar{M}_z \x\bbar{M}_{z'}$ (see Section \ref{mmksc} for the definition of $\MMksc$). More informally, this means that the kernel $A(k;z,z')$ has full asymptotic expansions as $k\to 0$, $z\to y\in \pl\bbar{M}$, $z'\to y'\in \pl\bbar{M}$ under certain regimes of convergence.
  
In the construction of the parametrix for the low-energy resolvent $R(k)$, we make two assumptions. 
The first assumption is that the operator $\Delta_q$ has no \emph{zero-resonances}, which means that 
\begin{equation}\label{Int_noresonance}
\ker_{x^{-1}L^2}(\Delta_q)=\ker_{L^2}(\Delta_q).
\end{equation}
In our geometric setting, it turns out that this condition is equivalent to 
\begin{equation}\label{kerLr}
\ker_{L^2}(\Delta_q)=\ker_{L^r}(\Delta_q) , \quad \forall r\in\Big[2,\frac{2n}{(n-2)}\Big],
\end{equation}
where $L^r=L^r(M,dg)$.

Our second assumption involves the spectrum of the Hodge Laplacian $\Delta_N=d_N\delta_N+\delta_Nd_N$ acting on the form bundle $\oplus_{p=0}^{n-1}\Lambda^{p}(N)$ of the cross-section $(N,h_0)$, 
where $d_N$ is the exterior derivative on $N$ and $\delta_N$ its formal adjoint (with respect to $h_0$): we assume that
\begin{equation}\label{0notindroot}
\left\{\begin{array}{lll}
|q-\ndemi|\leq 1/2 \Rightarrow 1-(\ndemi-q)^2\notin{\rm Sp}_{\Lambda^q}(\Delta_N|_{{\rm Im}\,d_N}), \\
H^{q}(N)=0 \textrm{ if }q=\ndemi-1,\\
H^{q-1}(N)=0 \textrm{ if }q=\ndemi+1
\end{array}\right.
\end{equation}
where ${\rm Sp}_{\Lambda^p}(\Delta_N)$ denotes the spectrum of $\Delta_N$ acting on $p$-forms on $N$ 
and $H^q(N)=\ker_{\Lambda^q}(\Delta_N)$ is the $q$-th de Rham cohomology of $N$.

\begin{theorem}\label{introresthm} 
Let $(M,g)$ be asymptotically conic to order $n_0\geq 3$ and assume \eqref{Int_noresonance} and \eqref{0notindroot}. Then there exists $k_0>0$ such that the resolvent 
$R_q(k) = (\Delta_q + k^2)^{-1}$ is a pseudo-differential operator in the calculus $\Psi_k^*(M)$ for $k\leq k_0$. 
\end{theorem}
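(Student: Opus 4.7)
The plan is to follow the parametrix strategy of Guillarmou--Hassell \cite{gh1} for the scalar Laplacian and adapt it to differential forms. I would construct an approximate inverse $P(k) \in \Psi^*_k(M)$ by prescribing its Schwartz kernel face by face on the blown-up double space $\MMksc$ so that
\[ (\Delta_q + k^2) P(k) = \Id - E(k), \]
with $E(k)$ a residual error in the calculus that vanishes to infinite order at every face of $\MMksc$ meeting $\{k=0\}$ and is small enough to be inverted by a Neumann series $(\Id-E(k))^{-1}=\sum_{j\ge 0} E(k)^j$ for $k\le k_0$. The true resolvent $R_q(k)$ would then equal $P(k)(\Id-E(k))^{-1}$ by uniqueness of the $L^2$-bounded inverse of $\Delta_q+k^2$, and would lie in $\Psi^*_k(M)$ because the calculus is closed under composition.

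The face-by-face construction proceeds from the most symbolic to the most structural. First invert the principal symbol at the interior lifted diagonal, where $\Delta_q + k^2$ is uniformly elliptic in the fiber variables. Away from $\{k=0\}$ and near spatial infinity the calculus reduces to Melrose's scattering calculus on $\bbar M$, where $\Delta_q + k^2$ is scattering-elliptic for $k > 0$. The delicate faces are those touching $\{k=0\}$, namely the zero face $\zf$, the transition face $\sca$, and the boundary faces $\bfo, \rbo, \lbo$. At each, the parametrix is determined by solving an explicit model (normal) operator: at $\bfo$ this is the exact conic Hodge Laplacian $\Delta_q^{\mathrm{con}}$ on $(0,\infty)_r \times N$ acting on $q$-forms; at $\rbo$ and $\lbo$ it is an indicial operator on a half-line; at $\zf$ and $\sca$ it is a rescaled version of $\Delta_q+k^2$ on a model space. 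Matching conditions at corners force consistency between adjacent face solutions, so the construction must be organized inductively on the order of vanishing at these corners.

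The principal obstacle will be inverting the normal operator at $\bfo$, that is the conic Hodge Laplacian on forms. Using the Hodge decomposition on $N$ to split the bundle into harmonic, exact and coexact pieces (together with the intertwining of $\Delta_q$ with $\Delta_{q\pm 1}$ under $d_N,\delta_N$), the problem reduces to families of Bessel-type ODEs whose orders are determined by the eigenvalues of $\Delta_N$ on $\Lambda^{q-1}(N)\oplus\Lambda^q(N)$. The indicial roots take the form $-(\ndemi-q)\pm\sqrt{\mu+(\ndemi-q)^2}$ for $\mu\in\mathrm{Sp}(\Delta_N|_{\Ima\,d_N})$, and hypothesis \eqref{0notindroot} is precisely what is needed to keep these roots off the critical thresholds at which the $L^2$-based model inverse either fails to exist or fails to lift as a polyhomogeneous conormal distribution onto $\rbo,\lbo$; the cohomological exclusions in \eqref{0notindroot} handle the degenerate cases $q=\ndemi\pm 1$ where a would-be indicial root collides with a harmonic piece. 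Hypothesis \eqref{Int_noresonance} then rules out zero-resonances, so $\Delta_q^{\mathrm{con}}$ is genuinely invertible on the weighted cone Sobolev space orthogonal to $\ker_{L^2}\Delta_q$, and the Hodge projector onto $\ker_{L^2}\Delta_q$ can be absorbed into the parametrix as its own leading term at $\zf$. Once each face inversion is in place, standard triangular composition in $\Psi^*_k(M)$ assembles them into $P(k)$ with the desired remainder $E(k)$, and the Neumann inversion above finishes the proof.
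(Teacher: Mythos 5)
Your strategy coincides with the paper's: a face-by-face parametrix $G(k)$ on $\MMksc$ solving model problems at ${\rm diag}$, $\sca$, $\bfo$, $\zf$, $\rbo$, $\lbo$, followed by a Neumann-series inversion of $\Id-E(k)$ and the composition law to conclude $R_q(k)\in\Psi_k^*(M)$. Two points in your outline, however, do not survive contact with the actual construction. First, one cannot (and need not) arrange that $E(k)$ vanishes to infinite order at the faces meeting $\{k=0\}$: the paper prescribes only finitely many models at each face, so $E(k)$ merely has positive leading order at $\zf$, $\bfo$, $\sca$ and order greater than $\nu_0+m-1$ at $\rbo$. The Neumann series still closes, but the mechanism is that the composition law forces $E(k)^N$ to be Hilbert--Schmidt with norm tending to $0$ as $k\to0$ for $N$ large, after which $(\Id-E(k))^{-1}$ is assembled from $(\Id-E(k)^N)^{-1}$; if you insist on an infinitely vanishing error you are committing yourself to solving the model problems to all orders at $\rbo$ and $\lbo$, which is substantially harder and not needed.

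Second, you have the two hypotheses playing the wrong roles. Condition \eqref{0notindroot} is equivalent to $\nu_0=\min(\mc{I}(P_b)\cap[0,\infty))>0$, and it is this that makes the $\bfo$ model (which is the conic Hodge Laplacian \emph{plus one}, i.e.\ $\kappa^{-1}P_{\bfo}\kappa^{-1}$, not $\Delta_q^{\rm con}$ itself) invertible with a polyhomogeneous Bessel kernel as in \eqref{Qbfo}, and that makes $P_b$ Fredholm of index zero on $H_b^0$; also note that the correct indicial roots for the coupled $\Lambda^q(N)\oplus\Lambda^{q-1}(N)$ system are $\pm(\sqrt{(\ndemi-q)^2+\alpha^2}\mp1)$ and $\pm\sqrt{(\ndemi-q\mp1)^2+\alpha^2}$, not the formula you quote. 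Hypothesis \eqref{Int_noresonance}, by contrast, is used at $\zf$, not at $\bfo$. The generalized inverse $Q_b$ of $P_b$ only gives $\Delta_q(x^{-1}Q_bx^{-1})=\Id-\sum_j(x\varphi_j)\otimes(x^{-1}\varphi_j)$, and the right-hand side is \emph{not} $\Id-\Pi_{\ker\Delta_q}$. One must correct $G_{\zf}^0$ by terms $x^{-1}(\chi_j\otimes\varphi_j+\varphi_j\otimes\chi_j)x^{-1}$ with $P_b\chi_j=x^{-1}\psi_j^\perp$, and the solvability of this equation in a weighted space $x^{-1+\eps}H_b^0$ is exactly where the absence of zero-resonances is invoked (it guarantees $x^{-1}\psi_j^\perp$ is orthogonal to the cokernel of $P_b$ there); the asymptotics of $\varphi_j$ and $\chi_j$ then feed into the $\rbo$ and $\lbo$ models. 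This step is the technical heart of the construction whenever $\ker_{L^2}\Delta_q\neq0$, and your outline, which absorbs the projector "as its own leading term at $\zf$" and attributes \eqref{Int_noresonance} to the conic model, passes over it.
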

A more precise statement, including the orders of the operator and describing the polyhomogeneity of the Schwartz kernel at the various boundary hypersurfaces of $\MMksc$, is given in Theorem  \ref{thm:nullspace}. As Theorem \ref{thm:nullspace} is proved by a parametrix construction, it also gives the explicit leading-order asymptotic terms of the kernel at all faces.

\begin{remark} Zero-resonances can appear only for degrees $q$ such that $|q-n/2|\leq 1$, and they are absent under certain assumptions on the bottom of the spectrum of $\Delta_N$ on forms of degree $r\in[n/2-1,n/2]$; for one such assumption, see Lemma \ref{wittconseq} when $n$ is odd and Remark \ref{noresonanceneven} when $n$ is even.
In fact, assumption \eqref{Int_noresonance} could likely be removed, but the parametrix construction would be much more technically involved, similar to the work \cite{gh2} for Schr\"odinger operators on functions. However it should be noticed that from the analysis of \cite{gh2}, there are likely some cases with zero-resonances where the resolvent is not a pseudo-differential operator in the calculus $\Psi_k^*(M)$.
Assumption \eqref{0notindroot} is likely not necessary either, but the construction would be more complicated - in fact, quite similar to the analysis of the resolvent on functions in dimension $n=2$ done in \cite[Section 4]{s1}.
We finally mention that assumptions \eqref{Int_noresonance}  and \eqref{0notindroot} are always satisfied on asymptotically Euclidean manifolds of dimension $n\geq 3$ (ie. when $(N,h_0)$ is a disjoint union of canonical 
spheres $(\sph^{n-1},d\theta)$).
\end{remark}

\textbf{Application to Sobolev estimates.}
We first give a Sobolev inequality which follows from the resolvent description. 
\begin{theorem}\label{mainthSob}
Let $(M,g)$ be asymptotically conic to order $n_0\geq 3$, let $p=2n/(n+2)$ and define the conjugate exponent $p'=2n/(n-2)$. Assume \eqref{0notindroot} and that $\ker_{L^2}(\Delta_q)=\ker_{L^r}(\Delta_q)$ for all $r\in [p,p']$.
Then there exists $C>0$ such that for all $q$-forms $u,v\in C_0^\infty(M;\Lambda^q)$ 
\begin{equation}\label{Sobolevest} 
||({\rm Id}-\Pi_{\ker(\Delta_q)})v||_{L^{p'}}\leq C||(d+\delta)v||_{L^2}, \quad 
||({\rm Id}-\Pi_{\ker(\Delta_q)})u||_{L^{p'}}\leq C||\Delta_q u||_{L^p},
\end{equation}
where $\Pi_{\ker(\Delta_q)}$ is the orthogonal projector on $\ker_{L^2}(\Delta_q)$ in $L^2$.
\end{theorem}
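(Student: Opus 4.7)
The plan is to reduce both inequalities in \eqref{Sobolevest} to pointwise bounds on Schwartz kernels, after which a Hardy--Littlewood--Sobolev argument closes the proof. For the second inequality, set $f=\Delta_q u$; since $\Pi:=\Pi_{\ker(\Delta_q)}$ commutes with $\Delta_q$ and annihilates its kernel, we have $(\Id-\Pi)u=\Delta_q^{-1}(\Id-\Pi)f$, reducing the problem to boundedness of $\Delta_q^{-1}(\Id-\Pi):L^p\to L^{p'}$. For the first inequality, the identity $\|\Delta_q^{1/2}v\|_{L^2}^2=\langle\Delta_q v,v\rangle=\|(d+\delta)v\|_{L^2}^2$ together with $(\Id-\Pi)v=\Delta_q^{-1/2}(\Id-\Pi)\Delta_q^{1/2}v$ reduces the estimate to boundedness of $\Delta_q^{-1/2}(\Id-\Pi):L^2\to L^{p'}$.

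The Schwartz kernels of these two operators are accessed via
\[
\Delta_q^{-1}(\Id-\Pi)=\lim_{k\to 0}\bigl(R_q(k)-k^{-2}\Pi\bigr),\qquad \Delta_q^{-1/2}(\Id-\Pi)=\frac{2}{\pi}\int_0^\infty R_q(k)(\Id-\Pi)\,dk,
\]
the second identity coming from $\lambda^{-1/2}=\frac{2}{\pi}\int_0^\infty(\lambda+k^2)^{-1}dk$. The hypothesis $\ker_{L^2}(\Delta_q)=\ker_{L^r}(\Delta_q)$ for $r\in[p,p']$ implies the no-resonance condition \eqref{Int_noresonance} via \eqref{kerLr}, so Theorem \ref{introresthm} (refined by Theorem \ref{thm:nullspace}) furnishes the full polyhomogeneous description of $R_q(k)$ on $\MMksc$. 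The subtracted term $k^{-2}\Pi$ precisely removes the singular contribution at the zero-energy face, and the $k$-integral converges thanks to the low-energy behavior encoded in $\Psi_k^*(M)$ together with standard elliptic resolvent decay as $k\to\infty$.

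The technical heart of the proof is then to extract uniform pointwise majorants of the form
\[
|K_{\Delta_q^{-1}(\Id-\Pi)}(z,z')|\leq C\,d_g(z,z')^{2-n},\qquad |K_{\Delta_q^{-1/2}(\Id-\Pi)}(z,z')|\leq C\,d_g(z,z')^{1-n}
\]
from the expansions at the various boundary hypersurfaces of $\MMksc$ (in particular $\zf$, $\bfo$, $\lbo$, $\rbo$, and the scattering face $\sca$). Near the diagonal these are standard scattering-pseudodifferential orders; at the conic infinity they require reading off the leading orders at the side faces, where assumption \eqref{0notindroot} rules out logarithmic terms or slower decay rates. Once these majorants are in place, both inequalities \eqref{Sobolevest} follow from the classical Hardy--Littlewood--Sobolev inequality, applied after dominating the kernels by Euclidean Riesz potentials in the asymptotically conic coordinates near infinity.

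The main obstacle, as I see it, is verifying that the parametrix construction yields the claimed decay rates simultaneously at every boundary face and at the corners where several regimes meet; and, for the square-root operator, showing that the $k$-integration shifts the index sets in a sufficiently controlled way to preserve the required pointwise bound.
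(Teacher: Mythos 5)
Your overall strategy (reduce to mapping properties of $\Delta_q^{-1}(\Id-\Pi)$ and of an inverse square root, then prove pointwise kernel bounds and invoke Hardy--Littlewood--Sobolev) is in the spirit of the paper's proof, which works with the zero-energy model $G^0_{\zf}$ of the parametrix (precisely your $\lim_{k\to 0}(R_q(k)-k^{-2}\Pi)$) and uses the HLS inequality near the diagonal. However, there is a genuine gap in the claimed majorants. The global bounds $|K_{\Delta_q^{-1}(\Id-\Pi)}(z,z')|\leq C\,d_g(z,z')^{2-n}$ and $|K_{\Delta_q^{-1/2}(\Id-\Pi)}(z,z')|\leq C\,d_g(z,z')^{1-n}$ are in general \emph{false} on an asymptotically conic manifold: the decay of the kernel at the side faces $\rbo$ and $\lbo$ is governed by the indicial data, not by the Euclidean rate. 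Concretely, in the region $x'\leq x/2$ the kernel of $G^0_{\zf}$ (densities removed) is of size $x'^{\,\nu_{\ker}}x^{\,n-2}$, whereas $d_g(z,z')^{2-n}\lesssim x'^{\,n-2}$ there; domination would force $\nu_{\ker}\geq n-2$, which fails already for asymptotically Euclidean manifolds with $n\geq 7$ (where $\nu_{\ker}\leq n/2+1$). So HLS cannot be applied to the whole kernel. The paper's proof instead splits the kernel into a near-diagonal piece, bounded by $C\,d(z,z')^{2-n}$ and handled by HLS, and off-diagonal pieces supported in $x/x'\leq 1/2$ or $x'/x\leq 1/2$, which are treated by a weighted Schur-type estimate; this is exactly where the hypothesis $\ker_{L^2}(\Delta_q)=\ker_{L^r}(\Delta_q)$ for $r\in[p,p']$ enters, being equivalent to $\nu_{\ker}>2$, i.e.\ to the exponent $\nu_{\ker}-3>-1$ needed for that Schur test. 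Your proposal never uses this hypothesis except to rule out resonances, which is a sign that the off-diagonal regime has not been addressed.

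A second, smaller issue: for the first inequality your route through $\Delta_q^{-1/2}(\Id-\Pi)=\frac{2}{\pi}\int_0^\infty R_q(k)(\Id-\Pi)\,dk$ requires uniform control of the resolvent over all $k$ and of the $k$-integration across the corners of $\MMksc$ --- essentially the full Riesz-transform analysis of Section 5, which is much heavier than necessary and whose output (sharp but restricted $L^p$ ranges for $D\Delta_q^{-1/2}$) reflects precisely the failure of your pointwise majorant. The paper sidesteps this entirely: it uses $DG^0_{\zf}=G^0_{\zf}D$ on compactly supported forms to write $(\Id-\Pi)v=DG^0_{\zf}Dv$, so only the single fixed kernel $DG^0_{\zf}$ on the face $\zf$ (conormal order $-1$ at the diagonal, order $\nu_{\ker}-2$ resp.\ $\nu_{\ker}-3$ at $\lbo$ resp.\ $\rbo$ after adjusting densities) needs to be bounded from $L^2$ to $L^{p'}$, again by the split into a near-diagonal HLS piece and off-diagonal Schur pieces.
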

Of course these inequalities extend by continuity to $u,v$ in appropriate Sobolev spaces (see Theorem \ref{SobolevTh}). 
The conditions \eqref{0notindroot} and $\ker_{L^2}(\Delta_q)=\ker_{L^r}(\Delta_q)$ for all $r\in [p,p']$ are satisfied when $N$ is a disjoint union of canonical spheres.
Uniform Sobolev estimates (for $\Delta-\lambda$) were recently proved for functions in the same geometric setting by the first author and Hassell; see \cite{GH3}. For differential forms, Li \cite{Li} proves some Sobolev estimates of the same form for $d+\delta$ on complete manifolds under some curvature conditions (non-negativity of some curvature tensor). \\

\textbf{Application to Riesz transform on forms.}
The Riesz transform acting on functions on a complete Riemannian manifold 
is defined by $T_0=d\Delta^{-1/2}$ and is bounded from the space of $L^2$ functions to the space of $L^2$ $1$-forms. 
It is a classical question in harmonic analysis (asked for instance by Strichartz \cite{St}) to understand for which $p$ the map $T_0$ is bounded on $L^p$. We refer for instance to Section 1.3 of the paper \cite{ACDH} by Auscher-Coulhon-Duong-Hoffman for a quite complete list of results in the geometric setting. For instance, Bakry \cite{B} proved that $T_0$ is bounded on any $L^p$ for $p\in(1,\infty)$ if $(M,g)$ is a complete manifold with non-negative Ricci curvature,  and Coulhon-Duong \cite{CD} obtained the quite general result stating that $T_0$ is bounded on $L^p$ for $p\in(1,2]$ when the volume of balls satisfies the doubling property and the heat kernel satisfies Gaussian upper estimates. On the other hand, for $p>2$, there exist simple examples where $T_0$ is not bounded on $L^p$. For instance,
it is shown by Carron-Coulhon-Hassell \cite{CCH} that an $n$-dimensional manifold with two ends isometric to 
$\rr^n\setminus B(0,R)$ has $T_0$ bounded on $L^p$ if and only if $p\in(1,n)$; this result has been generalized significantly by Devyver \cite{De}.

As in \cite{St}, we define the Riesz transform on $q$-forms as the operator taking $q$-forms on $M$ to a 
direct sum of $(q-1)$ and $(q+1)$-forms on $M$ defined by 
\[T_q=D\Delta_q^{-1/2} \,\, \textrm{ where }D:=d+\delta\]
(to make sense of $T_qf\in L^2$ when $f\in L^2$, we can consider weak limits of 
$D(\Delta_q+\eps)^{-1/2}f$ as $\eps\to 0^+$; see the beginning of Section \ref{secRT}). 

In this work we consider the sharp range of $p$ for which  $T_q$  is bounded on an asymptotically conic manifold 
with cross section $(N,h_0)$. The answer turns out to be quite complicated, and it can be expressed in terms of both topological and spectral data:
first the cohomology of $\bbar{M}$, then the small eigenvalues of the Laplacian $\Delta_N$ on the cross section, and finally the rate of decay of $L^2$ harmonic $q$-forms on $M$. To state the result we introduce the following indices
related to the Laplacian $\Delta_N$ on forms on $N$:  
writing ${\rm Sp}_{\Lambda^p}(\Delta_N|_H)$ for the spectrum of the Laplacian $\Delta_N$ on $p$-forms
restricted to a vector space $H\subset L^2(N,\Lambda^p(N))$, we define for $p=q-1,q,q+1$
\[\begin{gathered}
\la_p:=\min({\rm Sp}_{\Lambda^p}(\Delta_N|_{{\rm Im}\,d_N})), \,\, \mu_p:=\min({\rm Sp}_{\Lambda^p}(\Delta_N|_{({\rm Im}\,\delta_N)^\perp})),\,\,  \gamma_p:=\min({\rm Sp}_{\Lambda^p}(\Delta_N|_{({\rm Im}\,d_N)^\perp})),\\
\gamma'_q:=\left\{\begin{array}{ll}
\gamma_q & \textrm{ if }q\geq n/2-1\\
\la_{q+1} & \textrm{ if }q< n/2-1
\end{array}\right. ,  \quad 
\mu'_q=\left\{\begin{array}{ll}
\mu_q & \textrm{ if }q\leq n/2+1\\
\la_{q} & \textrm{ if }q> n/2-1.
\end{array}\right.
\end{gathered}\]
To state the result as smoothly as possible, we make an extra assumption in the Introduction which will be removed later in the paper: we assume that $\la_q>1-|q-n/2|^2$. Then under this assumption we define
\begin{equation}\label{defnu0intro}
\begin{split}
\nu_0:= &\min\Big(\sqrt{(\ndemi-q)^2+\la_q}-1,\, \sqrt{(\ndemi-q-1)^2+\gamma_q}, \,
\sqrt{(\ndemi-q+1)^2+\mu_{q-1}}
\, \Big)\\
\nu_D:= & \min\Big(\sqrt{(\ndemi-q)^2+\la_q}+1, \,\sqrt{(\ndemi-q-1)^2+\gamma'_q}, \,\sqrt{(\ndemi-q+1)^2+\mu'_{q-1}}\, \Big).
\end{split}
\end{equation} 
Then we prove 
\begin{theorem}\label{first} 
Let $(M,g)$ be asymptotically conic to order $n_0\geq 3$  with cross-section $(N,h_0)$ and assume that 
$\la_q>1-|q-n/2|^2$. Let $\nu_0,\nu_D$ be  be the indices defined by \eqref{defnu0intro} from the spectrum of $\Delta_N$ on $(q-1)$ and $q$-forms. Assume that \eqref{0notindroot} and \eqref{Int_noresonance} hold and finally, define 
\begin{equation}\label{nukerintro} 
\nu_{\ker}:= \min \Big(\nu_0+2 ,\, \max \{ \nu\geq \nu_0; \ker_{L^2}(\Delta_q)\subset x^{\nu+\ndemi-1}L^\infty\}\Big).
\end{equation}
Then the Riesz transform $T_q$ is bounded on $L^p$ if 
\begin{equation}\label{intervalsuff} 
\frac{n}{n-(n/2+1-\nu_{\ker})_+}<p< \frac{n}{(n/2-\nu_0)_+}.
\end{equation}
To obtain the exact interval of $L^p$ boundedness, we make the extra assumption 
that $n_0>\nu_0+2$ if $\nu_0<n/2$. Under this additional assumption, we have:\\
\noindent\textbf{Case $1$.} If $q<n/2-1$ and the natural map $H^{q+1}(\bbar{M},\pl \bbar{M})\to H^{q+1}(\bbar{M})$ in cohomology is not injective, or if 
$q>n/2+1$ and the natural map $H^{n-q+1}(\bbar{M},\pl \bbar{M})\to H^{n-q+1}(\bbar{M})$ in cohomology is not injective, then the Riesz transform 
$T_q=D\Delta_q^{-1/2}$ on $q$-forms is bounded on $L^p$ if and only if \eqref{intervalsuff} holds.\\
\textbf{Case $2$.} In all other cases, $T_q$ is bounded on $L^p$ if and only if 
\[  \frac{n}{n-(n/2+1-\nu_{\ker})_+}<p< \frac{n}{(n/2-\min(\nu_D,\nu_{\ker}))_+}.\]
\end{theorem}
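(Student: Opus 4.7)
The plan is to express the Riesz transform through the spectral integral
\[T_q = \frac{2}{\pi}\int_0^\infty D\, R_q(k)\, dk\]
and split the integrand with a cutoff $\chi(k)$ supported near $k=0$ into a low-frequency piece $T_q^{\mathrm{low}} = \frac{2}{\pi}\int \chi(k) D R_q(k)\, dk$ and a high-frequency remainder. The high-frequency part is handled by the standard interior pseudo-differential theory together with decay estimates for $R_q(k)$ as $k\to\infty$, and is $L^p$-bounded for every $p\in(1,\infty)$; this uses only that $(d+\delta)R_q(k)$ lies in the scattering calculus uniformly in $k\geq k_0$ and has polynomial decay in $k$. Thus the entire problem reduces to the analysis of $T_q^{\mathrm{low}}$, and here the main tool is Theorem \ref{introresthm}: since $R_q(k)\in \Psi_k^*(M)$ for $k\leq k_0$, the kernel of $DR_q(k)$ is polyhomogeneous conormal on $\MMksc$ with known leading orders at each boundary face, supplied by the parametrix construction of Theorem \ref{thm:nullspace}.

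The next step is to push the $k$-integral forward from $\MMksc$ onto a suitable blow-up of $\bbar{M}\times \bbar{M}$ and read off the polyhomogeneous indices of the resulting kernel $T_q^{\mathrm{low}}(z,z')$ at the scattering, zero, and boundary faces. The index $\nu_0$ in \eqref{defnu0intro} appears as the smallest indicial root of the model Laplacian $-\pl_x^2+x^{-2}((\ndemi-q)^2+\Delta_N)$ acting on $(q-1)$-, $q$- and $(q+1)$-forms on the cone, and it controls the rate of decay of $T_q^{\mathrm{low}}(z,z')$ along the asymptotic diagonal; the index $\nu_D$ records the next relevant indicial root, which becomes visible when certain leading coefficients vanish; and $\nu_{\ker}$ is the actual decay rate at infinity of $L^2$-harmonic $q$-forms entering through the singular behaviour of $R_q(k)$ at $k=0$ (which, by the no-resonance assumption \eqref{Int_noresonance}, is precisely the projection $\Pi_{\ker(\Delta_q)}/k^2$). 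With these expansions in hand, one combines them with the high-energy contribution to obtain the full kernel $T_q(z,z')$ as a polyhomogeneous distribution whose growth and decay rates are determined by $\nu_0$, $\nu_D$, and $\nu_{\ker}$.

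To obtain $L^p$-boundedness on the range \eqref{intervalsuff}, I would apply a weighted Schur-type test against reference weights $x^\alpha$ using the decay rates just identified; this separates naturally into pieces supported near the diagonal at infinity (controlled by $\nu_0$ and scale-invariant Hardy estimates of the form $\|r^{-\nu_0}u\|_{L^p}\lesssim \|u\|_{L^p}$) and pieces supported away from the diagonal (controlled by $\nu_{\ker}$ via the asymptotics of harmonic forms). For necessity I would test $T_q$ on sequences $u_R$ concentrated at the scale $x\sim 1/R$ in the asymptotic end and compute the response using the explicit leading term at the boundary face; if that leading coefficient is nonzero then $\|T_q u_R\|_{L^p}/\|u_R\|_{L^p}$ blows up as $R\to\infty$ outside the range \eqref{intervalsuff}, giving the sharp upper endpoint $p<n/(n/2-\nu_0)_+$. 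The dichotomy between Case 1 and Case 2 reflects exactly whether this leading coefficient vanishes: using the parametrix construction one shows the leading coefficient is naturally represented by a pairing involving elements of $H^{q+1}(\bbar{M},\pl\bbar{M})$ pushed to $H^{q+1}(\bbar{M})$ (with the Hodge dual version for $q>n/2+1$). In Case 1 the map fails to be injective and the coefficient is nonzero, producing the sharp endpoint at $\nu_0$; in Case 2 the coefficient vanishes and the next indicial root, $\nu_D$, becomes the relevant obstruction.

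The main obstacle is the bookkeeping in the parametrix: identifying the leading coefficient at the boundary face topologically, and keeping track of how logarithmic terms appear at coincidences of indicial roots (which happen precisely at the endpoints). The hypothesis $n_0>\nu_0+2$ enters here, ensuring that the error introduced by the non-exact conical structure of $g$ decays faster than the subleading term $\nu_D$, so that the expansions remain sharp. A secondary difficulty is the identification of $\nu_{\ker}$: its definition is truncated at $\nu_0+2$ because beyond that threshold the contribution from $L^2$-harmonic forms is dominated by terms coming from the scattering face, and verifying this requires matching the expansions of the harmonic projection $\Pi_{\ker(\Delta_q)}$ with those of $R_q(k)/k^2$ on $\MMksc$.
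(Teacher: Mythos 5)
Your proposal follows essentially the same route as the paper: split off the high-frequency part (a scattering pseudodifferential operator of order $-1$, bounded on all $L^p$), use the polyhomogeneity of $DR_q(k)$ on $\MMksc$ supplied by Theorem \ref{thm:nullspace} to integrate in $k$ and read off the decay orders of the resulting kernel, prove boundedness via pointwise kernel bounds (a Schur-type test against power weights) together with Calder\'on--Zygmund theory near the diagonal, obtain sharpness from the non-vanishing of the integrated leading coefficient at the boundary faces, and identify that coefficient cohomologically (via the $L^2$-cohomology theorem of Hausel--Hunsicker--Mazzeo and the long exact sequence of the pair) to produce the Case 1/Case 2 dichotomy. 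One correction to the narrative: the two endpoints of the $L^p$ interval are governed by the off-diagonal faces $\rbo$ and $\lbo$ (decay of the kernel as one of the two variables goes off to infinity with the other fixed), not by the behaviour ``along the asymptotic diagonal''; the diagonal-at-infinity faces $\bfo$ and $\sca$ contribute a Calder\'on--Zygmund piece bounded for every $p\in(1,\infty)$, and the displayed inequality $\|r^{-\nu_0}u\|_{L^p}\lesssim\|u\|_{L^p}$ is false as written and plays no role in the argument.
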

\begin{remark}
The same result holds without assuming $\la_q>1-|q-n/2|^2$, but the indices $\nu_0$ and $\nu_D$ need to be defined slightly differently (ie. they need to be defined by \eqref{defnu0} and \eqref{defindices}). The general case is 
in Theorem \ref{mainthRT}.
\end{remark}

\begin{remark} For $q=0$, $H^1(\bbar{M},\pl \bbar{M})=0$ if and only if $M$ has one end, and when $H^1(\bbar{M},\pl \bbar{M})\not= 0$ the map $H^1(\bbar{M},\pl \bbar{M})\to H^{1}(\bbar{M})$ is never injective. In particular, since for $q=0$ one has 
$\nu_0=n/2-1$ and $\ker_{L^2}(\Delta_q)=0$, we recover 
Theorem 1.5 in \cite{gh1} about the Riesz transform on functions by applying Case 1 of Theorem \ref{first}. We also recover Theorem 1.4 of \cite{gh1} by applying Case 2 of Theorem \ref{first} since
$\nu_D=\sqrt{(\ndemi-1)^2+\lambda_1}$ where $\lambda_1$ is the first non-zero eigenvalue of the Laplacian $\Delta_{N}$ on functions (or equivalently exact $1$-forms).
\end{remark}

Using that $\nu_0\geq |q-n/2|-1$ when $|q-n/2|>1$, we obtain the first corollary, which is weaker than Theorem \ref{first} in the sense that it does {\bf not} give the sharp range of $p$ for $L^p$ boundedness of $T_q$, but it has the advantage of being stated only in terms of the degree $q$:
\begin{corollary}
Let  $q$ satisfy $|q-n/2|>1$ and let $(M,g)$ be asymptotically conic to order $n_0\geq 3$ with cross-section $(N,h_0)$. Assume that $\ker_{x^{-1}L^2}(\Delta_q)=\ker_{L^2}(\Delta_q)=0$; then 
the Riesz transform $T_q$ is always bounded on $L^p$ if 
\begin{equation}\label{firstboundcor}  
\frac{n}{n/2+|n/2-q|}<p< \frac{n}{n/2+1-|n/2-q|}.
\end{equation}
If $\ker_{x^{-1}L^2}(\Delta_q)=\ker_{L^2}(\Delta_q)\not=0$, then $T_q$ is always bounded on $L^p$ if 
\[  \min\Big(2,\, \frac{n}{n/2-2+|n/2-q|}\Big)<p< \frac{n}{n/2+1-|n/2-q|}.\]
\end{corollary}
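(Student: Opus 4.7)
Both assertions will follow by specializing the sufficient condition \eqref{intervalsuff} of Theorem \ref{first}, the only work being to bound $\nu_0$ (and hence $\nu_{\ker}$) below purely in terms of $|n/2-q|$. Note that when $|q-n/2|>1$ the hypothesis \eqref{0notindroot} and the extra condition $\lambda_q>1-|q-n/2|^2$ of Theorem \ref{first} both hold vacuously (the latter since $1-|q-n/2|^2<0\leq\lambda_q$), so Theorem \ref{first} applies.

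First I would estimate $\nu_0$ directly from \eqref{defnu0intro}. Using only $\lambda_q,\gamma_q,\mu_{q-1}\geq 0$, the three quantities entering the minimum satisfy $\sqrt{(n/2-q)^2+\lambda_q}-1\geq |n/2-q|-1$ and $\sqrt{(n/2-q\pm 1)^2+\,\cdot\,}\geq |n/2-q\pm 1|$. When $|n/2-q|>1$, both $|n/2-q+1|$ and $|n/2-q-1|$ are $\geq |n/2-q|-1$, so $\nu_0\geq |n/2-q|-1$. It follows that $(n/2-\nu_0)_+\leq (n/2+1-|n/2-q|)_+$, so the upper endpoint $\tfrac{n}{(n/2-\nu_0)_+}$ in \eqref{intervalsuff} is at least $\tfrac{n}{n/2+1-|n/2-q|}$; this gives the upper bound on $p$ claimed in both assertions.

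For the lower bound on $p$ I would split into two cases. If $\ker_{L^2}(\Delta_q)=0$, the condition defining the max in \eqref{nukerintro} is vacuous, hence $\nu_{\ker}=\nu_0+2\geq |n/2-q|+1$. Then $(n/2+1-\nu_{\ker})_+\leq (n/2-|n/2-q|)_+$ and the lower endpoint in \eqref{intervalsuff} is at most $\tfrac{n}{n/2+|n/2-q|}$, proving \eqref{firstboundcor}. If instead $\ker_{L^2}(\Delta_q)\neq 0$, I would use only the automatic bound $\nu_{\ker}\geq \nu_0\geq |n/2-q|-1$ (the max in \eqref{nukerintro} is taken over $\nu\geq\nu_0$), which via \eqref{intervalsuff} gives $L^p$-boundedness on $p\in\bigl(\tfrac{n}{n/2-2+|n/2-q|},\tfrac{n}{n/2+1-|n/2-q|}\bigr)$. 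When $|n/2-q|\geq 2$ the lower endpoint is $\leq 2$ and this is the claim. When $1<|n/2-q|<2$ it exceeds $2$, and to close the remaining gap down to $p=2$ I would combine the standard $L^2$-boundedness of $T_q$ (immediate from $D^2=\Delta_q$ and the spectral theorem) with Riesz--Thorin interpolation against any $L^{p_1}$-bound from the range just obtained, which extends boundedness to all $p\in(2,\tfrac{n}{n/2+1-|n/2-q|})$.

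The one delicate point is the inclusion $\ker_{L^2}(\Delta_q)\subset x^{\nu_0+n/2-1}L^\infty$, needed to guarantee $\nu_{\ker}\geq \nu_0$ when the kernel is nontrivial. Under the no-resonance hypothesis \eqref{Int_noresonance} this follows from the indicial-root analysis at $\pl\bbar{M}$ underlying Theorem \ref{introresthm}: the smallest indicial root of $\Delta_q$ strictly above the $L^2$-threshold $n/2-1$ is precisely $n/2-1+\nu_0$, so any non-resonant $L^2$-harmonic form must decay at least like $x^{\nu_0+n/2-1}$ at infinity.
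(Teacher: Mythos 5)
Your treatment of the hypotheses, of the upper endpoint, and of the first assertion is correct and is exactly the paper's (largely implicit) route: one inserts $\nu_0\geq|n/2-q|-1$ and, when the kernel vanishes, $\nu_{\ker}=\nu_0+2$ into the sufficient condition \eqref{intervalsuff}.

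The second assertion, however, has a genuine gap. Since $q$ and $n$ are integers, $|n/2-q|>1$ forces $|n/2-q|\in\{3/2,2,5/2,\dots\}$. In the borderline case $|n/2-q|=3/2$ (i.e. $n$ odd and $q=(n\mp 3)/2$, e.g. $n=5$, $q=1$), your crude bound $\nu_{\ker}\geq\nu_0\geq|n/2-q|-1=1/2$ yields the interval $\bigl(\tfrac{n}{n/2-1/2},\tfrac{n}{n/2-1/2}\bigr)$, which is empty. There is then no $p_1>2$ in ``the range just obtained'' to feed into Riesz--Thorin, so interpolating with the $L^2$ bound produces nothing, and the claimed boundedness on $(2,\tfrac{2n}{n-1})$ is not established. (For $|n/2-q|\geq 2$ the crude bound already gives a lower endpoint $\leq 2$, so no interpolation is needed; your fallback is invoked only in the one case where it cannot work.) The missing ingredient is that the no-resonance hypothesis forces $\nu_{\ker}>1$: by \eqref{tilnu} and the discussion following it one has $\til{\nu}>1$, and $\nu_{\ker}=\nu_0+m=\min(\nu_0+2,\til{\nu})$ (see \eqref{defm} and the first line of the proof of Proposition \ref{basicinfo}), whence $\nu_{\ker}>1$, $(n/2+1-\nu_{\ker})_+<n/2$, and the lower endpoint of \eqref{intervalsuff} is already strictly less than $2$. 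Combined with $\nu_{\ker}\geq\nu_0\geq|n/2-q|-1$, this shows that the lower endpoint of \eqref{intervalsuff} is at most $\min\bigl(2,\tfrac{n}{n/2-2+|n/2-q|}\bigr)$, which is the statement; no interpolation is required in any case.
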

\begin{remark}
By Theorem \ref{first}, we see that the lower bound in \eqref{firstboundcor} is sharp if the cohomology $H^{q}(N)$ is non-trivial. This contrast with the case of Riesz transform on functions where the lower exponent of boundedness 
in a very general case is $1$, see \cite{CD}. It is not unlikely that such a (in general non-sharp) result could be extended to a more general setting, such as manifolds with volumes of large balls being comparable to those of Euclidean balls of dimension $n$, and satisfying some bounds on the curvature tensor as well as some Sobolev inequality for $D$ as in Theorem \ref{mainthSob} (see \cite{De} for the case $q=0$).\end{remark}

When $N=\sph^{n-1}$ is the sphere with curvature $+1$, one has $\nu_0=n/2-1$ (see \eqref{caseSn}), and the map $H^{q+1}(\bbar{M},\pl \bbar{M})\to H^{q+1}(\bbar{M})$ is always injective for $0<q<n$ since $H^q(\sph^{n-1})=0$. Thus Theorem \ref{first} applied to asymptotically Euclidean manifolds with $n\geq 3$ and $n_0>n/2+1$ gives
\begin{corollary}\label{euclidean}
Let $(M,g)$ be asymptotically Euclidean to order $n_0>n/2+1$, with dimension $n\geq 3$.
Then $\nu_{\ker}\in\{\ndemi-1,\ndemi,\ndemi+1\}$, and we have:\\ 
\textbf{Case $1$.} If $q=0$ or $q=n$, the Riesz transform $T_q=D\Delta_q^{-1/2}$ on $q$-forms is bounded on $L^p$ if and only if 
\[\begin{array}{ll}
1<p<n , & \textrm{if }\,\,\,Ê\ker(H^1(\bbar{M},\pl \bbar{M})\to H^1(\bbar{M}))\not=0, \\
1<p<\infty, & \textrm{if }\,\,\, \ker(H^1(\bbar{M},\pl \bbar{M})\to H^1(\bbar{M}))=0.
\end{array}\]
\textbf{Case $2$.} If $q\notin \{0,n\}$, the Riesz transform $T_q=D\Delta_q^{-1/2}$ on $q$-forms is bounded on $L^p$ if and only if 
\[\begin{array}{ll}
 \frac{n}{n-2}<p<n , & \textrm{ if } \nu_{\ker}=\ndemi-1, \\
\frac{n}{n-1}<p<\infty, & \textrm{ if } \nu_{\ker}=\ndemi,\\
1<p<\infty, & \textrm{ if } \nu_{\ker}=\ndemi+1.
\end{array}\]
\end{corollary}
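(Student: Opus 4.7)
The plan is to specialise Theorem \ref{first} to $(N,h_0)=(\sph^{n-1},d\theta)$ (or a disjoint union of copies) and evaluate the indices $\nu_0,\nu_D,\nu_{\ker}$ in this geometry. By the remark after Theorem \ref{introresthm}, \eqref{Int_noresonance} and \eqref{0notindroot} are automatic in the asymptotically Euclidean setting; the inequality $\la_q>1-|q-n/2|^2$ follows from the explicit Hodge spectrum of the sphere, and the additional hypothesis $n_0>n/2+1$ is exactly $n_0>\nu_0+2$, which ensures Theorem \ref{first} gives the sharp interval (using $\nu_0=n/2-1$ from \eqref{caseSn}).

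To identify $\nu_{\ker}\in[\nu_0,\nu_0+2]=[n/2-1,n/2+1]$, one observes that on the Euclidean model the indicial roots of $\Delta_q$ at $\pl\bbar{M}$ are integers: all the discriminants $(n/2-p)^2+\sigma$ appearing in those roots, with $\sigma$ a Hodge eigenvalue on $\sph^{n-1}$, turn out to be perfect squares. Hence by the polyhomogeneous conormal structure from Theorem \ref{thm:nullspace}, any non-zero $L^2$ harmonic $q$-form has leading order $x^\alpha$ with $\alpha$ an integer in $\{n-2,n-1,n,n+1,\dots\}$, so $\nu_{\ker}=\min(\nu_0+2,\alpha_{\min}-n/2+1)$ takes only the three values $\{n/2-1,n/2,n/2+1\}$. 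When $\ker_{L^2}(\Delta_q)=0$ the maximum in \eqref{nukerintro} is $+\infty$ and $\nu_{\ker}=n/2+1$; this is the case for $q=0$ and, by Hodge duality, for $q=n$ (since harmonic $L^2$ functions on a non-compact complete manifold vanish).

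An analogous spectral computation gives $\nu_D\geq n/2$ in every degree: for instance, for $q=0$ the only finite term in \eqref{defnu0intro} is $\sqrt{(n/2-1)^2+\la_1}=\sqrt{(n/2-1)^2+(n-1)}=n/2$, and the other degrees on the sphere are handled by the same algebra. Since only $\min(\nu_D,\nu_{\ker})$ enters the upper endpoint and $\nu_{\ker}\leq n/2+1$, this inequality is enough. Next, using the long exact sequence of the pair $(\bbar{M},\pl\bbar{M})$ together with $H^j(\sph^{n-1})=0$ for $j\in\{1,\dots,n-2\}$, the map $H^{q+1}(\bbar{M},\pl\bbar{M})\to H^{q+1}(\bbar{M})$ is injective for $q\in\{1,\dots,n-2\}$ and, dually, $H^{n-q+1}(\bbar{M},\pl\bbar{M})\to H^{n-q+1}(\bbar{M})$ is injective for $q\in\{2,\dots,n-1\}$. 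Hence for $q\in\{1,\dots,n-1\}$ only Case 2 of Theorem \ref{first} is possible, while Case 1 can occur only for $q=0$ or $q=n$, and does so precisely when $\ker(H^1(\bbar{M},\pl\bbar{M})\to H^1(\bbar{M}))\neq 0$.

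Substituting the computed $\nu_0,\nu_D,\nu_{\ker}$ into the endpoints $n/(n-(n/2+1-\nu_{\ker})_+)$ and $n/(n/2-\min(\nu_D,\nu_{\ker}))_+$ (Case 2) or $n/(n/2-\nu_0)_+=n$ (Case 1) yields exactly the three intervals $n/(n-2)<p<n$, $n/(n-1)<p<\infty$, $1<p<\infty$ stated in Case 2 and the two intervals $1<p<n$, $1<p<\infty$ stated in Case 1. The main technical ingredients are the explicit spectral calculation of $\nu_D$ in each degree and the structural fact that $L^2$ harmonic forms on asymptotically Euclidean manifolds have integer leading decay rates, both obtained from the polyhomogeneous parametrix of Theorem \ref{thm:nullspace}.
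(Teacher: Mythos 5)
Your proof is correct and takes essentially the same route as the paper's own (very short) argument: specialize Theorem \ref{first} using $\nu_0=n/2-1$ and $\nu_d=\nu_\delta=n/2$ from \eqref{caseSn} and \eqref{casrn}, observe via the long exact sequence and $H^q(\sph^{n-1})=0$ that the relevant extension maps are injective for $0<q<n$ so only $e_1$ matters, and use $\ker_{L^2}(\Delta_q)=0$ for $q\in\{0,n\}$ to get $\nu_{\ker}=n/2+1$ there. Your additional observations — that the indicial roots lie in $\tfrac n2+\zz$ so $\nu_{\ker}\in\{n/2-1,n/2,n/2+1\}$, and the endpoint arithmetic — are accurate and merely make explicit what the paper leaves implicit.
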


Although our geometric situation is quite restrictive in terms of the structure near infinity, 
there seem to be only very limited results about the Riesz transform for forms in the literature, and even Corollary \ref{euclidean} did not seem to be known (in fact, Theorem \ref{first} answers an open problem asked by Carron-Coulhon-Hassell \cite[Sec. 8]{CCH}).
There are a few previously known results: Bakry \cite{B} proved $L^p$ boundedness of $T_q$ on manifolds such that a curvature term appearing in the Weitzenbock formula is non-negative, Auscher-McIntosh-Russ \cite{AMcR} proved boundedness of Riesz transforms for forms on Hardy spaces for manifolds with volume measure satisfying the doubling property, while M\"uller-Peloso-Ricci \cite{MPR} obtained boundedness on $L^p$ for all $p$ in the case of the Heisenberg group.\\
  
\textbf{Conic degeneration and analytic torsion.}
We now apply Theorem \ref{introresthm} to investigate the behaviour of the analytic torsion under conic degeneration. The degeneration we discuss was originally proposed by Degeratu and Mazzeo as a means of analyzing elliptic operators on the more general class of iterated cone-edge spaces \cite{ma2}. The objective is to generalize theorems such as the Cheeger-M\"uller theorem to singular spaces by analyzing the behavior of the quantities involved in the smooth analogues as a family of smooth manifolds degenerates to a singular manifold. With this objective in mind, in \cite{s2}, the behaviour of the determinant of the Laplacian is investigated under conic degeneration. Here, we generalize this work to investigate the behaviour of the analytic torsion.

Let $(M,g)$ be a smooth asymptotically conic manifold with cross section $(N,h_0)$ which is exactly conic outside the compact manifold with boundary $\hat W:=\{x\geq 1\}$, which means that $g=dx^2/x^4+h_0/x^2$ in $\{x<1\}$.
As in the work of the second author \cite{s2}, we define a family of smooth compact manifolds $(\Omega_\epsilon,g_\eps)$ which degenerate to a  manifold $(\Omega_0,g_0)$ with an exact conic singularity as follows: assume that there is a compact set $K\subset \Omega_0$ where $g_0$ is smooth such that 
\[(\Omega_0\setminus K,g_0) \textrm{ is isometric to }((0,1)_r\x N, dr^2+r^2h_0).\]
For each $\eps>0$ small, let $M_\eps:=\{z\in M; x(z)\geq \eps\}$, and consider the manifold $\Omega_\eps=K\sqcup M_\eps$ obtained by gluing $K$ with $M_\eps$ along $\pl K\simeq \pl M_\eps \simeq N$. The obtained manifold is smooth and the metric  
$g_0$ on $K$ glues smoothly with the metric $\eps^2 g$ defined on $M_\eps$, giving a metric on $\Omega_\eps$ which we denote by $g_\eps$.
As $\epsilon$ goes to zero, $\Omega_\epsilon$ approaches $\Omega_0$ in the Gromov-Hausdorff sense; see \cite{s2} for more details concerning the geometry. 

We first need to make some assumptions on the cross-section $N$. Since analytic torsion is trivial in even dimensions, we assume that $n$ is odd. We then say that $N$ satisfies the \emph{modified Witt condition} if 
\begin{equation}
\label{Wittcond} 
\Big[0,\frac{3}{4}\Big]\cap {\rm Sp}_{\Lambda^{(n-1)/2}}(\Delta_{N})=\emptyset.
\end{equation}
Usually $M$ is said to be Witt if $0\notin{\rm Sp}_{\Lambda^{(n-1)/2}}(\Delta_N)$, so the modified Witt condition is slightly stronger. As we will see, the modified Witt condition rules out zero-resonances for $\Delta_q$ on $M$ for all $q\in[0,n]$, which allows us to apply Theorem \ref{introresthm} to obtain the microlocal description of the resolvent $R_q(k)$ on $M$ near $k=0$.

We can define a determinant of the Laplacian for any form degree on any compact manifold $\Omega$ by the method of Ray-Singer \cite{RS}, using a spectral zeta function for the Laplacians $\Delta_q^\Omega$ acting on $q$-forms:
\[ \zeta^\Omega_q(s):=\sum_{\la_j\in{\rm Sp}(\Delta_q), \la_j>0}\la_j^{-s} \,\, \textrm{ for }\Re(s)>n/2, \quad \log(\det(\Delta^\Omega_q)):=-\zeta_q'(0),\]
where $\zeta_q'(0)$ is obtained by meromorphic extension of $\zeta^\Omega_q(s)$ in $s\in \cc$. The analytic torsion is then defined by 
\[ \log T(\Omega):= \frac{1}{2}\sum_{q=0}^n(-1)^{q+1}q\log(\det\Delta^\Omega_q).\]

By the work of Cheeger \cite{ch2}, Dar \cite{Da} and Mooers \cite{mo}, the objects above are also well-defined on a compact manifold with conical singularities $\Omega_0$, under the Witt condition $0\notin{\rm Sp}_{\Lambda^{(n-1)/2}}(\Delta_N)$ (unless stated otherwise, we will always use the Friedrichs extension at the conic points when a choice of self-adjoint extension is necessary). The analytic torsion on manifolds with conic singularities has been the object of a considerable amount of recent study; see, for example, \cite{le,mv,ve} and the references therein.

We can also define analogous objects on $M$. Although $M$ is non-compact and its Laplacian on $q$-forms $\Delta_q^M$ 
has continuous spectrum in $\rr^+$, 
we can define a renormalized determinant of $\Delta^M_q$ and thus a renormalized analytic torsion, under the assumption that $N$ satisfies the modified Witt condition. To define the renormalized determinant, one uses a renormalized trace of the heat kernel defined as follows: 
\begin{equation}\label{renormtrace} {^R}\tra(e^{-t\Delta^M_q}):={\rm FP}_{\eps\to 0}\int_{\{x\geq\eps\}}\tra(H^M_q(t;z,z))dg(z)\end{equation}
where $H^M_q(t; z,z')$ is the heat kernel for $q$-forms on $M$ at time $t$ and ${\rm FP}$ means finite part in the sense of Hadamard.
The determinant of $\Delta_q^M$ is then defined as usual by $\log(\det \Delta_q^M ):=-\pl_s\zeta^M_q(0)$ through the zeta function
\[\zeta_q^{M}(s)=\frac{1}{\Gamma(s)}\int_0^{\infty}
({^R}\tra(e^{-t\Delta_q^M})-\dim \ker_{L^2}\Delta_q^M)t^{s-1}\ dt,\]
once we have shown that ${^R}\tra(e^{-t\Delta^M_q})$ has expansions in powers of $t$ and $\log(t)$ as $t\to 0$ and $t\to +\infty$.
 We may therefore define a renormalized analytic torsion on $M$ by
\[\log(T(M)):=\frac{1}{2}\sum_{q=0}^n(-1)^{q+1}q\log(\det\Delta_q^M).\]

It turns out, by the result of Ann\'e-Takahashi \cite{at}, that as $\eps\in(0,\eps_0]$ goes to $0$,
there are a finite number of small non-zero eigenvalues of $\Delta_q^{\Omega_\epsilon}$ which converge to $0$: for each $q$, if $N_q$ denotes the number of such eigenvalues, we have $N_0=0$ and  
\begin{equation}\label{numbersmall}
N_q=\dim\ker(\Delta_q^{\Omega_0})+\dim\ker_{L^2}(\Delta_q^M)-\dim\ker(\Delta_q^{\Omega_{\eps_0}}).
\end{equation}
This is a topological invariant, since all these kernel dimensions can be expressed in terms of 
cohomologies of $M$, $\Omega_0$ and $\Omega_{\eps_0}$.
Let the small eigenvalues themselves be $\mu^q_i(\epsilon)$, $i=1$ to $N_q$. 
Our theorem is the following, whose proof is given in Section \ref{section:proof}:

\begin{theorem}\label{conictorsion} 
Let $\Omega_{\epsilon}$ be defined as above and assume that $N$ satisfies the modified Witt condition. As $\epsilon\rightarrow 0$, for each $q$ between 0 and $n$,
\[\log\det\Delta^{\Omega_{\epsilon}}_q= -2\log\epsilon(\zeta_q^M(0))+\sum_{i=1}^{N_q}\log\mu^q_i(\epsilon)+\log (\det\Delta^{\Omega_{0}}_q)+\log (\det\Delta^{M}_q)+o(1),\]  and therefore
\[\log(T(\Omega_\epsilon))=(\sum_{q=0}^n(-1)^{q}q \zeta_q^M(0))\log\epsilon+\sum_{q=0}^{n}(-1)^qq(\sum_{i=1}^{N_q}\log\mu^q_i(\epsilon))\]\[+\log(T(\Omega_0))+\log(T(M))+o(1).\]
\end{theorem}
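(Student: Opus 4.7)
The strategy is to adapt the gluing method used by the second author in \cite{s2} for the scalar Laplacian, now carried out in every form degree $q$ using Theorem \ref{introresthm}. The modified Witt condition \eqref{Wittcond} implies, via Lemma \ref{wittconseq} applied degree by degree, that the no-resonance assumption \eqref{Int_noresonance} and the indicial-root assumption \eqref{0notindroot} both hold for all $q\in\{0,\ldots,n\}$ on $M$, so Theorem \ref{introresthm} gives the pseudo-differential characterization of $R_q(k)$ in the calculus $\Psi^*_k(M)$ uniformly in $q$. The starting identity is
\[
\log\det\Delta^{\Omega_\eps}_q=-\pl_s\zeta^{\Omega_\eps}_q(0),\qquad \zeta^{\Omega_\eps}_q(s)=\frac{1}{\Gamma(s)}\int_0^\infty\!\bigl(\tra e^{-t\Delta^{\Omega_\eps}_q}-\dim\ker\Delta^{\Omega_\eps}_q\bigr)t^{s-1}dt,
\]
and I would split the time integral at $t=T\eps^2$, with $T$ fixed independent of $\eps$, in order to separate a short-time regime, where the heat kernel is localized and sees only the two model geometries, from a long-time regime, where the low-lying spectrum dominates.

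In the short-time regime $t\le T\eps^2$, a gluing parametrix approximates $e^{-t\Delta^{\Omega_\eps}_q}$ by $e^{-t\Delta^{\Omega_0}_q}$ on the fixed compact piece $K$ and by $e^{-t\Delta^{(M,\eps^2 g)}_q}$ on $M_\eps$, with cutoffs in an overlap region where the error is controlled by off-diagonal heat-kernel decay. Using the exact scaling identity
\[
\zeta^{(M,\eps^2 g)}_q(s)=\eps^{2s}\zeta^{(M,g)}_q(s),
\]
differentiation at $s=0$ produces the term $-2\log\eps\cdot\zeta^M_q(0)+\log\det\Delta^M_q$ in the final formula. The $K$-side of the parametrix converges as $\eps\to 0$ to $\log\det\Delta^{\Omega_0}_q$ via the Cheeger--Dar--Mooers determinant theory for manifolds with conic singularities.

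In the long-time regime $t\ge T\eps^2$, following Ann\'e--Takahashi \cite{at}, the eigenvalues of $\Delta^{\Omega_\eps}_q$ split into the $N_q$ small eigenvalues $\mu^q_i(\eps)\to 0$, eigenvalues converging to the non-zero eigenvalues of $\Delta^{\Omega_0}_q$, and eigenvalues of order $\eps^{-2}$ coming from the rescaled end. I would isolate the small eigenvalues with a spectral projector, producing the explicit term $\sum_{i=1}^{N_q}\log\mu^q_i(\eps)$. The essential input of Theorem \ref{introresthm} enters at this step: the polyhomogeneous structure of $R_q(k)$ near $k=0$ on the blown-up space $\MMksc$ translates, via the contour integral $e^{-t\Delta^M_q}=(2\pi i)^{-1}\int_\Gamma e^{-tk^2}\, 2kR_q(k)\, dk$, into a uniform large-$t$ expansion of the renormalized heat trace ${^R}\tra(e^{-t\Delta^M_q})$ in powers of $t$ and $\log t$, which is needed both to justify the regularization of $\zeta^M_q$ at $s=0$ and to match the long-time part of $\zeta^{\Omega_\eps}_q$ to that of $\zeta^M_q+\zeta^{\Omega_0}_q$ up to the small-eigenvalue correction.

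The main obstacle is the uniform-in-$\eps$ matching between the two regimes at the intermediate time $t=T\eps^2$: one must verify that this time is simultaneously within the range of validity of the short-time gluing parametrix (so that the $M_\eps$ side has not yet seen its global structure beyond the rescaled end) and within the range of the long-time resolvent expansion that governs the separation of the small eigenvalues. Theorem \ref{introresthm} is precisely tailored to this matching, since the boundary hypersurface structure of $\MMksc$ encodes the interaction between small $k$ and the conic end in a single polyhomogeneous object. Once the asymptotic for $\log\det\Delta^{\Omega_\eps}_q$ is established in every degree, the analytic torsion formula follows directly by forming the appropriate alternating sum in $q$.
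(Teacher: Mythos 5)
Your outline follows the same overall strategy as the paper (rule out zero-resonances via the modified Witt condition, decompose the heat trace into a rescaled $M$-piece and an $\Omega_0$-piece, split off the small eigenvalues via Ann\'e--Takahashi, and use the low-energy resolvent to control the long-time heat trace on $M$), but there is a genuine gap precisely at the point you flag as ``the main obstacle,'' and invoking Theorem \ref{introresthm} does not by itself close it. A single cut at $t=T\eps^2$ does not work: for $t$ between $T\eps^2$ and any fixed time, the heat trace is of size $t^{-n/2}$ and is governed neither by the low-lying eigenvalues (so the spectral-convergence/dominated-convergence argument is unavailable there) nor by a purely local short-time parametrix (since $\tau=t/\eps^2$ ranges up to $\eps^{-2}$, the $M$-side kernel does see its global long-time structure on the rescaled end). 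The paper resolves this by first proving a uniform structure theorem (Theorem \ref{structure}): the trace $\tra(H_q^{\Omega_\eps}(t))$ is polyhomogeneous conormal on the blow-up $Q_0$ of $[0,1]_{\sqrt t}\times[0,\eps_0]_\eps$ at the corner, which simultaneously yields the uniform bound $|\tra(H_q^{\Omega_\eps}(t))|\le Ct^{-n/2}$ needed to apply dominated convergence on $t\ge 1$ (Lemma \ref{prevlemma}, Proposition \ref{htconve}) and an expansion at the front face describing the intermediate regime. The zeta integral is then split \emph{twice}, at $t=1$ and at $t=b^2\eps^2$, and the intermediate region is expressed through the renormalized trace ${^R}\tra(H_q^M(\tau))$ for $\tau\in[b^2,\eps^{-2}]$; this is the step your proposal is missing.

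Relatedly, your claim that the scaling identity $\zeta_q^{(M,\eps^2g)}(s)=\eps^{2s}\zeta_q^M(s)$ ``produces'' the term $-2\log\eps\cdot\zeta_q^M(0)+\log\det\Delta_q^M$ hides the renormalization: $\zeta_q^M$ is defined through the finite part of a spatially truncated trace, and the truncation region is itself $\eps$-dependent. Proposition \ref{expansion} shows that $\int_M\chi_1(\eps z)\tra(H_q^M(\tau,z,z))\,dg$ equals ${^R}\tra(H_q^M(\tau))$ plus explicitly divergent terms $\sum_k l_kf_k(\tau)\eps^{k-n}$, and these must be shown to cancel against the short-time heat coefficients of $\Omega_0$ (the coefficients $\tilde a_k-a_k$ in the paper, eliminated by observing that $\zeta_q^{\Omega_\eps}(s)$ is independent of the auxiliary parameter $b$). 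Without this bookkeeping one cannot identify the coefficient of $\log\eps$ or the constant term. The remaining ingredients of your outline (isolation of the $N_q$ small eigenvalues, the contour-integral passage from resolvent to long-time heat kernel on $M$, and the final alternating sum over $q$) are consistent with the paper's argument.
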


\begin{remark} An analogous theorem holds for the torsion defined with coefficients in a family of flat vector bundles, assuming that the family is constant in $\epsilon$ on the region where $\Omega_{\epsilon}$ is an exact cone (so that the gluing construction makes sense). To see this, we note (as in \cite[Eq. 2.3]{mv}) that a flat vector bundle $E$ over the cone $C_N$ may be written as a pullback of a flat vector bundle $F$ over the cross-section $N$. Therefore all calculations in local coordinates are exactly the same as in the trivial bundle case, as long as we consider the appropriate cross-section $(N,F)$ and its associated Gauss-Bonnet operator and Laplacian. In particular, although the result of Ann\'e-Takahashi in \cite{at} is not stated for twisted forms, their arguments (which use direct eigenform transplantation methods) work just as well. The conformal scaling property that we need also follows from this pullback observation. \end{remark}

\begin{remark} 
This theorem contains a non-explicit contribution from the small eigenvalues, but we show in Lemma \ref{conditions} that $N_q=0$ for all $q$ 
under the assumptions that $\la_q>1-|q-n/2|^2$, that $\nu_0\geq 1$ for all $q$ (with the notation of \eqref{defnu0intro} and above) and that the cohomology $H^q(N)$ is $0$ for $q\in[1,n-2]$. 
This happens, for instance, if $N=\sph^{n-1}$ is the canonical sphere and $n\geq 5$.
When $N_q=0$ for all $q$ and $\Omega_{\epsilon}$ has trivial cohomology in all degrees between 1 and $n-1$, since we know that $T(\Omega_\eps)$ is constant for $\eps>0$ by the result of Ray-Singer \cite{rs}, we deduce that for all $\eps\in(0,\eps_0]$
\[\log(T(\Omega_\epsilon))=\log(T(\Omega_0))+\log(T(M)).\]
\end{remark}

\textbf{Outline.} In Section \ref{section1}, we recall the relevant material from the b-calculus of Melrose and compute the indicial roots of the Hodge Laplacian on an asymptotically conic manifold. Section \ref{pdcle} contains a discussion of the pseudodifferential calculus of \cite{gh1,gh2}, which we use in Section \ref{resolvent-kernel} to explicitly construct the resolvent for the Hodge Laplacian at low energy. The applications to the Riesz transform are discussed in Section \ref{secRT}, the Sobolev estimates are proved in Section \ref{sec:Sobolev}, and the applications to analytic torsion are considered in Section \ref{section:proof}.\\

\textbf{Ackowledgements.} C.G. is partially supported by grants ANR-09-JCJC-0099-01 and ANR 10-BLAN 0105. D.S. is partially supported by the National Science Foundation grant NSF 1045119, and was supported by a CRM-ISM (Montr\'eal) fellowship in 2012-2013. We thank Andrew Hassell, Rafe Mazzeo, Adam Sikora, Pierre Albin and Alan Mc Intosh for helpful discussions.
 
\section{Laplacian on forms, indicial sets and $b$-pseudodifferential operators}\label{section1}

In this section, we will use the formalism of the paper \cite{gh1}, and we refer to reader in particular to Section 2 of that paper for details about the considered objects.
  
\subsection{Setup and functional spaces} We start by recalling some facts about $b$-structures and polyhomogeneity.\\

\textbf{b-structures.} Throughout, we will use the conformal metric 
 \[g_b = x^2g\] 
 which is an exact b-metric in the sense of Melrose \cite{me}, i.e. an asymptotically cylindrical metric on $M$.
 Associated to this b-structure, we can define an algebra of vector fields 
$\mc{V}_b(M)$ which is the set of smooth vector fields tangent to the boundary $\pl \bbar{M}$. Locally near $N$, if we let $y_1,\dots, y_{n-1}$ be local coordinates on $N$, the vector fieds in $\mc{V}_b(M)$ are linear combinations (over $C^\infty(M)$) of $x\pl_x, \pl_{y_1},\dots, \pl_{y_{n-1}}$. The enveloping algebra of $\mc{V}_b$ is
denoted $\textrm{Diff}_b(M)$; this is the space of smooth differential operators generated by compositions of elements in $\mc{V}_b(M)$ and multiplication by smooth functions. When the operators act linearly from sections of $E$ to sections of $F$, where $E$ and $F$ are smooth vector bundles over $M$, we use the notation $\textrm{Diff}_b(M; E,F)$; when $E=F$ we simply write $\textrm{Diff}_b(M; E)$. 

As in \cite{me}, there is a natural bundle associated to $g_b$, called the b-tangent bundle and denoted ${^b}TM$; the algebra $\mc{V}_b(M)$ may be viewed as the space of smooth 
sections of ${^b}TM$. We also let ${^bT}^*M$ denote the dual of ${^b}TM$, and let $\Lambda_b^q$ be the exterior $q$th power of ${^bT}^*M$. For later purposes, we also introduce some notation regarding half-densities; these are a bit inconvenient notationally but are useful for defining distributional kernels of operators in a more invariant way. The bundle of b-half densities, denoted $\Omega^{\demi}_b$, is the smooth line bundle trivialized by $|dg_b|^{\demi}$, where $dg_b$ is the volume form of $g_b$. See the book \cite{me} for more discussion of these and other b-structures, or the review of Grieser \cite{gri}.\\

\textbf{Scattering structures.} We can define similar ``scattering" objects associated to the original metric $g$. In particular, we define $\mc{V}_{\sca}(M)$ to be the algebra of smooth vector  fields which can be written locally near $N$ as linear combinations (over $C^\infty(M)$) of $x^2\pl_x,x\pl_{y_1},\dots,x\pl_{y_{n-1}}$. These vector fields correspond to vector fields of uniformly bounded length on $(M,g)$, and may again be viewed as sections of a bundle denoted ${^\sca T}M$. The dual bundle is ${^\sca T}^*M$, and the $q$-th exterior power of ${^\sca T}^*M$ is 
denoted $\Lambda_\sca^q$; this bundle is the natural setting for analyzing $q$-forms on $(M,g)$. As we did for $g_b$, we define the bundle of scattering half-densities $\Omega_\sca^{\demi}$ to be the trivial line bundle trivialized (over $C^\infty(M)$) by $|dg|^{\demi}$, where $dg$ is the volume form of $g$. In particular one 
has $x^{\ndemi}\Omega_\sca^{\demi}=\Omega_b^\demi$. Finally, we denote the bundle of smooth $q$-forms on $M$ by $\Lambda^q$.\\

\textbf{b-Sobolev spaces.} We now define the $j$-th Sobolev spaces on $q$-forms on $(M,g)$, but with respect to $b$-densities. First set $H_b^0:=L^2(M;\Lambda^q_{\sca}\otimes\Omega_b^\demi)$. Then write
 \[H_b^j:=\{ \omega \in H^0_b; \,\,\mc{L}_{X_1}\dots \mc{L}_{X_j}\omega\in H^0_b, \forall X_1,
 \dots,X_j\in \mc{V}_b(M)\} 
 \]
where $\mc{L}$ denotes the Lie derivative. For $\nu\in \rr$ we also define the space
\[x^{\nu-}H_b^0:=\bigcap_{\alpha<\nu}x^\alpha H_b^0.\]

\textbf{Polyhomogeneity and index sets.} We shall need the notions of index sets and polyhomogeneous conormal distributions on a manifold with corners, and we refer to \cite{me0} for details. An \emph{index set} $E$ is a discrete subset of $\rr\x \nn_0$ such that for each $m\in\RR$, the number of points $(\beta, j) \in E$ with $\beta \leq m$ is finite. We also adopt the convention that if $(\beta, j)\in E$, then $(\beta+1,j)\in E$ and (if $j>0$) $(\beta,j-1)\in E$.
Recall the operations of addition and extended union of two index sets $E$ and $F$, denoted $E+$ and $E\extunion F$ respectively:
\begin{equation}\begin{gathered}
E +F = \{ (\beta_1 + \beta_2, j_1 + j_2) \mid (\beta_1, j_1) \in E \text{ and } (\beta_2 , j_2) \in F \} \\
E \extunion F = E \cup F \cup \{ (\beta, j) \mid \exists (\beta, j_1) \in E, (\beta, j_2) \in F \text{ with } j = j_1 + j_2 + 1 \}.
\end{gathered}\end{equation} We say that $E\geq z_0$ (resp. $E> z_0$), with $z_0\in\rr$, if all $(z,k)\in E$ is such that $z\geq z_0$ (resp. $z>z_0$).

Now let $Z$ be a manifold with corners, with boundary hypersurfaces $H_1,\dots, H_\ell$.
Let $\rho_{H_i}$ be smooth boundary defining functions for each $H_i$; we write $\rho:=\prod_{i=1}^\ell \rho_{H_i}$, and call $\rho$ a total boundary defining function. An \emph{index family} $\mc{E}=(\mc{E}_{H_1},\dots, \mc{E}_{H_\ell})$ is a collection of index sets, one for each boundary hypersurface. The notation for index sets carries over to index families; for example, we say that $\mc{E}\geq z_0$ if $\mc{E}_{H_i}\geq z_0$ for all $H_i$. A distribution $u$ on  $Z$ is said to be \emph{polyhomogeneous conormal with index family} $\mc{E}$ if at each hypersurface $H_i$ there is an asymptotic expansion
\[u\sim\sum_{(s,j)\in E_{H_j}}a_{(s,j)}\rho_{H_j}^s(\log\rho_{H_j})^{j}\]
with $a_{(s,j)}\in C^{\infty}(Z)$, and with joint asymptotic expansions at each corner. See \cite{me} or \cite{gri} for further details.
 
\subsection{Laplacian on forms}\label{laponforms}
The usual exterior derivative $d$ is defined on $\Lambda^q(M)$. To view it as acting on half-density valued $q$-forms, we write $d(\omega\otimes |dg|^{\demi}):= (d\omega)\otimes |dg|^\demi$ if $\omega\in C^\infty(M;\Lambda^q)$. By writing it out in coordinates, we will see that $d$ is a scattering differential operator:
\[d\in \textrm{Diff}^1_{\sca}(M;\Lambda^q_{\sca}\otimes \Omega_\sca^{1/2},\Lambda^{q+1}_{\sca} \otimes \Omega_\sca^{1/2}).\]
To make explicit calculations, we decompose the bundle $\Lambda_{\sca}^q(M)$ near the boundary as a direct sum
\begin{equation}\label{decompos}
\begin{array}{rcl}
\Lambda^{q}(N)\oplus \Lambda^{q-1}(N)&\to &\Lambda^{q}_{\sca}(M) \\
(\omega_t,\omega_n)& \to & x^{-q}\omega_t+x^{-q-1}dx\wedge \omega_n.
\end{array}\end{equation}
Rather than write out the explicit form of $d$ itself, we write $d=xA=A'x$ for $b$-operators $A,A'\in\textrm{Diff}^1_b(M,\Lambda^q_{\sca}(M)\otimes\Omega_\sca^{1/2},\Lambda^{q+1}_{\sca}(M)\otimes \Omega_\sca^{1/2})$. Computing directly, the forms of $A$ and $A'$ in this decomposition (mapping $\Lambda^{q}(N)\oplus \Lambda^{q-1}(N)$ to 
$\Lambda^{q+1}(N)\oplus \Lambda^{q}(N)$) are
\[A=\left(\begin{array}{cc}
d_N & 0\\
x\pl_x -q & -d_N
\end{array}
\right), \quad A'=\left(\begin{array}{cc}
d_N & 0\\
x\pl_x -q-1 & -d_N
\end{array}
\right)
\] 
where $d_N$ is the exterior derivative on the boundary $N$.
Similarly, the adjoint $\delta$ with respect to the metric $g$ can be written $\delta=Bx=xB'$, and an easy computation shows that in the decomposition (\ref{decompos}), if we let $\delta_N$ be the adjoint of $d_N$ with respect to $h_0$, we have
\[\begin{gathered}B= \left(\begin{array}{cc}
\delta_N & -(x\pl_x-n+q-1)\\
0 & -\delta_N
\end{array}
\right)+ x^{n_0}\textrm{Diff}_b^1(M; \Lambda^q_\sca\otimes\Omega_{\sca}^{1/2},\Lambda^{q-1}_{\sca}\otimes\Omega_\sca^{1/2}),\\ B'= \left(\begin{array}{cc}
\delta_N & -(x\pl_x-n+q)\\
0 & -\delta_N
\end{array}
\right)+ x^{n_0}\textrm{Diff}_b^1(M; \Lambda^q_{\sca}\otimes\Omega_{\sca}^{1/2}, \Lambda^{q-1}_\sca\otimes\Omega_\sca^{1/2}).\end{gathered}\]

It will be convenient to consider $\Delta_q$ as acting on $\Lambda^q_{\sca}\otimes\Omega_b^{1/2}$, which amounts to conjugating all operators by $x^{n/2}$.
We now define the operator $P_b\in \textrm{Diff}_b^2(M;\Lambda_{\sca}^q \otimes\Omega_b^{1/2})$ by 
\begin{equation*}\label{pvspb}
P_b=AB+B'A', \quad \textrm{so that }\Delta_q=xP_bx.
\end{equation*} Let  $\Delta_N=\delta_Nd_N+d_N\delta_N$ be the Hodge Laplacian on the form bundle $\Lambda(N)=\oplus_{p=0}^{n-1}\Lambda^p(N)$. We compute (taking into account the conjugation by $x^{n/2}$) that in the decomposition \eqref{decompos},
\begin{equation} P_b=\left(\begin{array}{cc}
-(x\pl_x)^{2}+(\ndemi-q-1)^2+\Delta_{N} & 2d_N\\
2\delta_N & -(x\pl_x)^2+(\ndemi-q+1)^2+\Delta_N
\end{array}
\right)+ x^{n_0}W,
\end{equation}
where $W\in\textrm{Diff}_b^2(M;\Lambda_{\sca}^q\otimes\Omega_b^{1/2})$. Here $P_b$ acts on an element of $\Lambda_{\sca}^q \otimes\Omega_b^{1/2}$ by 
$P_b(\omega\otimes |dg_b|^\demi)=P_b(\omega)\otimes |dg_b|^\demi$. 
Since $\Delta_q$ is formally self-adjoint with respect to the scalar product induced by $g$, one easily checks that $P_b$ is also formally self-adjoint on $H_b^0$.

\subsection{The operator $P_b$ and its index set}

We will show, using the theory of Melrose \cite{me}, that $P_b$ is Fredholm and that it has a pseudodifferential inverse defined on its image. First we need to compute the \emph{indicial set} of $P_b$. The \emph{indicial family} in the sense of \cite{me} is the one-parameter family of operators  (with $\la\in\cc$)
\begin{equation}\label{indicialop}
I_\la(P_b)=\left(\begin{array}{cc}
-\la^{2}+(\ndemi-q-1)^2+\Delta_{N} & 2d_N\\
2\delta_N & -\la^2+(\ndemi-q+1)^2+\Delta_N
\end{array}
\right)\end{equation}
acting on $L^2(N; \Lambda^q(N)\oplus \Lambda^{q-1}(N))$, and the indicial set $\mc{I}(P_b)$ is the set of those $\la$
for which $I_\la(P_b)$ is not invertible.

To compute the indicial set, we use the Hodge decomposition of $\Delta_N$ on $q$-forms.  Specifically, we can write the $L^2$ spectrum of $\Delta_N$ as follows:
\[\begin{split}
\textrm{Sp}_{\Lambda^q}(\Delta_{N})=& \Big(\textrm{Sp}_{\Lambda^q}(\Delta_N)\cap \{0\}\Big) \cup \Big(\textrm{Sp}_{\Lambda^q}(\Delta_N|_{\textrm{Im} d_N})\Big)\cup \Big(\textrm{Sp}_{\Lambda^q}(\Delta_N|_{\textrm{Im}\delta_N})\Big)\\
=:\ &S^q_{0}\cup S^q_{d_N}\cup S^q_{\delta_N}
\end{split},
\] 
and note that $S^{q}_{d_N}=S^{q-1}_{\delta_N}$. Let $\mc{H}^q(N):=\ker_{\Lambda^q} \Delta_N$ be the space of harmonic $q$-forms on $N$, and let $F^q_{L,\alpha}:=\ker_{\Lambda^q}(\Delta_{N}-\alpha^2)\cap \Ima L$ on $q$-forms, where $L\in\{d_N,\delta_N\}$. 
Using this decomposition, we see that 
$I_{\la}(P_b)$ preserves the following subspaces of $L^2(N;\Lambda^q(N)\oplus \Lambda^{q-1}(N))$:
\begin{equation}\label{preserve}
\begin{gathered}
\Big(\mc{H}^q(N)\oplus \Ima\, \delta_N\Big)\oplus \{0\},\\
\{0\}\oplus \Big(\mc{H}^{q-1}(N)\oplus \Ima\, d_N\Big),\\ 
\Ima\, d_N \oplus  \Ima\, \delta_N.
\end{gathered}
\end{equation}

On the first two spaces of \eqref{preserve}, $I_\la(P_b)$ is diagonal and thus invertible if and only if $\la\notin (I_1\cup I_2)$ where
\begin{equation}\label{I1I2}
\begin{gathered}
I^1:=\left\{\pm\sqrt{(\ndemi-q-1)^2+\alpha^2}; \alpha^2\in S^q_0\cup S_{\delta_N}^q\right\},\\
I^2:=\left\{\pm\sqrt{(\ndemi-q+1)^2+\alpha^2}; \alpha^2\in S_0^{q-1}\cup S_{d_N}^{q-1}\right\}.
\end{gathered}
\end{equation}
For the third space, for each $\alpha$ with $\alpha^2\in S_{d_N}^q$, we use an orthonormal basis $(\phi_{\alpha,k})_{k=1,\dots,\dim F^{q-1}_{\delta_{N},\alpha}}$ for 
$F^{q-1}_{\delta_N,\alpha}$; then 
$(d_N\phi_{\alpha,k})_{k=1,\dots,\dim F^q_{\delta_N,\alpha}}$ is an orthogonal basis 
for $F^{q}_{d_N,\alpha}$. Using these bases, $d_N: F^{q-1}_{\delta_N,\alpha}\to F^{q}_{d_N, \alpha}$ is given by the identity matrix, while $\delta_N$ is the matrix $\alpha^2\textrm{Id}$. Therefore, $I_\la(P_b)$, acting on $\cc d_N\phi_{\alpha,k}\oplus \cc\phi_{\alpha,k}$, is of the form 
\[I_\la(P_b)=\left(\begin{array}{cc}
-\la^{2}+(\ndemi-q-1)^2+\alpha^2 & 2\\
2\alpha^2 & -\la^2+(\ndemi-q+1)^2+\alpha^2
\end{array}
\right).\]
An easy computation shows that the matrix can be diagonalized as
\[\left(\begin{array}{cc}
(\sqrt{(\ndemi-q)^2+\alpha^2}-1)^2-\la^2 & 0\\
0 & (\sqrt{(\ndemi-q)^2+\alpha^2}+1)^2-\la^2
\end{array}
\right)\]
in the basis 
\begin{equation}\begin{gathered}\label{mu+-}
\phi^-_{\alpha,k}:= \Big(d_N\phi_{\alpha,k},\Big((\ndemi-q)-\sqrt{(\ndemi-q)^2+\alpha^2}\Big)\phi_{\alpha,k}\Big)\\
\phi^+_{\alpha,k}:= \Big(d_N\phi_{\alpha,k},\Big((\ndemi-q)+\sqrt{(\ndemi-q)^2+\alpha^2}\Big)\phi_{\alpha,k}\Big)
\end{gathered}.
\end{equation}
The matrix of $I_\la(P_b)$ on the third space of \eqref{preserve} is thus invertible if and only if $\la\notin I^3\cup I^4$, where 
\begin{equation}\label{i3}
\begin{gathered}
I^3:=\left\{\pm(\sqrt{(\ndemi-q)^2+\alpha^2}-1); \alpha^2\in S^q_{d_N}\right\}\\
I^4:=\left\{\pm (\sqrt{(\ndemi-q)^2+\alpha^2}+1);\alpha^2\in S^q_{d_N}\right\}.
\end{gathered}
\end{equation}

The indicial set of $P_b$ (which we also call indicial set of $\Delta_q$) is thus  
\begin{equation}\label{indicial}
\mc{I}(P_b)=\mc{I}(\Delta_q)=I^1\cup I^2\cup I^3\cup I^4.
\end{equation}
We define the following subspaces of $L^2(N, \Lambda^q(N)\oplus\Lambda^{q-1}(N))$ associated to the index sets $I^1,\dots, I^4$:
\begin{equation}\label{E^j}
\begin{gathered}
E^1:=\Big(\ker \Delta_{N}\oplus \Ima \delta_N\Big)\oplus \{0\},\quad 
E^2:=\{0\}\oplus \Big(\ker \Delta_{N}\oplus \Ima d_N\Big),\\ 
E^3:=\bigoplus_{\alpha^2\in S^q_{d_N}}\bigoplus_{k=1}^{\dim F_{\delta_N,\alpha}}\cc\phi^-_{\alpha,k},\quad 
E^4:=\bigoplus_{\alpha^2\in S^q_{d_N}}\bigoplus_{k=1}^{\dim F_{\delta_N,\alpha}}\cc\phi^+_{\alpha,k}.
\end{gathered}
\end{equation}
We also define, for $\nu\in \mc{I}(P_b)$, the following vector subspaces of $L^2(N; \Lambda^q_N\oplus\Lambda^{q-1}_N)$ 
\[E_{\nu}:=\ker I_{\nu}(P_b), \quad E^j_{\nu}:=E_{\nu}\cap E^j\]
which yields the orthogonal decomposition
\begin{equation}\label{L2decomp}
L^2(N;\Lambda^{q}(N)\oplus\Lambda^{q-1}(N))=\bigoplus_{j=1}^4\bigoplus_{\nu\in I^j}E_{\nu}^j.
\end{equation}
We finally define 
\begin{equation}\label{defnu0}
\nu_0:=\min (\mc{I}(P_b)\cap [0,\infty)).
\end{equation}
Notice that the condition $\nu_0>0$ is equivalent to the condition \eqref{0notindroot} of the Introduction.
If $\la_p:=\min({\rm Sp}_{\Lambda^p}(\Delta_N|_{{\rm Im}d_N}))$,
$\mu_p:=\min({\rm Sp}_{\Lambda^p}(\Delta_N|_{({\rm Im}\delta_N)^\perp}))$, 
$\gamma_p:=\min({\rm Sp}_{\Lambda^p}(\Delta_N|_{({\rm Im}d_N)^\perp}))$
for $p=q-1,q,q+1$, then as long as $\la_q>1-|n/2-q|^2$, we get the formula \eqref{defnu0intro} from the introduction:
\begin{equation}\label{altnu0}
\nu_0=\min\Big(\sqrt{(\ndemi-q)^2+\la_q}-1,\, \sqrt{(\ndemi-q-1)^2+\gamma_q}, \,
\sqrt{(\ndemi-q+1)^2+\mu_{q-1}}
\, \Big).
\end{equation}

In the case where the boundary is the canonical sphere $(\pl\Mbar=\sph^{n-1},h_0=d\theta^2)$ (i.e. when  $M$  is  asymptotically Euclidean) we have $S^q_{d_N}=\{(q-1+j)(n-q-1+j);j\in\mathbb N\}$ for $1\leq q\leq(n-1)$; see for instance \cite{GM}. As a consequence, it is easy to compute that
\begin{equation}\label{caseSn}
\begin{gathered}
I^1=\left\{\begin{array}{ll}
\{\pm (j+\ndemi); j\in\nn_0\} & \textrm{ if }q\in [1,n-2]\\
\{\pm (j+\ndemi-1); j\in\nn_0\} & \textrm{ if }q=0\\
\{ \pm \ndemi\} & \textrm{ if }q=n-1\\
\emptyset & \textrm{ if }q=n
\end{array}\right.,\\
I^2=\left\{\begin{array}{ll} 
\{\pm (j+\ndemi); j\in\nn_0\} & \textrm{ if }q\in [2,n-1]\\ 
\{\pm(j+\ndemi-1); j\in\nn_0\} & \textrm{ if }q=n\\
\{\pm \ndemi\} & \textrm{ if }q=1\\
\emptyset & \textrm{ if }q=0
\end{array}\right.,\\
I^3=\left\{\begin{array}{ll} 
\{\pm (j+\ndemi-1); j\in\nn_0\} & \textrm{ if }q\in [1,n-1]\\ 
\emptyset & \textrm{ if }q=0,n
\end{array}\right.,\\
I^4= \left\{\begin{array}{ll} 
\{\pm (j+\ndemi+1); j\in\nn_0\} & \textrm{ if }q\in [1,n-1]\\ 
\emptyset & \textrm{ if }q=0,n
\end{array}\right.,
\end{gathered}\end{equation}
and $\nu_0=n/2-1$.

Now that we have computed the index set, various consequences follow immediately from the theory of Melrose \cite[Section 6.2]{me}. First we have the relative index theorem for 2nd order elliptic b-operators:
\begin{theorem}[Melrose's Relative Index Theorem, \cite{me}]\label{rit} The operator $P_b$ is Fredholm as a map from $x^\alpha H^j_b$  to $x^\alpha H^{j-2}_b$ for all $j \geq 2$ and all $\alpha \in \rr\setminus \mc{I}(P_b)$. The index of $P_b$ is equal to $0$ for $|\alpha| < \nu_0$ and the index increases by $\dim E_{\nu}$ as 
$\alpha$ crosses the value $\nu\in \mc{I}(P_b)$, with $\alpha$ decreasing.  
\end{theorem}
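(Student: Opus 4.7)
The plan is to deduce this from the general Fredholm and relative index theory for fully elliptic $b$-pseudodifferential operators developed in \cite{me}. First I would verify full ellipticity: the operator $P_b \in \textrm{Diff}_b^2(M; \Lambda^q_\sca \otimes \Omega_b^{1/2})$ is elliptic in the interior since its principal $b$-symbol is that of a geometric Laplacian, and by the explicit computation of $I_\la(P_b)$ together with the Hodge-decomposition analysis leading to \eqref{indicial}, the indicial family is invertible for every $\la \notin \mc{I}(P_b)$. Therefore for every $\alpha \notin \mc{I}(P_b)$, $P_b$ is fully elliptic with respect to the weight $x^\alpha$, and Melrose's small $b$-calculus produces a two-sided parametrix $Q$ with remainders compact on $x^\alpha H_b^j$; this yields Fredholmness of $P_b: x^\alpha H_b^j \to x^\alpha H_b^{j-2}$ for all $j \geq 2$.

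Next I would establish the index jumps via the $b$-boundary regularity theorem. If $u \in x^{\alpha_-} H_b^j$ with $P_b u \in x^{\alpha_+} H_b^{j-2}$, where $\alpha_- < \alpha_+$ and the only indicial root of $P_b$ in $(\alpha_-, \alpha_+)$ is $\nu$, then $u$ admits an asymptotic expansion whose leading term at order $x^\nu$ lies in $\ker I_\nu(P_b)$; conversely, for each $\phi \in \ker I_\nu(P_b)$ a standard Mellin/iteration argument produces a solution with that prescribed leading behaviour, modulo faster-decaying terms. The diagonalisation \eqref{mu+-} together with the decomposition \eqref{L2decomp} identifies $\ker I_\nu(P_b)$ with $E_\nu = \bigoplus_{j=1}^4 E_\nu^j$, so $\dim \ker(P_b|_{x^{\alpha_-}H_b^j}) - \dim \ker(P_b|_{x^{\alpha_+}H_b^j}) = \dim E_\nu$. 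Simultaneously, identifying the cokernel via the $L^2$ pairing with $\ker(P_b|_{x^{-\alpha}H_b^{j-2}})$ shows that the cokernel shrinks by the same amount as $\alpha$ decreases past $\nu$, so the index jumps by $\dim E_\nu$.

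Finally, to pin down the value $0$ on $(-\nu_0, \nu_0)$, I would use formal self-adjointness of $P_b$ on $H_b^0$ together with the symmetry $\mc{I}(P_b) = -\mc{I}(P_b)$ visible from \eqref{I1I2}--\eqref{i3}: the $L^2$ pairing identifies $\mathrm{coker}(P_b|_{x^\alpha H_b^{j-2}})$ with $\ker(P_b|_{x^{-\alpha} H_b^j})$, so the index is an odd function of $\alpha$ on $\rr \setminus \mc{I}(P_b)$. Since it is locally constant and $(-\nu_0, \nu_0)$ is a single connected component of the regular set containing $0$, the index must vanish on it.

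The main obstacle is not conceptual but organizational: matching the kernel of $I_\nu(P_b)$ restricted to each of the four invariant subspaces in \eqref{preserve} with the corresponding $E^j_\nu$ requires carefully unwinding the diagonalisation \eqref{mu+-}, and one must be a little careful at indicial roots where several of the $I^j$ coincide so that the contributions from different invariant subspaces add correctly to give exactly $\dim E_\nu$.
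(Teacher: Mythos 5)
The paper itself offers no proof of this theorem: it is quoted verbatim from Melrose \cite[Section 6.2]{me}, and your overall route --- Fredholmness from invertibility of the indicial family off $\mc{I}(P_b)$, a locally constant index with jumps at indicial roots, and oddness of the index (via formal self-adjointness of $P_b$ on $H_b^0$ and the symmetry $\mc{I}(P_b)=-\mc{I}(P_b)$) to force the value $0$ on the component $(-\nu_0,\nu_0)$ --- is exactly the argument that citation packages. Your first and third paragraphs are correct.

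The index-jump step, however, contains a genuine error. You assert that every $\phi\in\ker I_\nu(P_b)=E_\nu$ is realized as the leading coefficient of an element of the actual null space, so that $\dim\ker(P_b|_{x^{\alpha_-}H_b^j})-\dim\ker(P_b|_{x^{\alpha_+}H_b^j})=\dim E_\nu$, and then that the cokernel shrinks ``by the same amount''; if both were true the index would jump by $2\dim E_\nu$, so your accounting does not close. The false step is the ``conversely'' clause: the Mellin/iteration argument only produces a \emph{formal} solution near $\pl\bbar{M}$ (i.e.\ $P_bu=\mc{O}(x^\infty)$ on a collar), not a global element of the null space, and the subspace of $E_\nu$ actually attained as leading coefficients of true null-space elements is the space $G_\nu$ of the paper, which is in general a proper subspace of $E_\nu$ (e.g.\ for the flat Laplacian on functions, constants are globally harmonic but no nonzero global harmonic function decays like $r^{2-n}$). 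The correct bookkeeping is that the kernel grows by $\dim G_\nu$ while the cokernel, identified with the null space on $x^{-\alpha}H_b^{*}$, drops by $\dim G_{-\nu}$, and the jump equals $\dim G_\nu+\dim G_{-\nu}=\dim E_\nu$ only because of the complementarity $E_\nu=G_\nu\oplus G_{-\nu}$ of Proposition \ref{prop:complementary}, which rests on a boundary-pairing (Green's formula) argument and not on the mere existence of formal solutions. Without that input your argument identifies the jump as $\dim G_\nu+\dim G_{-\nu}$ with both summands unknown, and as written it outputs the wrong constant.
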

We also have a regularity result \cite{me}:
\begin{theorem}[Regularity of solutions to $P_b u = f$]\label{reg} Suppose that for $\beta\in\rr$, 
$f\in x^{\beta}H_b^0(M)$ is polyhomogeneous on $M$ with respect to the index set $E$, that $u \in x^\alpha H^0_b$, and that $P_b u = f$.
For $b \in \RR$ let $\mu(b,j)=\sharp \{b+k;k=0,\dots j\}\cap \mc{I}(P_b)$. 
Then $u\in x^{\alpha}H^2_b$ is polyhomogeneous with respect to the index set $E \extunion F$, where $F$ is the index set 
$$
\big\{ \big(\nu+j, k\big) \mid  j,k\in\nn_0, \ \nu\in \mc{I}(P_b),\nu\geq \alpha, \ 0 \leq k \leq \mu(\nu,j) - 1 \big\}.
$$
When $(N,h_0)=(S^{n-1},d\theta^2)$, this reduces to 
\[\big\{ \big(n/2+l, k\big) \mid  l\in\zz, \ n/2+l\geq \alpha, \ 0 \leq k \leq N_l - 1 \big\}.\]
where $N_l$ is the number of elements of the form $\pm(n/2-1+j), j\in\nn_0$ in the interval $(\alpha,n/2+l]$ .
\end{theorem}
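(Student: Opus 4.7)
The plan is to follow Melrose's standard b-calculus argument, since $P_b$ is an elliptic second-order b-differential operator on $\bbar{M}$ with the indicial set $\mc{I}(P_b)$ computed in \eqref{indicial}. The starting point is to construct a b-parametrix: by ellipticity of $P_b$, there exists $Q\in\Psi_b^{-2}(\bbar{M};\Lambda^q_\sca\otimes\Omega_b^\demi)$ with $QP_b=\Id-R$ where $R\in\Psi_b^{-\infty}$ is a smoothing b-operator whose Schwartz kernel is polyhomogeneous. Applying $Q$ to the equation $P_bu=f$ gives $u=Qf+Ru$, which immediately upgrades $u$ from $x^\alpha H^0_b$ to $x^\alpha H^2_b$, using standard mapping properties of $Q$ and $R$ on weighted b-Sobolev spaces.

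Next I would prove polyhomogeneity via Mellin transform in the normal variable $x$. Near $N$, the Mellin transform $\tilde u(\lambda,y)$ is holomorphic in $\Re\lambda>\alpha$ and meromorphically extends in $\lambda$, with the pole structure encoding the asymptotic expansion of $u$ at $x=0$. The equation becomes $I_\lambda(P_b)\tilde u(\lambda,y)=\tilde f(\lambda,y)$ modulo lower-order errors coming from the perturbation $x^{n_0}W$ in the expansion of $P_b$. Inverting $I_\lambda(P_b)$, which is meromorphic in $\lambda$ with poles exactly on $\mc{I}(P_b)$, shifts the contour of integration leftward from $\Re\lambda=\alpha$, picking up residues at points of $\mc{I}(P_b)$ to the right of the new contour, together with contributions inherited from the poles of $\tilde f$, which correspond to the index set $E$. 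The extended union $E\,\extunion\, F$ arises precisely because, when a pole of $\tilde f$ coincides with a pole of $I_\lambda(P_b)^{-1}$, an additional power of $\log x$ appears in the asymptotic expansion.

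The key technical point is tracking multiplicities of $\log x$ factors. Proceeding inductively: assume $u$ has been expanded through all terms with exponents below $\beta$, and let $\nu=\min(\mc{I}(P_b)\cap[\beta,\infty))$. Subtracting the formal solution of $I_{\nu}(P_b)v=0$ with the correct leading behavior reduces the problem to a remainder of improved order $\nu+\epsilon$. When the shifted indicial root $\nu+j$ lies again in $\mc{I}(P_b)$, or when such a shift produces a repeated root of the indicial family along the induction, a new $\log x$ factor appears. Counting these coincidences is exactly the definition of $\mu(\nu,j)=\#\{\nu+k:k=0,\dots,j\}\cap\mc{I}(P_b)$, so the maximal log power is $\mu(\nu,j)-1$, yielding the claimed index set $F$. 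The main obstacle is carrying out this bookkeeping of logarithmic accumulations rigorously; however, this is the content of Melrose's pushforward/extension theorem for polyhomogeneous b-conormal distributions, which can be cited directly from \cite[Chapter 5]{me}.

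For the sphere case, I would specialize: when $(N,h_0)=(\sph^{n-1},d\theta^2)$, the explicit computation \eqref{caseSn} shows that every element of $\mc{I}(P_b)$ has the form $\pm(n/2-1+j)$ for some $j\in\nn_0$, so $\mc{I}(P_b)\subset n/2+\zz$. Consequently, the set $\{\nu+j:\nu\in\mc{I}(P_b),\,j\in\nn_0,\,\nu\geq\alpha\}$ collapses onto $\{n/2+l:l\in\zz,\,n/2+l\geq\alpha\}$, and the multiplicity $\mu(\nu,j)-1$ at the point $n/2+l$ becomes $N_l-1$, where $N_l$ counts elements of the form $\pm(n/2-1+j)$ with $j\in\nn_0$ lying in $(\alpha,n/2+l]$. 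This yields the stated simplified index set.
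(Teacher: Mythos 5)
Your proposal is correct and is exactly the standard b-calculus argument (interior parametrix plus Mellin-transform/indicial-equation iteration at the boundary, with logs accumulating at coincidences of shifted indicial roots) that underlies this result; the paper itself gives no proof but simply cites Melrose \cite{me}, which is also where you defer the rigorous bookkeeping. Your specialization to the sphere case via \eqref{caseSn} is likewise correct, since all indicial roots there lie in $\{\pm(n/2-1+j):j\in\nn_0\}\subset n/2+\zz$.
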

A consequence of this theorem is that each $u \in x^{\alpha-}H_0^b$ satisfying the assumption of Theorem \ref{reg}
 has a full asymptotic expansion which starts with
\[u=\Big(\sum_{\substack{ \nu\in \mc{I}(P_b)\\ 
\ \alpha\leq \nu<\alpha+n_0}}\sum_{k=0}^{k_{\nu}}x^{\nu}(\log x)^{k}u_{\nu}(y)+\mathcal O(x^{\alpha+n_0})\Big)|dg_b|^\demi\]
for some $k_\nu\in\nn_0$ and some $u_\nu\in C^\infty(N;\Lambda^q(N)\oplus \Lambda^{q-1}(N))$. In fact, by assumption \eqref{assumhx}, we get
\begin{proposition}\label{technical} 
Assume $\nu_0>0$ and $\alpha\in \mc{I}(P_b)$, then if $P_{b}u=\mathcal O(x^{\alpha+n_0})$ with 
$u\in x^{\alpha-}H_b^2$ having asymptotic 
\[u=\Big(\sum_{\substack{ \nu\in \mc{I}(P_b)\\ 
\ \alpha\leq \nu<\alpha+n_0}}\sum_{k=0}^{k_{\nu}}x^{\nu}(\log x)^{k}u_{\nu}(y)+\mathcal O(x^{\alpha+n_0})\Big)|dg_b|^\demi,\]
then in fact $k_{\nu}=0$ (so there are no log terms) and $u_{\nu}\in E_{\nu}$ for all $\nu\in [\alpha,\alpha+n_0)$.
\end{proposition}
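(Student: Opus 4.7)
The plan is to substitute the postulated expansion directly into the equation $P_b u = \mathcal{O}(x^{\alpha+n_0})$ and match coefficients of $x^\nu(\log x)^k$ for each $\nu \in \mc{I}(P_b) \cap [\alpha, \alpha+n_0)$. First I would split
\[
P_b = L_0 + x^{n_0} W,
\]
where $L_0$ is the leading part computed in Section \ref{laponforms}, whose coefficients are $x$-independent, and $W \in \textrm{Diff}_b^2$. Since the $x^{n_0}W$ correction shifts any term $x^\nu(\log x)^k u_{\nu,k}(y)$ to order $x^{\nu+n_0} \geq x^{\alpha+n_0}$ (with at worst the same $\log$-degree), it contributes nothing to the coefficients at orders strictly less than $x^{\alpha+n_0}$. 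Thus all relations on the $u_{\nu,k}$ come from the model equation $L_0 u \equiv 0$ modulo the prescribed remainder.

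Next I would compute, using $(x\pl_x)^2(x^\nu(\log x)^k) = \nu^2 x^\nu(\log x)^k + 2k\nu x^\nu(\log x)^{k-1} + k(k-1) x^\nu(\log x)^{k-2}$, that
\[
L_0\bigl(x^\nu(\log x)^k f(y)\bigr) = x^\nu(\log x)^k I_\nu(P_b) f - 2k\nu\, x^\nu(\log x)^{k-1} f - k(k-1)\, x^\nu(\log x)^{k-2} f.
\]
Hence the coefficient of $x^\nu(\log x)^k$ in $L_0 u$ equals
$I_\nu(P_b) u_{\nu,k} - 2(k+1)\nu\, u_{\nu,k+1} - (k+1)(k+2)\, u_{\nu,k+2}$, and each such expression must vanish. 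Fix $\nu \in \mc{I}(P_b) \cap [\alpha, \alpha+n_0)$ and let $k_\nu$ be the top $\log$-index. Taking $k = k_\nu$ immediately gives $I_\nu(P_b) u_{\nu,k_\nu} = 0$, i.e.\ $u_{\nu,k_\nu} \in E_\nu$.

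The crux — and the main obstacle — is ruling out $k_\nu > 0$. For this I would use the next equation, at $k = k_\nu - 1$, namely $I_\nu(P_b) u_{\nu,k_\nu-1} = 2 k_\nu \nu\, u_{\nu,k_\nu}$, and pair it in $L^2(N; \Lambda^q(N) \oplus \Lambda^{q-1}(N))$ with $u_{\nu,k_\nu}$. Since $I_\nu(P_b)$ is formally self-adjoint and annihilates $u_{\nu,k_\nu}$, the right-hand side of the pairing vanishes, giving
\[
2 k_\nu \nu\, \|u_{\nu,k_\nu}\|^2 = 0.
\]
The hypothesis $\nu_0 > 0$ forces $|\nu| \geq \nu_0 > 0$ for every $\nu \in \mc{I}(P_b)$, so $\nu \neq 0$, and therefore $k_\nu \, u_{\nu,k_\nu} = 0$. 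As $u_{\nu,k_\nu} \neq 0$ by the definition of $k_\nu$, we conclude $k_\nu = 0$, so there are no $\log$ terms and $u_\nu = u_{\nu,0} \in E_\nu$. Applying this at each $\nu \in \mc{I}(P_b) \cap [\alpha, \alpha+n_0)$ finishes the proof.
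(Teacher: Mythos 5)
Your proof is correct and follows essentially the same route the paper indicates for Proposition \ref{technical}: substitute the expansion into $P_bu=\mathcal{O}(x^{\alpha+n_0})$, use the formula for $P_b$ applied to $x^{\nu}(\log x)^k f(y)$ to get $I_{\nu}(P_b)u_{\nu,k_\nu}=0$ at the top log order, and then eliminate the log terms by pairing the $(\log x)^{k_\nu-1}$ equation with $u_{\nu,k_\nu}$, using the formal self-adjointness of $I_{\nu}(P_b)$ together with $\nu\neq 0$ (which follows from $\nu_0>0$). Your coefficient $-2k\nu$ in the commutator identity is the correct one; the paper's displayed formula \eqref{pbxnulogk} has a typo there, but this does not affect the argument.
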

The proof of Proposition \ref{technical} is straightforward and proceeds by plugging the asymptotic expansion into the equation $P_bu=\mathcal O(x^{\alpha+n_0})$, then using that for $u_\nu \in C^\infty(N;\Lambda^q(N)\oplus \Lambda^{q-1}(N))$ and $\mu\in\rr$
\begin{equation}\label{pbxnulogk}
\begin{split}
P_b (x^{\mu}\log(x)^ku_\nu(y)|dg_b|^\demi)= & \Big(x^{\mu}\log(x)^k I_\mu(P_b)u_\nu(y)
- k\mu x^\mu\log(x)^{k-1}u_\nu(y)\\
& +k(k-1)\log(x)^{k-2} u_\nu(y)+\mc{O}(x^{\mu+n_0}\log(x)^k)\Big)|dg_b|^\demi.
\end{split}\end{equation}

It follows immediately from Theorems \ref{rit} and \ref{reg} that for $\nu\in \mc{I}(P_b)$, the vector space
$$
\{ \omega \in x^{\nu - \epsilon} H^j_b \mid P_b \omega = 0 \} \big/ \{  \omega \in x^{\nu + \epsilon} H^j_b \mid P_b \omega = 0 \} 
$$
for sufficiently small $\epsilon > 0$ is finite dimensional, independent of $\epsilon$, and  independent of $j$ by elliptic regularity. From Theorem~\ref{reg}, elements of this space have the form $x^{\nu} \phi_\nu(y) + \mc{O}(x^{\nu + \epsilon})$ where $\phi_\nu\in E_{\nu}$.  The span of such $\phi_{\nu}$ is a vector subspace of $E_{|\nu|}$ which we denote $G_{\nu}$.
Finally, from \cite[Chap. 6]{me}:
\begin{proposition}\label{prop:complementary} The subspaces $G_{\nu}$ and $G_{-\nu}$ of $E_{\nu}$ are orthogonal complements with respect to the inner product on $L^2(N;\Lambda^{q-1}(N)\oplus 
\Lambda^q(N))$ given by $h_0$. 
\end{proposition}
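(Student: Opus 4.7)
My plan is to split the statement into two independent claims: (i) $G_\nu$ and $G_{-\nu}$ are mutually orthogonal in $L^2(N;\Lambda^{q-1}(N)\oplus\Lambda^q(N))$ with respect to $h_0$, and (ii) $\dim G_\nu + \dim G_{-\nu} = \dim E_\nu$. Note that $E_\nu = E_{-\nu}$ because the indicial family $I_\la(P_b)$ in \eqref{indicialop} depends only on $\la^2$, so $G_{\pm\nu}$ both sit naturally inside $E_\nu$. Together, (i) and (ii) give the proposition. I would first handle $\nu\neq 0$; the case $\nu=0$, which can only occur for $q=\ndemi\pm 1$, requires extra care for possible logarithmic terms but reduces easily to the main case.

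For the orthogonality (i), I would use a Green's-formula argument exploiting the formal self-adjointness of $P_b$ on $H^0_b$. Given $\phi\in G_\nu$ and $\psi\in G_{-\nu}$, the definitions together with Proposition~\ref{technical} furnish global solutions $u\in x^{\nu-}H^2_b$ and $v\in x^{-\nu-}H^2_b$ of $P_bu=P_bv=0$ with polyhomogeneous asymptotic expansions
\[ u = x^\nu\phi(y)\,|dg_b|^\demi + \mc{O}(x^{\nu+\epsilon_1}), \qquad v = x^{-\nu}\psi(y)\,|dg_b|^\demi + \mc{O}(x^{-\nu+\epsilon_2}),\]
where the expansion of $v$ may carry intermediate indicial contributions at roots $\mu\in(-\nu,\nu]$. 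Applying Green's formula on $M_\epsilon := \{x\geq\epsilon\}$ gives
\[ 0 = \int_{M_\epsilon}\bigl(\ip{P_bu}{v}-\ip{u}{P_bv}\bigr)\,dg_b = B_\epsilon(u,v),\]
where $B_\epsilon(u,v)$ is the boundary pairing at $x=\epsilon$. Since the tangential part of $P_b$ (involving $\Delta_N$, $d_N$, $\delta_N$) is $L^2(N)$-symmetric (so it contributes antisymmetrically and cancels) and the normal part is $-(x\pl_x)^2$, a single integration by parts in $x$ reduces $B_\epsilon$ to $\int_N\bigl(\ip{(x\pl_x)u}{v}-\ip{u}{(x\pl_x)v}\bigr)|_{x=\epsilon}\,dh_0$, which evaluates to $2\nu\ip{\phi}{\psi}_{L^2(N,h_0)} + \mc{O}(\epsilon^{\min(\epsilon_1,\epsilon_2)})$. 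Letting $\epsilon\to 0$ and using $\nu\neq 0$ yields $\ip{\phi}{\psi}_{L^2(N,h_0)}=0$.

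For the dimension count (ii), I would set $K_\alpha := \ker(P_b:x^\alpha H^2_b\to x^\alpha H^0_b)$, so that by construction $\dim G_\nu = \dim K_{\nu-\epsilon} - \dim K_{\nu+\epsilon}$ and $\dim G_{-\nu} = \dim K_{-\nu-\epsilon} - \dim K_{-\nu+\epsilon}$ for small $\epsilon>0$. The formal self-adjointness of $P_b$ on $H^0_b$, together with the $L^2_b$ pairing identification $(x^\alpha H^0_b)^*\cong x^{-\alpha}H^0_b$ and elliptic $b$-regularity, yields $\mathrm{coker}(P_b:x^\alpha H^2_b\to x^\alpha H^0_b)\cong K_{-\alpha}$. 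Subtracting indices at the two weights then gives
\[ \mathrm{ind}(P_b|_{x^{\nu-\epsilon}H^2_b}) - \mathrm{ind}(P_b|_{x^{\nu+\epsilon}H^2_b}) = \dim G_\nu + \dim G_{-\nu},\]
and comparison with Theorem~\ref{rit}, which asserts that this index jump equals $\dim E_\nu$, proves (ii).

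The main technical hurdle will be the clean computation of $B_\epsilon(u,v)$, that is, to verify: (a) the tangential terms of $P_b$ contribute symmetrically and therefore cancel in $\ip{P_bu}{v}-\ip{u}{P_bv}$; (b) the $\mc{O}(x^{n_0})$ remainder $W$ produces only contributions vanishing as $\epsilon\to 0$, which follows from $n_0\geq 3$; and (c) the intermediate indicial components in the expansion of $v$ pair against the leading term of $u$ at orders $x^{\nu+\mu}$ with $\mu>-\nu$, hence contribute only $\mc{O}(\epsilon^{\epsilon'})$. All three points follow from the explicit structure of $P_b$ recalled in Section~\ref{laponforms} and the form of polyhomogeneous solutions provided by Proposition~\ref{technical}.
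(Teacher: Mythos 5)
Your argument is correct and is precisely the standard one: the paper offers no proof of Proposition \ref{prop:complementary}, simply citing \cite[Chap.~6]{me}, and your combination of the boundary pairing $B_\epsilon(u,v)$ (yielding orthogonality from $2\nu\ip{\phi}{\psi}_{L^2(N,h_0)}=0$ after the tangential terms cancel and the intermediate and $x^{n_0}W$ contributions vanish) with the relative index theorem and the self-adjoint cokernel identification (yielding $\dim G_\nu+\dim G_{-\nu}=\dim E_\nu$) is exactly the argument of that reference. The only quibble is your aside about $\nu=0$: it is moot here because the standing assumption $0\notin\mc{I}(P_b)$ excludes it, and in any case $0$ can lie in $I^3$ for any $q$ with $|q-n/2|\leq 1$, not only for $q=\ndemi\pm1$.
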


\subsection{$b$-pseudo-differential operators}

The $b$-double space $M^2_b$ is defined by blowing-up $\pl \bbar{M}\x\pl \bbar{M}$ inside $M\x M$, which we denote 
$[M\x M; \pl \bbar{M}\x\pl \bbar{M}]$; for details, see for instance \cite[Section 4.2]{me}. Let $\beta_b:M^2_b\to M^2$ be the blow-down map. The double space $M^2_b$ is a manifold with codimension-$2$ corners and three boundary hypersurfaces: 
the left boundary $\lb$, whose interior projects down to $\pl \bbar{M}\x M^\circ$ through $\beta_b$; 
the right boundary $\rb$, whose interior projects down to $M^\circ\x \pl \bbar{M}$; and the $b$-face $\bfa$, which projects down to $\pl \bbar{M}\x \pl \bbar{M}$ through $\beta_b$ and is diffeomorphic to $\pl \bbar{M}\x\pl \bbar{M}\x[-1,1]_{\tau}$ where $\tau:=(x-x')/(x+x')$. We denote by ${\rm diag}_b$ the closure of the 
lift through $\beta_b$ of the diagonal of $M^\circ\x M^\circ$.

Let $\mc{E}_{\lb},\mc{E}_{\rb}$ be index sets. The pseudo-differential operator class $\Psi_b^{m,\mc{E}_{\lb},
\mc{E}_{\rb}}(M)$ is the set of continuous linear operators $A$ mapping the space 
$\{f\in C^\infty(\bbar{M};\Omega_b^{\demi}); f=\mc{O}(x^\infty)\}$ 
to its dual, which have Schwartz kernels $\kappa_A=\kappa_A^1+\kappa_A^2 \in C^{-\infty}(M^2; \Omega_b^{\demi}(M^2))$ so that 

i) $\beta^*\kappa_A^1$ is smooth on 
$M^2_b\setminus {\rm diag}_b$, vanishes to infinite order at $\lb$ and $\rb$, and has a classical conormal singularity of order $m$ at ${\rm diag}_b$; 

ii) $\beta^*\kappa_A^2$ is polyhomogeneous on $M^2_b$ with index sets $\mc{E}_\lb$ at $\lb$, $\mc{E}_\rb$ at $\rb$, and $\nn_0$ at $\bfa$.

Operators acting on a smooth bundles are defined similarly as their Schwartz kernels can be considered as matrix valued distributions.

\subsection{Inverse for $P_b$}
By Proposition 5.64 in \cite{me}, we have: 
\begin{proposition}\label{Qb}
Assume $\nu_0>0$ and \eqref{assumhx} with $n_0\geq 3$. There exists $Q_b\in \Psi_b^{-2, \mc{E}_{{\rm lb}},\mc{E}_{{\rm rb}}}(M)$ such that $P_bQ_b={\rm Id}-\Pi_b$
where $\Pi_b$ is the orthogonal projection on the kernel of $P_b$ in $H_b^0$.
The index sets satisfy  
\[\mc{E}_{{\rm lb}}\subset \{ (\nu, k)\in \mc{I}(P_b)\x \nn_0; \nu\geq \nu_0\},  \quad 
\mc{E}_{{\rm rb}}\subset \{ (\nu, k)\in \mc{I}(P_b)\x \nn_0; \nu\geq \nu_0\}.\]
and if $m:=\min(\til{\nu}-\nu_0,2)$  with $\til{\nu}$ defined in \eqref{tilnu}, we have
\[\begin{gathered}
\mc{E}_{{\rm lb}}\cap [\nu_0,\nu_0+m)\x \nn_0\subset \mc{E}_{\rm lb}\cap [\nu_0,\nu_0+m)\x \{0\}, \\
\mc{E}_{{\rm rb}}\cap [\nu_0,\nu_0+m)\x \nn_0\subset \mc{E}_{\rm rb}\cap [\nu_0,\nu_0+m)\x \{0\},\\
\mc{E}_{{\rm lb}}\cap [\nu_0+m,\nu_0+m+2]\x \nn_0\subset \mc{E}_{\rm lb}\cap [\nu_0+m,\nu_0+m+2]\x \{0,1\}, \\
\mc{E}_{{\rm rb}}\cap [\nu_0+m,\nu_0+m+2]\x \nn_0\subset \mc{E}_{\rm rb}\cap [\nu_0+m,\nu_0+m+2]\x \{0,1\}.
\end{gathered}\]
\end{proposition}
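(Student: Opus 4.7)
The plan is to invoke Proposition 5.64 of \cite{me} directly; all the necessary structural information about $P_b$ has been assembled in the preceding subsections, so the main task is to verify hypotheses and then refine the index family using the explicit form of $\mc{I}(P_b)$ from \eqref{indicial}.

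First I would verify that $P_b$ satisfies the hypotheses of the general b-parametrix construction. The operator $P_b\in\Diff_b^2(M;\Lambda_\sca^q\otimes\Omega_b^{1/2})$ is elliptic with indicial family \eqref{indicialop}, and the assumption $\nu_0>0$ together with Theorem \ref{rit} yields Fredholmness as a map $H_b^2\to H_b^0$. Since $P_b$ is formally self-adjoint on $H_b^0$, its index is zero, and Proposition 5.64 of \cite{me} then produces a generalized inverse $Q_b\in\Psi_b^{-2,\mc{E}_{\rm lb},\mc{E}_{\rm rb}}(M)$ satisfying $P_bQ_b=\Id-\Pi_b$. The inclusions $\mc{E}_{\rm lb},\mc{E}_{\rm rb}\subset \mc{I}(P_b)\times\nn_0$ with $\nu\geq \nu_0$ come directly from the inductive structure of Melrose's construction: the parametrix is built by repeatedly solving the indicial equation $I_\nu(P_b)u_\nu=f_\nu$ near $\lb$ and $\rb$, and each such step only introduces terms at $\nu\in\mc{I}(P_b)$; the lower bound $\nu\geq \nu_0$ holds because $\nu_0=\min(\mc{I}(P_b)\cap[0,\infty))$ by \eqref{defnu0}.

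To control the powers of $\log x$ I would use formula \eqref{pbxnulogk} together with Proposition \ref{technical}. Log factors enter the iteration only when two expansion orders collide — either via a shift by $n_0$ coming from the error $x^{n_0}W$ in $P_b$, or via a shift by a positive element of $\mc{I}(P_b)$ generated when removing a lower-order error. For $\nu\in[\nu_0,\nu_0+m)$ with $m=\min(\tilde\nu-\nu_0,2)$, the definition of $\tilde\nu$ guarantees that neither type of collision occurs, so only $k=0$ appears in this range. On the adjacent interval $[\nu_0+m,\nu_0+m+2]$ a single collision becomes possible, contributing at most one log factor and giving $k\in\{0,1\}$. The hypothesis $n_0\geq 3$ is precisely what keeps such collisions from occurring too early.

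The main obstacle is the combinatorial bookkeeping: carefully tracking when shifts in the iterative construction cause indicial roots to pile up and thereby raise the log power by one. However, once the explicit indicial structure \eqref{indicial} is in hand and one identifies which elements of $\mc{I}(P_b)$ lie close to $\nu_0$, this reduces to a finite check on the first few indicial roots. Proposition \ref{technical} already handles the prototypical case in a uniform way via \eqref{pbxnulogk}, so the verification amounts to iterating that lemma through the first two intervals of length $m$ and $2$ above $\nu_0$.
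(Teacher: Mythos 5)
Your overall strategy coincides with the paper's: existence of $Q_b$ and the crude inclusion $\mc{E}_{\lb},\mc{E}_{\rb}\subset\{\nu\geq\nu_0\}$ come from Proposition 5.64 of \cite{me}, and the refinement of the log powers is obtained from \eqref{pbxnulogk} together with the argument of Proposition \ref{technical}. The verification of Fredholmness and index zero is fine, and your observation that $n_0\geq 3$ prevents the perturbation $x^{n_0}W$ from feeding back into the window $[\nu_0,\nu_0+m)$ is correct.

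There is, however, one genuine gap: you assert that ``the definition of $\tilde\nu$ guarantees that neither type of collision occurs'' on $[\nu_0,\nu_0+m)$, but you never explain how $\tilde\nu$ --- which is defined in \eqref{tilnu} purely in terms of the decay of $\ker_{H_b^0}(P_b)$ --- enters the parametrix construction at all. The collision mechanism you describe (indicial roots shifted onto one another during the iteration) would not by itself produce the threshold $\tilde\nu$; indeed, in the asymptotically Euclidean case the positive roots \eqref{caseSn} differ by integers, so naive extended-union bookkeeping would predict logarithms already at $\nu_0+1$. The missing idea is the following: away from the diagonal the kernel of $\Id$ vanishes, so near $\lb$ and $\rb$ the defining relation becomes $P_bQ_b=Q_bP_b=-\Pi_b$ with $\Pi_b=\sum_j\varphi_j\otimes\varphi_j$, and by \eqref{varphij} the expansion of $\Pi_b$ at $\rb$ (resp.\ $\lb$) begins at order $\tilde\nu$ in $x'$ (resp.\ $x$). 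For orders $\nu<\min(\tilde\nu,\nu_0+n_0)$ the equation satisfied by the coefficients of $Q_b$ is therefore homogeneous to the relevant accuracy, and Proposition \ref{technical} (i.e.\ \eqref{pbxnulogk}) excludes log terms and places the coefficients in $E_\nu$; the first possible log is forced only at order $\tilde\nu$, where the nonzero inhomogeneity $-\Pi_b$ meets the non-invertible indicial operator $I_{\tilde\nu}(P_b)$, or at order $\nu_0+n_0$ via the metric perturbation. This is exactly what produces the bound $\nu_0+m=\min(\tilde\nu,\nu_0+2)$ and the single log on $[\nu_0+m,\nu_0+m+2]$. Without this link your argument does not establish why $\tilde\nu$, rather than some spacing of the indicial roots, is the correct threshold.
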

In Proposition 5.64 in \cite{me}, the index set is larger than we claim here, but by writing that 
$P_bQ_b=Q_bP_b=-\Pi_b$ near $\rb$ and $\lb$ (since the kernel of ${\rm Id}$ is supported at the diagonal) and expanding the equation at $x=0$ and $x'=0$, the identity \eqref{pbxnulogk} gives the desired statement for $\mc{E}_\rb,\mc{E}_\lb$.

\section{Pseudo-differential calculus for low energy}\label{pdcle}

In this section we briefly recall the definition of the calculus $\Psi_k^{m,\mc{E}}(M)$ of pseudo-differential operators with parameter $k\in[0,k_0]$, as well as a few facts which are detailed in Section 2.2 of \cite{gh1}.

\subsection{The space $\MMksc$ and half-densities}\label{mmksc}
The Schwartz kernel of the resolvent $R_q(k)=(\Delta_q + k^2)^{-1}$ is a distribution on the space 
$M^\circ \times M^\circ \times (0, 1]$. The space $\MMksc$ is a manifold with codimension-$2$ corners, obtained 
by performing several blow-ups of $M^2\x[0,1]$, as explained in Section 2.2.1 of \cite{gh1} (we refer the reader to this section for a more detailed explanation). The reason for the blowups comes from the off-diagonal behaviour of the resolvent kernel. We denote the boundary hypersurfaces of $M^2\x[0,1]$  by $\zf = M^2 \times \{ 0 \}$, $\rb = M \times \partial M \times [0, k_0]$, and $\lb = \partial M \times M \times [0, 1]$. Then we blow-up\footnote{The reader not familiar with real blow-ups may consult Chapter 5 of \cite{me2}} the submanifold $(\partial M)^2 \times \{ 0 \}$, followed by the lift to this space of $(\partial M)^2 \times [0, k_0]$, $M \times \partial M \times \{ 0 \}$, $\partial M \times M \times \{ 0 \}$, to produce a space we call $M^2_{k, b}$. The new boundary hypersurfaces so created are denoted 
$\bfo$, $\bfa$, $\rbo$ and $\lbo$, respectively, while the old ones are still denoted $\zf,\lb$ and $\rb$. 
The blow-down map 
is denoted $\beta_{k,b}: M^2_b\to M^2\x [0,1]$ and the new boundary hyperfsurfaces satisfy
\[\begin{gathered}
\beta_{k,b}(\bfo)=\pl \bbar{M}\x\pl \bbar{M}\x\{0\},\quad  \beta_{k,b}(\bfa)=\pl \bbar{M}\x\pl \bbar{M}\x[0,1], \\
\beta_{k,b}(\rbo)= M\x\pl \bbar{M}\x\{0\}, \quad \beta_{k,b}(\lbo)= M\x\pl \bbar{M}\x\{0\}.
\end{gathered}\]
Finally, to produce the space $\MMksc$, we blow up the submanifold $\bfa\cap {\rm diag}_{k,b}$, where ${\rm diag}_{k,b}:=
\bbar{\beta^{-1}_{k,b}({\rm diag}(M^\circ\x M^\circ)\x (0,1)}$, to create a new boundary hypersurface $\sca$. See Figure \ref{blownupspace}. There is a natural blow-down map 
$\beta_{\sca,k}:\MMksc\to M^2\x [0,1]$ associated to these iterated blow-ups. 

\begin{figure}\label{blownupspace}
\centering
\includegraphics[scale=0.5]{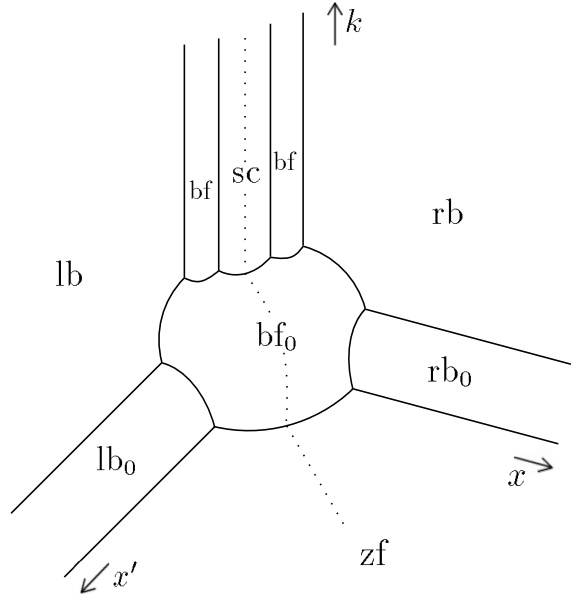}
\caption{The blown-up space $M^2_{k,sc}$.}
\end{figure}

The space $\MMksc$ has eight boundary hypersurfaces, each a geometric realization of a different asymptotic regime. The face $\zf$ may be identified naturally with $M^2_b$. The interior of the face $\bfo$ identifies with the product of exact cones $M_0^2$, where $M_0=(0,\infty)\x N$. Indeed, letting $(y,y')$ be coordinates on $N^2$, we can use the coordinates $k,\kappa=k/x,\kappa'=k/x',y,y'$ near the interior of $\bfo$. In these coordinates, $\bfo$ is defined by $k=0$, so writing $(\kappa,y,\kappa',y')\in ((0,\infty)\x N)^2$ provides the diffeomorphism between the interior of $\bfo$ and $M_0^2$. The interior of the face $\sca$ is a bundle over $\bfa\cap {\rm diag}_{k,b} \simeq N$, with fibers $\rr^n$.\\

The bundle of $b$-half-densities $\Omega_b^{\demi}(M^2_{k,b})$ on $M^2_{k,b}$ is defined to be the smooth line bundle trivialized by $\rho^{-\demi}\mu$,
where $\mu$ is any non-vanishing smooth section of the bundle of half-densities and $\rho$ is a global boundary defining 
function on $M^2_{k,b}$ (i.e. a product of boundary functions for the boundary hypersurfaces). In particular, this bundle is trivialized 
by $|dg_b(z)dg_b(z')\frac{dk}{k}|^\demi$. Let $\tilde{\Omega}_b^\demi(\MMksc)$ be the lift of $\Omega_b^{\demi}(M^2_{k,b})$ to $\MMksc$. Except at $\sca$, this bundle restricts canonically to each hypersurface, and gives the bundle of $b$-half-densities on the hypersurface (which is also a manifold with corners).

Denote by $\pi_k: M^2\x [0,1]\to M^2$ the projection off the $k$ variable. 
The bundle $\Lambda^q_\sca\otimes (\Lambda^q_\sca)^*$ on $M^2$  
pulls-back through the map $\beta_{\sca,k}\circ \pi_k= \MMksc\to M^2$ to a smooth bundle denoted 
\[ F_q:=(\beta_{k,\sca}\circ \pi_k)^*(\Lambda^q_\sca\otimes (\Lambda^q_\sca)^*).\]
The operators we shall consider have Schwartz kernels which pull back to $\MMksc$ as 
distributional sections of $F_q\otimes \tilde{\Omega}_b^\demi(\MMksc)$.

\subsection{Operator calculus}\label{opecal}
We will show that the Schwartz kernel of $R_q(k)$ is a polyhomogeneous conormal distribution on the space $\MMksc$, with 
a classical conormal singularity at the spatial diagonal. The space of operators 
 $\Psi_k^{m,(a_{\bfo},a_{\zf},a_{\sca}), \mc{A}}(M)$ acting on scattering $q$-forms (with $b$-half-density values) is a space of pseudo-differential operators depending parametrically on $k$
and with Schwartz kernels given by conormal polyhomogeneous distributions on $\MMksc$; 
it is introduced in \cite[Def 2.8]{gh1} for functions, but the definition for operators acting on bundles is identical.
The index $m\in\rr$ corresponds to the order of the conormal singularity at the diagonal (i.e. the usual order for pseudo-differential operators). The set $(a_{\bfo},a_{\zf},a_{\sca})\in \rr^3$ gives the behaviour of the part of the kernel which is singular at the diagonal at the faces $\bfo,\zf,\sca$. And $\mc{A}$ is an index set
\[\mc{A}=(\mc{A}_{\bfo},\mc{A}_{\zf},\mc{A}_{\sca},\mc{A}_{\rbo},\mc{A}_{\lbo})\] 
which corresponds to the polyhomogeneous expansion of the part of the kernel which is smooth in the interior at the respective faces. In addition, the kernels of operators in this class vanish to infinite order at $\lb,\rb$, and $\bfa$.
More precisely, $A\in \Psi_k^{m,(a_{\bfo},a_{\zf},a_{\sca}), \mc{A}}(M)$ if its Schwartz kernel pulls-back to $\MMksc$ to a sum $\kappa_1(A)+\kappa_2(A)$, where $\rho_{\bfo}^{-a_{\bfo}}\rho_{\zf}^{-a_\zf}\rho_{\sca}^{-a_{\sca}}\kappa_1(A)$ is a distributional section of $F_q\otimes \tilde{\Omega}_b^\demi(\MMksc)$ supported in a neighbourhood of ${\rm diag}_{k,\sca}:=
\bbar{\beta^{-1}_{k,\sca}({\rm diag}(M^\circ\x M^\circ)\x (0,k_0)}$ and conormal to this submanifold uniformly (and smoothly) up to the boundary, and $\kappa_2(A)$ is smooth in the interior of $\MMksc$ and polyhomogeneous conormal 
with index set $\mc{A}$ (and vanishing to infinite order at $\lb,\rb$, and $\bfa$). We also  ask that $a_{\zf}\in \mc{A}_\zf$, $a_{\bfo}\subset \mc{A}_{\bfo}$ and $a_{\sca}\subset \mc{A}_{\sca}$.\\ 

\textbf{Composition.} This calculus forms an algebra, as there is a composition law: given index families $\mc{A}$ and $\mc{B}$ as above and operators
\[A\in \Psi_k^{m,(a_{\bfo},a_{\zf},a_{\sca}),\mc{A}}(M),\,\, 
B\in \Psi_k^{m',(b_{\bfo},b_{\zf},b_{\sca}),\mc{B}}(M )\]
then $C:=A\circ B\in \Psi_k^{m+m',(a_{\bfo}+b_{\bfo},a_{\zf}+b_{\zf},a_{\sca}+b_{\sca}),\mc{C}}(M)$ with $\mc{C}$ given as in \cite[Prop 2.10]{gh1} by
\begin{equation}\begin{aligned}
\mc{C}_{\sca} &= \mc{A}_{\sca} + \mc{B}_{\sca},\\
\mc{C}_{\zf} &= \big( \mc{A}_{\zf} + \mc{B}_{\zf} \big) \extunion \big(  \mc{A}_{\rbo} + \mc{B}_{\lbo} \big), \\
\mc{C}_{\bfo} &= \big( \mc{A}_{\lbo} + \mc{B}_{\rbo} \big) \extunion \big(  \mc{A}_{\bfo} + \mc{B}_{\bfo} \big), \\
\mc{C}_{\lbo} &= \big( \mc{A}_{\lbo} + \mc{B}_{\zf} \big) \extunion \big(  \mc{A}_{\bfo} + \mc{B}_{\lbo} \big), \\
\mc{C}_{\rbo} &= \big( \mc{A}_{\zf} + \mc{B}_{\rbo} \big) \extunion \big(  \mc{A}_{\rbo} + \mc{B}_{\bfo} \big), \\
\mc{C}_{\bfc} &= \mc{C}_{\lb} = \mc{C}_{\rb} = \emptyset.
\end{aligned}\label{comp-if}\end{equation}
The proof is done in \cite{gh1} for operators acting on $0$-forms, but applies as well for $q$-forms. Here the composition means that the $k$-dependent operator $A\otimes|\frac{dk}{k}|^{-\demi}$ is composed on the right with $B\otimes |\frac{dk}{k}|^{-\demi}$ and then tensored with $|\frac{dk}{k}|^\demi$ again so that its lifted kernel is a section of $\tilde{\Omega}_b^\demi$ on $\MMksc$.
The operator ${\rm Id}\otimes|\frac{dk}{k}|^{\demi}$ is in the calculus (if ${\rm Id}$ is the identity 
operator on $b$-half-densities on $M$), and we will simply denote this operator ${\rm Id}$  to avoid writing the extra $|dk/k|^\demi$ factor everywhere in the paper. \\

\textbf{Inverse of ${\rm Id}-E(k)$.} If $E(k) \in \Psi_k^{m,(e_{\bfo},e_{\zf},e_{\sca}), \mc{E}}(M)$, with $m < 0$, $e_{\rm f}>0$ and $\mc{E}_{\textrm{f}} > 0$ for $\textrm{f}\in
 \{ \zf, \bfo, \sca\}$, and if $\mc{E}_{\rbo}+\mc{E}_{\lbo}>0$, then by the composition law, we have that for large enough $N$, $E(k)^N$ is Hilbert-Schmidt with $\| E^N(k) \|_{HS} \to 0$ as $k \to 0$. In particular, the operator $\Id - E(k)^N$ is invertible for $k$ small enough, and the Neumann series $\sum_{j=0}^\infty E(k)^{Nj}$ for the inverse converges in operator norm (see \cite[Cor. 2.11]{gh1}) providing the following right inverse for $\Id - E(k)$:
\begin{equation}\label{invde1-E}
\sum_{j=0}^\infty E(k)^{N-1}({\rm Id}-E(k)^{Nj})^{-1}.
\end{equation}
This inverse lies in the calculus $\Psi_k^{*}(M)$ by the composition law \eqref{comp-if}.\\
 
\textbf{Normal operators.} If the index family $\mc{A}$ is nonnegative, then the restriction of the Schwartz kernel of 
$A \in \Psi_k^{m, (a_{\bfo},a_{\zf},a_{\sca}),\mc{A}}(M; \Lambda^q_\sca)$ to any of the faces $\zf, \bfo, \sca$ is a well-defined distribution, called the normal operator at $\textrm{f}$ and denoted $I_{\textrm{f}}(A)$, for $\textrm{f} = \zf, \bfo, \sca$. 
The distribution $I_{\zf}(A)$ corresponds to the Schwartz kernel of a b-pseudodifferential operator of order $m$ acting on $\Lambda_\sca^q$.
Using the decomposition \eqref{decompos} as $\Lambda^q(N)\oplus \Lambda^{q-1}(N)$ 
of the bundle $\Lambda^q_\sca$ near $N=\pl \bbar{M}$,  the distribution $I_{\bfo}(A)$ corresponds to the kernel of a pseudodifferential operator acting on $(\Lambda^q(N)\oplus \Lambda^{q-1}(N))\otimes \Omega_b^\demi (M_0)$, 
where $\Omega^\demi_b(M_0)$ is the bundle of $b$ half-densities on the compactification 
$\bbar{M}_0=[0,\infty]\x N$ of $M_0$.
Finally the kernel $I_{\sca}(A)$ is a family, parametrized by $N \times (0, k_0]$,  of convolution pseudodifferential operators acting on half-densities on scattering forms on $\RR^n$. 

The normal operators respect composition:
 $$
 I_{\textrm{f}}(A) \circ I_{\textrm{f}}(B) = I_{\textrm{f}}(A \circ B)
 $$
  provided that 
  \begin{equation*}
\mc{A}_{\rbo} + \mc{B}_{\lbo} > 0 \text{ and } \mc{A}_{\lbo} + \mc{B}_{\rbo} > 0.
\end{equation*}
When $A$ is differential, the Schwartz kernel is supported on the diagonal $\Delta_{k,\sca}$, and we can identify
$I_{\zf}(A)$, $I_{\bfo}(A)$ and $I_{\sca}(A)$ with differential operators on bundles over $M$, $M_0$, and $\rr^n$ respectively.
A b-differential operator $P$ of order $m$ acting in the left variable on $\MMksc$ is a sum of compositions
of at most $m$ vector fields of $\mc{V}_b$ which are tangent to $\rbo$ (resp. to $\lbo$),
and thus restricts smoothly to $\rbo$ (resp. to $\lbo$) as a differential operator denoted $I_{\rbo}(P)$ (resp. $I_{\lbo}(P)$).

\section{Resolvent kernel}\label{resolvent-kernel}

Our strategy to construct the resolvent kernel near $k=0$ follows the method of \cite{gh1} and more particularly \cite{gh2}
when there are $L^2$ forms in $\ker \Delta_q$. Once the indicial operator and index set of $P_b$  are obtained in \eqref{indicialop} and \eqref{indicial}, there are only minor changes to make in the construction of \cite{gh2} (which was for functions) to extend it to forms.  Therefore, we will not repeat all the details used in \cite{gh2} but simply explain the main steps and changes. Since we care about applications to the Riesz transform for forms, we shall need
to construct a precise parametrix with several terms in the asymptotic expansion of the resolvent kernel 
at the face $\rbo$. This would be unnecessary for simply showing the polyhomogeneity of $R(k)$.\\

We will construct a parametrix $G(k) \in \Psi_k^{-2; (-2,0,0), \mc{G}}(M)$ 
that solves 
$$
(\Delta_q + k^2) G(k) = \Id - E(k),
$$ where $E(k)$ is an error term to be specified later. Throughout, we use $k$ as a boundary defining function for the interior of the boundary faces $\zf, \bfo, \lbo, \rbo$ of $\MMksc$. On the left factor of $M^2$, $z \in M$ is written $z = (x,y)$ close to $N=\partial \bbar{M}$, where $y$ are local coordinates on $N$, and primes indicate the same coordinates on the right factor. 
We also denote 
\[\kappa:=k/x, \quad \kappa':=k/x', \quad s=x/x'.\]
We use the coordinates $(z,z')$ on $\zf$, $(\kappa, \kappa', y, y')$ for $\bfo$, $(z, y', \kappa')$ for $\rbo$ and $(z', y, \kappa)$ for $\lbo$. 
Using these coordinates we will write the polyhomogeneous expansion of $G(k)$ at (the interior of) the face $\textrm{f}$, for $\textrm{f} = \zf, \bfo, \rbo, \lbo$, in the form
$$
G(k)\sim \sum_{(j,p) \in J,\,\, j\leq j_0} k^{j}(\log k)^pG^j_{\textrm{f}}+o(k^{j_0})
$$
where $J$ is some index set. We call $G^j_{\textrm{f}}$ the model at order $j$ at the face $\textrm{f}$. 
At the other boundary hypersurfaces of $\MMksc$, elements of the calculus will be rapidly decreasing (except at $\sca$, where there will be a smooth expansion).

We construct $G(k)$ by setting a finite number of models at each boundary hypersurface, together with the singularity at the diagonal, with the property that the models match at adjacent faces so that there exists a polyhomogeneous 
distribution which has polyhomogeneous expansion at each face corresponding to our models. 
The parametrix $G(k)$ can then be taken to be this distribution. If the models are constructed to solve the appropriate 
model problems at each face, then $(\Delta_q+k^2)G(k)$ will be the Schwartz kernel of the Identity plus a polyhomogeneous distribution $E(k)$ which vanishes to high order at the faces corresponding to $k=0$ (i. e. $\bfo$, $\zf$, $\lbo$, $\rbo$).\\   

Throughout, we will make the assumption that $\Delta_q$ has no zero-resonances; recall that this means $\ker_{x^{-1}L^2(M)}(\Delta_q)=\ker_{L^2(M)}(\Delta_q)$. This assumption is equivalent to the statement that (as an operator on $b$-half density-valued forms) \begin{equation}\label{noresPb}
\ker_{H_b^0}(P_b)=\ker_{xH_b^0}(P_b). 
\end{equation}
and by using Theorem \ref{reg} giving us the conormal regularity of solutions of $P_bu=0$, we deduce easily that the no zero resonance condition is equivalent to \eqref{kerLr}.
We define the kernel exponent
\begin{equation}\label{tilnu} 
\til{\nu}:=\max\{\nu \in \mc{I}(P_b); \ker_{H_b^0}(\Delta_q)\subset x^{\nu-1-}H_b^0\} ,
\end{equation}
which tells us which weighted space $\ker_{H_b^0}(P_b)$ lies in; notice that our assumption \eqref{noresPb} implies that $\til{\nu}>1$. By Theorem \ref{reg}, $\til{\nu}$ can also be defined as 
\[\til{\nu}=\max\{\nu \in \mc{I}(P_b); \ker_{L^2}(\Delta_q)\subset x^{n/2+\nu-1}L^\infty(M,\Lambda_{\sca}^q)\}.\]
where we consider $\ker_{L^2}(\Delta_q)$ as pure forms and not density valued forms, and the $L^p$ spaces are with respect to the measure $dg$.
From Proposition \ref{prop:complementary}, we have
\begin{equation}\label{Gnu=0}
G_\nu=0 \textrm{ and }G_{-\nu}=E_\nu \textrm{ for all } \nu\in \mc{I}(P_b)\cap (0,\til{\nu}).
\end{equation} Finally, we define 
\begin{equation}\label{defm}
m:= \min(\til{\nu}-\nu_0,2).
\end{equation}
In the case where $\ker_{H_b^0}(\Delta_q)=0$, the construction below works as well and we set $m=2$.

\subsection{Singularity at the diagonal} 
As in \cite{gh1}, this is standard and corresponds to the usual parametrix construction for elliptic operators on compact manifolds. In particular, $\Delta_q+k^2$ 
is elliptic in the sense that its symbol times $\rho_{\bfo}^{-2}$ is elliptic uniformly on $N^*{\rm diag}_{k,\sca}$. Therefore, there exists an operator $Q\in \Psi_k^{-2,(-2,0,0)}(M)$ such that $(\Delta_q+k^2)Q-{\rm Id}\in\Psi_k^{-\infty,(0,0,0)}(M)$. The full symbol of $Q$ at the diagonal
${\rm diag}_{k,\sca}$ is uniquely determined modulo symbols of order $-\infty$ by ellipticity.

\subsection{Leading term at $\sca$}
As explained in \cite[Sec. 4.2]{gh1}, $\sca$ is a bundle with Euclidean fibers, and the normal operator $I_{\sca}(\Delta_q+k^2)$ corresponds to a Euclidean Laplacian  $\Delta^{\rr^n}_q+k^2$ 
on $q$-forms. This Laplacian has an inverse for any $k>0$, and we set $G_{\sca}^{0}$ to be that inverse. Moreover, by scaling in $k$, it is immediate that $G_{\sca}^0$ is polyhomegenous on $\sca$, with index set $-2$ at $\bfo\cap\sca$ and infinite-order decay at $\sca\cap\bfa$.

\subsection{Leading term at $\bfo$}

We follow the description in \cite[Sec. 3.4]{gh1}. 
The interior of the face $\bfo$ may be identified with the product of two exact cones $M_0^2$, where the first cone is $(0,\infty)_{\kappa}\x N_y$ and the second cone is the same with primed coordinates. Therefore, we may use $(\kappa,\kappa',y,y')$ as coordinates. The operator $(\Delta_q+k^2)$ vanishes at order $2$ at $\bfo$, and
$k^{-2}\Delta_q+1$ has indical operator  (acting on b-half-densities)
$I_{\bfo}(k^{-2}\Delta_q+1)=\kappa^{-1}P_{\bfo}\kappa^{-1}$ at $\bfo$. Here $P_{\bfo}$ 
is a differential operator on $M_0$ on sections of $(\Lambda^q(N)\oplus \Lambda^{q-1}(N))\otimes \Omega_b^{\demi}(M_0)$ 
(using the decomposition \eqref{decompos} of $\Lambda_{\sca}^q$), which can be written in matrix form as follows:
\begin{equation}
\label{Pbfo}
P_{\bfo}=
\left(\begin{array}{cc}
-(\kappa\pl_\kappa)^{2}+(\ndemi-q-1)^2+\Delta_{N}+\kappa^2 & 2d_N\\
2\delta_N & -(\kappa\pl_\kappa)^2+(\ndemi-q+1)^2+\Delta_N+\kappa^2
\end{array}
\right),\end{equation} 
where $\Omega^\demi_b(M_0)$ are $b$-half densities on $\bbar{M}_0=[0,\infty]\x N$.
Using the spectral decomposition \eqref{E^j}, we can diagonalize in the same way we did for $P_b$. We find that $P_{\bfo}$ has an inverse $Q_{\bfo}$ in terms of this decomposition:
\begin{equation}\label{Qbfo}
\begin{split}
 Q_{\bfo}:=\sum_{\nu\in \mc{I}(P_b),\nu>0}\Pi_{E_\nu}\Big(I_{\nu}(\kappa)K_{\nu}(\kappa')H(\kappa'-\kappa)+I_{\nu}(\kappa')K_{\nu}(\kappa)H(\kappa-\kappa')\Big) \left|\frac{d\kappa dyd\kappa'dy'}{\kappa\kappa'}\right|^\demi.
\end{split}\end{equation}
Here and below, $\Pi_{E_\nu}=\Pi_{E_\nu}(y,y')$ means the Schwartz kernel of the 
orthogonal projection on $E_\nu\subset L^2(N;\Lambda^q_N\oplus\Lambda^{q-1}_N)$, 
and $I_{\nu}(z),K_{\nu}(z)$ are the modified Bessel functions.
As in \cite[Sec. 3.4]{gh1}, we set
\begin{equation}
G_{\bfo}^{-2} = (\kappa \kappa') Q_{\bfo} .
\label{Gbfo}\end{equation}
The consistency of this model with $G_{\sca}^0$ follows exactly as in \cite[Sec. 3.5]{gh1}. The model $G_{\bfo}^{-2}$ also vanishes
to infinite order at $\rb,\lb$ and $\bfa$; this follows from the exponential 
decay of Bessel functions $K_{\nu}(\kappa)$ as $\kappa\to \infty$.

\subsection{Terms at zf}\label{subsect:zf}
We follow closely the approach of Section 4 in \cite{gh2}. By Proposition \ref{Qb} and the fact that $0\notin\mc I(P_b)$, the operator $P_b$ has a generalized inverse $Q_b\in \Psi_b^{-2, \mc{E}_\lb,\mc{E}_\rb}(M)$ such that 
$$P_b Q_b = Q_b P_b = \Id - \Pi_b, \quad \textrm{ with }\, \Pi_b=\sum_{j=1}^J\varphi_j\otimes\varphi_j,$$
where $\{\varphi_j\}$ is a basis of the $L^2$ real orthonormalized (half-density) eigenforms of $P_b$ with eigenvalue $0$.
When $\zf$ is viewed as $M^2_b$, the intersection $\zf\cap \bfo$ is identified with the b-face $\bf$ of $M_b^2$, which in turn may be identified with $(0,\infty)_s\x N\x N$, where $s:=x/x'$. Using these coordinates, one has, as in \cite[Sec 4.6]{gh1}, that the kernel of $Q_b$ restricted to $\zf\cap\bfo$ is given by
\begin{equation}\label{Qbatbf} 
Q_b|_{\zf\cap\bfo}=\sum_{\nu\in \mc{I}(P_b),\nu>0}\Pi_{E_\nu}(y,y')\frac{e^{-\nu|\log s|}}{2\nu}|dydy'ds/s|^\demi.
\end{equation}
Since $\Delta_q=xP_bx$, we obtain
\[ \Delta_q(x^{-1}Q_{b}x^{-1})={\rm Id}-\sum_{j=1}^{J}(x\varphi_{j})\otimes(x^{-1}\varphi_{j}).\]

In order to solve the model problem at $\zf$, we need to understand the asymptotics of the $\varphi_j$. By the absence of zero-resonances (see \eqref{noresPb}), all elements $\varphi_j$ are such that $x^{-1}\varphi_j\in \ker_{H_b^0}(\Delta_q)$. Using Proposition \ref{technical} and the definition of $\tilde\nu$, we have 
\begin{equation}\label{varphij}
\varphi_j=\Big(\sum_{\substack{\nu \in \mc{I}(P_b)\\ 
\til{\nu}\leq\nu \leq \til{\nu}+2}}x^{\nu}a^j_{\nu}+\mathcal O(x^{\til{\nu}+2+\epsilon})\Big)|dg_b|^\demi
\end{equation}
with $a^j_\nu\in E_\nu$. Let $(\psi_i)_{i=1,\dots,J}$ be an orthonormal basis of $\ker_{H_b^0}\Delta_q$; then
\[\psi_i=\sum_{j=1}^J\alpha_{ij}x^{-1}\varphi_j\]
for some $\alpha_{ij}\in\rr$.
Now, if $\Pi_{\ker \Delta_q}$ is the orthogonal projection on $\ker_{H_b^0}\Delta_q$, we write 
\[x\varphi_{j}=\Pi_{\ker \Delta_q}(x\varphi_{j})+\psi_{j}^{\perp}.\]
and as in \cite[eq 3.10, 3.11]{gh2}, we have that
\[\begin{gathered}
\Pi_{\ker \Delta_q}(x\varphi_{j})=\sum_{k=1}^{J}\alpha_{kj}\psi_{k} \quad \textrm{and }
\sum_{j=1}^{J}(\Pi_{\ker \Delta_q}(x\varphi_{j}))\otimes x^{-1}\varphi_{j}=\sum_{j=1}^{J}\psi_{j}\otimes\psi_{j}.
\end{gathered}\]
This tells us that $\psi_{j}^{\perp}\in x^{\til{\nu}-1-}H_b^0$, so there exists $\eps_0\in(0,1)$ such that
$x^{-1}\psi_{j}^{\perp}\in x^{\til{\nu}-2-}H^0_{b}\subset x^{-1+\eps}H_b^0$ for all $\eps\in(0,\eps_0)$. By Theorem \ref{rit} and the fact that the indicial set is discrete, we may fix an $\eps\in(0,\eps_0)$ so that the operator $P_{b}$ is Fredholm on $x^{-1+\eps}H_b^0$. Now the null space of $P_{b}$ on $x^{1-\eps}H^0_b$ is the same as the null space of $P_{b}$ on $H^0_b$ by our assumption \eqref{noresPb},
thus it is spanned by the $\varphi_{j}$. But $x^{-1}\varphi_{j}$ is a linear combination of the $\psi_{k}$,
so $x^{-1}\varphi_{j}$ is orthogonal to $\psi_{k}^{\perp}$, so $\varphi_{j}$ is orthogonal to $x^{-1}\psi_{k}^{\perp}$ for
any $j$, $k$. Hence $x^{-1}\psi_{j}^{\perp}\in x^{-1+\eps}H^0_b$ is
orthogonal to the null space of $P_b$ on $x^{1-\eps}H^0_b$, and thus by Proposition \ref{prop:complementary} 
it is orthogonal to the kernel of the adjoint of $P_b$ on $x^{-1+\eps}H^0_b$.
This means that it is in the range of $P_b$ in $x^{-1+\eps}H_b^0$. 
So there exists $\chi_{j}\in x^{-1+\eps}H^0_{b}(M)$
such that
\[P_b\chi_{j}=x^{-1}\psi_{j}^{\perp}.\]
The forms $\chi_{j}$ are not necessarily unique, but since $\nu_0>0$ and $G_{-\nu}=E_\nu$ for $\nu\in(0,\til{\nu})$ by \eqref{Gnu=0}, we can always add elements in $\ker_{x^{-1+\eps}H_b^0}P_b$ to $\chi_j$ to ensure that
$\chi_j\in x^{\til{\nu}-2-}H_b^0$. 

Using the same analysis as in \cite[Sec. 4]{gh2}, first note that $b_\nu^j\in E_\nu$, defined by 
\[b_{\nu}^{j}(y)=\sum_{k=1}^{J}\sum_{l=1}^{J}\alpha_{kj}\alpha_{kl}a^{l}_{\nu}(y),\]
provide the leading asymptotics of $\psi_j^\perp$:
\[x^{-1}\psi_j^\perp=\Big(\sum_{\substack{\nu \in \mc{I}(P_b)\\ 
\til{\nu}\leq\nu \leq\til{\nu}+2}}-b_\nu^j(y) x^{\nu-2}+a_{\til{\nu}}^j(y)x^{\til{\nu}}+\mc{O}(x^{\til{\nu}+\eps})\Big)|dg_b|^\demi.\]
Then, as in \cite{gh2}, we have
\begin{equation}\label{aschij}
x^{-1}\chi_j=\Big(\sum_{\substack{\nu \in \mc{I}(P_b)\\ 
\til{\nu}\leq\nu \leq \til{\nu}+2}}x^{\nu-3}\big(\frac{b_{\nu}^{j}(y)}{4(1-\nu)}+\beta^{j}_{\nu-2}(y)\big)
-\frac{a_{\til{\nu}}^j(y)x^{\til{\nu}-1}\log x}{2\til{\nu}}+\mathcal O(x^{\til{\nu}+2+\epsilon})\Big)|dg_b|^\demi.
\end{equation} 
for some $\beta^{j}_{z}\in E_{z}$ such that
$\beta^{j}_{z}=0$ for $z<\nu_{0}$.
By construction, one has 
\[ \Delta_q\Big(x^{-1}(Q_{b}+\sum_{j=1}^{J}(\chi_{j}\otimes\varphi_{j}+
\varphi_{j}\otimes\chi_{j}))x^{-1}\Big)={\rm Id}-\sum_{j=1}^J\psi_j\otimes\psi_j.\]
We now define our model operators
at zf as follows:
\begin{equation}\label{Gzf}
\begin{split}
G_{\zf}^{-2} = &\sum_{j=1}^{J}\psi_{j}\otimes\psi_{j},\\  
G_{\zf}^{\alpha}= & 0, \quad  \forall\ \alpha\in(0,2),\\
G_{\zf}^{0}= & (xx')^{-1}(Q_{b}+\sum_{j=1}^{J}(\chi_{j}\otimes\varphi_{j}+
\varphi_{j}\otimes\chi_{j})).
\end{split}\end{equation}
Again as in \cite{gh2}, we have near the interior of $\zf$
\[(\Delta_q+k^{2})(k^{-2}G_{\zf}^{-2}+G_{\zf}^{0})|\tfrac{dk}{k}|^\demi=
(\Delta_qG_{\zf}^{0}+G_{\zf}^{-2}+\mathcal O(k^{2}))|\tfrac{dk}{k}|^\demi={\rm Id}+\mathcal O(k^{2}).\]
This means that
for any polyhomogeneous conormal parametrix $G(k)$ which agrees with these models at $\zf$ to positive order,
the lifted Schwartz kernel of $(\Delta_q+k^{2})G(k)-{\rm Id}$ vanishes to a positive order at $\zf$. \\

Now we need to check compatibility of these models with the model at $\bfo$; the analysis again proceeds exactly as in \cite{gh2}. Let
$\rho_{\zf}$ and $\rho_{\bfo}$ be smooth defining functions of $\zf,\bfo$ such that $\rho_{\zf}\rho_{\bfo}=k$; then we must show that the coefficient of the $\rho_{\zf}^{l+2}$ term in the asymptotic expansion of $G_{\bfo}^{-2}$ at $\bfo\cap \zf$ agrees with 
the coefficient of the $\rho_{\bfo}^{-2-l}$ term in the expansion of $G_{\zf}^l$ at $\bfo\cap\zf$ for $-2\leq l\leq 0$.
Even though they are not smooth on $\MMksc$, it suffices (and is convenient) to choose near the interior of $\zf\cap\bfo$ the functions 
$\rho_{\zf}=(\kappa\kappa')^{1/2}=k/(xx')^{\demi}$ and $\rho_{\bfo}=(xx')^{1/2}$. 
The term $G_{\bfo}^{-2}$ vanishes to second order at $\zf$, and $G_{\zf}^{-2}$ vanishes at $\bfo$ and therefore matches with $G^{-2}_{\bfo}$. Then the same exact argument as in \cite[Sec 3.5]{gh1} shows that $(\rho_{\zf}^{-2}G_{\bfo}^{-2})|_{\sf\cap\bfo}=(\rho_{\bfo}^{2}G_{\zf}^0)|_{\zf\cap\bfo}$. Note in particular (as in \cite{gh2}) that since
$x^{-1}\chi_j \otimes x^{-1}\varphi_j$ has a kernel which has order $2(\til{\nu}-2)>-2$ at $\bfo$ by \eqref{varphij} and \eqref{aschij}, it does not contribute.

\subsection{Terms at $\rbo$ and $\lbo$. }
Next we construct terms $G_{\rbo}^{*}$ at $\rbo$.  As in \cite[Sec. 3.7 and 4.4]{gh1} and \cite[Sec 3.3 and 4]{gh2},
these are determined by the first few terms in the Taylor series of $G_{\zf}^{l}$ at $\zf \cap \rbo$ for $l\leq 0$, as well as the expansion of $G_{\bfo}^{-2}$ at $\bfo\cap\rbo$. To analyze these Taylor series, we begin with the kernel $Q_b$. 
Localizing near $\rb$, the kernel of the identity vanishes identically and we have
$$
P_b Q_b = Q_b P_b = -\Pi_b.
$$
Using Theorem~\ref{reg}, \eqref{noresPb} and the formal expansion at $\rbo$ of this equation,
 we can write the following asymptotic for $Q_b$ at $\rb$:
\begin{equation}\label{qbright}
Q_b= \Big(\sum_{k=0}^1\sum_{\substack{\nu\in \mc{I}(P_b),\\
\nu_0\leq\nu\leq \nu_0+2}} \omega_{\nu,k}(z,y'){x'}^{\nu}\log(x')^k +\mc{O}({x'}^{\nu_0+2+\eps})\Big)|dg_bdg'_b|^\demi
\end{equation}
for some $\omega_{\nu,k}\in x^{-\nu-\eps}L^2(M\x\pl \bbar{M};\Lambda_\sca^q\otimes E_\nu)$.
By considering the operator operating on the right variable and using Proposition \ref{technical}, 
we see that $\omega_{\nu,1}=0$ if $\nu<\til{\nu}$ and 
\[\omega_{\til{\nu},1}(z,y')|dg_bdg'_b|^\demi= \sum_{j=1}^J\varphi_j(z)\otimes \frac{a_{\til{\nu}}^j(y')}{2\til{\nu}}|dg'_b|^\demi.
\]
We write $\omega_\nu=\omega_{\nu,0}$ to simplify notation. By considering  the operator $P_b$ acting on the left variable, and using \eqref{pbxnulogk}, we see that 
\[P_b(\omega_{\nu}(z ,y')|dg_bdg'_b|^\demi)=\left\{\begin{array}{ll}
0, & \textrm{ if }\nu<\til{\nu} \\
-\sum_{j=1}^J\varphi_j(z)\otimes a_\nu^j(y')|dg'_b|^\demi, & \textrm{ if }\nu\geq \til{\nu} 
\end{array}\right..\]
By matching the series 
\eqref{qbright} with the expansion of $Q_b|_{\zf\cap \bfo}$ (given by \eqref{Qbatbf}) at $\rbo$ (i.e. at $s=\infty$), and using Proposition \ref{technical}, we have that  if $\nu\in[\nu_0,\nu_0+2)$
\begin{equation}
\omega_{\nu}(x, y, y') =  \Big(\frac{x^{-\nu} \Pi_{E_\nu}(y,y')}{2\nu} +
\sum_{\substack{\mu\in \mc{I}(P_b),\\
\nu_0\leq \mu <\nu}} x^{-\mu}\gamma_{\nu,\mu}(y,y')+
\mc{O}(x^{-\nu+n_0})\Big).
\label{omeganu}\end{equation}
for some $\gamma_{\nu,\mu}(y,y')\in E_{\mu}\otimes E_{\mu}$. Notice that the $\mc{O}(x^{-\nu+n_0})$ remainder is in $H_b^0$ when  $n_0>\nu_0+2$. 

Recalling that $m=\min(\til{\nu}-\nu_0,2)$, we may now write down the asymptotic expansions at $\rbo$ of $G_{\zf}^{*}$:
\[\begin{split} 
G_{\zf}^{-2}= & \sum_{\substack{\nu\in \mc{I}(P_b),\\
\til{\nu}\leq \nu\leq\til{\nu}+2}}\sum_{j=1}^Jx^{-1}\varphi_j(z)\otimes {x'}^{\nu-1}b_\nu^j(y')|dg_b'|^\demi+
\mc{O}({x'}^{\til{\nu}+2+\eps}),\\
G_{\zf}^0=& \Big(\sum_{j=1}^J\sum_{\substack{\nu\in \mc{I}(P_b),\\
\til{\nu}\leq \nu \leq \til{\nu}+2}}x^{-1}\varphi_j(z)\otimes (\frac{b_\nu^j(y')}{4(1-\nu)}+\beta^j_{\nu-2}(y')){x'}^{\nu-3}+\sum_{j=1}^Nx^{-1}\chi_j(z)\otimes {x'}^{\til{\nu}-1}a_{\til{\nu}}^j(y')\Big)|dg'_b|^\demi
\\
& + \sum_{\substack{\nu\in \mc{I}(P_b),\\
\nu_0\leq\nu\leq \nu_0+2}} x^{-1}{x'}^{\nu-1}\omega_{\nu}(z,y')|dg_bdg'_b|^\demi+\mc{O}({x'}^{\nu_0+m-1+\eps}).
\end{split}\]
On the other hand, the matching with $G_{\bfo}^{-2}$ is explained in details in the asymptotically Euclidean case in \cite[Sec. 3.7]{gh1} (see also \cite[Sec. 4]{gh2} for the general asymptotically conic case) and comes from the expansion at $\kappa=0$ of the $I_{\nu}(\kappa)$ terms in \eqref{Qbfo}.

We can now write down several terms at $\rbo$. For $\nu\in\mc{I}(P_b) \cap [\nu_0+m,\nu_0+2+m)$, we set
\begin{equation}\label{Grbonu-3}
\begin{split}
G_{\rbo}^{\nu-3}:= & {\kappa'}\til{K}_{\nu-2}(\kappa')\Big(\sum_{j=1}^Jx^{-1}\varphi_j(z)\otimes \beta_{\nu-2}^j(y')+x^{-1}\omega_{\nu-2}(z,y')|dg_b|^\demi\Big)\Big|\frac{d\kappa'}{\kappa'}dy'\Big|^\demi\\
&+ {\kappa'}\til{K}_{\nu}(\kappa')\sum_{j=1}^Jx^{-1}\varphi_j(z)\otimes b_{\nu}^j(y')\Big|\frac{d\kappa'}{\kappa'}dy'\Big|^\demi
\end{split}\end{equation}
where $\til{K}_{\nu}(\kappa'):=\frac{1}{\Gamma(\nu)2^{\nu-1}}K_{\nu}(\kappa')$. Here we use the convention that $a_\nu^j=0$ when $\nu<\til{\nu}$ and $\beta^j_{\nu}=0$ when $\nu<\nu_0$; this is in order to make the notation consistent when $\til{\nu}>\nu_0+2$. 
Notice that $\Delta_q$ acting on the left annihilates these models, since it kills $x^{-1}\varphi_j(z)$ and 
$x^{-1}\omega_{\nu-2}(z,y')$ for $\nu-2<\nu_0+m\leq \til{\nu}$. Moreover, these models match the models at $\zf$ and $\bfo$ by construction (they also vanish at infinite order at $\rb$, since $K_{\nu}(\kappa')=\mc{O}({\kappa'}^{-\infty})$ as $\kappa'\to \infty$).

For Riesz transform purposes, we compute $(d+\delta)G_{\rbo}^{\nu-3}$ with $\nu\in\mc{I}(P_b) \cap [\nu_0+m,\nu_0+2+m)$, where $d+\delta$ acts in the left $z$-variable: 
\begin{equation}\label{d+delta2exp}
(d+\delta) G_{\rbo}^{\nu-3}=\Big((d+\delta)(x^{-1}\omega_{\nu-2}(z,y'))|dg_b|^\demi\Big) {\kappa'}\til{K}_{\nu-2}(\kappa')\Big|\frac{d\kappa'}{\kappa'}dy'\Big|^\demi.
\end{equation}
Later, we shall discuss when these terms are $0$.

Next we compute a higher order term at $\rbo$; the analysis splits into two cases. \\

\textbf{Case 1.} Assume first that $\til{\nu}<\nu_0+2$ (i.e. $m<2$). We set 
\[\begin{split}
G_{\rbo}^{\til{\nu}-1}:= & {\kappa'}\til{K}_{\til{\nu}}(\kappa')\Big(\sum_{j=1}^Jx^{-1}\varphi_j(z)\otimes \beta_{\til{\nu}}^j(y')+
x^{-1}\chi_j(z)\otimes a^j_{\til{\nu}}(y')+x^{-1}\omega_{\til{\nu}}(z,y')|dg_b|^\demi\Big)\Big|\frac{d\kappa'}{\kappa'}dy'\Big|^\demi\\
&+ {\kappa'}\til{K}_{\til{\nu}+2}(\kappa')\sum_{j=1}^Jx^{-1}\varphi_j(z)\otimes b_{\til{\nu}+2}^j(y')\Big|\frac{d\kappa'}{\kappa'}dy'\Big|^\demi.
\end{split}
\]
The matching with the models at $\bfo$ and $\zf$ may be checked in a similar way to the matching conditions for the lower order terms. Using that 
$P_b(\sum_j\chi_j(z)\otimes a_{\til{\nu}}^j(y'))+P_b(\omega_{\til{\nu}}(z,y'))=
-\sum_{j}\varphi_j(z)\otimes b^j_{\til{\nu}}(y')$, we have 
\begin{equation}\label{DeltaGrbo}
\Delta_qG^{\til{\nu}-1}_{\rbo}=-G_{\rbo}^{\til{\nu}-3},
\end{equation}
where $\Delta_q$ acts tangentially on $\rbo$ in the left $z$ variable. This implies that
any polyhomogeneous kernel $G(k)$ with expansion at $\rbo$
\[G(k)\sim \sum_{\substack{\nu\in\mc{I}(P_b)\\ \til{\nu}\leq \nu\leq \til{\nu}+2
}}k^{\nu-3}G_{\rbo}^{\nu-3}|\tfrac{dk}{k}|^\demi\] 
will be such that near any point of the interior of $\rbo$
\[(\Delta_q+k^2)G(k)=\mc{O}(\rho_{\rbo}^{\til{\nu}-1+\eps})\]
for some $\eps>0$.
An important observation for Riesz transform purposes is that 
\begin{equation}\label{d+delta1}
(d+\delta)G_{\rbo}^{\til{\nu}-1}\not=0
\end{equation}
since otherwise one would have $\Delta_qG_{\rbo}^{\til{\nu}-1}=0$ by applying $d+\delta$ on the left, contradicting \eqref{DeltaGrbo}.\\

\textbf{Case 2.} On the other hand, if $\til{\nu}\geq \nu_0+2$ (i.e. $m=2$), we set 
\[ \begin{split}
G_{\rbo}^{\nu_0+1}:= & {\kappa'}\til{K}_{\nu_0+2}(\kappa')x^{-1}\Big(\sum_{j=1}^J\varphi_j\otimes 
\beta_{\nu_0+2}^j+
\chi_j\otimes a^j_{\nu_0+2}+\omega_{\nu_0+2}|dg_b|^\demi\Big)\Big|\frac{d\kappa'}{\kappa'}dy'\Big|^\demi\\
&+ {\kappa'}\til{K}_{\nu_0+4}(\kappa')\sum_{j=1}^Jx^{-1}\varphi_j\otimes b_{\nu_0+4}^j\Big|\frac{d\kappa'}{\kappa'}dy'\Big|^\demi-{\kappa'}\til{K}_{\nu_0}(\kappa')x^{-1}v(z,y')\Big|\frac{d\kappa'}{\kappa'}dy'\Big|^\demi
\end{split}
\]
where $v(\cdot,y')\in x^{-\nu_0-2-}H_b^0$, depending smoothly on $y'$, satisfies for all $y'\in N$
\begin{equation}\label{v(z,y')}
\begin{gathered}
P_bv(z,y')=\omega_{\nu_0}(z,y')+\sum_{j=1}^J\varphi_j(z)\otimes\beta_{\nu_0}^j(y'), \\
 v(z,y')=\Big(\frac{x^{-\nu_0-2}\Pi_{E_{\nu_0}}(y,y')}{8\nu_0(\nu_0+1)}+\mc{O}(x^{-\nu_0-1})\Big)|dg_b|^\demi.
\end{gathered}\end{equation}
It is not obvious that a form $v(z,y')$ satisfying \eqref{v(z,y')} exists, in particular when $\til{\nu}=\nu_0+2$,
but this follows from Lemma 3.1 of \cite{gh2}. We refer the reader to that paper for details; note that the proof is written in the asymptotically Euclidean setting, but as in \cite[Sec. 4]{gh2}, it applies to the asymptotically conic setting as well.
It is now easy to check that
$G_{\rbo}^{\nu_0+1}$ matches with $G_{\bfo}^{-2}$, due to the leading asymptotic of $v(z,y')$ in \eqref{v(z,y')} as $x\to 0$; matching with $G_{zf}^*$ is also easy to check. Moreover, by construction, one has 
\begin{equation}\label{DeltaGrbo2}
\Delta_qG^{\nu_0+1}_{\rbo}=-G_{\rbo}^{\nu_0-1}
\end{equation}
where $\Delta_q$ acts tangentially on $\rbo$ in the left $z$ variable. This implies that 
any polyhomogeneous kernel $G(k)$ with expansion at $\rbo$
\[G(k)\sim \sum_{\substack{\nu\in\mc{I}(P_b)\\ \nu_0\leq \nu\leq \nu_0+2
}}k^{\nu-1}G_{\rbo}^{\nu-1}\otimes |\tfrac{dk}{k}|^\demi\] 
will be such that near any point of the interior of $\rbo$
\[(\Delta_q+k^2)G(k)=\mc{O}(\rho_{\rbo}^{\nu_0+1+\eps})\]
for some $\eps>0$. As before, a consequence of \eqref{DeltaGrbo2} is that 
\begin{equation}\label{d+delta2}
(d+\delta)G_{\rbo}^{\nu_0+1}\not=0
\end{equation}
since otherwise one would have $\Delta_qG_{\rbo}^{\nu_0+1}=0$.\\

The $\lbo$ terms $G_{\lbo}^{\nu-3}$ for $\nu\in [\nu_0+m,\nu_0+m+2]$ are defined from $G_{\rbo}^{\nu-3}$ by switching the primed and unprimed coordinates. The matching follows from the symmetry of the models at $\zf$ and $\bfo$ under the involution $(z,z')\rightarrow (z',z)$.
For later Riesz transform purposes, we observe that the dependence of $G_{\lbo}^{\mu}(\kappa, y,z')$ on $\kappa$ and $y$ 
is always a sum of terms of the form $\kappa \til{K}_{\mu}(\kappa) c_\mu(y)$, where $c_\mu \in E_\mu$. On the other hand, as in \eqref{Pbfo}, we have
\[\begin{gathered}
(k^{-2}\Delta_q+1)|_{\lbo}= \kappa^{-1}P_{\lbo}\kappa^{-1}\quad \textrm{ with }\\
P_{\lbo}:=\left(\begin{array}{cc}
-(\kappa\pl_\kappa)^{2}+(\ndemi-q-1)^2+\Delta_{N}+\kappa^2 & 2d_N\\
2\delta_N & -(\kappa\pl_\kappa)^2+(\ndemi-q+1)^2+\Delta_N+\kappa^2
\end{array}
\right).\end{gathered}\] 
From this equation together with the spectral decomposition, we see that $(k^{-2}\Delta_q+1)|_{\lbo}$ acting in the left variable kills the models $G_{\lbo}^{\nu-3}$ (the key is that $K_{\mu}(\kappa)c_\mu(y)$ is in the kernel of the corresponding Bessel operator). This implies that any polyhomogeneous $G(k)$ with asymptotic expansion at $\lbo$ given by the $G_{\lbo}^{\nu-3}$ will
satisfy $(\Delta_q+k^2)G(k)=\mc{O}(\rho_{\lbo}^{\nu_0+m+1+\eps})$ near any point of the interior of $\rbo$.

\subsection{Error term and resolvent} 
Let $G(k)\in\Psi_k^{-2,(-2,0,0),\mc{G}}(M)$ be a pseudo-differential operator in the calculus
which has all the prescribed terms defined above at the faces $\bfo,\rbo,\lbo,\sca,\zf$, and where $\mc{G}$ is an index set satisfying   
\[\begin{gathered} \mc{G}_\sca=0, \quad \mc{G}_{\zf}\subset (-2,0)\cup (0,0)\cup \mc{N},\quad 
\mc{G}_{\bfo}\subset (-2,0)\cup \mc{N},\,\, \\
\mc{G}_{\rbo}=\mc{G}_{\lbo}\subset \bigcup_{\nu=\nu_0+m}^{\nu_0+m+2}(\nu-3,0) \cup \mc{N}
\end{gathered}\]
for some index set $\mc{N}>0$.
The error term $E(k)$ defined by $(P + k^2) G(k) = \Id - E(k)$ is also polyhomogeneous conormal 
on $\MMksc$. By construction, its index set $\mc{E}$ satisfies 
\begin{equation}\label{indexsetE}
\begin{gathered} \mc{E}_\sca\geq 1, \quad \mc{E}_{\zf}>0,\quad 
\mc{E}_{\bfo}>0,\,\, \\
\mc{E}_{\rbo}>\nu_0+m-1, \quad \mc{E}_{\lbo}>\nu_0+m+1.
\end{gathered}\end{equation}
Therefore, as a consequence of the discussion of the inverse of $\Id-E(k)$ in section \ref{pdcle}, 
 the series \eqref{invde1-E} converges for small $k>0$, so the Neumann series construction yields an inverse which we write $\Id+S(k)$. By the composition law, $\Id + S(k) = (\Id - E(k))^{-1}$ lies in the calculus, and $S(k)$ has an index family $\mc{S}$ satisfying the same lower bounds \eqref{indexsetE} as $\mc{E}$.

The resolvent itself is given by $R_q(k) = G(k) + G(k) S(k)$, and a final application of the composition law to analyze $G(k)S(k)$ gives the following refinement of Theorem \ref{introresthm}:
\begin{theorem}\label{thm:nullspace} 
Let $(M,g)$ be  asymptotically conic to order $3$, ie. it satisfies \eqref{assumhx} with $n_0=3$. Assume that $0\notin \mc I(P_b)$ and there are no zero-resonances.
Then there exists $k_0>0$ such that the resolvent $R_q(k) = (\Delta_q + k^2)^{-1}$ on half-densities satisfies for $k\in(0,k_0]$
\begin{equation}
R_q(k) \in \Psi_k^{-2, (-2, 0, 0), \mc{R}}(M)
\end{equation}
for some index family $\mc{R}$ satisfying 
\[
\mc{R}_{\rm zf} \subset -2 + \mc{N}, \quad  
\mc{R}_{\bfo} \subset -2+ \mc{N}, \quad 
\mc{R}_{\sca} = 0, \quad \mc{R}_{\lbo} = \mc{R}_{\rbo} \subset \nu_0+m-3+ \mc{N}
\] 
for some 
with some index set $\mc{N}>0$ and $m\in[0,2]$ defined by \eqref{defm}. Moreover, the leading term of the resolvent kernel at $\zf, \bfo, \sca, \lbo, \rbo$ is equal to the leading term of the parametrix $G(k)$ at the corresponding face as defined above.
\end{theorem}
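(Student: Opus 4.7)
The plan is to read off the theorem from the parametrix construction just sketched, together with the Neumann series argument and the composition law from Section~\ref{pdcle}. First, I would assemble a single polyhomogeneous kernel $G(k) \in \Psi_k^{-2,(-2,0,0),\mc{G}}(M)$ that realizes all the models built face by face: the interior diagonal parametrix $Q$, the Bessel model $G_{\sca}^0$ at $\sca$, the product-of-cones inverse $G_{\bfo}^{-2}$ at $\bfo$, the b-parametrix models $G_{\zf}^{-2}, G_{\zf}^0$ at $\zf$, and the $\rbo$/$\lbo$ terms $G_{\rbo}^{\nu-3}$, $G_{\lbo}^{\nu-3}$ for $\nu \in \mc{I}(P_b) \cap [\nu_0+m, \nu_0+m+2]$. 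Such a $G(k)$ exists because at each corner the neighboring models have already been shown to match.

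Next I would compute $E(k) := \text{Id} - (\Delta_q + k^2)G(k)$ and check that it is polyhomogeneous conormal on $\MMksc$ with index family $\mc{E}$ obeying \eqref{indexsetE}, namely $\mc{E}_{\sca} \geq 1$, $\mc{E}_{\zf}, \mc{E}_{\bfo} > 0$, $\mc{E}_{\rbo} > \nu_0+m-1$, $\mc{E}_{\lbo} > \nu_0+m+1$. The $\zf$, $\bfo$, and $\sca$ bounds follow because the leading models there invert the respective normal operators. The $\rbo$ and $\lbo$ bounds use the tangential identities $\Delta_q G^{\nu_0+1}_{\rbo} = -G^{\nu_0-1}_{\rbo}$ (Case~2, equation \eqref{DeltaGrbo2}) and its Case~1 analogue \eqref{DeltaGrbo}, together with the fact that $\Delta_q$ acting in the left variable annihilates the lower-order $\rbo$ models, and symmetrically for $\lbo$ (where the annihilation by $(k^{-2}\Delta_q+1)|_{\lbo}$ was checked using the Bessel-equation structure of $P_{\lbo}$).

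Having established the smallness of $E(k)$, I would apply the inversion result recalled in Section~\ref{pdcle}: since $\mc{E}$ is strictly positive at all the $k=0$ faces and $\mc{E}_{\rbo} + \mc{E}_{\lbo} > 2(\nu_0 + m) > 0$, the Neumann iterates $E(k)^N$ are Hilbert--Schmidt with norm tending to $0$, so $(\text{Id}-E(k))^{-1} = \text{Id} + S(k)$ lies in $\Psi_k^{*}(M)$ by \eqref{invde1-E}, and by the composition law \eqref{comp-if} the index family $\mc{S}$ of $S(k)$ inherits the same lower bounds as $\mc{E}$. Then $R_q(k) = G(k)(\text{Id} + S(k)) = G(k) + G(k)S(k)$, and a final application of \eqref{comp-if} gives the index family $\mc{R}$ claimed in the statement, with the understanding that the $\nu_0 + m - 3$ bound at $\rbo, \lbo$ is forced by the $G(k)$ factor and is not worsened by the $G(k)S(k)$ correction.

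The main obstacle is verifying that the corrective term $G(k)S(k)$ does not contaminate the asserted leading coefficients at $\zf$, $\bfo$, $\sca$, $\rbo$, $\lbo$. Concretely, I must show that in every extended-union formula of \eqref{comp-if} applied to the pair $(G(k), S(k))$, the contributions from $S(k)$ lie strictly deeper (by some $\mc{N}>0$) than the leading order of $G(k)$ at that face. For instance at $\rbo$ the relevant combinations $\mc{G}_{\zf} + \mc{S}_{\rbo}$ and $\mc{G}_{\rbo} + \mc{S}_{\bfo}$ must both exceed $\nu_0 + m - 3$, which follows from $\mc{S}_{\rbo} > \nu_0 + m - 1$ and $\mc{G}_{\zf} \geq -2$ (and symmetrically at $\lbo$), and similarly at $\zf$ and $\bfo$ using $\mc{S}_{\zf}, \mc{S}_{\bfo} > 0$. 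Once this bookkeeping is checked, the identification of the leading terms of $R_q(k)$ at each face with those of $G(k)$ is immediate, completing the proof.
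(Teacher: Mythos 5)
Your proposal is correct and follows essentially the same route as the paper: assemble the polyhomogeneous parametrix $G(k)$ from the matched face models, verify the error bounds \eqref{indexsetE} for $E(k)$, invert $\Id-E(k)$ by the Neumann series of Section \ref{pdcle}, and apply the composition law \eqref{comp-if} to $G(k)S(k)$ to confirm that the index family and leading terms at each face are those of $G(k)$. The order-counting you carry out at $\rbo$, $\lbo$, $\zf$, and $\bfo$ is exactly the "final application of the composition law" the paper invokes.
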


\begin{remark} The condition $0\notin\mc I(P_b)$ is equivalent to the statement that $\nu_0>0$, where $\nu_0$ is defined by \eqref{defnu0}, which is also equivalent to the condition \eqref{0notindroot} of the Introduction. 
We will show in Lemma \ref{wittconseq} that when $n$ is odd, $0\notin \mc{I}(P_b)$
and there is no zero-resonance for $\Delta_q$ if either $q\not=(n\pm 1)/2$ or if $q=(n\pm 1)/2$ and the modified Witt condition ${\rm Sp}_{\Lambda^{(n-1)/2}}(\Delta_N)\cap[0,3/4]=\emptyset$ holds. When $n$ is even, the same conclusion holds if $|q-n/2|>1$, if $q=n/2-1$ and $H^{n/2-1}(N)=0$, if $q=n/2+1$ and $H^{n/2}(N)=0$, or if $q=n/2$ and ${\rm Sp}_{\Lambda^{n/2}}(\Delta_N)\cap[0,1]=\emptyset$ (see Remark \ref{noresonanceneven}).  
For asymptotically Euclidean manifolds, these conditions hold; thus Theorem \ref{thm:nullspace} applies to asymptotically Euclidean manifolds whenever $n\geq 3$ (as long as $n_0\geq 3$). 
\end{remark}
\begin{remark}\label{rem5} Taking $\theta\in (-\pi/2,\pi/2)$ fixed, the same proof as above shows that $R_q(e^{i\theta}k)=(\Delta_q-k^2e^{2i\theta})^{-1}$ has exactly the same property as $R_q(k)$ in terms of conormal polyhomegeneity and the construction above is smooth in $\theta$. The construction remains essentially the same 
by replacing $k$ and $\kappa$ by $ke^{i\theta}$
and $ke^{i\theta}/x$, and using the fact that $I_{\nu}(\kappa e^{i\theta})$ and $K_{\nu}(e^{i\theta}k)$
satisfy all the needed properties for any $\theta\in(-\pi/2,\pi/2)$.
\end{remark}


\section{Riesz transform}\label{secRT}

\subsection{Definition of Riesz transorm on forms} 
First, let us define the operator 
\[D:=(d+\delta)\]
whose square $D^2$ is the Laplacian $\Delta_q$ when acting on $q$-forms.
To discuss the Riesz transform for $q$-forms, we first define, for $\eps>0$ small, the bounded operator on $L^2$
\[T_q^\eps:= D(\Delta_q+\eps)^{-1/2}.\]
Notice that $(T_q^\eps)^*T_q^\eps=\Delta_q(\Delta_q+\eps)^{-1}$ is a non-negative self-adjoint operator on $L^2$ with norm $1$ and with kernel $\ker_{L^2}(\Delta_q)$. For each $f\in L^2$, there exists a sequence $\epsilon_i$ going to zero for which $T_q^{\epsilon_i}f$ converges weakly to $h\in L^2(M;\Lambda_{\sca}^{q+1}\oplus \Lambda_{\sca}^{q-1})$, satisfying $||h||_{L^2}\leq ||f||_{L^2}$ and 
\[Dh= \Delta_q^{\demi}f \in H^{-1}(M;\Lambda_{\sca}^{q+1}\oplus \Lambda_{\sca}^{q-1}).\]
The limit $h$ is independent of the choice of $\epsilon_i$; to see this, note that since $\cjg T_q^\eps f,\psi\cjd=0$ for all $\eps>0$ and all $\psi$ in the kernel of $D$ on  
$L^2(M,\Lambda_{\sca}^{q+1}\oplus \Lambda_{\sca}^{q-1})$, we must have $\cjg h,\psi\cjd=0$. Taking another converging subsequence with limit $h'$, we observe that $D(h'-h)=0$ and $\cjg h'-h,\psi\cjd=0$ for all $\psi$ in the kernel of $D$, hence $h'=h$. We may therefore define the Riesz transform on $q$-forms by 
\[ T_qf:= h.\]
This is a linear map bounded on $L^2$ with norm $||T_q||_{L^2\to L^2}\leq 1$, and we denote it $T_q=D\Delta_q^{-1/2}$.

Notice that $T_q^\eps$ can be written in terms of the resolvent as the integral
\[T_q^\eps=\frac{2}{\pi}\int_{0}^{\infty}D(\Delta_q+\eps+k^2)^{-1}dk;\]
as $\epsilon$ goes to zero, for any $\alpha>0$, these converge as operators mapping $L^2(M)$ to the Sobolev space $H^{-\alpha}(M)$. We want to consider the weak limit as $\eps\to 0$, so we shall consider its Schwartz kernel.
We split the $k$-integral into an integral on $(0,k_0]$ and an integral on $[k_0,\infty)$ for some small $k_0$. By the arguments of \cite[Sec. 5.2]{gh1}, we have that 
$\frac{2}{\pi}\int_{k_0}^{\infty}DR_q(\sqrt{k^2+\eps})dk$
is a continous family for $\eps\in[0,1)$ of scattering pseudo-differential operators of order $-1$ in the sense of \cite{me3}. These are Calderon-Zygmund operators and hence bounded on $L^p$ for any $p\in(1,\infty)$. The value at $\eps=0$ is the following operator, which is also bounded on $L^p$:
\[T_q^{\rm hf}:=\frac{2}{\pi}\int_{k_0}^{\infty}DR_q(k)dk.\]
The more delicate part of the analysis concerns the low frequency region, and this is where we need our parametrix. We want to show that $\frac{2}{\pi}\int_{0}^{k_0}DR_q(\sqrt{k^2+\eps})dk$ has a limit as $\eps\to 0$ given by 
\begin{equation}\label{Tlf}
T_q^{\rm lf}:=\frac{2}{\pi}\int_{0}^{k_0}DR_q(k)dk,
\end{equation} 
and that the limit is well defined as a bounded operator on $L^p$ for some range of $p$ containing $2$. In fact we will simply show that \eqref{Tlf} is well defined, and from the proof it will be clear that the operators
$\frac{2}{\pi}\int_{0}^{k_0}DR_q(\sqrt{k^2+\eps})dk$
are bounded on $L^p$ uniformly down to $\eps=0$ for some interval of $p$ containing $2$.

\subsection{Indicial roots of $d$ and $\delta$}
To state the result about boundedness of Riesz transform, we need to define the index sets of $d$ and $\delta=d^*$ (or equivalently of $D:=d+\delta$). 
These operators act on sections of 
$\Lambda_{\sca}^q\otimes \Omega_\sca^{1/2}$ as described in Section \ref{laponforms}, but we can also see them as acting on $\Lambda_{\sca}^q\otimes \Omega_b^{1/2}$ by conjugating by $x^{n/2}$.\\
For $\nu\in I^j$, and $\omega_\nu\in E_{\nu}^j$ non zero, we see by using the expressions of $d,\delta$ in Section \ref{laponforms} (as before we use the isomorphism \eqref{decompos}) and performing a bit of algebra that
\begin{equation}\label{indicddelta}
\begin{gathered}
d(x^{\nu+\ndemi-1}\omega_\nu)=0\textrm{ near }\pl \bbar{M} \iff \nu\in I_d^j, \\
\delta(x^{\nu+\ndemi-1}\omega_\nu)=\mc{O}(x^{\nu+\ndemi+n_0})\textrm{ near }\pl \bbar{M} 
\iff \nu\in I^j_\delta
\end{gathered}
\end{equation}
where 
\begin{equation}\label{indrootofD}
\left\{\begin{array}{l}
I^1_d=\{-(\ndemi-q-1)\}\cap I^1\\ 
I^2_d=I^2\\
I^3_d =\{-(\sqrt{(\ndemi-q)^2+\alpha^2}-1); \alpha^2\in S^q_{d_N}\}\\  
I^4_d =\{\sqrt{(\ndemi-q)^2+\alpha^2}+1; \alpha^2\in S^q_{d_N}\}
\end{array}\right.,
\left\{\begin{array}{l}
I^1_\delta=I^1\\ 
I^2_\delta=\{\ndemi-q+1\}\cap I^2 \\
I^3_\delta= \{-(\sqrt{(\ndemi-q)^2+\alpha^2}-1); \alpha^2\in S^q_{d_N}\}\\  
I^4_\delta= \{\sqrt{(\ndemi-q)^2+\alpha^2}+1; \alpha^2\in S^q_{d_N}\}.
\end{array}\right.
\end{equation}
Let us define these indicial sets of $d$ and $\delta$ by  
\[ \mc{I}(d):=\cup_{j=1}^4I_d^j, \quad \mc{I}(\delta):=\cup_{j=1}^4I_\delta^j.\]
We ultimately want to know when a harmonic $q$-form $\omega\in x^{-\alpha}H_b^{0}$ (as a section of $\Omega_b^{1/2}$), for some $\alpha\in [\nu_0,\nu_0+2]$,  is such that 
$d\omega \in H_b^0$ and $\delta \omega\in H_b^0$. To that aim we define 
\begin{equation}\label{defindices}
\begin{gathered}
\nu_d:=-\max (\rr^-\cap \mc{I}(P_b)\setminus \mc{I}(d)), \quad
\nu_\delta:=-\max(\rr^-\cap \mc{I}(P_b)\setminus \mc{I}(\delta))\\
\nu_D:=\min(\nu_d,\nu_\delta).
\end{gathered}
\end{equation}
From these definitions, we see that for a harmonic $q$-form $\omega\in x^{-\mu-1-}H_b^{0}$ with $\mu\in[\nu_0,\nu_0+2)$ which satisfies
\[ 
\omega=\Big(\sum_{\substack{\nu \in \mc{I}(P_b)\\ -\mu\leq \nu\leq -\nu_0}}x^{\nu-1}\omega_\nu(y)+\mc{O}(x^{\eps})\Big)|dg_b|^\demi, \quad \omega_{-\mu}\not=0, \,\, \omega_\nu\in E_\nu
\]
for some $\eps>0$, we certainly have (recall $D=d+\delta$)
\begin{equation}\label{d+deltaomzero}
d\omega \in H_b^0\Longrightarrow \mu <\nu_d ,\quad  
 \delta \omega \in H_b^0\Longrightarrow  \mu< \nu_\delta, \quad  D\omega \in H_b^0\Longrightarrow \mu <\nu_D.
\end{equation}
Moreover, if $n_0>\nu_0+2$, then the error term in $\delta$ does not interfere and we in fact have
\begin{equation}\label{d+deltaom}
d\omega \in H_b^0\iff \mu <\nu_d ,\quad  
 \delta \omega \in H_b^0\iff  \mu< \nu_\delta, \quad D\omega \in H_b^0\iff \mu <\nu_D. 
\end{equation}

In many cases these indices can be expressed in terms of smallest eigenvalues:  if  $\la_q>1-|q-n/2|^2$ we can 
characterize $\nu_D$  by the formula \eqref{defnu0intro} in the Introduction.
For the asymptotically Euclidean case where $N$ is a disjoint union of $\sph^{n-1}$, this gives 
\begin{equation}\label{casrn}
\begin{gathered}
\nu_\delta=\left\{\begin{array}{cc}
n/2 & \textrm{ if }q>0\\
\emptyset & \textrm{ if }q=0
\end{array}\right., \quad \nu_d=\left\{\begin{array}{cc}
n/2 & \textrm{ if }q<n\\
\emptyset & \textrm{ if }q=n
\end{array}\right.\end{gathered}.
\end{equation}

\subsection{Main theorem}
We will show the following:
\begin{theorem}\label{mainthRT}
Let $(M^n,g)$ be asymptotically conic to order $n_0\geq 3$ with cross-section $N$. 
Assume that $0\notin\mc I(P_b)$ and that $\Delta_q$ has no zero-resonances. Finally, let
and $\nu_0$ defined by \eqref{defnu0} and
\begin{equation}\label{defofmth} 
\nu_{\ker}:= \min \Big(\nu_0+2, \max \{ \nu; \ker_{L^2}(\Delta_q)\subset x^{\nu+\ndemi-1}L^\infty(M,\Lambda^q_\sca)\})\Big).
\end{equation}
Then the Riesz transform $T=D\Delta_q^{-1/2}$ on $q$-forms is bounded on $L^p$ if 
\begin{equation}\label{suff}
 \frac{n}{n-(n/2+1-\nu_{\ker})_+}<p< \frac{n}{(n/2-\nu_0)_+}.
 \end{equation}
To get the precise interval of boundedness, assume that $n_0>\nu_0+2$ if $\nu_0<n/2$. Then\\ 
\textbf{Case $1$.} If $q<n/2-1$ and the natural map $H^{q+1}(\bbar{M},\pl\bbar{M})\to H^{q+1}(\bbar{M})$ in cohomology is not injective, or if 
$q>n/2+1$ and the natural map $H^{n-q+1}(\bbar{M},\pl\bbar{M})\to H^{n-q+1}(\bbar{M})$ in cohomology is not injective, then $T_q$ is bounded on $L^p$ if and only if \eqref{suff} holds.\\
\textbf{Case $2$.} In all other cases, let $\nu_D$ be the index defined by \eqref{defindices}; then $T_q$ is bounded on $L^p$ if and only if 
\[  \frac{n}{n-(n/2+1-\nu_{\ker})_+}<p< \frac{n}{(n/2-\min(\nu_D,\nu_{\ker}))_+}.\]
\end{theorem}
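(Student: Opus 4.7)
\textbf{Proof plan for Theorem \ref{mainthRT}.}

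The plan is to reduce everything to the low-frequency Riesz transform $T_q^{\rm lf}$ of \eqref{Tlf} and to study it via the polyhomogeneous description of $R_q(k)$ on $\MMksc$ obtained in Theorem~\ref{thm:nullspace}. The high-frequency piece $T_q^{\rm hf}$ is already known to be a scattering pseudodifferential operator of order $-1$ and thus bounded on every $L^p$ with $p\in(1,\infty)$, so the whole question is about $T_q^{\rm lf}$. The first concrete step is to apply $D=d+\delta$ in the left variable to the parametrix $G(k)$ and the correction terms from Section~\ref{resolvent-kernel}, keeping track of how $D$ acts on each model: at $\sca$ it produces the Euclidean Riesz kernel (order one singular, bounded on $L^p$ for all $1<p<\infty$); at $\bfo$ it acts on the Bessel-type expansions in \eqref{Qbfo} and \eqref{Gbfo}; at $\zf$ it annihilates the rank-one piece $G^{-2}_{\zf}=\sum_j\psi_j\otimes\psi_j$ because the $\psi_j$ are harmonic (hence closed and co-closed), but produces a non-trivial contribution from $G^{0}_{\zf}$; and at $\rbo,\lbo$ it acts on the explicit models $G_{\rbo}^{\nu-3}$, $G_{\rbo}^{\tilde\nu-1}$, $G_{\rbo}^{\nu_0+1}$, where \eqref{d+delta2exp}, \eqref{d+delta1}, \eqref{d+delta2} become the key formulas.

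Next I would integrate the resulting kernel against $dk$ on $(0,k_0]$. By the pushforward theorem for polyhomogeneous conormal distributions, the $k$-integrated kernel is a polyhomogeneous conormal distribution on the (closed) b-double space $M^2_b$, with index sets at $\lb,\rb,\bfa$ computed from the orders of $DR_q(k)$ at the faces $\lbo,\rbo,\bfo,\zf,\sca$ of $\MMksc$ as described in Section~\ref{opecal}. Concretely, the $\rbo$ asymptotic controls the $\rb$ index, the $\lbo$ asymptotic controls $\lb$, and the $\bfo,\sca$ faces together give the $\bfa$ expansion. One then establishes $L^p$-boundedness of $T_q^{\rm lf}$ via the usual Schur-type criterion: an operator whose kernel on $M^2_b$ behaves like $x^a (x')^b$ near $\lb\cap\bfa$ and like $(x/x')^c$ away from the diagonal is bounded on $L^p$ provided $a>n/p-n/2$, $b>n/p'-n/2$ (with $p'=p/(p-1)$), and the on-diagonal singularity satisfies the standard Calder\'on--Zygmund bounds inherited from $\sca$. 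Plugging in the leading orders gives exactly the condition $\nu_0>n/2-n/p'$ on one side and $\nu_{\ker}>n/2-n/p$ (or $\min(\nu_D,\nu_{\ker})>n/2-n/p$) on the other, yielding the claimed interval.

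The case dichotomy arises when we analyze what order in $k$ the kernel $DR_q(k)$ actually starts at near $\rbo$ (and by symmetry $\lbo$). If $DG_{\rbo}^{\nu_{\ker}-3}\neq 0$, we get the lower exponent governed only by $\nu_{\ker}$, and the upper exponent is governed by $\nu_0$; this is Case~1, and one checks that the non-vanishing of $DG_{\rbo}^{\nu_{\ker}-3}$ corresponds exactly to the non-injectivity of the cohomology maps $H^{q+1}(\bbar M,\partial\bbar M)\to H^{q+1}(\bbar M)$ (for $q<n/2-1$) or $H^{n-q+1}(\bbar M,\partial\bbar M)\to H^{n-q+1}(\bbar M)$ (for $q>n/2+1$), by translating the indicial formulas \eqref{indicddelta}--\eqref{indrootofD} into Hodge-theoretic statements about $L^2$-harmonic forms. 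If, instead, $D$ kills the would-be leading contribution, one must expand one more order in $k$, and the stronger indicial root $\nu_D$ from \eqref{defindices} kicks in; this is Case~2 and requires the extra assumption $n_0>\nu_0+2$ to ensure that the error term in $\delta$ (see \eqref{d+deltaomzero}--\eqref{d+deltaom}) does not interfere with the computation.

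Finally, necessity is proved by the standard rescaling/test-function argument: take a cut-off bump supported in $\{x\leq \epsilon\}$, apply $T_q$ to it using the explicit leading term of $DR_q(k)$ near $\rbo$ (resp.\ $\lbo$), and compare $L^p$ norms. For $p$ outside the claimed range, the leading asymptotic forces a logarithmic or power divergence, contradicting boundedness. The main obstacle, and where the argument is most delicate, is Step~3 above: one must verify in Case~2 that $DG_{\rbo}^{\nu_{\ker}-3}$ genuinely vanishes in all the required senses, which amounts to showing that the corresponding element $\omega_{\nu_{\ker}-2}$ in \eqref{omeganu} is both closed and co-closed to the required order, and then that the first non-vanishing contribution from the subleading models \eqref{Grbonu-3} reflects exactly the indicial root $\nu_D$. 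This is where the hypotheses $n_0>\nu_0+2$ and the cohomological condition are really used, and translating between the spectral side (indicial roots of $P_b$) and the topological side (maps between relative and absolute cohomology of $\bbar M$) is the technically heaviest part of the proof.
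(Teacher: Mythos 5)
Your architecture coincides with the paper's own proof: reduce to $T_q^{\rm lf}$ from \eqref{Tlf}, determine the leading orders $\mu_{\rbo},\mu_{\lbo}$ of $DR_q(k)$ at $\rbo$ and $\lbo$, convert these into pointwise kernel bounds plus a Calder\'on--Zygmund estimate near the diagonal, and trace the case dichotomy back to the cohomology maps via the identification of $L^2$-harmonic forms with relative/absolute cohomology. There is, however, a genuine error in your treatment of $\lbo$. The parenthetical ``and by symmetry $\lbo$'' is wrong: $R_q(k)$ is symmetric in $(z,z')$ but $DR_q(k)$ is not, since $D$ acts only on the left factor. At $\rbo$ the left variable $z$ is interior to the face and the question is whether the global forms $x^{\ndemi-1}\omega_{\nu-2}(\cdot,y')$ are closed and coclosed --- this is where the cohomology of $\bbar M$ enters. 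At $\lbo$ the left variable is the model-cone variable $(\kappa,y)$, and the question is whether the indicial operators $A'\kappa^{-1}$ and $B\kappa^{-1}$ of \eqref{AB} can simultaneously annihilate the terms $\kappa \til{K}_\nu(\kappa)a_\nu(y)$ of the leading model; the paper shows they cannot, because $E^1\cap E^2=\{0\}$, so $\mu_{\lbo}=\nu_{\ker}-2$ unconditionally. This asymmetry is precisely why the two endpoints of the $L^p$ interval have different shapes ($\nu_{\ker}$ governing the lower endpoint, $\nu_0$ or $\min(\nu_D,\nu_{\ker})$ the upper); arguing ``by symmetry'' would give the wrong lower exponent.

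The second gap is in the necessity direction. Non-vanishing of the leading coefficient $DG_{\rbo}^{\mu_{\rbo}}$ of $DR_q(k)$ at $\rbo$ does \emph{not} by itself imply non-vanishing of the leading coefficient of the $k$-integrated kernel: as in \eqref{integratedthing}, the latter is $\int_0^\infty {\kappa'}^{\mu_{\rbo}}DG_{\rbo}^{\mu_{\rbo}}\,d\kappa'$, and this Bessel-type integral could a priori vanish even though the integrand does not. Ruling this out is exactly Lemma \ref{asymptoticd+deltaG}, which needs explicit evaluations such as $\int_0^\infty\kappa^\nu(-\kappa\partial_\kappa+c)\til{K}_\nu(\kappa)\,d\kappa\neq 0$ together with a linear-independence argument exploiting the distinct spatial asymptotics of the several terms in $G_{\rbo}^{\nu_{\ker}-3}$ and $G_{\lbo}^{\nu_{\ker}-3}$. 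Your test-function argument presupposes this non-vanishing and cannot conclude without it. A smaller issue: your translation of the kernel orders into exponent conditions (e.g.\ ``$\nu_{\ker}>n/2-n/p$'') does not reproduce the stated endpoints; the correct bookkeeping sets $\alpha=n/2-1-\mu_{\rbo}$, $\beta=n/2-1-\mu_{\lbo}$ and yields boundedness for $n/(n-\beta_+)<p<n/\alpha_+$ as in Proposition \ref{boundednessT2}.
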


\subsection{Proof of Theorem \ref{mainthRT}}
The main step in our analysis is to describe the asymptotic behaviour of the Schwartz kernel of $T_q^{\rm lf}$ to 
deduce its sharp $L^p$ boundedness. 
We start with the following:
\begin{proposition}\label{basicinfo} 
The operator $DR_q(k)$ is a pseudo-differential operator in our calculus of order $-1$ with index set bounded below by $0$ at $\zf$, $-1$ at $\bfo$, $1$ at $\sca$, $\nu_{\ker}-3$ at $\rbo$, and $\nu_{\ker}-2$ at 
$\lbo$. Moreover, at $\lbo$ and $\rbo$ the leading nontrivial coefficient of the Schwartz kernel of $DR_q(k)$ is the same as that of $DG(k)$. 
\end{proposition}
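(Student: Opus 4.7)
The plan is to combine the structure of $R_q(k)$ from Theorem \ref{thm:nullspace} with the composition law \eqref{comp-if} and a face-by-face analysis of how $D=d+\delta$ acts on the parametrix models. From the parametrix construction, write $R_q(k) = G(k) + G(k)\,S(k)$, where $\Id + S(k) = (\Id - E(k))^{-1}$ comes from Neumann series and $S(k)$ has index family satisfying the strict positivity bounds in \eqref{indexsetE}. Applying $D$ gives
\[DR_q(k) = DG(k) + DG(k)\circ S(k),\]
and by the composition law \eqref{comp-if} the second summand has indices at every boundary face strictly exceeding those of $DG(k)$, since each extended-union term involves adding a strictly positive $\mc S$-component. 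It therefore suffices to determine the orders of $DG(k)$ at each face, and the leading coefficients of $DR_q(k)$ at $\rbo$ and $\lbo$ automatically coincide with those of $DG(k)$, which is the second assertion of the proposition. The overall pseudo-differential order of $DR_q(k)$ is $-2+1 = -1$ since $D\in\Diff_\sca^1(M)$.

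Next I analyze $DG(k)$ face by face, using $\nu_0+m = \nu_{\ker}$, which follows from the $L^\infty$-characterization of $\til\nu$ recalled just after \eqref{tilnu}. At $\bfo$, working in coordinates $(\kappa,\kappa',y,y')$ with $\kappa = k/x$, a direct computation gives $x^2\partial_x = -k\,\partial_\kappa$ and $x\,\partial_{y_i} = (k/\kappa)\,\partial_{y_i}$, so every generator of $\mc{V}_\sca$ carries a factor of $k\sim\rho_\bfo$; thus $D$ applied to the order $-2$ term $G_\bfo^{-2}$ of \eqref{Gbfo} yields $\bfo$-order $\geq -1$. An analogous argument at $\sca$, where $D$ is scattering of order $1$ applied to the Euclidean model $G_\sca^0 = (\Delta_q^{\rr^n}+k^2)^{-1}$, improves the $\sca$-order to $\geq 1$. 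At $\zf$, the leading term $G_\zf^{-2} = \sum_j \psi_j\otimes\psi_j$ of \eqref{Gzf} is annihilated by $D$, because $\|D\psi_j\|^2_{L^2} = \langle\Delta_q\psi_j,\psi_j\rangle = 0$ for the harmonic forms $\psi_j$. Since $G_\zf^\alpha = 0$ for $\alpha\in(-2,0)$ by \eqref{Gzf}, the next nontrivial contribution is $G_\zf^0$, of order $0$ at $\zf$, and the explicit form of $G_\zf^0$ shows $DG_\zf^0$ is nontrivially of order $0$, giving index $\geq 0$.

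At $\rbo$, the left variable $z$ is interior to $\bbar M$ on the interior of $\rbo$, so $D$ acts as a smooth differential operator in $z$ and picks up no factor of $\rho_\rbo$; the index therefore remains $\nu_0+m-3 = \nu_{\ker}-3$. At $\lbo$, by contrast, the left variable approaches $\pl\bbar M$, and in coordinates $(z',y,\kappa)$ with $\rho_\lbo = k$ the same computation $x^2\partial_x = -k\,\partial_\kappa$ and $x\,\partial_{y_i} = (k/\kappa)\,\partial_{y_i}$ shows that each generator of $\mc{V}_\sca$ carries a factor of $\rho_\lbo$; hence $D$ raises the $\lbo$-order by one, yielding $\geq\nu_{\ker}-2$. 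The delicate step, to my mind, is the $\zf$ reduction from $-2$ to $0$ (rather than the generic $-1$), which relies both on the harmonic-form identity $D\psi_j = 0$ and on the explicit absence of intermediate $G_\zf^\alpha$ contributions between orders $-2$ and $0$; the remaining face-by-face bookkeeping is routine given the models built in Section \ref{resolvent-kernel} and the composition law \eqref{comp-if}.
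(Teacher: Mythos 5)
Your strategy is essentially the paper's: write $DR_q(k)=DG(k)+DG(k)\,S(k)$, dispose of the composition term via \eqref{comp-if}, and read off the orders of $DG(k)$ face by face from the degeneration of $D$ where $x\to 0$ together with $DG_{\zf}^{-2}=0$. The coordinate computations you give at $\bfo$ and $\lbo$ are just the explicit form of the paper's factorization $D=x\cdot(x^{-1}D)$ with $x^{-1}D\in\Diff^1_b$, and the $\zf$ and $\rbo$ discussions match.

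There is, however, a gap in your treatment of the ``moreover'' clause. To conclude that the leading nontrivial coefficient of $DR_q(k)$ at $\rbo$ (resp.\ $\lbo$) is that of $DG(k)$, you must show that the index set of $DG(k)\circ S(k)$ at that face exceeds the leading \emph{nontrivial} order $\mu_{\rbo}$ (resp.\ $\mu_{\lbo}$) of $DG(k)$, not merely the a priori lower bound $\nu_{\ker}-3$ (resp.\ $\nu_{\ker}-2$). This requires an \emph{upper} bound on $\mu_{\rbo}$ and $\mu_{\lbo}$, which you never establish: by \eqref{d+delta2exp} the terms $DG^{\nu-3}_{\rbo}$ may vanish for $\nu<\nu_{\ker}$, so one needs \eqref{d+delta1}/\eqref{d+delta2} (i.e.\ $DG_{\rbo}^{\nu_{\ker}-1}\neq 0$) to get $\mu_{\rbo}\le\nu_{\ker}-1$, and at $\lbo$ one needs the nonvanishing of $(A'+B)G_{\lbo}^{\nu_{\ker}-3}$ (proved in the paper via $E^1\cap E^2=\{0\}$) to pin down $\mu_{\lbo}=\nu_{\ker}-2$. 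Your justification of the composition bound is also imprecise: in $\mc C_{\rbo}=(\mc A_{\zf}+\mc S_{\rbo})\,\extunion\,(\mc A_{\rbo}+\mc S_{\bfo})$ the first term does \emph{not} involve adding a positive $\mc S$-component ($\mc S_{\rbo}$ is only $>\nu_{\ker}-1$); one must instead combine $\mc A_{\zf}\ge 0$ with $\mc S_{\rbo}>\nu_{\ker}-1\ge\mu_{\rbo}$, which is exactly where the missing upper bound enters. A secondary remark: your ``analogous argument'' at $\sca$ does not literally carry over, since the lifted scattering vector fields are nondegenerate on the interior of $\sca$ (that face resolves the scattering structure); the gain of one order there has to come from the vanishing of the factor $x$ at $\sca$ in $D=x\cdot(x^{-1}D)$, not from a degeneration of $\mc V_{\sca}$.
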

\begin{proof} 
First notice that in the notation of Section \ref{resolvent-kernel}, $\nu_{\ker}=\nu_0+m$. Observe that $DR_q(k)=DG(k)+(DG(k))S(k)$, where $S(k)$ is defined in the previous section, so it is certainly an element of the calculus. We first analyze $DG(k)$. The operator $d+\delta=x.x^{-1}D$ is a first order scattering operator, with $x^{-1}D$ being an operator  
in ${\rm Diff}_b^1$. This algebra preserves conormal polyhomogeneity (with orders) on $M^2_{k,b}$ since $\mc{V}_b$ lifted from the left to $M^2_{k,b}$ consists of vector fields tangent to all boundary hypersurfaces. Thus $D$ acting in the left variable increases the index sets at $\lbo$ and $\bfo$ by $1$ and the pseudo-differential order (singularity at the diagonal) by $1$, it preserves the index set at $\sca$. Moreover,  
$DG_{\zf}^{-2}=0$ since $G_{\zf}^{-2}$ is the orthogonal projector on the $L^2$ kernel of $\Delta_q$ (thus of $D$), we see that $DG(k)$ has index set bounded below by $0$ at $\zf$. Finally, let $\mu_{\rbo}$ and $\mu_{\lbo}$ be the leading nontrivial orders of $DG(k)$ at $\rbo$ and $\lbo$ respectively. From \eqref{d+delta1} and \eqref{d+delta2}, we know that $\mu_{\rbo}\in[\nu_{\ker}-3,\nu_{\ker}-1]$, and we will prove that $\mu_{\lbo}=\nu_{\ker}-2$ (in fact, we will compute both $\mu_{\rbo}$ and $\mu_{\lbo}$ explicitly; see Lemma \ref{idleadingorder}). Since the index set $\mc{S}$ of $S(k)$ has $\mc{S}\geq\mc{E}$, we read off from the composition law \eqref{comp-if} that the leading order of $(DG(k))S(k)$ at $\lbo$ is at least $\min(\mu_{\lbo}+\eps,\nu_{\ker}+\eps)$, which is greater than $\mu_{\lbo}$. Similarly, the leading order of $(DG(k))S(k)$ at $\rbo$ is at least $\min(\nu_{\ker}-1+\eps,\mu_{\rbo}+\eps)$, which is greater than $\mu_{\rbo}$. This completes the proof. \end{proof}

As in the proof of the Proposition, we define $\mu_{\rbo}$, $\mu_{\lbo}$ by 
\[ \begin{gathered}
\mu_{\rbo} =\textrm{ the smallest elements of the index set of } DR_q(k) \textrm{ at }\rbo, \\ 
 \mu_{\lbo} =\textrm{ the smallest elements of the index set of } DR_q(k) \textrm{ at }\lbo .
\end{gathered} \]
We must compute $\mu_{\rbo}$ and $\mu_{\lbo}$.
The following two lemmas are the necessary ingredients.
\begin{lemma}\label{idleadingorder} For each degree $q$, let $e_q:H^{q}(\bbar{M},\pl\bbar{M})\rightarrow H^{q}(\bbar{M})$ be the map coming from the long exact sequence for relative cohomology. Assume $n_0\geq 3$.

a) In all cases, $\mu_{\lbo}=\nu_{\ker}-2$ and $\mu_{\rbo}\geq\nu_0-1$.

b) Suppose $n_0>\nu_0+2$. If $q<n/2-1$ and $e_{q+1}$ is not injective, or if $q>n/2+1$ and $e_{n-q+1}$ is not injective, then $\mu_{\rbo}=\nu_0-1$.

c) If $n_0>\nu_0+2$ and b) does not hold, $\mu_{\rbo}=\min(\nu_D,\nu_{\ker})-1$, with $\nu_D$ defined by \eqref{defindices}.
\end{lemma}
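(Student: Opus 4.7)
The plan is to analyze $DG(k)$ at $\lbo$ and $\rbo$, since by Proposition~\ref{basicinfo} the leading orders $\mu_{\lbo}, \mu_{\rbo}$ of $DR_q(k)$ coincide with those of $DG(k)$. The key tools are the factorization $D = xA + Bx$ with $A,B \in \Diff_b^1$ from Section~\ref{laponforms}, the explicit models at $\rbo$ and $\lbo$ constructed in Section~\ref{resolvent-kernel}, and the indicial data \eqref{indicddelta}, \eqref{L2decomp}.

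For the $\lbo$ claim in (a): since $x = k/\kappa$ vanishes to first order at $\lbo$ (with $k$ the chosen defining function), the factor $x$ in $D$ raises the $\lbo$-order by one, giving $\mu_{\lbo} \geq \nu_{\ker} - 2$ automatically. For equality I would show $DG_{\lbo}^{\nu_{\ker}-3} \neq 0$ by contradiction: the $\lbo$-analog of \eqref{DeltaGrbo}/\eqref{DeltaGrbo2}, namely $\Delta_q G_{\lbo}^{\nu_{\ker}-1} = -G_{\lbo}^{\nu_{\ker}-3}$ (obtained by swapping variables and using self-adjointness of $\Delta_q$), would force $\Delta_q (DG_{\lbo}^{\nu_{\ker}-1}) = 0$ under the contradiction hypothesis. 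But $DG_{\lbo}^{\nu_{\ker}-1} \neq 0$ by the $\lbo$-analog of \eqref{d+delta1}/\eqref{d+delta2}, and this harmonic form has explicit $\kappa \tilde K_\mu(\kappa) c_\mu(y)$-structure on the cone $M_0$. The absence of $\Delta_q$-harmonic forms with such precise asymptotic shape, read off from the orthogonal decomposition \eqref{L2decomp} and standard modified Bessel identities, yields the contradiction.

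For the $\rbo$ lower bound in (a), formula \eqref{d+delta2exp} reads
\[(d+\delta)G_{\rbo}^{\nu-3} = \bigl[(d+\delta)(x^{-1}\omega_{\nu-2})\bigr]\, \kappa' \tilde K_{\nu-2}(\kappa'),\]
which vanishes for $\nu-2 < \nu_0$ by the convention $\omega_\mu = 0$ there. Since the extra term $G_{\rbo}^{\nu_{\ker}-1}$ sits at order $\nu_{\ker}-1 \geq \nu_0 - 1$, every contribution to $DG(k)$ at $\rbo$ has order $\geq \nu_0 - 1$.

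For (b) and (c), the precise value of $\mu_{\rbo}$ depends on when $D(x^{-1}\omega_{\nu-2}) \neq 0$ for $\nu-2 \geq \nu_0$. The form $x^{-1}\omega_{\nu_0}(\cdot, y')$ is a smooth harmonic $q$-form on $M$ with leading asymptotic $(2\nu_0)^{-1} x^{-\nu_0-1} \Pi_{E_{\nu_0}}(y,y')\,|dg_b|^{1/2}$. Under the hypothesis $n_0 > \nu_0+2$, criterion \eqref{d+deltaom} gives $D(x^{-1}\omega_\mu) \in H_b^0 \iff \mu < \nu_D$; for such $\mu$, the form $D(x^{-1}\omega_\mu)$ is closed, coclosed, in $H_b^0$, and a Stokes-type pairing argument (analogous to the construction of $\chi_j$ in Section~\ref{subsect:zf}), combined with the no-zero-resonance hypothesis, forces it to vanish. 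For $\mu \geq \nu_D$ the leading asymptotic of $D(x^{-1}\omega_\mu)$ survives, contributing at order $\nu_D - 1$. Combined with the $\nu_{\ker}-1$ contribution from the extra $\rbo$-term (non-zero by \eqref{d+delta1}/\eqref{d+delta2}), this yields $\mu_{\rbo} = \min(\nu_D, \nu_{\ker}) - 1$ in case (c). In case (b), the non-injectivity of $e_{q+1}$ (or its Hodge-dual $e_{n-q+1}$) produces, via the long exact sequence of $(\bbar{M}, \pl\bbar{M})$ and Hodge theory on asymptotically conic manifolds \`a la Hausel-Hunsicker-Mazzeo, an obstruction forcing $D(x^{-1}\omega_{\nu_0}) \neq 0$, giving $\mu_{\rbo} = \nu_0 - 1$. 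The main obstacle is this cohomological identification in case (b): translating non-injectivity of $e_{q+1}$ into the asymptotic non-vanishing statement requires identifying $E_{\nu_0}$ via its decomposition \eqref{E^j} with the appropriate piece of relative cohomology, and tracing through the indicial data \eqref{indrootofD}.
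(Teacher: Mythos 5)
Your overall skeleton matches the paper's proof (reduce to $DG(k)$ via Proposition \ref{basicinfo}, use \eqref{d+delta2exp} and the convention $\omega_\mu=0$ for $\mu<\nu_0$ for the lower bound in (a), and reduce (b)--(c) to deciding when $D(x^{\ndemi-1}\omega_\mu(\cdot,y'))=0$), but two of the three substantive steps have genuine gaps. First, the $\lbo$ equality: the identity you invoke, $\Delta_q G_{\lbo}^{\nu_{\ker}-1}=-G_{\lbo}^{\nu_{\ker}-3}$, is obtained from \eqref{DeltaGrbo} by swapping variables and therefore holds with $\Delta_q$ acting in the \emph{right} variable $z'$, whereas $D$ in $DR_q(k)$ acts on the left. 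In the left variable the model operator $(k^{-2}\Delta_q+1)|_{\lbo}$ \emph{annihilates} all the $G_{\lbo}^{\nu-3}$ (this is how one sees the error term is small at $\lbo$), so no contradiction can be extracted from your scheme, and "absence of harmonic forms of this shape" is the wrong statement -- the terms $\kappa\til{K}_\nu(\kappa)a_\nu(y)\otimes\phi(z')$ \emph{are} harmonic; the issue is whether they can be simultaneously closed and coclosed. The paper's argument is direct: writing $I_{\lbo}(k^{-1}d)=A'\kappa^{-1}$ and $I_{\lbo}(k^{-1}\delta)=B\kappa^{-1}$ as in \eqref{AB}, such a term is killed by the first only if $a_\nu\in E^2$ (or $F(\kappa)$ is a power of $\kappa$, which $K_\nu$ is not) and by the second only if $a_\nu\in E^1$; since $E^1\cap E^2=\{0\}$, $(A'+B)G_{\lbo}^{\nu_{\ker}-3}\neq0$.

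Second, and more seriously, your mechanism for case (c) -- that for $\mu<\min(\nu_D,\nu_{\ker})$ a "Stokes-type pairing argument combined with the no-zero-resonance hypothesis forces $D(x^{\ndemi-1}\omega_\mu)$ to vanish" -- cannot work. By \eqref{d+deltaom} the form $dv_{\mu,y'}$ is then a genuine $L^2$ harmonic $(q+1)$-form; since $v_{\mu,y'}$ itself is \emph{not} in $L^2$, the integration by parts carries boundary terms that do not vanish, and indeed case (b) is precisely the situation in which this harmonic form is nonzero, so any argument that ignores the cohomology proves too much. The correct mechanism is cohomological: after Melrose's $F_s^*$ regularization, $dv_{\mu,y'}$ is an exact smooth form up to the boundary, so its class under the Hausel--Hunsicker--Mazzeo isomorphism \eqref{Th.hhm} dies in absolute cohomology; for $q\geq n/2-1$ the HHM target is absolute cohomology and the form vanishes, while for $q<n/2-1$ the target is $H^{q+1}(\bbar{M},\pl\bbar{M})$ and one concludes only when $e_{q+1}$ is injective. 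For case (b), non-injectivity of $e_{q+1}$ forces $\mc{H}^q(N)\neq0$, hence $\nu_0=n/2-q-1$ and $2\nu_0 v_{\nu_0,y'}|_{x=0}=\Pi_{E_{\nu_0}}(\cdot,y')$, and $[dv_{\nu_0,y'}]$ is the image of $[v_{\nu_0,y'}|_{\pl\bbar{M}}]$ under the connecting map $H^q(\pl\bbar{M})\to H^{q+1}(\bbar{M},\pl\bbar{M})$; if all these vanished, exactness of \eqref{les} would make $e_{q+1}$ injective, a contradiction. The $\delta$ half is handled by applying $\star$ and Poincar\'e duality. You correctly flag this identification as "the main obstacle," but it is the heart of parts (b) and (c) and is left undone, while the mechanism you do propose for (c) is false.
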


\begin{proof} We need to determine as far as possible the leading-order terms of the Schwartz kernel of $DG(k)$ at the faces $\lbo$ and $\rbo$.\\

\emph{Asymptotic at $\lbo$.}
It is direct to see that $I_{\lbo}(k^{-1}d)$ and $I_{\lbo}(k^{-1}\delta)$ do not both kill the leading term 
$G_{\lbo}^{\nu_{\ker}-3}$ of $R_q(k)$ at $\lbo$.
Indeed, in the decomposition \eqref{decompos},  
$I_{\lbo}(k^{-1}d)= A'\kappa^{-1}$ and $I_{\rbo}(k^{-1}\delta)= B\kappa^{-1}$ where 
\begin{equation}\label{AB} 
A'=\left(\begin{array}{cc}
d_N & 0\\
-\kappa\pl_\kappa+\ndemi -q -1& -d_N
\end{array}\right), \quad  B=\left(\begin{array}{cc}
\delta_N & \kappa\pl_\kappa+\ndemi-q+1\\
0 & -\delta_N
\end{array}
\right).\end{equation} Now consider an element of the form $F(\kappa)a(y)\otimes \phi(z')$ with $a\in \cup_{i=1}^4 E^i$, 
$F\in C^\infty((0,\infty))$ and $\phi\in x^{-m}H_b^0$ for some $m>0$. It is killed by $I_{\lbo}(k^{-1}d)$ if and only if either $a\in E^2$ or $a\in E^{3}\cup E^4$ with $F(\kappa)$ a power of $\kappa$. Similarly, those killed by $I_{\lbo}(k^{-1}\delta)$ must have either $a\in E^1$ or 
$a\in E^{3}\cup E^4$ with $F(\kappa)$ a power of $\kappa$. In particular, since $G_{\lbo}^{\nu_{\ker}-3}$ is a sum of elements of the form $\kappa K_{\nu}(\kappa)a_\nu(y)\otimes \phi(z')$ with $a_\nu\in E_\nu$ for some $\nu\in \mc{I}(P_b)$, and since $E^1\cap E^2=\{0\}$,
we have that $DG(k)$ (and hence $DR(k)$ by Proposition \ref{basicinfo}) has a non-vanishing term at $\lbo$ at order $\nu_{\ker}-2$ given by 
$(A'+B)(G_{\lbo}^{\nu_{\ker}-3})$. Hence $\mu_{\lbo}=\nu_{\ker}-2$, which proves the first claim of a) in Lemma \ref{idleadingorder}.\\

\emph{Asymptotic at $\rbo$.} Applying $D$ to $G(k)$ and considering the asymptotic expansion at $\rbo$, we have using \eqref{d+delta2exp}
\[\begin{split}
DG(k)= & \sum_{\substack{\nu \in \mc{I}(P_b)\\ \nu_{\ker}\leq \nu<\nu_{\ker}+2}}k^{\nu-3}
D(x^{\ndemi-1}\omega_{\nu-2}(z,y'))|dg|^\demi {\kappa'}\til{K}_{\nu-2}(\kappa')\Big|\frac{d\kappa'}{{\kappa'}}dy'\Big|^\demi+\\
 & + k^{\nu_{\ker}-1}D(G_{\rbo}^{\nu_{\ker}-1})+\mc{O}(\rho_{\rbo}^{\nu_{\ker}-1+\eps})
\end{split};\]
moreover, by \eqref{d+delta1} and \eqref{d+delta2}, $DG_{\rbo}^{\nu_{\ker}-1}\not=0$.
The problem of determining $\mu_{\rbo}$ thus reduces to finding when $D(x^{\ndemi-1}\omega_{\nu-2}(\cdot,y'))=0$ for all $y'\in N$. Note that since $\omega_{\nu-2}$ is zero by definition whenever $\nu-2<\nu_0$, there are no terms at order less than $\nu_0-1$; hence $\mu_{\rbo}\geq\nu_0-1$, which ends the proof of a).

Assume now that $n_0>\nu_0+2$, and let $\mu=\nu-2$. From \eqref{d+deltaom}, we see that  $d(x^{\ndemi-1}\omega_{\nu-2}(\cdot,y'))\in 
L^2$ for all $y'\in N$ if and only if $\mu<\nu_d$ and  
$\delta(x^{\ndemi-1}\omega_{\nu-2}(\cdot,y'))\in 
L^2$ if and only if $\mu<\nu_\delta$. Fix $y'\in \pl \bbar{M}$ and define \[v_{\mu,y'}:=x^{\ndemi-1}\omega_{\mu}(\cdot,y').\]

\emph{Action of $d$.} Suppose that $\mu\in[\nu_0,\min(\nu_d,\nu_{\ker}))$.
Recall the result of Hausel-Hunsicker-Mazzeo \cite[Thm. 1.A]{hhm}: there is a canonical linear isomorphism  
\begin{equation}\label{Th.hhm} 
\ker_{L^2}(\Delta_q) \to \left\{\begin{array}{ll}
H^q(\bbar{M},\pl\bbar{M}) & \textrm{ if }q<n/2\\
{\rm Im}(H^{q}(\bbar{M},\pl\bbar{M})\to H^{q}(\bbar{M})) & \textrm{ if }q=n/2\\
H^q(\bbar{M}) & \textrm{ if }q>n/2
\end{array}\right.
\end{equation}
where $H^q(\bbar{M})$ is the absolute cohomology and $H^{q}(\bbar{M},\pl\bbar{M})$ the relative.
Since $dv_{\mu,y'}$ is an $L^2$-harmonic form, we see that it represents a class $[dv_{\mu,y'}]$ in $H^{q+1}(\bbar{M},\pl\bbar{M})$ if $q<n/2-1$ and
a class in $H^{q+1}(\bbar{M})$ if $q\geq n/2-1$. For $q=n/2-1$, it is in the image of the extension map 
$e_{q+1}:H^{q+1}(\bbar{M},\pl\bbar{M})\to H^{q+1}(\bbar{M})$ arising in the long exact sequence for relative cohomology:
\begin{equation}\label{les} 
\dots \to H^{q}(\pl \bbar{M})\to H^{q+1}(\bbar{M},\pl\bbar{M})\to H^{q+1}(\bbar{M})\to H^{q+1}(\pl \bbar{M})\to \dots .
\end{equation}

To compute $[dv_{\mu,y'}]$, we first note that $v_{\mu,y'}$ is only conormal at the boundary rather than smooth. However, we may apply the argument of Melrose \cite[Lemma 6.11]{me} in order to regularize $v_{\mu,y'}$ without changing the class. Namely, we use the map $F_s:M\to M$ defined by 
$\pl_sF_s^*\phi:=F_s^*\pl_x\phi$ for $s>0$ small (with $F_0:={\rm Id}$) to show that 
$dF^*_s(v_{\mu,y'})=F^*_sdv_{\mu,y'}$
is an exact smooth form on $M$ (up to the boundary). By the fact that $F_s^*$ induces the identity in exact cohomology $H^{*}(M)$,we have that $[dv_{\mu,y'}]=0$ in $H^{q+1}(\bbar{M})$ if $q\geq n/2-1$
and $e_{q+1}([dv_{\mu,y'}])=0$ in $H^{q+1}(\bbar{M})$ if $q< n/2-1$.
If $q\geq n/2-1$ this implies that $dv_{\mu,y'}=0$. 

On the other hand, if $q<n/2-1$, we consider two cases. First, if $e_{q+1}:H^{q+1}(\bbar{M},\pl\bbar{M})\to H^{q+1}(\bbar{M})$ is injective, then $dv_{\mu,y'}=0$. 
If on the contrary it is not injective, then we must have $H^{q}(\pl \bbar{M})=\mc{H}^q(N)\not=0$, and then \eqref{I1I2} and \eqref{i3} imply that $\nu_0=n/2-q-1$. Then the form $v_{\nu_0,y'}$ is conormal at the boundary and is in $C^{0}(M,\Lambda^q)$ with leading behaviour $\omega_{\nu_0,y'}|_{\pl \bbar{M}}\in \ker \Delta_{N}=\mc{H}^q(N)\simeq H^q(\pl \bbar{M})$. As above, one can regularize it to make it smooth without changing the class in $H^q(\bbar{M})$, and so the class of $dv_{\nu_0,y'}$ in $H^{q+1}(\bbar{M},\pl\bbar{M})$ corresponds precisely to the image of $[v_{\nu_0,y'}|_{\pl \bbar{M}}]$ under the map $H^{q}(\pl \bbar{M})\to H^{q+1}(\bbar{M},\pl\bbar{M})$ in the long exact sequence. 
However, by \eqref{omeganu}, $2\nu_0v_{\nu_0,y'}(x,y)|_{x=0}=\Pi_{E_{\nu_0}}(y,y')$. Suppose $dv_{\nu_0,y'}=0$ for all $y'$. Then the image of the map $H^{q}(\pl \bbar{M})\to H^{q+1}(\bbar{M},\pl\bbar{M})$ is zero, and hence by exactness, $e_{q+1}$ is injective, which is a contradiction. We conclude that when $e_{q+1}$ is not injective, the supremum of $\mu\in[\nu_0,\nu_{\ker}]$ such that
$d(x^{\ndemi-1}\omega_{\mu'}(.,y'))=0$ for all $y'\in \pl \bbar{M}$ and all $\mu'<\mu$ is given by $\mu=\nu_0$. On the other hand, when $e_{q+1}$ is injective, it is given by $\mu=\min(\nu_d,\nu_{\ker})$. Therefore, the leading term at $\rbo$ of $dG(k)$ is at order $\nu_0-1$ when $e_{q+1}$ is not injective and $q<n/2-1$, while it is at order at least $\min(\nu_d,\nu_{\ker})-1$ in all other cases, and is at order exactly $\nu_d-1$ if $\nu_d<\nu_{\ker}$.\\ 

\emph{Action of $\delta$.} Now suppose that $\mu\in[\nu_0,\min(\nu_\delta,\nu_{\ker}))$ and consider $\delta(v_{\mu,y'})$. Using Poincar\'e duality and the Hodge star operator $\star$, we see that $\star \delta(v_{\mu,y'})\in L^2(M,\Lambda_{\sca}^{n-q+1})$ gives a representative in $H^{n-q+1}(\bbar{M})$ by \eqref{Th.hhm} when $q\leq n/2+1$ and in $H^{n-q+1}(\bbar{M},\pl\bbar{M})$ when $q>n/2+1$.
But this form is exact, so we may use the previous argument with the map $F_s$ to regularize and proceed as above. We conclude as before that the leading term of $\delta G(k)$ at $\rbo$ is at order $\nu_0-1$ when $e_{n-q+1}$ is not injective and $q>n/2-1$, while it is at order at least $\min(\nu_\delta,\nu_{\ker})-1$ in all other cases (and is at order exactly $\nu_\delta$ when $\nu_\delta<\nu_{\ker}$). \\

Finally, using the fact that $DG_{\rbo}^{\nu_{\ker}-1}\not=0$, the exact order of $DG(k)$ at $\rbo$  is
\begin{equation}\label{leadingrbo}
\begin{array}{ll}
\min(\nu_d,\nu_\delta,\nu_{\ker})-1  & \textrm{ if }q\in[n/2-1,n/2+1],\\
\min(\nu_d,\nu_\delta,\nu_{\ker})-1  & \textrm{ if }q<n/2-1 \textrm{ and }e_{q+1} \textrm{ is injective},\\
\min(\nu_d,\nu_\delta,\nu_{\ker})-1   & \textrm{ if }q>n/2+1 \textrm{ and }e_{n-q+1} \textrm{ is injective},\\
\nu_0-1 & \textrm{ in all other cases}.
\end{array}\end{equation}
This completes the proof of Lemma \ref{idleadingorder}.
\end{proof}

Now to know the asymptotic behaviour of the kernel of $T^{\rm lf}_q$, we need to integrate the kernel of $DR(k)$ in $k$:
\begin{lemma}\label{asymptoticd+deltaG} The first (resp. second) integral below does not vanish identically in $z$ (resp. $z'$):
\begin{equation}\label{kappaint}
\int_{0}^{\infty}{\kappa'}^{\mu_{\rbo}}(\delta+d)G_{{\rm rb}_0}^{\mu_{\rbo}}(z,\kappa',y')d\kappa';
\end{equation}
\begin{equation}\label{kappaintlbo}
\int_{0}^{\infty}{\kappa}^{\mu_{\lbo}+1}(I_{{\rm lb}_0}(k^{-1}\delta)+I_{{\rm lb}_0}(k^{-1}d))G_{{\rm lb}_0}^{\mu_{\lbo}}(\kappa,y,z') d\kappa.
\end{equation}
 \end{lemma}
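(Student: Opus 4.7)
Both integrands have the shape ``Bessel function in $\kappa$ (or $\kappa'$) times a spatial form,'' and the plan is to factor the integration along these two variables: perform the $\kappa$-integration using the explicit Mellin transform of the modified Bessel $K_\nu$, and use Lemma \ref{idleadingorder} together with \eqref{d+delta2exp}--\eqref{d+delta2} to identify a non-vanishing spatial coefficient. The non-vanishing of the Mellin values combined with the linear independence of the spatial coefficients will produce the claim.

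\textbf{Step 1 (the $\rbo$ integral).} From \eqref{d+delta2exp}, and the analogous statements for $G^{\til\nu-1}_{\rbo}$ and $G^{\nu_0+1}_{\rbo}$, each term $DG^{\nu-3}_{\rbo}$ reduces to a single Bessel mode: $D$ kills the $\til K_\nu(\kappa')$ piece in \eqref{Grbonu-3} (since that piece is paired with $x^{-1}\varphi_j$, which is harmonic and hence in $\ker D$), leaving only $\kappa'\til K_{\nu-2}(\kappa')\cdot D(x^{\ndemi-1}\omega_{\nu-2}(z,y'))|dg_b|^\demi$. Since contributions from different values of $\nu$ live at distinct orders $\nu-3$ at $\rbo$, the leading term
\[
DG^{\mu_{\rbo}}_{\rbo}(z,\kappa',y') \;=\; \kappa'\til K_{\mu_{\rbo}+1}(\kappa')\cdot \Phi(z,y')
\]
involves a \emph{single} Bessel function, with $\Phi$ the non-vanishing form identified in the case analysis of Lemma \ref{idleadingorder}. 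Using the Mellin formula
\[
\int_0^\infty \kappa^{s-1}K_\nu(\kappa)\,d\kappa \;=\; 2^{s-2}\,\Gamma\!\left(\tfrac{s-\nu}{2}\right)\Gamma\!\left(\tfrac{s+\nu}{2}\right),\qquad s>|\nu|,
\]
with $s=\mu_{\rbo}+2$ and $\nu=\mu_{\rbo}+1$ gives $\sqrt{\pi}\,2^{\mu_{\rbo}}\Gamma(\mu_{\rbo}+\tfrac{3}{2})\neq 0$ (the convergence condition and non-vanishing hold because $\mu_{\rbo}\geq \nu_0-1>-1$). Hence \eqref{kappaint} equals a non-zero constant times $\Phi(z,y')$, and does not vanish identically in $z$.

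\textbf{Step 2 (the $\lbo$ integral).} By the swap $(z,y)\leftrightarrow (z',y')$, the kernel $G^{\mu_{\lbo}}_{\lbo}$ is a finite sum of terms $\kappa\til K_\alpha(\kappa)\,f_\alpha(y)\otimes \psi_\alpha(z')$ with $f_\alpha\in E_\alpha$ and $\psi_\alpha$ built from $x'^{-1}\varphi_j(z')$ and $x'^{-1}\omega_\mu(z',y)$. Applying $I_{\lbo}(k^{-1}D)=(A'+B)\kappa^{-1}$ from \eqref{AB}, and using the Bessel recurrence $\kappa\partial_\kappa K_\nu(\kappa)=\nu K_\nu(\kappa)-\kappa K_{\nu+1}(\kappa)$, one writes
\[
(I_{\lbo}(k^{-1}\delta)+I_{\lbo}(k^{-1}d))G^{\mu_{\lbo}}_{\lbo} \;=\; \sum_\beta \kappa^{a_\beta}\til K_{\beta}(\kappa)\,g_\beta(y)\otimes \psi_\beta(z'),
\]
where each $g_\beta$ belongs to some spectral subspace $E^j_\nu$ of $L^2(N;\Lambda^q_N\oplus \Lambda^{q-1}_N)$. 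Integrating against $\kappa^{\mu_{\lbo}+1}$ and applying the Mellin formula mode-by-mode yields $\sum_\beta C_\beta g_\beta(y)\otimes \psi_\beta(z')$, with each $C_\beta$ a non-vanishing product of Gamma functions (convergence is guaranteed because the surviving Bessel orders $\beta$ satisfy $|\beta|<\mu_{\lbo}+3=\nu_{\ker}+1$). Lemma \ref{idleadingorder} asserts that the kernel $DG^{\mu_{\lbo}}_{\lbo}$ is not identically zero, so at least one $g_\beta$ is non-zero; by the orthogonal decomposition \eqref{L2decomp} the various $g_\beta$ are linearly independent, so the sum cannot collapse. Since the $z'$-factors $\psi_\beta$ do not vanish identically (they come from the non-zero $L^2$ null-space elements $\varphi_j$ or from $\omega_\mu$), \eqref{kappaintlbo} does not vanish identically in $z'$.

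\textbf{Main obstacle.} The only delicate point is the $\lbo$ analysis: the normal operator $I_{\lbo}(k^{-1}D)$ mixes a single Bessel order into several via the recurrences for $\kappa\partial_\kappa K_\nu$, and a priori the resulting Mellin contributions could cancel after integration. This is precisely where the orthogonality of the spectral subspaces $E^j_\nu$ in the decomposition \eqref{L2decomp} is used: the coefficients of distinct Bessel modes sit in different (orthogonal) components of $L^2(N;\Lambda^q_N\oplus \Lambda^{q-1}_N)$, ruling out any cancellation.
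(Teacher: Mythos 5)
Your Step 1 breaks down exactly in the case that matters most. The reduction of $DG^{\nu-3}_{\rbo}$ to a single Bessel mode, as in \eqref{d+delta2exp}, works only for the lower-order models, because there the $\til{K}_{\nu}$ piece is paired exclusively with $x^{-1}\varphi_j$, which $D$ annihilates. There is no ``analogous statement'' for the top model: when $\mu_{\rbo}=\nu_{\ker}-1$ and $m=2$ (the generic situation, e.g.\ asymptotically Euclidean with trivial $L^2$ kernel), $G^{\nu_0+1}_{\rbo}$ contains both $\kappa'\til{K}_{\nu_0+2}(\kappa')\,x^{-1}\big(\sum_j\chi_j\otimes a^j_{\nu_0+2}+\omega_{\nu_0+2}\big)$ and the correction $-\kappa'\til{K}_{\nu_0}(\kappa')\,x^{-1}v(z,y')$, and none of $\chi_j$, $\omega_{\nu_0+2}$, $v$ is harmonic. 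Applying $D$ therefore leaves two distinct Bessel modes paired with two a priori unrelated spatial forms $\Psi_1,\Psi_2$; the non-vanishing of $DG^{\nu_0+1}_{\rbo}$ only gives $(\Psi_1,\Psi_2)\neq(0,0)$, and nothing in your argument prevents $C_1\Psi_1-C_2\Psi_2$ from vanishing identically after the $\kappa'$-integration. The paper's proof avoids this by a contradiction via $D^2=\Delta_q$: if \eqref{kappaint} vanished, applying $D$ once more and using $\Delta_qG^{\nu_{\ker}-1}_{\rbo}=-G^{\nu_{\ker}-3}_{\rbo}$ would force $\int_0^\infty{\kappa'}^{\nu_{\ker}-1}G^{\nu_{\ker}-3}_{\rbo}\,d\kappa'=0$; the three pieces of $G^{\nu_{\ker}-3}_{\rbo}$ have distinct spatial asymptotics, so each would have to vanish, contradicting $G^{\nu_{\ker}-3}_{\rbo}\neq0$. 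You need some substitute for this step.

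Step 2 has a similar soft spot. Orthogonality of the $E^j_\nu$ does separate contributions from different spectral components, and the paper uses exactly that to treat one component at a time. But within a single component, $A'+B$ produces terms of the form $(-\kappa\partial_\kappa+c)\til{K}_{\nu}(\kappa)$, i.e.\ (after your recurrence) a combination of $K_{\nu}$ and $K_{\nu+1}$ multiplying the \emph{same} spatial coefficient; these two Mellin contributions must be summed, and their sum is the constant $\nu+1+c$ times $\int_0^\infty\kappa^{\nu}\til{K}_{\nu}\,d\kappa$, obtained by integration by parts. For the $E^4$-type roots one has $c=-1-\sqrt{(\ndemi-q)^2+\alpha_+^2}$ and $\nu=\sqrt{(\ndemi-q)^2+\alpha_+^2}+1$, so the constant is a difference of comparable quantities that happens to equal $1$ -- not an automatically non-vanishing product of Gamma functions. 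This possible cancellation occurs \emph{within} a single spectral component, so the orthogonal decomposition \eqref{L2decomp} cannot rule it out; it must be excluded by the explicit computation. Your ``main obstacle'' paragraph correctly identifies the danger, but the proposed resolution does not address it.
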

\begin{proof}
First we prove the statement at $\rbo$. The argument splits into two cases. Suppose first that $DG^{\nu-3}_{\rbo}$ does not vanish for some $\nu\in[\nu_{\ker},\nu_{\ker}+2)$; then $D(x^{\ndemi-1}\omega_{\nu-2}(\cdot, y'))$ is nonzero. Note that $\nu>2$, otherwise $\omega_{\nu-2}=0$ by definition, yielding a contradiction. We want to show that
\[D(x^{\frac{n}{2}-1}\omega_{\nu-2}(\cdot, y'))\int_0^{\infty}\kappa'^{\nu-2}\tilde K_{\nu-2}(\kappa')\ d\kappa'\]
is nonzero. But since $\nu-2>0$, as in \cite[Sec. 5.2]{gh1}, the $\kappa'$ integral becomes $-\pi^{1/2}\frac{\Gamma(\nu-\frac{3}{2})}{\Gamma(\nu-2)}$, which is nonzero. This is enough.

The other possibility is that $DG^{\nu_{\ker}-1}_{\rbo}$ is the first nonvanishing term. Suppose for contradiction that (\ref{kappaint}) vanishes. Then applying $D$ to (\ref{kappaint}) and using the fact that in this case $D^{2}G_{\rbo}^{\nu_{\ker}-1}=-G_{\rbo}^{\nu_{\ker}-3}$, we must have that
\[\int_0^{\infty}\kappa'^{\nu_{\ker}-1}G_{\rbo}^{\nu_{\ker}-3}(\kappa',z,y')\ d\kappa'=0.\]
Using the formula (\ref{Grbonu-3}), we have
\begin{equation}
\begin{split}
& \int_0^{\infty}\Big(\kappa'^{\nu_{\ker}}\til{K}_{\nu_{\ker}-2}(\kappa')\Big(\sum_{j=1}^Jx^{-1}\varphi_j(z)\otimes \beta_{\nu_{\ker}-2}^j(y')+x^{-1}\omega_{\nu_{\ker}-2}(z,y')\Big)\\
&+ \kappa'^{\nu_{\ker}}\til{K}_{\nu_{\ker}}(\kappa')\sum_{j=1}^Jx^{-1}\varphi_j(z)\otimes b_{\nu_{\ker}}^j(y')\Big)=0.
\end{split}\end{equation}
This integral is a sum of three integrals, each given by a function of $(z,y')$ times an integral in $\kappa'$. Each of the three functions has different asymptotics in $(x,y,y')$ near the boundary, so each term must vanish individually. The argument above shows that the $\kappa'$ integral in the third term does not vanish, therefore the third function of $(z,y')$ does. To complete the proof, all we need to do is show that 
\[\int_0^{\infty}{\kappa'}^{\nu_{\ker}}\til{K}_{\nu_{\ker}-2}(\kappa')\ d\kappa'\neq 0,\]
as then the same argument will apply to the first two terms, forcing $G_{\rbo}^{\nu_{\ker}-3}=0$, which is a contradiction. We may assume $\nu_{\ker}>2$, as otherwise the first two terms are zero.

To compute the integral, write $\nu_s=\nu_{\ker}-2$, and let $F(\kappa')=\kappa^{\nu_s}\til K_{\nu_s}$. As in \cite{gh1}, $F(\kappa')$ is the Fourier transform of the function $f(t)=C(1+t^2)^{-(\nu_s)-1/2}$, where $C=-\frac{\Gamma(\nu_s+1/2)\pi^{-1/2}}{\Gamma(\nu_s)}$ is a nonzero constant. So the integral of $\kappa'^2F(\kappa')$ is a nonzero multiple of $f''(0)$. Doing the calculations, the integral is a nonzero multiple of $2C(-\nu_s-1)$, which is nonzero. This completes the case of $\rbo$.

We now need to prove a similar statement for $\lbo$. By Lemma \ref{idleadingorder}, $\mu_{\lbo}=\nu_{\ker}-2$. Using symmetry and (\ref{Grbonu-3}), we may write $(\ref{kappaintlbo})$ as
\begin{equation}\label{integralatlbo}\int_0^{\infty}\kappa^{\nu_{\ker}-2}(A'+B)\Big(\tilde K_{\nu_{\ker}-2}(\kappa)g_{\nu_{\ker}-2}(y,z')+\tilde K_{\nu_{\ker}}(\kappa)g_{\nu_{\ker}}(y,z')\Big)\ d\kappa,\end{equation}
where $A',B$ are given by \eqref{AB} and $g_{\nu}(\cdot, z')\in E_{\nu}\otimes \Lambda_{\sca}^q$ with $\nu>0$ for each $z'\in M$. Note that $g_{\nu_{\ker}}(y,z')$ and $g_{\nu_{\ker}-2}(y,z')$ have different asymptotics as $z'\to \pl \bbar{M}$. For the integral to be $0$, we thus need each term to be $0$; since $g_{\nu_{\ker}-2}$ is never identically zero, we consider the first integral in \eqref{integralatlbo}; suppose that it vanishes and set $\nu=\nu_{\ker}-2$.
We write $g_\nu(y,z')=\sum_{j=1}^4g_\nu^j(y,z')$ with $g_\nu^j\in E_\nu^j\otimes \Lambda_{\sca}^q$ (following the notation \eqref{L2decomp}).

First, we observe from the form of $A'$ that the $\Lambda^{q}(N)$ component of 
$A'(\til{K}_\nu(\kappa)g_\nu(y,z'))$ is zero only if $g^1_\nu(\cdot,z')\in
(\mc{H}^q(N)\oplus \{0\})\otimes \Lambda_{\sca}^q$, and thus $\nu=|n/2-q-1|$. In this case
\[\begin{split}
\int_{0}^{\infty}\kappa^\nu A'(\til{K}_\nu(\kappa)g^1_\nu(y,z'))d\kappa= \frac{dx}{x}\wedge g^1_\nu(y,z') \int_{0}^{\infty}\kappa^\nu (-\kappa\pl_\kappa+n/2-q-1)(\til{K}_\nu(\kappa))d\kappa
\end{split}\]
but integration by parts in $\kappa$ (here $\kappa^\nu \til{K}_\nu(\kappa)$ is continuous at $\kappa=0$) shows that 
\[\int_{0}^{\infty}\kappa^\nu (-\kappa\pl_\kappa+n/2-q-1)(\til{K}_\nu(\kappa))d\kappa=C\int_{0}^{\infty}\kappa^\nu \til{K}_\nu(\kappa)d\kappa
\]
with $C\geq 1$, and the integral does not vanish. This implies that $g_\nu^1=0$. 
The same argument works to show that $g_\nu^2=0$ using $B$ instead of $A'$. 
Finally, using \eqref{mu+-} and \eqref{i3} 
\[\begin{split}
\int_{0}^{\infty}\kappa^\nu A'(\til{K}_\nu(\kappa)g_\nu(y,z'))d\kappa= &
\frac{dx}{x}\wedge d_N\phi_{z'}^-(y) \int_{0}^{\infty}\kappa^\nu (-\kappa\pl_\kappa-1+\sqrt{(\ndemi-q)^2+\alpha_-^2})
(\til{K}_\nu(\kappa))d\kappa\\
+&\frac{dx}{x}\wedge d_N\phi_{z'}^+(y) \int_{0}^{\infty}\kappa^\nu (-\kappa\pl_\kappa-1-\sqrt{(\ndemi-q)^2+\alpha_+^2})
(\til{K}_\nu(\kappa))d\kappa
\end{split}
\]
for some $\phi^\mp_{z'}\in (\ker(\Delta_{N}-\alpha_\mp^2)\cap \Ima \delta_N)\otimes \Lambda_{\sca}^q$  with $\alpha_\mp>0$ and
$\nu=|\sqrt{(\ndemi-q)^2+\alpha_\mp^2}\mp1|$. Notice that $d\phi^-_{z'}$ and $d\phi^+_{z'}$ are independent (if non-zero) since $\alpha_+\not=\alpha_-$.
Integrating by parts, the integrals become
\[C_{\mp}\int_{0}^{\infty}\kappa^\nu 
\til{K}_\nu(\kappa)d\kappa\not=0,\]
where $C_{-}$ (for the first integral) is $\nu+\sqrt{(\ndemi-q)^2+\alpha_{-}^2}>0$, and $C_{+}$ (for the second integral) is $\nu-\sqrt{(\ndemi-q)^2+\alpha_{+}^2}=1$. We conclude that \eqref{integralatlbo} cannot be zero for all $z'$, since $g_\nu$ is not identically zero. This completes the proof of Lemma \ref{asymptoticd+deltaG}.\end{proof}

The last step in the proof of Theorem \ref{mainthRT} is to describe the $L^p$ boundedness of $T_q^{\rm lf}$, using very similar arguments as in Proposition 5.1 of \cite{gh1}. For this purpose we switch back to writing the kernels as multiples of the scattering half-density $|dgdg'|^{\demi}$ and then multiply by $|dgdg'|^{-\demi}$ to remove the density factors and see the kernel as acting on pure forms (which is a more convenient thing to deal with $L^p(M,\Lambda_\sca^q)$ 
spaces defined with respect to the volume density $|dg|$). This has the effect of adding $n/2$ to the index sets at $\rbo$ and $\lbo$ and adding $n$ at $\bfo$: the index set $\mc{A}$ of the Schwartz kernel $DR_q(k)$ (with density removed) satisfies 
\begin{equation}\label{indexAbis} 
\mc{A}_\sca\geq 1, \, \, \mc{A}_\zf\geq 0, \,\, \mc{A}_{\bfo}\geq n-1, \,\, \mc{A}_{\rbo}\geq \mu_{\rbo}+n/2, \,\, \mc{A}_{\lbo}\geq \mu_{\lbo}+n/2.
\end{equation}
\begin{proposition}\label{boundednessT2} 
Let $\alpha=n/2-1-\mu_{\rbo}$ and let $\beta=n/2-1-\mu_{\lbo}$. The operator $T_q^{\rm lf}$ defined in \eqref{Tlf}   bounded on $L^p(M)$ for all $p$ between
\[ \frac{n}{n-\beta_+}<p<\frac{n}{\alpha_+},\] 
where $\alpha_+=\min(\alpha,0)$, $\beta_+=\min(\beta,0)$. Moreover, this range is sharp.
\end{proposition}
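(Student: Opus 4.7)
The proof follows the template of Proposition 5.1 in \cite{gh1}, where the result was established for the Riesz transform on functions; the new inputs are the index bounds \eqref{indexAbis}, the identification of the exact boundary orders $\mu_{\rbo},\mu_{\lbo}$ in Lemma \ref{idleadingorder}, and the non-vanishing of the leading coefficients in Lemma \ref{asymptoticd+deltaG}. First I would integrate the kernel of $DR_q(k)$ in $k\in(0,k_0]$ to obtain the kernel of $T_q^{\rm lf}$ on $M\x M$. Using coordinates $(z,y',\kappa'=k/x')$ near the interior of $\rbo$, the pure kernel of $DR_q(k)$ is $k^{\mu_{\rbo}+\ndemi}G_{\rbo}^{\mu_{\rbo}}(z,y',\kappa')+\mc{O}(k^{\mu_{\rbo}+\ndemi+\eps})$ with $G_{\rbo}^{\mu_{\rbo}}$ Schwartz as $\kappa'\to\infty$ (from the infinite-order vanishing at $\rb$). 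The substitution $k=x'\kappa'$ in the $k$-integral then gives $T_q^{\rm lf}(z,z')\sim C(z,y')(x')^{\mu_{\rbo}+\ndemi+1}$ as $x'\to 0$ with $z$ in a compact subset of $M^\circ$, where $C(z,y')$ is proportional to \eqref{kappaint} and is not identically zero by Lemma \ref{asymptoticd+deltaG}. The symmetric analysis at $\lbo$ yields exact decay $x^{\mu_{\lbo}+\ndemi+1}$ as $x\to 0$ (with $z'$ interior), with coefficient \eqref{kappaintlbo}.

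Next I would split $T_q^{\rm lf}=T_{\rm diag}+T_{\rm bdry}$, with $T_{\rm diag}$ supported in a small tubular neighborhood of the scattering diagonal. The operator $T_{\rm diag}$ is a scattering pseudodifferential operator of order $-1$ and is therefore bounded on $L^p(M)$ for every $p\in(1,\infty)$ by standard Calder\'on--Zygmund theory on the scattering calculus (as used for $T_q^{\rm hf}$). The remaining $T_{\rm bdry}$ has smooth polyhomogeneous kernel with the leading boundary orders from Step~1. For $T_{\rm bdry}$ I would apply a weighted Schur test, using $dg\sim x^{-n-1}dx\,dy$ near $\pl\bbar{M}$. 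The column integral at $\rbo$, bounded by a multiple of $\int(x')^{(\mu_{\rbo}+\ndemi+1)p'}dg(z')$, converges on the boundary end if and only if $\mu_{\rbo}+\ndemi+1>n/p'$, which is equivalent to $p<n/\alpha_+$ for $\alpha=\ndemi-1-\mu_{\rbo}$. The dual estimate at $\lbo$ gives the condition $p>n/(n-\beta_+)$. Compatibility at the corner $\bfo$ is checked using the explicit Bessel form of $G_{\bfo}^{-2}$ in \eqref{Gbfo}, whose joint decay in $(\kappa,\kappa')$ dominates both $\rbo$ and $\lbo$ contributions.

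For sharpness, the non-vanishing of the leading coefficient from Lemma \ref{asymptoticd+deltaG} is essential. For $p\geq n/\alpha_+$, I would take test forms $f_\eta(z')=\chi(x'/\eta)(x')^{-a}\phi(y')$ with $\phi$ chosen so that the boundary pairing with $C(z,y')$ is not identically zero, and $a$ just above the critical value $n/p-(\mu_{\rbo}+\ndemi+1)$. A direct computation using the asymptotics from Step~1 yields $\|T_q^{\rm lf}f_\eta\|_{L^p}/\|f_\eta\|_{L^p}\to\infty$ as $\eta\to 0$, contradicting $L^p$-boundedness at the upper endpoint; the symmetric argument at $\lbo$ gives the lower endpoint. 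The main obstacle is the Schur-type control at the corner $\bfo$ where both $x,x'\to 0$ simultaneously: one must verify that the interaction of the two boundary tails does not generate a logarithmic divergence outside the stated $p$-range, which is handled using the rapid decay of the Bessel functions appearing in $G_{\bfo}^{-2}$ exactly as in \cite[Sec. 5]{gh1}.
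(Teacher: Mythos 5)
Your overall architecture (boundary asymptotics of $\int_0^{k_0}DR_q(k)\,dk$ from Lemma \ref{asymptoticd+deltaG} for sharpness, off-diagonal weighted estimates for sufficiency, a separate near-diagonal piece) matches the paper's, and your sharpness argument via concentrating test forms is a legitimate variant of the paper's use of $(\log x)^{-1}x^{\alpha}$. However, there is a genuine gap in your treatment of the near-diagonal piece. You assert that $T_{\rm diag}$ is a scattering pseudodifferential operator of order $-1$, bounded on all $L^p$ ``as used for $T_q^{\rm hf}$.'' This is false: only the high-frequency part $\int_{k_0}^{\infty}DR_q(k)\,dk$ lies in the scattering calculus. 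The low-frequency integrand lives on $\MMksc$, and near the corner where the lifted diagonal meets $\bfo$ (i.e.\ $x\sim x'$ both small, $k$ small) the integrated kernel is not a scattering $\Psi$DO kernel --- if it were, the entire parametrix construction on the blown-up space would be unnecessary. The paper instead proves the Calder\'on--Zygmund bounds $|T_1(z,z')|\leq C|z-z'|^{-n}$, $|\nabla T_1|\leq C|z-z'|^{-n-1}$ by hand, integrating over three regions ($A_3$ near $\sca$ off the diagonal, $A_4$ near the diagonal away from $\zf$, $A_5$ near the diagonal with $k\leq x$) with different choices of defining functions. Moreover, even with these kernel bounds, Calder\'on--Zygmund theory only yields weak $(1,1)$ plus interpolation \emph{given} $L^2$-boundedness of the near-diagonal piece, which your proposal never supplies; the paper obtains it by subtraction, $T_1=T_q^{\rm lf}-T_2-T_3-T_4$, after showing the off-diagonal pieces are $L^2$-bounded.

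A secondary weakness: your Schur test at $\rbo$ only records the column integrability $(x')^{n-\alpha}\in L^{p'}(dg)$, i.e.\ a necessary condition. For sufficiency you need the joint two-sided bound of the paper, $|T_2(z,z')|\leq Cx^{\alpha}(x')^{n-\alpha}$ on $\{x<x'/4\}$ (and symmetrically for $T_3$), which controls the behaviour as $z$ also approaches the boundary and at the corner $\bfo$; the paper derives it by splitting the $k$-integral at $k=x'$ and using different boundary defining functions on each piece, and then invokes the Schur-type result of Hassell--Lin rather than the Bessel decay of $G_{\bfo}^{-2}$ alone. Your appeal to ``the explicit Bessel form of $G_{\bfo}^{-2}$'' does not by itself rule out a logarithmic loss at the corner; the split of the $k$-integral is where that is actually handled.
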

\noindent Theorem \ref{mainthRT} then follows immediately from this proposition, the preceding discussion, and Lemma \ref{idleadingorder}.
\begin{proof} 
Throughout, we identify all operators with their Schwarz kernels. Let $\chi_1(z,z')$ be a smooth cutoff function which is 1 on the region where $x\leq\epsilon$, $x\leq\epsilon$, and
$1/4\leq x/x'\leq 4$, and which is 0 outside a small neighborhood of this region. Then define $T_1=\chi_1T_q^{\rm lf}$. The remainder $T_q^{\rm lf}-T_1$ may be split into three pieces: $T_2$, supported in the region where $x<x'/4$; $T_3$, supported in the region where $x>4x'$; and $T_4$, supported in the region where $x\geq\epsilon$ and $x'\geq\epsilon$ (see Figure \ref{cutoffs} for an illustration of this decomposition). We then consider each $T_i$ separately. Note first that the operator with kernel $T_4$ is a compactly supported classical pseudo-differential operator of order $0$, and thus is bounded on all $L^p$ for $1<p<\infty$. 

\begin{figure}\label{cutoffs}
\centering
\includegraphics[scale=0.5]{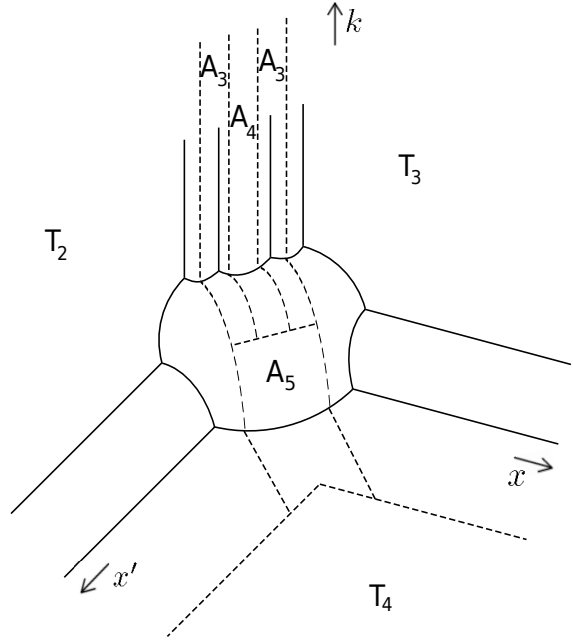}
\caption{Decomposition of integrals: $T_i$ and $A_i$.}
\end{figure}

Next, we obtain pointwise bounds on the kernels of $T_2$ and $T_3$: we claim that there exists $C>0$ such that 
\begin{equation}\label{boundT2T3}
|T_2(z,z')|\leq Cx^{\alpha}x'^{n-\alpha}, \quad |T_3(z,z')|\leq Cx^{n-\beta}x'^{\beta}.
\end{equation}
We prove the statement for $T_3$ only; the proof for $T_2$ is precisely analogous (this is very similar to the proof in Proposition 5.1 of \cite{gh1}, with a slightly improved bound, so we refer there for details). We break the integral \eqref{Tlf} in $k$ into two pieces: the first from $0$ to $x'$ and the second from $x'$ to 1; since we are only considering $T_3$, we may assume $x'<x/4$. 

In the first region of integration, we have $k\leq x'<x$, so we use the coordinates $(k/x', x'/x, x,y,y')$. In these coordinates, using \eqref{indexAbis} shows that the integrand is bounded by
\[Cx^{n-1}(x'/x)^{n-1-\alpha}=Cx^{-\alpha}(x')^{n-1-\alpha}.\]
Integration in $k$ from $0$ to $x'$ gives a bound of $Cx^{-\alpha}(x')^{n-\alpha}$. 

In the second region, we use the same coordinates as in \cite{gh1}: $x'/(x'+k)$ as a boundary defining function for rb, $x+x'+k$ for bf$_0$ and $(x'+k)/(x+x'+k)$ for rb$_0$. Therefore, for any $N$, the second integral is bounded by
\[C_N\int_{x'}^{1}(x+x'+k)^{n-1}\frac{(x'+k)^{n-1-\alpha}}{(x+x'+k)^{n-1-\alpha}}\frac{x'^{N}}{(x'+k)^{N}}dk.\]
Letting $N=2n$ and using $\kappa'=k/x'$, we bound this integral by 
\[Cx^{\alpha}x'^{n-\alpha}\int_{1}^{\infty}(1+\frac{x'}{x}(1+\kappa'))^{\alpha}(1+\kappa')^{-n-1-\alpha}\ d\kappa'.\]
Since $x'<x/4$ on the support of $T_3$, this is bounded (if $\alpha\geq 0$) by
\[Cx^{\alpha}x'^{n-\alpha}\int_{1}^{\infty}(1+(1+\kappa'))^{\alpha}(1+\kappa')^{-n-1-\alpha}\ du=C'x^{\alpha}x'^{n-\alpha},\]
as desired. If $\alpha<0$, then note that $(1+x'/x(1+\kappa'))^{\alpha}\leq 1$, giving an analogous bound. This proves (\ref{boundT2T3}).\\

From (\ref{boundT2T3}), we deduce directly (as in the proof of \cite[Corollary 5.9]{HaLi}) that the operator with kernel $T_2$ is bounded on $L^p$ for $p<n/\alpha$ and the operator with kernel $T_3$ is bounded on $L^p$ for $p>n/(n-\beta)$. By Lemma \ref{idleadingorder}, both $\mu_{\lbo}$ and $\mu_{\rbo}$ are at least $\min(\nu_0-1,\tilde\nu-2)>-1$, so $\alpha$ and $\beta$ are less than $n/2$, hence $T_2$ and $T_3$ are bounded on $L^2$.
As a consequence, $T_1=T-T_2-T_3-T_4$ is also bounded on $L^2$.

To finish the proof of the Proposition, we shall show that $T_1$ is bounded on $L^p$ for all $1<p<\infty$; to do so, we will follow the argument of Hassell-Lin \cite[Sec. 5]{HaLi}.
 It suffices to show that there is a constant $C$ such that
\begin{equation}\label{czbounds}|T_1(z,z')|\leq C|z-z'|^{-n};\ |\nabla_{z}T_1(z,z')|\leq C|z-z'|^{-n-1};\ |\nabla_{z'}T_1(z,z')|\leq C|z-z'|^{-n-1}.\end{equation}
Assuming this claim, we may combine it with the boundedness of $T_1$ on $L^2$ to apply Calderon-Zygmund theory, which shows that $T_1$ is of weak type $(1,1)$, as is its adjoint. Interpolation and duality then show boundedness on $L^p$. 

The proof of \eqref{czbounds} is based on the last three parts of the proof of Proposition 5.1 in \cite{gh1}. Recall from \eqref{indexAbis} that the integrand of $T_1$ (obtained from \eqref{Tlf} by multiplying by $\chi_1$) has orders $n-1$ at bf$_0$, $1$ at sc, $\infty$ at bf, and conormal order $-1$ at the diagonal; since $\nabla_z$ and $\nabla_{z'}$ are first order scattering differential operators, the integrands of $\nabla_zT_1(z,z')$ and $\nabla_{z'}T_1(z,z')$ have orders $n$ at bf$_0$, $2$ at sc, $\infty$ at bf, and conormal order 0 at the diagonal. Since they have the same orders and our proof only depends on those orders, we consider only $\nabla_zT_1$. We break the kernel into three pieces: $A_4$, which is localized near the diagonal and away from zf and bf; $A_5$, which is localized near the diagonal and away from sc and bf (say that it has support where $k\leq x$); and $A_3$, which is localized near sc but away from the diagonal. Again, see Figure \ref{cutoffs}.

For $A_3$, we let boundary defining functions in the support of $A_3$ be $\rho_{\bf}=(1+k^2|z-z'|^2)^{-1/2}$ and $\rho_{\bfo}=k$. The integrals of $|T_1|$ and $|\nabla_zT_1|$ are therefore respectively bounded by
\[\int_0^1(1+k^2|z-z'|^2)^{-N}k^{n-1}\ dk, \quad\int_0^1(1+k^2|z-z'|^2)^{-N}k^{n}\ dk.\]
Changing variables in the integral to $u=k|z-z'|$ and choosing $N$ large then gives bounds of $C|z-z'|^{-n}$ and $C|z-z'|^{-n-1}$ respectively (the region of integration may vary with $|z-z'|$ but is always contained in $[0,\infty)$).

For $A_4$, we let $W=k(z-z')$ be a defining function for the diagonal and $k$ as a defining function for $\bfo$; we only need order 0 at sc (even though we have orders 1 and 2 respectively). Plugging these in, we get bounds of
\[\int_0^1k^{n-1}h_1(|W|)dk,\quad \int_0^1k^{n}h_2(|W|)dk,\]
where $h_1(t)$ and $h_2(t)$ are both exponentially decreasing for large $t$ and are bounded by $Ct^{-n+1}$ and $Ct^{-n}$ respectively for $t$ small. Changing variables to integrate in $W$ and using the same argument as for $A_3$ then gives bounds of $C|z-z'|^{-n}$ and $C|z-z'|^{-n-1}$ respectively.

Last, for $A_5$, we may use $x(z-z')$ as a defining function for the diagonal and $x$ as a defining function for bf$_0$. Our integrals are bounded respectively by
\[\int_0^xx^{n-1}h_1(x|z-z'|)\ d\lambda=x^nh_1(x|z-z'|);\ \int_0^xx^{n}h_2(x|z-z'|)\ d\lambda=x^{n+1}h_2(x|z-z'|).\]
Writing $t=x|z-z'|$ gives bounds of
\[|z-z'|^{-n}t^nh_1(t);\ |z-z'|^{-n-1}t^{n+1}h_2(t).\]
Since $t^nh_1(t)$ and $t^{n+1}h_2(t)$ are globally bounded, \eqref{czbounds}, and hence the boundedness part of Proposition \ref{boundednessT2}, follows. \\

It remains to show that the range is sharp. However, this is a direct consequence of Lemma  \ref{asymptoticd+deltaG}. Indeed, the asymptotic of the Schwartz kernel $T(z,z')$ of the Riesz transform (as a half-density) as $z'\to \infty$ is obtained by using the expression 
\[ T_q(z,z')=\frac{2}{\pi}\int_{0}^\infty DR_q(k;z,z')dk,\]
where the resolvent kernel $R_q(k;z,z')$ is now viewed as a scattering half-density. This amounts to multiplying the kernel 
of $R_q(k)\in\Psi_k^{-2, (-2, 0, 0), \mc{R}}(M, \Lambda_{\sca}^q\otimes \Omega_b^\demi)$ by
$(xx')^{n/2}\otimes |\frac{dk}{k}|^{-\demi}$. For $z\in M^\circ$ fixed, we easily see that as $x'\to 0$
\begin{equation}\label{integratedthing} 
T_q(z,z')\sim \frac{2}{\pi}{x'}^{\mu_{\rbo}+1+\ndemi}x^\ndemi\Big(\int_{0}^\infty {\kappa'}^{\mu_{\rbo}}DG_{\rbo}^{\mu_{\rbo}}(z,\kappa',y')d\kappa'\Big) |dg(z)dg(z')|^\demi,
\end{equation}
which, by Lemma \ref{asymptoticd+deltaG}, does not vanish identically in $z$. Therefore, by the same argument given at the end of \cite[Sec.5]{gh1}, $T_q$ cannot be bounded on $L^p$ for 
$p\geq n/\alpha_+$ (note that $\mu_{\rbo}+1+\ndemi=n-\alpha$); in particular it will not act boundedly on a function of the form $(\log x)^{-1}x^{\alpha}$ near the boundary. The same argument for the asymptotic $z\to \infty$ involves the second integral in Lemma 
\ref{asymptoticd+deltaG}, and an analogous argument shows that $T_q$ is not bounded on $L^p$ for $p\leq n/(n-\beta)_+$. This completes the proof of Proposition \ref{boundednessT2}.
\end{proof}

\subsection{Proof of Corollary \ref{euclidean}}

Now we prove Corollary \ref{euclidean}, which follows more or less immediately from \eqref{casrn}. It suffices to apply the result of Theorem \ref{first}, together with the values $\nu_0=n/2-1$ and 
$\nu_d=\nu_\delta=n/2$ for $q\notin\{0,n\}$. Notice that the map $H^q(\bbar{M},\pl\bbar{M})\to H^q(\bbar{M})$ is always injective 
if $q\in[2,n/2]$ because for those $q$, $H^{q-1}(\sph^n)=0$. We also use the fact that by the maximum principle (and Poincar\'e duality), $\ker_{L^2}(\Delta_q)=0$ when $q\in\{0,n\}$.

\section{Sobolev estimates}\label{sec:Sobolev}

In this section, we use the construction of the resolvent to prove some Sobolev estimates for $q$-forms. For $k\in\nn$ and $p\in(1,\infty)$, we define $W^{k,p}$ to be the Sobolev space consisting of $q$-forms for which applying up to $k$ Lie derivatives in the direction of scattering vector fields keeps the forms in $L^p$; the measure is $dg$. Since scattering vector fields are precisely those in $x\mc{V}_b(M)$,
\[ W^{k,p}:=\{u\in C^{-\infty}(M,\oplus_{q=0}^n\Lambda^q(M)); \mc{L}_{X_1}\dots\mc{L}_{X_j}u\in
L^p, \, \forall j\leq k,\, \forall X_i\in x\mc{V}_b(M)\}.\]  
When $p=2$, we denote $H^k:=W^{k,2}.$
\begin{theorem}\label{SobolevTh}
Let $(M,g)$ be asymptotically conic to order $n_0\geq 3$. Assume that $\nu_0>0$ and that there are no zero-resonances. We also assume that $\nu_{\ker}>2$ if $\nu_{\ker}$ is defined by \eqref{defofmth}. Let $p=2n/(n+2)$ and $p'=2n/(n-2)$;
then there exists $C>0$ such that for all $q$-forms $u\in W^{2,p}$ and $v\in H^1$,
\begin{equation}\label{Sobolevest} 
||({\rm Id}-\Pi_{\ker(\Delta_q)})v||_{L^{p'}}\leq C||Dv||_{L^2}, \quad 
||({\rm Id}-\Pi_{\ker(\Delta_q)})u||_{L^{p'}}\leq C||\Delta_q u||_{L^p}
\end{equation}
where $\Pi_{\ker(\Delta_q)}$ is the orthogonal projector on $\ker_{L^2}(\Delta_q)$ in $L^2$.
\end{theorem}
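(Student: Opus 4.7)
The plan is to derive both inequalities from the $L^p \to L^{p'}$ boundedness of the operator $G := \Delta_q^{-1}(\Id - \Pi_{\ker(\Delta_q)})$, whose Schwartz kernel I identify with the leading-order coefficient $G_\zf^0$ of the low-energy resolvent at the face $\zf$. By Theorem~\ref{thm:nullspace}, $R_q(k)$ expands at $\zf$ as $k^{-2}\Pi_{\ker(\Delta_q)} + G_\zf^0 + O(\rho_\zf^\epsilon)$, so applying $\Delta_q + k^2$ and comparing coefficients yields $\Delta_q G_\zf^0 = \Id - \Pi_{\ker(\Delta_q)}$. Recall from \eqref{Gzf} that
\[G_\zf^0 = (xx')^{-1}\Big(Q_b + \sum_{j=1}^J (\chi_j\otimes\varphi_j + \varphi_j\otimes\chi_j)\Big),\]
a $b$-pseudodifferential operator of order $-2$ on $M_b^2$; its index sets at $\lb, \rb$ as $b$-half-densities are bounded below by $\nu_0 - 1$ (from Proposition~\ref{Qb} applied to $Q_b$) and by $\til{\nu} - 3$ (from the asymptotics \eqref{aschij} of $x^{-1}\chi_j$ together with the smoothness of $\varphi_j$ on the opposite factor), and both bounds exceed $-1$ under the hypotheses $\nu_0 > 0$ and $\nu_{\ker} > 2$ (the latter being equivalent to $\til{\nu} > 2$).

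The second estimate in \eqref{Sobolevest} is exactly the statement that $G : L^p \to L^{p'}$. Converting the $b$-half-density kernel to a Schwartz kernel $K(z,z')$ acting on $L^p(M, dg)$ (which, as in the preamble to Proposition~\ref{boundednessT2}, shifts the $\lb, \rb$ indices by $+n/2$ and the $\bfa$ index by $+n$) yields: a diagonal singularity $|K(z,z')| \lesssim |z-z'|^{-(n-2)}$ from the order $-2$ pseudodifferential singularity; a decay of order strictly greater than $n/2-1$ at both $\lb$ and $\rb$; and order at least $n-2$ at $\bfa$. I then split $G$ following the scheme of Proposition~\ref{boundednessT2} into a near-diagonal piece, two off-diagonal pieces localized near $\lb$ and $\rb$, and a compactly supported interior piece. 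The near-diagonal piece is bounded $L^p \to L^{p'}$ by the Hardy--Littlewood--Sobolev inequality applied to the Newton-type pointwise bound. The off-diagonal $\lb, \rb$ pieces are handled by a Schur-type argument applied to the pointwise decay, completely analogous to the treatment of $T_2, T_3$ in Proposition~\ref{boundednessT2}. The interior piece is a compactly supported pseudodifferential operator of order $-2$, bounded by the standard Sobolev embedding. Combined, these give the second inequality.

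For the first inequality, I use duality and integration by parts. Given $u \in L^p$, set $g := Gu$; by the preceding step, $\|g\|_{L^{p'}} \leq C\|u\|_{L^p}$. The $b$-pseudodifferential structure of $G$ and elliptic regularity place $g$ in a sufficiently regular space for the integration by parts below to be justified. Since $\Delta_q g = (\Id - \Pi_{\ker(\Delta_q)})u$, one has
\[\|Dg\|_{L^2}^2 = \langle \Delta_q g, g\rangle = \langle (\Id - \Pi_{\ker(\Delta_q)})u, g\rangle \leq \|u\|_{L^p}\|g\|_{L^{p'}} \leq C\|u\|_{L^p}^2,\]
so that $DG : L^p \to L^2$ is bounded. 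For $v \in H^1$, using $(\Id - \Pi_{\ker(\Delta_q)})u = \Delta_q g = D^*(Dg)$ and Cauchy--Schwarz,
\[|\langle v, (\Id - \Pi_{\ker(\Delta_q)})u\rangle| = |\langle Dv, Dg\rangle| \leq \|Dv\|_{L^2}\|Dg\|_{L^2} \leq C\|Dv\|_{L^2}\|u\|_{L^p},\]
and taking the supremum over $u$ with $\|u\|_{L^p} = 1$ yields the first inequality.

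The main technical obstacle is the careful splitting of $M \times M$ and the verification of the Schur and Hardy--Littlewood--Sobolev bounds on each piece. The hypothesis $\nu_{\ker} > 2$ is used precisely to ensure that neither the $Q_b$ term (via $\nu_0 > 0$) nor the harmonic-correction terms $(xx')^{-1}\chi_j \otimes \varphi_j$ (via $\til{\nu} > 2$) drop the $\lb, \rb$ decay of the kernel below the threshold $n/2 - 1$ required to pair with the critical Sobolev exponent $p' = 2n/(n-2)$ in a Schur test. The arguments parallel those of Proposition~\ref{boundednessT2} but are applied to the simpler scalar order $-2$ operator $G$ rather than to the first-order object $DR_q(k)$.
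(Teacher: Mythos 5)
Your proposal is correct, and the first two thirds of it (the pointwise bounds on $G_{\zf}^0$, the Hardy--Littlewood--Sobolev/Schur splitting giving $G_{\zf}^0:L^p\to L^{p'}$, and the identity $({\rm Id}-\Pi_{\ker(\Delta_q)})u=G_{\zf}^0\Delta_qu$ for the second estimate) coincide with the paper's proof. Where you genuinely diverge is the first inequality. The paper proves it by establishing the mapping property $\|DG_{\zf}^0\|_{L^2\to L^{p'}}\leq C$ directly from the kernel orders of $DG_{\zf}^0$ recorded in Proposition \ref{basicinfo} (order $-1$ at the diagonal, $n-1$ at $\bfo$, $n/2+\nu_{\ker}-2$ at $\lbo$ and $n/2+\nu_{\ker}-3$ at $\rbo$), and then uses the commutation $DG_{\zf}^0=G_{\zf}^0D$ (inherited from $DR_q(k)=R_q(k)D$ via the $L^2$ functional calculus) to write $({\rm Id}-\Pi)u=DG_{\zf}^0Du$. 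You instead run an energy/duality argument: the quadratic-form identity $\|Dg\|_{L^2}^2=\langle\Delta_qg,g\rangle\leq C\|u\|_{L^p}\|g\|_{L^{p'}}$ with $g=G_{\zf}^0u$ gives $DG_{\zf}^0:L^p\to L^2$, and pairing against $v$ and dualizing ($(L^p)^*=L^{p'}$) yields the $D$-estimate. This buys you something real: you never need the finer (and asymmetric) index-set information for $DG_{\zf}^0$ at $\lbo$ versus $\rbo$, nor the commutation argument. The price is that the weight of the proof shifts onto two integrations by parts that you only gesture at: $\langle\Delta_qg,g\rangle=\|Dg\|_{L^2}^2$ and $\langle v,\Delta_qg\rangle=\langle Dv,Dg\rangle$ for $g=G_{\zf}^0u$, $u\in C_0^\infty$. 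These do hold, but the justification is not pure soft elliptic regularity: one must check that the boundary terms at infinity (as in \eqref{ibp}) vanish, which uses precisely the polyhomogeneous decay $x^{n/2+\nu_{\ker}-3}$ of $g$ at $\lb$ together with $\nu_{\ker}>2$ (and the resulting extra order of vanishing of $Dg$); you should also note that the step $\langle({\rm Id}-\Pi)u,g\rangle\leq C\|u\|_{L^p}\|g\|_{L^{p'}}$ uses the $L^p$-boundedness of the finite-rank projector $\Pi$, i.e.\ $\ker_{L^2}(\Delta_q)\subset L^p\cap L^{p'}$, which is again the hypothesis $\nu_{\ker}>2$. With those points made explicit, your route is a complete and arguably more self-contained alternative to the paper's.
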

Notice that the condition $\nu_{\ker}>2$ is the condition that $\ker_{L^2}(\Delta_q)\subset L^r$ for all $r\geq p=2n/(n+2)$; it is required to make sense of $({\rm Id}-\Pi_{\ker(\Delta_q)})u\in L^{p'}$ when $u\in H^1$ or $u\in W^{2,p}$.
\begin{proof} First consider the integral kernel given by $G_{\zf}^0$ in \eqref{Gzf}.
Viewing it as a scattering half-density on zf, it is a conormal distribution with leading orders $n-2$ at $\bfo$, interior conormal order $-2$ at the diagonal, 
and order $n/2+\nu_{\ker}-3$ at both $\rbo$ and $\lbo$. We prove first that there is $C>0$ such that for all 
$u$ compactly supported
\[||G_{\zf}^0u||_{L^{p'}}\leq C||u||_{L^p}.\]
To prove this, it suffices to get pointwise estimates on this kernel and use the Hardy-Littlewood-Sobolev result. We follow the arguments of \cite{HaLi} to get pointwise bounds: the kernel $G_{\zf}^0$ can be split into $G_d+G_L+G_R$, where $G_1$ is supported near the diagonal in the region $x/x'\in[1/4,4]$ (its support intersects only the boundary $\bfo$), $G_L$ is supported in $x/x'\leq 1/2$ (its support intersects only the boundaries $\lbo$ and $\bfo$) and $G_R$ is suported in $x'/x\leq 1/2$ (its support intersects only the boundaries $\rbo$ and $\bfo$). By Lemma 5.3 and 5.4 in \cite{HaLi}, we have a pointwise estimate for the kernel (with the densities removed) outside the diagonal:  
\[ |G_d(z,z')|\leq Cd(z,z')^{-n+2}, \] 
where $d(\cdot,\cdot)$ is the Riemannian distance. In particular, applying the Hardy-Littlewood-Sobolev result of \cite{GG}, we deduce that $G_d$ is the kernel of a bounded operator from $L^p$ to $L^{p'}$ if $p=2n/(n+2)$. 
Now the kernel $G_R$ has pointwise norm bounded by $C\rho_{\bfo}^{n-2}\rho_{\rbo}^{\nu_{\ker}-3}$, where $\rho_{\bfo},\rho_{\rbo}$ are boundary defining functions of $\bfo$ and $\rbo$ respectively on $\zf$. Since $x'/x$ and $x$ are such boundary defining functions in the region where $G_R$ is supported, the kernel of $G_R$ is bounded by $C{x'}^{\nu_{\ker}}x^{n-2}1_{[0,1/2]}(x'/x)$. This kernel is then easily seen (as in \cite{HaLi}) to be bounded as a map $L^p\to L^{p'}$ as long as $\nu_{\ker}-3>-1$. The same argument applies to $G_L$, and this proves the boundedness of $G_{\zf}^0$.

To prove the Sobolev estimate for $\Delta_q$, we take $u\in C_0^\infty(M;\Lambda_{\sca}^q)$. We have 
\[ ({\rm Id}-\Pi_{\ker(\Delta_q)})u=G_{\zf}^0\Delta_q u\] 
by the construction of $G_{\zf}^0$ in Section \ref{subsect:zf}; this equality makes sense when 
$u$ is compactly supported. Therefore, using the $L^p\to L^{p'}$ bound on $G_{\zf}^0$, we get 
\[ ||({\rm Id}-\Pi_{\ker(\Delta_q)})u||_{L^{p'}}\leq C||\Delta_q u||_{L^p}\]
and by density, this shows the second estimate of \eqref{Sobolevest}.  
 
Finally, we have seen in the proof of Proposition \ref{basicinfo} that $DR_q(k)u=DG_{\zf}^0u+o(1)$ as $k\to 0$ for all $u\in C_0^\infty(M;\Lambda_{\sca}^q)$ compactly supported.
Moreover Proposition \ref{basicinfo} shows that the kernel of $DG_{\zf}^0$ (living on $\zf$) has order bounded below by $n-1$ at $\bfo$, interior conormal order $-1$ at the diagonal, $n/2+\nu_{\ker}-2$ at $\lbo$ and $n/2+\nu_{\ker}-3$ at $\rbo$. Splitting the kernel into near-diagonal and off-diagonal terms as before, we see that the near-diagonal term is bounded from $L^2$ to $L^{p'}$ by the Hardy-Littlewood-Sobolev result in \cite{GG}, and the off-diagonal terms are bounded by the same argument as above. This shows that $||DG_{\zf}^0||_{L^2\to L^{p'}}\leq C$ for some $C>0$. Since moreover $R_q(k)D=DR_q(k)$ for $k>0$ by the $L^2$-functional calculus, we deduce that the same holds at the limit $k=0$ when acting on compactly supported functions; that is, $DG_{\zf}^0u=G_{\zf}^0Du$ for $u\in C_0^\infty(M;\Lambda_{\sca}^q)$.
Then for such $u$,  $({\rm Id}-\Pi_{\ker(\Delta_q)})u=G_{\zf}^0D^2u=DG_{\zf}^0Du$,
and by using the $L^2\to L^{p'}$ bound of $DG_{\zf}^0$ we obtain 
\[ ||({\rm Id}-\Pi_{\ker(\Delta_q)})u||_{L^{p'}}\leq C||Du||_{L^2}.\]
By density, this shows the first estimate of \eqref{Sobolevest}.
 \end{proof}

Notice that the condition $\nu_{\ker}>2$ is equivalent to asking that $\ker_{L^2}(\Delta_q)=\ker_{L^r}(\Delta_q)$ 
for all $r\in [2n/(n+2),2]$, by the regularity result of Theorem \ref{reg} on solutions of $\Delta_qu=0$. Combining with 
the fact that the absence of zero-resonances is equivalent to \eqref{kerLr}, the statement 
of Theorem \ref{mainthSob} in the introduction is thus the same as Theorem \ref{SobolevTh}. We also notice that when 
$\pl\bbar{M}=\sph^n$, any $u\in \ker_{L^2}(\Delta_q)$ satisfies $Du=0$; by considering the indicial roots of 
$D$ in \eqref{indrootofD} together with \eqref{caseSn}, we see that $u\in x^{n}L^\infty$ for $q\in[1,n-1]$ (and $u=0$ if $q=0$ or $n$), and therefore $\nu_{\ker}>2$ in the asymptotically Euclidean case. 

\section{Analytic Torsion and the proof of Theorem \ref{conictorsion}}\label{section:proof}
In this section, we prove Theorem \ref{conictorsion} by following the method introduced by the second author \cite{s1,s2}. Once we have the result on the resolvent behavior at low energy, the proof is rather similar, and we just adapt the arguments as needed to our case. We consider the geometric setting of a family of smooth compact Riemannian manifolds $(\Omega_\eps,g_\eps)$ degenerating as $\eps\to 0$ to a compact manifold with a conic singularity $(\Omega_0,g_0)$ with cross section $(N,h_0)$, as explained in the Introduction (here $\eps\in (0,\eps_0]$ for some $\eps_0>0$ small). The non-compact manifold $(M,g)$ with exact conic end from which $\Omega_\eps$ is constructed has also cross section $(N,h_0)$. We denote by $\Delta^{\Omega_\eps}_q$, $\Delta_q^{\Omega_0}$ and $\Delta_q^M$ the Laplacians on $\Omega_\eps$, $\Omega_0$, and $M$ acting on $q$-forms, and as above, $\Delta_N$ is the Laplacian acting on the whole form bundle $\Lambda(N)=\oplus_{p=0}^{n-1}\Lambda^p(N)$. 

The key to describe the analytic torsion of $\Omega_\eps$ as $\eps\to 0$ is to obtain a 
detailed analysis of the heat trace on $q$-forms on $\Omega_\epsilon$, which we denote 
$\tra(H_q^{\Omega_\epsilon})(t)$ (here and below we use the notation $H_q^{Z}(t):=e^{-t\Delta_q^{Z}}$ when $Z\in \{\Omega_\eps,\Omega_0,M\}$). 
The proof proceeds in two steps. First we shall prove Theorem \ref{structure}, which expresses 
$\tra(H_q^{\Omega_\epsilon})(t)$ in terms of the heat traces on $\Omega_0$ and $M$. This will be proved in part by using the work of Section \ref{resolvent-kernel} describing the low-energy resolvent on $M$. 
Then we shall use what we know about the heat trace on $\Omega_0$ and on $M$ 
to directly analyze the spectral zeta function
\[\zeta_q^{\Omega_{\epsilon}}(s)=\frac{1}{\Gamma(s)}\int_0^{\infty}(\tra(H_q^{\Omega_\epsilon})(t)-\dim \ker(\Delta^{\Omega_{\epsilon}}_q))t^{s-1}\ dt.\]
This second step depends on a scaling argument on exact cones and also on a spectral convergence result of Ann\'e and Takahashi \cite{at}. From there, we proceed to the determinants of the Laplacians and the analytic torsion. 
We will assume that $n$ is odd since the analytic torsion 
vanishes in even dimension, but it is easily seen from the proof below that the results about 
asymptotic expansions of the determinants of the Laplacians on $q$-forms in even dimension work similarly. There is one difference: in that case, the expansion of $\det (\Delta_q^{\Omega_\eps})$ will contain a $\log(\eps)^2$ coefficient coming from the  $\log(t)$ coefficient of $\tra(H^{\Omega_0}_q(t))$ as $t\to 0$ (see for instance \cite{s2} for the case $q=0$).

\subsection{Consequences of the modified Witt condition}
We first show that the Witt condition rules out zero-resonances on $M$ and thus allows us to apply the results of Section \ref{resolvent-kernel} to study the resolvent $R^M_q(k)$.

\begin{lemma}\label{wittconseq} 
Assume $M$ satisfies the modified Witt condition \eqref{Wittcond}. Then $0\notin\mc{I}(\Delta^M_q)$, where $\mc{I}(\Delta^M_q)$ is the indicial set defined by \eqref{indicial}. 
Moreover, $\Delta_q^M$ has no zero-resonance for any $q$.
\end{lemma}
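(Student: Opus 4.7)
The plan is to handle the two claims in sequence.

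For $0\notin\mc{I}(P_b)$, I would directly inspect the four subsets $I^1,I^2,I^3,I^4$ from \eqref{I1I2} and \eqref{i3}. Since $n$ is odd, both $n/2-q-1$ and $n/2-q+1$ are nonzero half-integers for every integer $q$, so $0\notin I^1\cup I^2$, and $0\notin I^4$ since every element of $I^4$ is at least $1$. The only delicate case is $I^3$: having $0\in I^3$ would force $(n/2-q)^2+\alpha^2=1$ with $\alpha^2\in S^q_{d_N}$, which for $n$ odd requires $q=(n\pm 1)/2$ and $\alpha^2=3/4$. For $q=(n-1)/2$ this is directly excluded by \eqref{Wittcond}, and for $q=(n+1)/2$ the $d_N$-isomorphism $F^{(n-1)/2}_{\delta_N,\alpha}\to F^{(n+1)/2}_{d_N,\alpha}$ identifies $S^{(n+1)/2}_{d_N}$ with $S^{(n-1)/2}_{\delta_N}\subset{\rm Sp}_{\Lambda^{(n-1)/2}}(\Delta_N)$, so \eqref{Wittcond} again excludes $3/4$.

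The no-zero-resonance statement is equivalent to $\ker_{H_b^0}(P_b)=\ker_{xH_b^0}(P_b)$. I would take $\tilde u\in\ker_{H_b^0}(P_b)$; by Theorem \ref{reg} it is polyhomogeneous, and its scattering-form representative $u_0=x^{n/2-1}\tilde u\in x^{-1}L^2(M,\Lambda^q_\sca)$ satisfies $\Delta_q u_0=0$ with leading asymptotic $x^{\nu+n/2-1}\omega_\nu$, where $\nu\in\mc{I}(P_b)$, $\nu>0$, $\omega_\nu\in E_\nu$. The goal is to prove $\nu>1$. Integration by parts on $M_\eps=\{x\geq\eps\}$ gives
\[
\int_{M_\eps}\bigl(|du_0|^2+|\delta u_0|^2\bigr)\,dg=-\int_{\{x=\eps\}}B(u_0,Du_0),
\]
with $|B(u_0,Du_0)|\lesssim|u_0|_g|Du_0|_g\sim\eps^{2\nu+n-1}$ and a surface form on $\{x=\eps\}$ of scaling $\eps^{-(n-1)}$, so the boundary term is $O(\eps^{2\nu})\to 0$. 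Hence $du_0=\delta u_0=0$ on all of $M$.

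Substituting the leading asymptotic into $du_0=0$ and $\delta u_0=0$, and decomposing $\omega_\nu=\sum_j\omega_\nu^j$ via the orthogonal splitting \eqref{L2decomp}, formulas \eqref{indicddelta}--\eqref{indrootofD} force each nonzero $\omega_\nu^j$ to satisfy $\nu\in I^j_d\cap I^j_\delta$. It remains to verify $(I^j_d\cap I^j_\delta)\cap(0,1]=\emptyset$ for each $j$ under \eqref{Wittcond}: for $j=1$ the intersection equals $\{q+1-n/2\}\cap I^1$, and a value in $(0,1]$ would require $q=(n-1)/2$ together with $\mc{H}^{(n-1)/2}(N)\neq 0$, contradicting \eqref{Wittcond}; $j=2$ is symmetric with $q=(n+1)/2$. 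For $j=3,4$, a positive value $\leq 1$ forces $q=(n\pm 1)/2$ and $\alpha^2<3/4$, excluded directly (or via the same $d_N$-isomorphism) by \eqref{Wittcond}. Hence $\nu>1$, i.e.\ $\tilde u\in xH_b^0$, completing the argument.

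The main obstacle I anticipate is a clean justification of the integration by parts, in particular careful bookkeeping of the boundary terms at $\{x=\eps\}$ using the decomposition \eqref{decompos} of $\Lambda^q_\sca$ together with the blow-up of $dg$ as $x^{-n-1}dx\,dh$; once this $O(\eps^{2\nu})$ decay of the boundary contribution is secured, the rest reduces to the purely algebraic case analysis powered by the quantitative $[0,3/4]$ gap in the modified Witt condition.
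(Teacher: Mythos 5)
Your proposal is correct and follows essentially the same route as the paper: the first claim by inspecting $I^1,\dots,I^4$ and reducing to $\alpha^2=3/4$ in degree $(n\pm1)/2$, and the second by justifying the integration by parts with the $\mc{O}(\eps^{2\nu})$ boundary term, concluding $du_0=\delta u_0=0$, and then ruling out indicial roots of $d$ and $\delta$ in $(0,1]$ via \eqref{indrootofD} and the modified Witt condition. Your componentwise check that each nonzero $\omega_\nu^j$ forces $\nu\in I^j_d\cap I^j_\delta$ is just a slightly more explicit version of the paper's statement that $\nu\in\mc{I}(d)\cap\mc{I}(\delta)$ with $\min(\mc{I}(d)\cap\mc{I}(\delta)\cap\rr^+)\geq 3/2$.
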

\begin{proof}
To prove the lemma, first suppose for contradiction that $0\in \mc{I}(\Delta_q^M)$. Since $n$ is odd, we see by examining the indicial set in \eqref{I1I2} and \eqref{i3} that $0\in I^{3}$. Moreover, $q$ must be either $\frac{n+1}{2}$ or 
$\frac{n-1}{2}$; in either case, $\alpha^{2}=3/4$, so $3/4\in S^{q}_{d_N}$.
But $S^{q}_{d_N}=S^{q-1}_{\delta_N}$, so in either case, $3/4\in {\rm Sp}_{\Lambda^{(n-1)/2}}(\Delta_N)$. This contradicts the modified Witt assumption, and therefore $0\notin \mc{I}(\Delta^M_q)$.

Now we show that $\Delta_q^M$ has no zero-resonances. 
Recall that a zero-resonance is a $q$-form $u$ (valued in b-half-densities) satisfying $\Delta_q^Mu=0$ which is in $x^{-1}H_b^0\setminus H_b^0$. Such a form must be smooth in $M$ by elliptic regularity, and by Theorem \ref{reg}, it is polyhomogeneous conormal on $\bbar{M}$; in fact, one has $u\in x^{\nu_0-1-}H^2_b$ with $\nu_0>0$ and $u\sim x^{\nu}u_\nu(y)$ for some $\nu\geq \nu_0$ and $u_\nu\in E_{\nu}$. 
The first step is to show that such a form must satisfy $du=0$ and $\delta u=0$, which follows immediately as long as the integration by parts
\[\ip{\Delta^M_q u}{u}=\ip{du}{du}+\ip{\delta u}{\delta u}\]
is justified. Since $x^{-1}d$ and $x^{-1}\delta$ are b-differential operators of order $1$, one has $du\in x^{\nu_0-}H^1_b$ 
and $\delta u\in x^{\nu_0-}H^1_b$. We switch back to scattering half-densities and factor out 
the $|dg|^{\demi}$ factor to view $u,du,\delta u$ as forms:
we get 
\[u\in x^{\ndemi +\nu_0-1}L^\infty(M,\Lambda^{q}_\sca), \,\, 
du\in x^{\ndemi+\nu_0}L^\infty(M,\Lambda^{q+1}_\sca),\,\,  
\delta u\in x^{\ndemi+\nu_0}L^\infty(M,\Lambda^{q-1}_\sca).\] 
Then 
we integrate by parts on the region $M_\eps=\{x\geq \eps\}$ for $\eps>0$ small and let $\eps\to 0$.
The boundary term in the integration by parts is
\begin{equation}\label{ibp}
\int_{\pl M_\eps}\iota_{\pl M_\eps}(\delta u\wedge *u-u\wedge *du).
\end{equation}
and this integral goes to zero as $\eps\to 0$: indeed each term in the integrand is $\eps^{n+2\nu_0-1}$ times 
a bounded element of $\Lambda^{n-1}_\sca$ pulled-back by the inclusion $\iota_{\pl M_\eps}$, thus the integral is $\mc{O}(\eps^{2\nu_0})$. 
We conclude that any zero-resonance is also in the kernel of the operator $D=d+\delta$. This means that $\nu$ must be an indicial root of $D=d+\delta$, i.e. $\nu$ belongs to $\mc{I}(d)\cap \mc{I}(\delta)$ as defined in 
\eqref{indicddelta}. These roots are given by \eqref{indrootofD}, and under the modified Witt condition  we see that $\min(\mc{I}(d)\cap \mc{I}(\delta)\cap \rr^+)\geq 3/2$, which implies that $\nu\geq 3/2$. Consequently, $u\in L^2(M,\Lambda^q_{\sca})$ (or $u\in H_b^0$ when viewed as a b-half-density), which is a contradiction.
\end{proof}
\begin{remark}\label{noresonanceneven}
We notice that the proof above also works similarly in even dimension: for $q=n/2-1$, the Laplacian 
$\Delta_q^M$ has no resonance if $H^{\ndemi-1}(N)=0$; for $q=n/2+1$, it has no resonance if $H^{\ndemi}(N)=0$; for $q=n/2$, it has no resonances if  ${\rm Sp}_{\Lambda^{n/2}}(\Delta_N)\cap [0,1]=\emptyset$; and when $|q-n/2|>1$, the Laplacian never has resonances.
\end{remark}
As a consequence, the manifold $M$ satisfies the hypotheses of Theorem \ref{thm:nullspace} and therefore the 
 low-energy resolvent $R^M_q(k)$ of the Laplacian on $q$-forms has the structure described there.
 
\subsection{Heat trace structure theorem} 

The key ingredient in the proof of Theorem \ref{conictorsion} is a structure theorem for the trace of the heat kernel on $q$-forms on $\Omega_{\epsilon}$, as a function of $(t,\eps)$. In fact, it is more convenient to work with the variables 
$(\sqrt t,\epsilon)$. We consider $Q=[0,1]_{\sqrt{t}}\x [0,\eps_0]_{\eps}$ where $\eps_0>0$ is small, and we let $Q_0$ be the manifold with corners obtained by radially blowing up the corner $\sqrt{t}=\eps=0$ in $Q$; see Figure \ref{qnaught}. Functions which are smooth (or polyhomogeneous conormal) on $Q_0$ correspond to functions on $Q$ which are smooth (or polyhomogeneous conormal) when written in polar coordinates around $(0,0)$.
Denote the face created by this blowup by $F$ (projecting to $(0,0)$ by the blow-down map);
the face projecting to $\{\sqrt t=0\}$ is denoted by $L$, and the one projecting to $\{\epsilon=0\}$ is called $R$. 

We shall use the arguments of \cite{s2} to prove that the heat trace $\tra(H_q^{\Omega_\epsilon})(t)$ is polyhomogeneous conormal on $Q_0$.
Let $\chi_2(z)$ be a smooth radial cutoff function on $\Omega_\epsilon$, equal to zero for $r\leq 15/16$ and to one for $r\geq 17/16$, and non-decreasing in $r$; moreover, let $\chi_1(z)=1-\chi_2(z)$. 
We explain how Theorem 3 in \cite{s2} can be extended from functions to the following 
case of general $q$-forms:

\begin{theorem}\label{structure} 
The heat trace ${\rm Tr}(H_q^{\Omega_\eps}(t))$ is polyhomogeneous conormal on $Q_{0}$ 
with leading orders $-n$ at $L$ and $F$ and leading order $0$ at $R$. Moreover, one has
\begin{equation}\label{structureeq}
{\rm Tr}(H_q^{\Omega_{\epsilon}}(t))=
\int_{M}\chi_1(\epsilon z){\rm Tr}(H_q^{M}(\frac{t}{\epsilon^2},z,z))dg(z)+\int_{\Omega_0}\chi_2(z){\rm Tr}(H_q^{\Omega_0}(t,z,z))dg_0(z)+R(\epsilon,t),
\end{equation}
where $R(\epsilon,t)$ is polyhomogeneous conormal on $Q$ with infinite-order decay at $t=0$. In particular, for all $k\in\nn$ there is $C_k\geq 0$ such that for all $\eps>0$ small and $t\leq 1$, one has $|R(\eps,t)|\leq C_kt^{k}$.
\end{theorem}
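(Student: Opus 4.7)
The plan is to build a parametrix for the heat kernel on $\Omega_\epsilon$ by gluing, via cutoff functions, a rescaled copy of the heat kernel on $(M,g)$ to the heat kernel on $\Omega_0$, and then to propagate the conormal polyhomogeneous structure through Duhamel's formula and the trace integral. This is the strategy of \cite{s2} in the case of functions; the transfer to $q$-forms is enabled by Theorem \ref{thm:nullspace}, which provides the $q$-form analogue of the low-energy resolvent analysis of \cite{gh2}. By Lemma \ref{wittconseq}, the modified Witt condition ensures $0\notin \mc{I}(P_b)$ and the absence of zero-resonances for every $q$, so Theorem \ref{thm:nullspace} is applicable to $R_q^M(k)$. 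One further preliminary is the conformal rescaling identity $\Delta_q^{\epsilon^2 g}=\epsilon^{-2}\Delta_q^g$ on $q$-forms, which gives
\[H_q^{M,\epsilon^2 g}(t,z,z')=\epsilon^{-n}H_q^{M,g}(t/\epsilon^2,z,z')\]
as Schwartz kernels against the respective Riemannian volume forms.

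I would then introduce a parametrix
\[P_\epsilon(t,z,z')=\psi_1(z)\,\epsilon^{-n}H_q^{M,g}(t/\epsilon^2,z,z')\chi_1(z')+\psi_2(z)H_q^{\Omega_0}(t,z,z')\chi_2(z'),\]
where $\chi_1+\chi_2\equiv 1$ on $\Omega_\epsilon$ and $\psi_i$ is a slight enlargement of $\chi_i$ with $\psi_i\equiv 1$ on $\supp\chi_i$. Then $P_\epsilon(0)=\Id$, and the error
\[E_\epsilon(t):=(\pl_t+\Delta_q^{\Omega_\epsilon})P_\epsilon(t)=[\Delta_q^{\Omega_\epsilon},\psi_1]\,\epsilon^{-n}H_q^{M,g}(t/\epsilon^2)\chi_1+[\Delta_q^{\Omega_\epsilon},\psi_2]H_q^{\Omega_0}(t)\chi_2\]
is supported in a compact collar around $\pl K\simeq\pl M_\epsilon$ in which $d\psi_i$ and $\chi_i$ have disjoint supports, so off-diagonal Gaussian heat estimates yield $E_\epsilon(t)=\mc{O}(t^\infty)$ uniformly in $\epsilon$ and polyhomogeneously in $(\sqrt t,\epsilon)$. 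By Duhamel,
\[H_q^{\Omega_\epsilon}=P_\epsilon - H_q^{\Omega_\epsilon}*_t E_\epsilon,\]
and the residual term $R(\epsilon,t):=-H_q^{\Omega_\epsilon}*_t E_\epsilon$ inherits infinite-order vanishing at $t=0$; its polyhomogeneity on $Q$ follows by a standard composition argument.

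The heart of the argument is the polyhomogeneous structure of $\tra P_\epsilon$ on $Q_0$, face by face. Near $L$ ($\sqrt t\to 0$ with $\epsilon$ bounded below) both heat traces have standard Weyl expansions, yielding order $-n$ at $L$. Near $R$ ($\epsilon\to 0$ with $\sqrt t$ bounded below), the argument $t/\epsilon^2$ of $H_q^{M,g}$ tends to $+\infty$, and one needs the large-time expansion of the $q$-form heat kernel on $M$: this is where Theorem \ref{thm:nullspace} is essential. The polyhomogeneous calculus description of $R_q^M(k)$ on $\MMksc$, combined with a contour representation of the form
\[H_q^M(T)=c\int_{\Gamma}e^{-Tk^2}\,k\,R_q^M(k)\,dk\]
(justified on a contour $\Gamma$ in the sector of Remark \ref{rem5}), transfers the polyhomogeneous structure of $R_q^M(k)$ at $k=0$ into a polyhomogeneous expansion of $H_q^M(T)$ as $T\to\infty$; pairing it with $\chi_1(\epsilon z)$ and integrating in $z$ gives an expansion at $R$ beginning at order $0$, whose leading coefficient is the renormalized heat trace ${^R}\tra(e^{-t\Delta_q^M})$ of \eqref{renormtrace}. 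At the corner face $F$ (where $\sqrt t$ and $\epsilon$ vanish at comparable rates), the prefactor $\epsilon^{-n}$ combined with the boundedness of $H_q^{M,g}$ at times of order one produces the leading order $-n$ at $F$, and the joint polyhomogeneity in polar coordinates around the corner is again a consequence of the calculus structure on $\MMksc$.

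Finally, the trace identity \eqref{structureeq} is obtained by integrating the diagonal of $P_\epsilon$ against $dg_\epsilon$: the rescaling identity above rewrites the $M_\epsilon$-contribution as $\int_M\chi_1(\epsilon z)\tra(H_q^{M,g}(t/\epsilon^2,z,z))\,dg(z)$ (the integrand vanishes outside $M_\epsilon$ by the support of $\chi_1(\epsilon z)$), while the $K$-contribution equals $\int_{\Omega_0}\chi_2(z)\tra(H_q^{\Omega_0}(t,z,z))\,dg_0(z)$ since $g_\epsilon=g_0$ on $K\subset\Omega_\epsilon$. The remainder $R(\epsilon,t)$ then satisfies $|R(\epsilon,t)|\leq C_kt^k$ for every $k$. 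The principal obstacle is the joint control at the corner face $F$: simultaneous polyhomogeneous dependence on $\epsilon$ and $\sqrt t$ in the regime $t\sim\epsilon^2$ is not accessible from pointwise heat kernel bounds alone and genuinely requires the full calculus statement of Theorem \ref{thm:nullspace} on $\MMksc$, rather than merely its qualitative consequences.
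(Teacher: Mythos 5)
Your proposal is correct and follows essentially the same route as the paper: the paper likewise defers to the gluing/Duhamel parametrix construction of \cite[Sec. 4]{s2}, supplying exactly the $q$-form ingredients you identify (Lemma \ref{wittconseq} to unlock Theorem \ref{thm:nullspace}, the conformal scaling of the heat kernel, Albin's short-time heat calculus on $M$, and the contour-integral transfer of the low-energy resolvent structure to the long-time heat kernel via \cite[Th. 8]{s1}). The one point you leave implicit that the paper verifies explicitly (Proposition \ref{maincor}, last paragraph) is that the order-$0$ coefficient of $H_q^M(t)$ at ${\rm zf}$ decays to order strictly greater than $n$ at ${\rm bf}_0$ — the ``Alternative Hypothesis'' of \cite{s2} — which is needed for the expansion at the face $R$ and is obtained by splitting off the leading term of $R_q^M(k)$ at ${\rm zf}$ before applying the contour integral.
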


\begin{figure}
\centering
\includegraphics[scale=0.70]{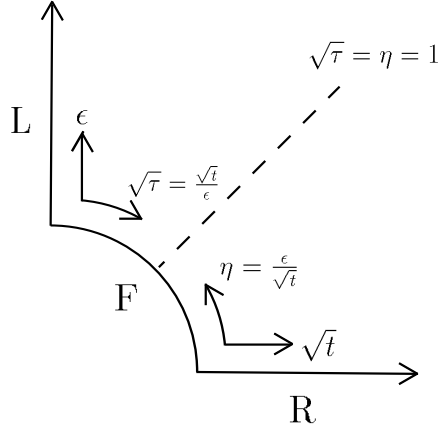}
\caption{The blown-up space $Q_0$.}\label{qnaught}
\end{figure}

\begin{proof}
We only describe the slight changes to the proof in \cite[Sec. 4]{s2} which are needed to extend it from functions 
to $q$-forms.
The main point is to obtain a uniform parametrix construction for the heat kernel on $\Omega_\epsilon$, and in fact Section 4 of \cite{s2} concerning the parametrix construction was written with this situation in mind. 
The argument requires precise descriptions of the asymptotic structure of the heat kernels on $\Omega_0$ and on $M$. 
The polyhomogenous structure of the integral kernel of 
$\chi_2 H_q^{\Omega_0}(t)\chi_2$ for any $\chi_2$ with support not intersecting the cone tip 
is precisely the same in the general-$q$ case as it is in the $q=0$ case; the kernel is polyhomogeneous conormal
on the space $\Omega^2_{0,{\rm heat}}$ obtained by blowing up $\Omega_0^2\x\{\sqrt{t}=0\}$ inside $\Omega_0^2\x[0,1]_{\sqrt t}$.
This follows from the general heat calculus for second order elliptic self-adjoint differential operators on closed manifolds (note that Mooers \cite{mo} also describes the behaviour near the cone tip).

The heat kernel on $M$ for short time is also polyhomogeneous conormal on a certain space: we set
\[M^2_\sca:=\MMksc\cap \{k=1\},\quad {\rm diag}_{\sca}:={\rm diag}_{k,\sca}\cap\{k=1\}\]
with $\MMksc$ defined as in Section \ref{mmksc} and ${\rm diag}_{k,\sca}$ as in Section \ref{opecal}. Then as a direct consequence of the work of Albin \cite{alb}, (see \cite[Appendix]{s1} for details\footnote{We notice that Albin's heat calculus does not require $q=0$ (in fact, it allows general vector bundle coefficients), and neither does the argument in the appendix of \cite{s1} which completes the construction of the heat kernel. The leading orders of the heat kernel on $M$ for time bounded above are also the same as in the $q=0$ case.}) we have the following
\begin{proposition}\label{zshorttime} 
Let $T>1$ fixed, then for $t<T$, the heat kernel $H^{M}_q(t)$ for $q$-forms on $M$ is polyhomogeneous conormal on the space
\begin{equation}\label{M2heat}
M^2_{\rm heat}:=[M_{{\rm sc}}^{2}\times[0,T]_{\sqrt t}; {\rm diag}_{\rm sc}\x \{\sqrt t=0\}]
\end{equation}
obtained by blowing-up $M_{{\rm sc}}^{2}\times[0,T]_{\sqrt t}$ at the submanifold 
${\rm diag}_{\rm sc}\x \{\sqrt t=0\}$.
Moreover there is infinite-order decay at all faces except the scattering face ${\rm sc}$ (where the leading order is zero) and the temporal face ${\rm tf}$ obtained from the blowup (where the leading order is $-n$ in terms of $\sqrt t$).
\end{proposition}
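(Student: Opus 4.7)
My plan is to adapt Albin's heat calculus from \cite{alb} (which constructs heat kernels for generalized scattering Laplacians acting on sections of vector bundles) to the present situation, then appeal to the parametrix-completion argument from the appendix of \cite{s1}. The construction proceeds by building a polyhomogeneous parametrix $\tilde H(t)$ on $M^2_{\rm heat}$ whose error vanishes to infinite order at every boundary face, and then correcting it by a Volterra-type Neumann series to obtain the true heat kernel.

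First, I would set up the geometry. The space $M^2_{\rm heat}$ in \eqref{M2heat} carries a natural blow-down map to $M^2_\sca\times[0,T]_{\sqrt t}$, and the new front face created by the blowup ${\rm diag}_\sca\times\{\sqrt t=0\}$ is what we call the temporal face ${\rm tf}$. In the interior of ${\rm tf}$, the natural rescaled coordinates are $(Z-Z')/\sqrt t$ where $Z$ is a local coordinate on $M$, and the model operator (normal operator) of $\pl_t+\Delta_q$ at ${\rm tf}$ is the Euclidean heat operator $\pl_\tau+\Delta^{\RR^n}_q$ acting on $q$-forms on $\RR^n$. The leading model at ${\rm tf}$ is therefore taken to be the Euclidean heat kernel on $q$-forms, which is diagonal (in the standard trivialization) with scalar entries given by the classical Gaussian; this has leading order $-n$ in $\sqrt t$ at ${\rm tf}$ and decays rapidly away from the diagonal. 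At the scattering face $\sca$, the normal operator of $\Delta_q$ is again a Euclidean Laplacian on $q$-forms on the fibers $\RR^n$ (as recalled in Section~\ref{pdcle}); the model for the heat kernel here is the corresponding Euclidean heat kernel for $q$-forms on $\RR^n$, viewed as a smooth family parametrized by $N$ and $\sqrt t\in(0,T]$, with leading order $0$ at $\sca$.

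The next step is to show that these two models match at ${\rm tf}\cap\sca$ (the matching is the same calculation as in the scalar case since the scattering normal operator and the tf normal operator are both built from the same Euclidean Laplacian, and the bundle structure is locally trivial in the rescaled coordinates), and to extend to a global polyhomogeneous section $\tilde H(t)$ on $M^2_{\rm heat}$ with the stated leading orders and infinite-order vanishing at the remaining faces (these are the lifts of $\lb,\rb$ and the zero-time face off the diagonal). Applying $\pl_t+\Delta_q$ then yields an error $(\pl_t+\Delta_q)\tilde H(t)=E(t)$ which, by construction of the models at ${\rm tf}$ and $\sca$, is polyhomogeneous and vanishes to infinite order at all faces. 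The true heat kernel is recovered by the Volterra series
\[
H^M_q(t)=\tilde H(t)+\sum_{j\geq 1}(-1)^j \tilde H(t)\ast E(t)^{\ast j},
\]
where $\ast$ denotes convolution in $t$ combined with composition in space. Albin's composition theorem for the heat calculus shows that this series converges, lies in the same calculus as $\tilde H(t)$, and preserves the index sets (because each convolution with $E(t)$ contributes a factor vanishing to infinite order at most faces and a positive power of $\sqrt t$).

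The main obstacle I anticipate is the verification that the bundle coefficient structure does not interfere with Albin's scalar arguments. Concretely, one must check that $\Delta_q$, when written in the $b$-frame for $\Lambda^q_\sca$ near $\pl\Mbar$, has the same scattering-elliptic principal part as the scalar Laplacian (this is the content of the computation in Section~\ref{laponforms} combined with the fact that the zeroth-order Weitzenb\"ock-type terms decay like $x$ and are therefore harmless at the scattering face), and that the normal operator at $\sca$ is indeed a Euclidean form Laplacian. Once these checks are in place, Albin's construction and the argument in \cite[Appendix]{s1} carry over verbatim, since neither depends on $q=0$ in any essential way; the only $q$-dependence of the leading orders is through the rank of $\Lambda^q$, which does not affect the exponents $-n$ at ${\rm tf}$ and $0$ at $\sca$.
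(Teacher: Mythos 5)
Your proposal follows exactly the route the paper takes: Proposition \ref{zshorttime} is deduced there as a direct consequence of Albin's heat calculus \cite{alb} together with the parametrix-completion argument in the appendix of \cite{s1}, the only point requiring comment being (as you correctly identify) that the vector-bundle coefficients do not disturb the scalar construction and that the normal operator of $\Delta_q$ at the scattering face is a Euclidean form Laplacian. Your sketch of the models at ${\rm tf}$ and ${\rm sc}$, their matching, and the Volterra-series correction is an accurate expansion of what those references carry out, so the argument is correct and essentially identical in approach.
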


In order to understand the long-time heat kernel on $M$, we apply the analysis of the low-energy resolvent and the methods in \cite[Th. 2]{s1}, based on the expression of the heat operator as a contour integral of the resolvent. 
We shall show that the space on which the heat kernel is polyhomogeneous is the same for general $q$-forms, 
but that the leading orders of the heat kernel at the boundary hypersurfaces may change:
\begin{proposition}\label{maincor} 
For any fixed time $0<T<1$ and any $q$ with $0\leq q\leq n$, $H_q^M(t)$ is polyhomogeneous conormal on 
$M^2_{\omega,sc}$ for $t>T$, where $\omega:=t^{-1/2}$. The leading orders at the boundary hypersurfaces are bounded below by $0$ at ${\rm sc}$, by $n$ at ${\rm bf}_0$, by $0$ at ${\rm zf}$, and by $n/2-1+\nu_{\ker}$ at ${\rm lb}_0$ 
and ${\rm rb}_0$. Moreover, the coefficient of the zeroth order term at ${\rm zf}$ (if any) 
decays to order strictly greater than $n$ at ${\rm bf}_0$.
\end{proposition}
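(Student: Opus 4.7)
The plan is to realize the long-time heat kernel as a rescaled contour integral of the low-energy resolvent and pull back the polyhomogeneous structure from Theorem \ref{thm:nullspace}. Starting from the spectral representation, for $t>T$ one has
\[
e^{-t\Delta_q^M}=\Pi_{\ker(\Delta_q^M)}-\frac{1}{\pi i}\int_{\Gamma_\theta} e^{-tk^2}\,k\,R_q(k)\,dk,
\]
where $\Gamma_\theta$ is a keyhole contour running in from $\infty$ along $e^{-i\theta}\rr_+$, around the origin at small radius, and back out along $e^{i\theta}\rr_+$ for some fixed $\theta\in(0,\pi/2)$; the deformation from the standard imaginary-axis contour into the right half-plane is justified by Remark \ref{rem5}, and the $k^{-2}\Pi_{\ker(\Delta_q^M)}$ pole of $R_q$ at $k=0$ reproduces the $\Pi_{\ker(\Delta_q^M)}$ term via the residue. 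I would truncate the contour to $|k|\leq 1$ at the cost of an exponentially small error $O(e^{-cT})$ which is harmless for the claimed polyhomogeneity.

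Next, substituting $k=\omega\kappa$ with $\omega=t^{-1/2}$ and using Cauchy's theorem to deform back to a fixed contour $\Gamma_\theta$ in the $\kappa$-plane, the right-hand side becomes
\[
-\frac{1}{\pi i}\int_{\Gamma_\theta} e^{-\kappa^2}\,\omega^2\kappa\,R_q(\omega\kappa)\,d\kappa.
\]
The Gaussian $e^{-\kappa^2}$ provides uniform rapid decay along the rotated rays. Viewing the integrand as a polyhomogeneous family on $\MMksc$ in the variable $k=\omega\kappa$, depending smoothly on $\kappa\in\Gamma_\theta$, a pushforward argument (following \cite[Appendix]{s1}) shows that integration in $\kappa$ produces a polyhomogeneous conormal distribution on $M^2_{\omega,sc}$.

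The leading orders at each boundary hypersurface follow by combining Theorem \ref{thm:nullspace} with the $\omega^2$ prefactor and the standard scattering-versus-b half-density shifts: at $\zf$, the $k^{-2}\Pi_{\ker}$ leading term of $R_q$ is absorbed into the residue producing $\Pi_{\ker(\Delta_q^M)}$ (order $0$), with higher-order corrections starting at strictly positive order; at $\bfo$, the $-2$ order of $R_q$ combines with the $\omega^2$ and the $+n$ shift from b to scattering half-densities to yield order $n$; at $\lbo,\rbo$, the $\nu_{\ker}-3$ order of $R_q$ becomes $n/2-1+\nu_{\ker}$ after the analogous bookkeeping; and at $\sca$, order $0$ of $R_q$ at $\sca$ gives order $0$. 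For the final assertion, the coefficient of $\omega^0$ at $\zf$ is exactly $\Pi_{\ker(\Delta_q^M)}=\sum_j\psi_j\otimes\psi_j$, and by definition each $\psi_j\in x^{(n/2-1)+\nu_{\ker}-\eps}L^\infty$ as a scattering form for every $\eps>0$, so $\Pi_{\ker(\Delta_q^M)}$ decays at $\bfo$ to order $n-2+2\nu_{\ker}-2\eps$, which is strictly greater than $n$ since $\nu_{\ker}>1$ by the no-zero-resonance assumption (Lemma \ref{wittconseq}).

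The main obstacle is the rigorous pushforward step showing that integration in $\kappa$ of the polyhomogeneous family on $\MMksc$ yields a polyhomogeneous conormal distribution on $M^2_{\omega,sc}$ with the correct leading orders. The rescaling $k=\omega\kappa$ intertwines $\kappa$ with the face-defining functions of $\MMksc$ in a face-dependent way, particularly at $\bfo$ where the coordinates $\kappa=k/x,\kappa'=k/x'$ on $\MMksc$ are identified after integration with $\omega/x,\omega/x'$ on $M^2_{\omega,sc}$; careful term-by-term control using the Gaussian decay is required. This is essentially the argument in \cite{s1,s2} in the scalar case, and with Theorem \ref{thm:nullspace} in hand it adapts directly to $q$-forms.
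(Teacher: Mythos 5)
Your proposal follows essentially the same route as the paper: represent the long-time heat kernel as a contour integral of the low-energy resolvent along rotated rays (Theorem \ref{thm:nullspace} plus Remark \ref{rem5}), push forward to $M^2_{\omega,sc}$ by the argument of \cite{s1} (the paper simply invokes \cite[Th.\ 8]{s1} and \cite[Lemma 10]{s1} here), and do the order bookkeeping; your identification of the $\omega^0$ coefficient at $\zf$ with $\Pi_{\ker(\Delta_q)}$ and the computation of its decay $n-2+2\nu_{\ker}>n$ from the definition of $\nu_{\ker}$ and the no-resonance condition is correct and in fact more direct than the paper's splitting $R_q=R_1+R_2$.

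Two points need repair. First, your explicit contour formula is wrong as written: with the weight $e^{-tk^2}$ and a keyhole hugging the rays $\arg k=\pm\theta$, $\theta\in(0,\pi/4)$ (where that Gaussian decays), the integrand $e^{-tk^2}kR_q(k)$ is holomorphic in the enclosed sector (the resolvent is regular for $\Re k>0$) and the integral vanishes identically. The contour must instead approach the spectrum, i.e.\ hug the rays $\arg k=\pm\theta$ with $\theta\in(\pi/4,\pi/2)$, with the weight $e^{+tk^2}$ decaying there; this is exactly the regime covered by Remark \ref{rem5}. Second, discarding the $|k|\geq 1$ portion of the contour as "an exponentially small error $O(e^{-cT})$" is not sufficient (and the bound should in any case be $O(e^{-ct})$): the error is a Schwartz kernel, and the Proposition asserts polyhomogeneity on $M^2_{\omega,sc}$, so you must show this remainder vanishes to infinite order at \emph{all} boundary hypersurfaces, in particular with uniform conormal decay at spatial infinity. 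Exponential decay in $t$ alone gives no control in $(z,z')$. This is precisely the hypothesis of \cite[Th.\ 8]{s1} that the paper verifies separately, namely polynomial bounds on the high-energy resolvent coming from the semiclassical scattering calculus (\cite{vz,wz} and \cite[Sec.\ 10]{hw}); you need to include that input.
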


\begin{proof} By Lemma \ref{wittconseq},  we can apply and Theorem \ref{thm:nullspace} and Remark \ref{rem5}, which show that 
the Schwartz kernel of $(\Delta_q^M+k^2e^{i\theta})^{-1}$ is phg conormal on $M^2_{k,\sca}$ 
for each fixed $\theta \in (-\pi/2,\pi/2)$, and with smooth dependence in $\theta$. 
The leading orders of the resolvent, with respect to the half-density bundle $\til{\Omega}_b^\demi(\MMksc)$, 
are $0$ at $\sca$, $-2$ at $\bfo$ and $\zf$, and $\nu_{\ker}-3$ at $\rbo$ and $\lbo$. 
Switching to the natural scattering half-densities adds $n/2$ at $\lbo$ and $\rbo$ and $n$ at $\bfo$, giving $n-2$ at 
$\bfo$ and $n/2+\nu_{\ker}-3$ at $\lbo$ and $\rbo$. Additionally, from the construction in Section \ref{subsect:zf}, the leading-order term $G^{-2}_{\zf}$ at $\zf$ decays to higher order at $\bfo$; with respect to scattering half-densities, it decays to order strictly greater than $n-2$, which we write $n-2+\delta$ for some $\delta>0$.
Theorem 8 in \cite{s1} also requires that the spectrum of $\Delta_q^M$ be contained in $[0,\infty)$, which it is, and that the high-energy resolvent be polynomially bounded in $|\lambda|$ for $\theta$ with $|\theta|>\pi/4$. As in \cite{s1}, the polynomial bounds on the high-energy resolvent are a direct consequence of the usual semiclassical scattering calculus and just comes from ellipticity (see \cite{vz,wz} and section 10 of \cite{hw}). The conditions of Theorem 8 in \cite{s1} are therefore met. The polyhomogeneity and leading-order statements in Theorem \ref{maincor} then follow immediately from the application of \cite[Th. 8]{s1}. Note that in particular, the transition from resolvent to heat kernel adds $2$ to the orders at zf, bf$_0$, lb$_0$, and rb$_0$.

Finally, we must show that the coefficient of the order-0 term at zf decays to higher order at bf$_0$. To do this, we note that the corresponding property is true for the low-energy resolvent, so we may break off the leading order term. Write the resolvent as a sum of two kernels, $R_q^M(k)=R_1+R_2$, where $R_1$ consists precisely of the leading-order term at zf. $R_1$ then has order $-2$ at zf and order $n-2+\delta$ at bf$_0$, while $R_2$ has order greater than $-2$ at zf and order 0 at bf$_0$. Applying Lemma 10 of \cite{s1} separately to the two terms, we see that $H_q^M(t)=H_1(t)+H_2(t)$, where $H_1(t)$ has order 0 at $\zf$ and order $n+\delta$ at $\bfo$, while $H_2(t)$ 
has positive order at $\zf$ and order $n$ at $\bfo$. Therefore, the order-0 term of $H_q^M(t)$ at $\zf$ is the same as the order-0 term of $H_1(t)$ at $\zf$, and hence decays to order $n+\delta$ at $\bfo$. This completes the proof of Proposition \ref{maincor}.
\end{proof}

The proof of Theorem 3 in \cite[Sec. 4]{s2} is done in a general setting by simply assuming that the heat kernel  
is polyhomogeneous conormal on $M^2_{\omega,\sca}$ for large time and on $M^2_{\rm heat}$ for small time, 
with lower bounds on the index sets at the boundaries of these spaces. The small time behaviour for forms is the same as for functions, so the argument for small time is unchanged. The necessary large time assumption is written down in Section 4.2 of \cite{s2} and called the ``Alternative Hypothesis.'' It requires the following lower bounds on the orders of $M^2_{\omega,\sca}$ (with $\omega=t^{-1/2}$) for $H^M_q(t)$: $0$ at $\zf$ and $\ndemi+\eps$ for some $\eps>0$ at $\rbo,\lbo$. Moreover, the coefficient of the term of order $0$ at $\zf$ (if any) must decay to order strictly greater than $n$ at $\bfo$. 
Thus Proposition \ref{maincor} implies that $H_q^M(t)$ satisfies the Alternative Hypothesis and we can apply the result of Section 4.2 of \cite{s2}. Indeed, nothing in the argument in \cite{s2} requires $q=0$; the parametrix construction for the heat kernel is the same for differential forms and for functions, once the definition of t-convolution is generalized in the usual way to operators on forms. The only other key property used is the conformal scaling of the heat kernel:
\[H_q^{\epsilon Z}(t,z,z')=\epsilon^{-n}H_q^Z(\frac{t}{\epsilon},\frac{z}{\epsilon},\frac{z'}{\epsilon}).\] As discussed for example in \cite{ch2}, this holds for $q>0$ as well as $q=0$. This completes the proof of Theorem \ref{structure}.
\end{proof}

\subsection{Renormalized heat trace on $M$}

We now need to analyze each of the terms in Theorem \ref{structure}. We begin by studying
\begin{equation}\label{firstthing}
\int_{M}\chi_1(\epsilon z)\tra(H^M_q(\tau,z,z))dv(z), 
\end{equation}
where $\tau=t/\epsilon^2$. This was studied in Section 3.1 of \cite{s1} for the case of functions. Since we have the polyhomogeneous conormality of the heat kernel on 
$M^2_{\omega,\sca}$ for large time and on $M^2_{\rm heat}$ for small time, the same proof 
extends verbatim to the case of $q$-forms: this gives 
\begin{proposition} \label{polyhomof69}
As a function of $(\tau,\eps)$, the integral (\ref{firstthing}) has a polyhomogeneous conormal expansion in $[0,1]_\tau\x [0,1]_\eps$, and  in the space $[ [0,1]_{\tau^{-1/2}}\x [0,1]_{\eps}; \tau^{-1/2}=\eps=0]$ obtained by blowing-up
$\tau^{-1/2}=\eps=0$ inside the square $[0,1]_{\tau^{-1/2}}\x [0,1]_{\eps}$. As a consequence, the finite part as $\eps\to 0$, denoted 
\begin{equation}\label{finsmooth}{\rm FP}_{\eps\to 0}\int_{M}\chi_1(\epsilon z){\rm Tr}(H_q^M(\tau,z,z))dg(z),\end{equation}
exists and has polyhomogeneous asymptotic expansions in $\tau$ at both $\tau=0$ and $\tau=\infty$.
\end{proposition}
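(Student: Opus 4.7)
The plan is to follow the proof of the $q=0$ case in \cite[Sec.~3.1]{s1} essentially verbatim, the only new inputs being the polyhomogeneity descriptions of the small-time and large-time heat kernels on $M$, which are precisely Propositions~\ref{zshorttime} and \ref{maincor} already established. The proof splits naturally at a fixed time, say $\tau=1$, into a small-$\tau$ analysis (using the heat space $M^2_{\rm heat}$) and a large-$\tau$ analysis (using $M^2_{\omega,\sca}$ with $\omega=\tau^{-1/2}$), and in each case proceeds by viewing the integrand as a polyhomogeneous conormal distribution on an appropriate blow-up of $M\times[0,1]_{\sqrt\tau}\times[0,1]_\eps$ and then applying Melrose's pushforward theorem to integrate out the $z$-fibre.

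For the small-$\tau$ piece I would restrict the phg section $H_q^M(\tau,z,z')$ on $M^2_{\rm heat}$ to the diagonal, multiply by $\chi_1(\eps z)$ (which is smooth as a function of $\eps/x$ up to $x=0$), and observe that the combined distribution is phg on the appropriate blow-up of $M\times[0,1]_{\sqrt\tau}\times[0,1]_\eps$, with leading behaviour $\sqrt\tau^{-n}$ at the lifted temporal face and infinite-order vanishing at the lifted scattering face. The pushforward along $M$ is then a phg conormal function on $[0,1]_{\sqrt\tau}\times[0,1]_\eps$ (even extending smoothly across the blown-up corner since nothing is singular at $\eps=0$ for fixed small $\sqrt\tau$). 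For the large-$\tau$ piece I would use Proposition~\ref{maincor}: restricting to the diagonal and multiplying by $\chi_1(\eps z)$ yields a phg distribution on an appropriate blow-up of $M\times[0,1]_\omega\times[0,1]_\eps$; the cutoff $\chi_1(\eps z)$ localizes to $\{x\gtrsim\eps\}$ and depends smoothly on $\eps/x$ up to $x=0$, so it is phg on the natural $(\eps,z)$ blow-up. Applying the pushforward theorem again produces a phg function on the blow-up of $[0,1]_\omega\times[0,1]_\eps$ at the corner. Piecing the two regimes together across $\tau\sim 1$ gives phg conormality on $[0,1]_\tau\times[0,1]_\eps$ and on the $(\tau^{-1/2},\eps)$ blow-up $Q_0$ in the statement. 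The finite-part assertion \eqref{finsmooth} then follows by reading off the $\eps^0$ term in the expansion at the face $\{\eps=0\}$, with phg asymptotics in $\tau$ at the faces $R$ and $F$ of $Q_0$ governing the $\tau\to 0$ and $\tau\to\infty$ behaviour respectively.

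The main obstacle is verifying that the index-set lower bounds in Proposition~\ref{maincor} are strong enough to make the $z$-integral converge uniformly and to guarantee that Melrose's pushforward produces a truly polyhomogeneous (rather than merely conormal-with-logs) distribution on the $(\omega,\eps)$ blow-up. This is exactly what the refined statement in Proposition~\ref{maincor} is designed to deliver: the order $\geq n/2-1+\nu_{\ker}>n/2$ at $\rbo$ and $\lbo$ gives integrability at the right-hand scattering face, and, crucially, the statement that the order-zero term at $\zf$ decays strictly faster than $x^n$ at $\bfo$ prevents the leading $\tau$-independent piece from colliding with the volume form and producing spurious logarithms at the new face $F$ of $Q_0$. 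Once these bounds are in place, the pushforward theorem of \cite{me0} yields polyhomogeneity on $Q_0$ with the orders asserted, and the proof of \cite[Sec.~3.1]{s1} carries over without further modification, since the remainder of the argument there only uses the formal phg calculus and not any feature specific to functions.
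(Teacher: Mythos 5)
Your proposal is correct and follows essentially the same route as the paper, which at this point simply observes that, given the small-time polyhomogeneity on $M^2_{\rm heat}$ (Proposition \ref{zshorttime}) and the large-time polyhomogeneity on $M^2_{\omega,\sca}$ (Proposition \ref{maincor}), the argument of \cite[Sec.~3.1]{s1} for $q=0$ carries over verbatim — i.e.\ exactly the split at $\tau=1$, diagonal restriction, and pushforward argument you describe. One small imprecision: the lifted diagonal never meets $\rbo$ or $\lbo$ (there $x=x'$), so the orders at those faces are not the binding integrability condition for the $z$-pushforward; what matters is the order $n$ at $\bfo$ together with the fact (which you do correctly flag) that the order-zero coefficient at $\zf$ decays past order $n$ at $\bfo$.
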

As in \cite{s1}, a similar argument applies to the integral
\[\int_{\{x\geq\eps\}}{\rm Tr}(H_q^M(\tau,z,z))dg(z),\]
and hence the renormalized heat trace  $^{R}\tra(H^M_q(\tau))$ defined by \eqref{renormtrace} exists and has polyhomogeneous expansions in $\tau$ at $\tau=0$ and $\tau=\infty$.

It will be necessary to be more specific about the asymptotic expansion of (\ref{firstthing}) as $\eps\rightarrow 0$. In fact, we can explicitly compute most of the divergent terms. By \cite{ch2}, the local trace of the heat kernel on the exact metric cone $C_N=(0;\infty)_r\x N$ with metric $dr^2+r^2h_0$ has a local asymptotic expansion away from the conic points and away from infinity: 
for $z\in C_N$,
\[\tra(H_q^{C_N}(\tau,z,z))\sim\sum_{k=0}^{\infty}u_k(z)\tau^{(k-n)/2} \textrm{ as }\tau\to 0\]
where the $u_k(z)$ are functions on $C_N$ which are zero for all odd $k$. Since $n$ is odd,  there is no $\tau^{0}$ coefficient in the 
heat expansion. For each $k\in[0,n-1]$, let
\[f_k(\tau)=\frac{\tau^{(k-n)/2}}{k-n}\int_Nu_k(1,y)\ dh_0(y), \quad l_{k}=-\int_{1/2}^{2}\chi_{1}'(r)r^{k-n}\ dr.\]
Then we have the following expansion, which is precisely analogous\footnote{There is however a difference with \cite[Sec. 3.2]{s1}: since we consider $n$ odd, there is no $\log \eps$ term in the expansion and $^{R}\tra(H^M_q(\tau))$ is the only $\eps^0$ coefficient.} 
to \cite[Lemma 19]{s1}:
\begin{proposition}\label{expansion} 
For each fixed $\tau>0$,
\[\begin{split}
\int_{M}\chi_1(\epsilon z){\rm Tr}(H^{M}_q(\tau,z,z)) dg(z)= & 
\sum_{k=0}^{n-1}l_{k}f_{k}(\tau)\epsilon^{k-n}
+ {^R}{\rm Tr}(H^M_q(\tau))+\tilde R(\epsilon,\tau)
,\end{split}\]
where $\tilde R(\epsilon,\tau)$ goes to zero as $\epsilon$ goes to zero.
\end{proposition}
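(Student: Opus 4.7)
The plan is to mimic the scalar version of this statement, namely \cite[Lemma 19]{s1}; the extension to $q$-forms is essentially notational because both ingredients we need---the scaling of the heat kernel on an exact cone and the uniform short-time heat expansion---are valid for forms without change.

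First, I would split $M = M_1 \cup M_2$ with $M_1 = \{x \geq 1\}$ (the non-conic part, compact) and $M_2 = \{x < 1\}$ (the exact cone end). On $M_1$ one has $\chi_1(\epsilon z) \equiv 1$ for $\epsilon$ sufficiently small, so the $M_1$ contribution is $\epsilon$-independent and is eventually absorbed into the finite part. On $M_2$ I would switch to the cone coordinate $r = 1/x$ so that $dg = r^{n-1}\,dr\,dh_0$.

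Second, on the exact cone $C_N$ the conformal scaling identity $\tra H_q^{C_N}(\tau,(r,y)) = r^{-n}\tra H_q^{C_N}(\tau/r^2,(1,y))$ combined with the short-time expansion $\tra H_q^{C_N}(\tau,(1,y)) \sim \sum_{k} u_k(1,y)\tau^{(k-n)/2}$ (uniform in $y \in N$ since $N$ is closed) gives, for any $K$ and as $r \to \infty$ with $\tau$ fixed,
\[\tra H_q^M(\tau,(r,y)) = \sum_{k=0}^{K-1} u_k(1,y)\,\tau^{(k-n)/2}\,r^{-k} + \mathcal{O}(r^{-K}),\]
the extension from $C_N$ to $M$ being valid because $M$ is isometric to $C_N$ on $M_2$ and $\tra H_q^M - \tra H_q^{C_N}$ decays faster than any power of $r$ at fixed $\tau$, either by finite propagation speed or, equivalently, by the polyhomogeneous structure at $\bfo$ in Proposition \ref{maincor}.

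Third, I would insert this expansion into the integral over $M_2$ and change variables to $s = \epsilon r$. The contribution of the $k$-th term becomes
\[a_k\,\tau^{(k-n)/2}\,\epsilon^{k-n}\int_\epsilon^\infty \chi_1(s)\,s^{n-1-k}\,ds,\qquad a_k := \int_N u_k(1,y)\,dh_0(y),\]
with only even $k$ surviving since $n$ is odd and $u_k \equiv 0$ for odd $k$. For $k < n$ the inner integral has a finite limit as $\epsilon \to 0$; an integration by parts against $\chi_1'$ (supported in $[15/16, 17/16]$, away from both endpoints) identifies this limit with $l_k$ up to the normalising factor $k-n$, producing the divergent term $l_k f_k(\tau)\epsilon^{k-n}$. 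The finite remainders left over, together with the $M_1$ contribution and the convergent tails $k \geq n$, reassemble into ${}^R\tra H_q^M(\tau)$ by direct comparison with its defining limit
\[{}^R\tra H_q^M(\tau) = \mathrm{FP}_{\epsilon \to 0}\int_{\{x \geq \epsilon\}} \tra H_q^M(\tau,z,z)\,dg(z),\]
in which the same substitutions produce the same $\epsilon^{k-n}$ divergences modulo a transition contribution localised in $x \sim \epsilon$ that feeds only into those divergent coefficients, not into the finite part.

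The main technical point will be justifying the interchange of the limit $\epsilon \to 0$ with the spatial asymptotic sum, and thus guaranteeing that the error $\tilde R(\epsilon, \tau)$ is genuinely $o(1)$. Both require uniform control on the remainder $\mathcal{O}(r^{-K})$ near $r \sim 1/\epsilon$, which is supplied by the polyhomogeneity of $H_q^M$ on $M^2_{\omega,\sca}$ from Proposition \ref{maincor} together with the uniformity of the short-time heat expansion on the compact cross-section $N$.
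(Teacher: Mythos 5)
Your proposal is correct and follows the same overall strategy as the paper's proof (which itself follows \cite[Lemma 19]{s1}): exact conformal scaling of the cone heat kernel to the slice $\{r=1\}$, the uniform short-time expansion there, the change of variables $s=\epsilon r$ against the cutoff, and identification of the finite part with ${^R}\tra(H^M_q(\tau))$. The one place where the extension from functions to $q$-forms requires genuinely new work — and where you take a different route — is the comparison $\tra H^M_q(\tau,z,z)-\tra H^{C_N}_q(\tau,z,z)=\mc{O}(|z|^{-\infty})$, i.e.\ Lemma \ref{comparison}. For $q=0$ this is done in \cite{s1} by the maximum principle, which fails for forms; the paper instead glues an approximate heat kernel from $H^W_q$ and $H^{C_N}_q$ and runs a Duhamel argument in Albin's asymptotically conic heat calculus, using that $\Psi^{\infty,\infty}(M^2_{\rm heat})$ is an ideal. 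Your first justification — finite propagation speed, properly understood as the Cheeger--Gromov--Taylor transplantation argument for the wave group $\cos(s\sqrt{\Delta_q})$, which does have unit speed since $\Delta_q=(d+\delta)^2$ has scalar principal symbol — is a valid and more elementary alternative: two operators isometric on $B(z,R)$ have heat kernels at $(z,z)$ differing by $\mc{O}(e^{-cR^2/\tau})$, which after local elliptic estimates gives superpolynomial decay of the pointwise difference. Your second justification, via Proposition \ref{maincor}, is not adequate on its own: that proposition concerns only $t>T$ and provides only lower bounds on orders at the faces of $M^2_{\omega,\sca}$; it does not identify the expansion of $H^M_q$ at spatial infinity with that of $H^{C_N}_q$, which is precisely the content of the comparison lemma. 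So keep the wave-group argument (or the paper's parametrix argument) and drop the second. A final stylistic point: it is slightly cleaner to apply the exact scaling identity to $H^{C_N}_q$ inside the integral and transfer to $M$ only at the end via the comparison lemma, as the paper does, rather than expanding $\tra H^M_q$ pointwise first; this sidesteps the uniformity-of-remainder issue you flag, since the remainder is then controlled by the uniform short-time expansion on the compact slice $\{1\}\times N$ together with the superpolynomially small comparison error.
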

\begin{remark} Note that this proposition shows \eqref{finsmooth} is equal to $^{R}\tra(H^M_q(\tau))$, and therefore, as in \cite[Prop. 20]{s1}, the renormalized trace is independent of the cutoff function used to define it. \end{remark}

\begin{proof}
As in \cite[Lemma 19]{s1}, the proof proceeds by comparing the heat kernel on $M$ 
with the heat kernel on $C_N$. The first step is the following comparison lemma: we identify 
a region $\{z\in M; x(z)\leq \delta\}$ with $\{z\in C_N; r(z)\geq 1/\delta\}$ for some $\delta>0$ small enough and we have
\begin{lemma} \label{comparison}
For any fixed $t>0$ and any $q$ between 0 and $n$, as $|z|\rightarrow\infty$, the pointwise norm of the difference $H^M_q(t,z,z)-H^{C_N}_q(t,z,z)$ decays faster than any polynomial in $|z|$.
\end{lemma}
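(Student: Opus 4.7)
The plan is to use a Duhamel comparison argument combined with Gaussian upper bounds. Choose a smooth cutoff $\psi \in C^\infty(M)$ which equals $1$ on $\{r \geq 2R\}$ and vanishes on $\{r \leq R\}$, where $R$ is chosen large enough that the region $\{r \geq R\}$ lies inside the exactly conic part of $M$, so that in this region we may identify $M$ and $C_N$ isometrically and have $\Delta^M_q = \Delta^{C_N}_q$. For $z$ in the exactly conic portion of $M$, define the approximate heat kernel
\[
\tilde G(t, z, z') := \psi(z') H^{C_N}_q(t, z, z'),
\]
viewed as a kernel on $M \times M$. Since $\psi$ is supported where $\Delta^M_q = \Delta^{C_N}_q$, a direct computation gives
\[
(\partial_t + \Delta^M_{q,z'}) \tilde G(t,z,z') = [\Delta^M_{q,z'}, \psi(z')] H^{C_N}_q(t,z,z'),
\]
which is a first-order operator in $z'$ applied to $H^{C_N}_q$, supported in the compact annulus $K := \{R \leq r \leq 2R\}$. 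For $z$ with $\psi(z) = 1$, the initial data agree, $\tilde G(0, z, \cdot) = \delta_z$, so Duhamel's principle yields for such $z$
\[
H^M_q(t,z,z) - H^{C_N}_q(t,z,z) = -\int_0^t \int_K H^M_q(t-s, z, w)\, [\Delta^M_{q,w}, \psi(w)] H^{C_N}_q(s, z, w)\, dw\, ds.
\]

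Next I would apply Gaussian upper bounds to each factor. On the complete Riemannian manifold $M$ one has estimates of the form
\[
|H^M_q(t-s, z, w)| \leq C(t-s)^{-n/2}\, e^{-c\, d_M(z,w)^2/(t-s)},
\]
and similarly (together with a gradient bound of order $s^{-1/2}$ extra) on $C_N$ away from the tip. For $w \in K$ and $|z| \to \infty$ we have $d(z, w) \geq |z|/2$, so the product of the two Gaussian factors is bounded by $\exp(-c|z|^2(1/(t-s) + 1/s)) \leq \exp(-4c|z|^2/t)$. The apparent singularities at $s = 0$ and $s = t$ in the prefactor $(t-s)^{-n/2} s^{-(n+1)/2}$ are absorbed by the same Gaussian, giving
\[
|H^M_q(t,z,z) - H^{C_N}_q(t,z,z)| \leq C_N(t)\, |z|^{-N}
\]
for every $N$, which is the desired super-polynomial decay.

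The main obstacle is justifying the Gaussian upper bounds for the heat kernels on $q$-forms: on $M$ (for fixed $t$) they follow either from the polyhomogeneous structure of $H^M_q(t)$ established in Propositions \ref{zshorttime} and \ref{maincor} — which already incorporate the exponentially decreasing behaviour at the scattering face — or, more cleanly, from finite propagation speed for $\cos(s\sqrt{\Delta^M_q})$ together with the subordination formula $e^{-t\Delta^M_q} = (4\pi t)^{-1/2} \int e^{-s^2/4t} \cos(s\sqrt{\Delta^M_q})\,ds$. The same subordination argument gives the bound on $C_N$ away from the tip, where $\cos(s\sqrt{\Delta^{C_N}_q})$ has unit propagation speed and the cone metric is smooth. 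Once these bounds are in place, the plan outlined above completes the proof of Lemma \ref{comparison}.
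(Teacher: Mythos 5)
Your argument is correct in outline and takes a genuinely different route from the paper. The paper also runs a Duhamel comparison, but it glues $H^{C_N}_q$ to the heat kernel of an auxiliary \emph{compact} manifold $W$ to build a global approximate heat kernel $A(t)$, observes that the resulting error $E(t)$ lies in $\Psi^{\infty,\infty}(M^2_{\rm heat})$ (compactly supported in the outgoing variable, supported away from the diagonal), and then concludes from the composition theorem of Albin's heat calculus that $H^M_q(t)-A(t)=-(H^M_q*E)(t)\in\Psi^{\infty,\infty}$, since $\Psi^{\infty,\infty}$ is an ideal. That structural argument buys the decay of the difference without ever needing off-diagonal heat kernel estimates. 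Your version cuts off only the cone kernel, localizes the Duhamel error to a fixed compact annulus $K$, and then estimates the convolution by hand; this is more elementary but shifts the entire burden onto off-diagonal decay of $H^M_q(t-s,z,w)$ and $\nabla_wH^{C_N}_q(s,z,w)$ for $w\in K$, $|z|\to\infty$, uniformly in $s\in(0,t)$.

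That is where you overreach. Pointwise Gaussian upper bounds $|H^M_q(t,z,w)|\le Ct^{-n/2}e^{-cd(z,w)^2/t}$ for the Hodge Laplacian on forms are \emph{not} a general fact on complete manifolds (there is no maximum principle for forms, and the Weitzenb\"ock curvature term can destroy such bounds), so you cannot simply quote them. What you actually need is much weaker and is obtainable by either of the two routes you sketch, but each needs one more step. The finite-propagation-speed/subordination argument only yields $L^2$-localized off-diagonal bounds of the form $\|\chi_{B(z,1)}e^{-\tau\Delta_q}\chi_{B(w,1)}\|_{L^2\to L^2}\le Ce^{-cd(z,w)^2/\tau}$; to pass to pointwise bounds on the kernel (and on one $w$-derivative) you must invoke local elliptic/parabolic estimates on unit balls, which is legitimate here because $K$ is fixed and the ends of $M$ and of $C_N$ away from the tip have bounded geometry, but it should be said. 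Alternatively, and more in the spirit of the paper, Proposition \ref{zshorttime} already gives infinite-order vanishing of $H^M_q(\tau,\cdot,\cdot)$ at the face $\lb$ of $M^2_{\rm heat}$ jointly with the decay at $\tau=0$ off the diagonal, which is exactly the uniform bound $|H^M_q(t-s,z,w)|\le C_N(t)|z|^{-N}$ for $w\in K$ that your $s$-integral requires (and the analogous statement for the exact cone away from its tip handles the other factor, or one can use Cheeger's explicit Bessel expansion of $H^{C_N}_q$). With either repair made explicit, your proof closes.
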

\begin{proof} This is shown for $q=0$ in \cite[Lemma 16]{s1} by using the maximum principle, so we must find an alternate approach for general $q$. The approach we use is based on the asymptotically conic heat calculus\footnote{The heat calculus for metrics with iterated edge structures, which includes asymptotically conic metrics, is discussed in the end of Section 4 in \cite{alb}, and the composition law is given in Theorem 4.3 of that paper.} of Albin \cite{alb}. 
We use the scattering heat space $M^2_{{\rm heat}}$ of \eqref{M2heat}; 
recall that $\sca$ denotes the scattering face (the lift of the scattering face in $M_{\sca}^2$ to $M^2_{\rm heat}$),
and let ${\rm tf}$ be the temporal face of the final blowup (of the $t=0$ diagonal). 
We now define various classes of time-dependent pseudodifferential operators whose kernels are polyhomogeneous conormal on $M^2_{\rm heat}$, again following \cite{alb}. For any real numbers $\alpha_1$ and $\alpha_2$, we let $\Psi^{\alpha_1,\alpha_2}(M^2_{\rm heat})$ be the class of operators whose Schwartz kernels are polyhomogeneous conormal on $M_h^2$, with index set bounded below by $\alpha_1$ at $\sca$ and $\alpha_2$ at ${\rm tf}$, 
and with infinite-order decay at all other boundary hypersurfaces. Let $\Psi^*(M^2_{\rm heat})$ be the union of all such classes. Further, let $\Psi^{\infty,\infty}(M^2_{\rm heat})$ be the class of operators whose kernel is phg conormal on $M^2_{\rm heat}$ 
with infinite-order decay at all boundary hypersurfaces.
On these classes of operators, Albin shows that $t$-convolution 
\[ A*B(t)=\int_{0}^t A(t-s)B(s)ds\]
is a well-defined binary operation whenever the integrals in the convolution converge. In particular from the composition rule in \cite[Th. 4.3]{alb}, we obtain that  all convolutions of an operator in $\Psi^*$ with one in $\Psi^{\infty,\infty}$ are well-defined, and the class $\Psi^{\infty,\infty}$ is a two-sided ideal in $\Psi^*$. 

Now let $\chi_3,\chi_4\in C^\infty(M)$ be smooth functions so that $\chi_3+\chi_4=1$, $\chi_3$ has compact support, and $M$ is exactly conic on the support of $\chi_4$. Let $\tilde\chi_3$ be a smooth function with compact support which is equal to $1$ on a neighbourhood of the support of $\chi_3$ and let $\tilde\chi_4$ be a smooth function with $\til{\chi}_4=1$ in a neighbourhood of the support of $\chi_4$.
Finally, let $W$ be a compact manifold which agrees with $M$ whenever $\tilde\chi_3\not=0$. 
Then define an approximate heat kernel $A(\tau)$ whose integral kernel on $M^2_{\rm heat}$ is:
\[A(t,z,z')=\tilde\chi_3(z)H_q^W(t,z,z')\chi_3(z')+\tilde\chi_4(z)H_q^{C_N}(t,z,z')\chi_4(z').\]
Note that the limit of $A(t)$ as $t\rightarrow 0$ is the delta function. We claim that $||A(t,z,z)-H^q_Z(t,z,z)||$ has infinite-order decay in $|z|$ as $|z|\rightarrow\infty$.
To prove the claim, we compute
\[\begin{split}
E(t,z,z'):= & (\partial_t+\Delta_q^M)A(t,z,z')=
[\Delta_Z^q,\tilde\chi_3(z)]H_q^W(t,z,z')\chi_3(z')\\
&+ [\Delta_Z^q,\tilde\chi_4(z)]H_q^{C_N}(t,z,z')\chi_4(z').\end{split}\]
By the properties of the cutoff functions, we see that the support of $E(t,z,z')$ is bounded away from the lift of the spatial diagonal in $M^2_{\rm heat}$, and the outgoing support (in $z$) is compact. Therefore, $E(t,z,z')$ is zero in a neighborhood of sc and tf. Moreover, it is easy to see from the explicit form of $H_q^{C_N}(t)$ and the known properties of $H_q^W(t)$ that for $t<T$, each decays to infinite order, together with all derivatives, at each of the other boundary faces of $M^2_{\rm heat}$. Therefore, $E(t)\in\Psi^{\infty,\infty}(M^2_{\rm heat})$. On the other hand, by Duhamel's formula, $H_q^{M}(t)=A(t)-(H^q_M *E)(t)$. Since $\Psi^{\infty,\infty}$ is an ideal and we know that $H_q^M(t)\in 
\Psi^{0,-n}(M^2_{\rm heat})$ (Proposition \ref{zshorttime}), we have $A(t)-H^M_q(t)
\in\Psi^{\infty,\infty}(M^2_{\rm heat})$. 
This immediately implies the infinite-order decay of $||A(t,z,z)-H^M_q(t,z,z)||$, and hence the lemma.
\end{proof}

From this point, the proof of Proposition \ref{expansion} proceeds precisely as in \cite{s1}. Namely, we first use the conformal homogeneity of the heat kernel, as in the well-known work of Cheeger \cite{ch2}, to compute the asymptotic expansion in $\epsilon$ of
\[\int_{r\leq 1/\epsilon}\tra(H^{C_N}_q(\tau,r,y,r,y)) r^{n-1}dr dh_0(y)\]
in terms of the short-time heat invariants. Then we may use elementary calculus, as in Section 3.2 of \cite{s1}, to compute the expansion 
\[\int_{C_N}\chi_1(\epsilon r,y)\tra(H^{C_N}_q(\tau,r,y,r,y)) r^{n-1}drdh_0(y).\]
Finally, we use Lemma \ref{comparison} with $\tau$ replacing $t$ to finish the proof. We refer to \cite[Sec. 3.2]{s1} for more  details. 
\end{proof}

\begin{remark}\label{remimp}
We notice that the expansion of Proposition \ref{expansion} is really an expansion at the face $F$ of $Q_0$, and 
$\til{R}(\eps,\tau)$ is thus polyhomogeneous conormal on $Q_0$ with positive order at $F$ and zeroth order at $R$ by Theorem \ref{structure}.
\end{remark}

\subsection{Leading order at $\eps=0$}

The final ingredient needed to analyze the asymptotics of the determinant, as in \cite[Th. 2]{s2}, 
is the leading-order behavior of $\tra(H^{\Omega_\eps}_q(t))$ at the face $R$ of the $(\sqrt{t},\eps)$ blown-up space $Q_0$ (recall that $R$ is the face that projects to $\eps=0$ on $Q$ by the blow down map $Q_0\to Q$). 
The key is a spectral convergence result of Ann\'e and Takahashi (theorems A and B in \cite{at}), generalizing earlier work of Degeratu-Mazzeo and Rowlett in the $q=0$ case \cite{ma2, row}. Our assumption that $N$ satisfies the modified Witt condition simplifies the statement of their result by ruling out zero-resonances on $M$ (zero-resonances are called \emph{extended solutions} in their work). In the absence of zero-resonances, their result can be stated as:

\begin{lemma}\label{specconv}\cite{at} Let $n\geq 3$. As sets with multiplicity, the spectrum ${\rm Sp}(\Delta^{\Omega_\epsilon}_q)$ converges as $\epsilon\rightarrow 0$ to the union of ${\rm Sp}(\Delta_{\Omega_0}^q)$ 
with a set consisting of $M_q$ zeroes, where $M_q=\dim\ker_{L^2}\Delta^M_q$.
\end{lemma}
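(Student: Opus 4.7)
The plan is to prove the lemma via a two-sided min-max argument using the gluing $\Omega_\eps = K \sqcup M_\eps$, where $K$ carries the fixed metric $g_0$ and $M_\eps$ carries the rescaled metric $\eps^2 g$. The essential input is the absence of zero-resonances for $\Delta_q^M$ (Lemma \ref{wittconseq}) and the fact that the modified Witt condition forces $\nu_0 > 1/2$, ruling out slowly decaying approximate null elements at the conic tip.

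\textbf{Upper bound (production of near-eigenvalues).} Two families of test forms are used. First, given a Friedrichs eigenform $\phi$ of $\Delta_q^{\Omega_0}$ with eigenvalue $\mu$, cut off $\phi$ by $\chi_\eps$ supported at distance $\gtrsim \eps^{1/2}$ from the cone tip and extend by zero across $N$ into $M_\eps \subset \Omega_\eps$; using the polyhomogeneous expansion of $\phi$ near the tip, one checks that $\|(\Delta_q^{\Omega_\eps}-\mu)(\chi_\eps\phi)\|_{L^2} = o(1)$. Second, given $\psi \in \ker_{L^2}(\Delta_q^M)$, cut off $\psi$ to the region $x \geq c\eps_0$ of $M$ and transplant it to $M_\eps \subset \Omega_\eps$ after rescaling; because $\psi = O(x^{\ndemi+\nu_0-1-})$ at infinity and $\Delta_q$ scales as $\eps^{-2}$ under $g \mapsto \eps^2 g$, the resulting $M_q$ independent forms have Rayleigh quotient of order $\eps^{2\nu_0}$. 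The two families live in essentially disjoint regions and are almost orthogonal; Courant--Fischer then yields the upper bound on the $k$-th eigenvalue of $\Delta_q^{\Omega_\eps}$ by the $k$-th eigenvalue of the claimed limiting set.

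\textbf{Matching lower bound.} Given a sequence $\mu_\eps \to \mu$ with normalized eigenforms $\phi_\eps$, split $\phi_\eps = u_\eps + v_\eps$ via a cutoff supported on $K$ vs. $M_\eps$. Standard elliptic estimates on compact subsets of $K$ extract a subsequential $H^1_{\rm loc}$ limit $u_\infty$ on $\Omega_0\setminus\{\text{tip}\}$ satisfying $(\Delta_q^{\Omega_0}-\mu)u_\infty = 0$; the Friedrichs condition for $u_\infty$ at the tip follows from the uniform energy bound $\|du_\eps\|_{L^2}, \|\delta u_\eps\|_{L^2} \leq C$ combined with the Witt gap in the indicial set. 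On the $M_\eps$ side, rescale via $z\mapsto z/\eps$ to view $v_\eps$ as a form on $(M, g)$ with $(\Delta_q^M - \eps^2\mu_\eps)v_\eps = \text{cutoff error}$; since $\eps^2\mu_\eps \to 0$ and, by the no-zero-resonance assumption, any weak limit in $x^{-1}L^2$ of solutions to $\Delta_q^M w = 0$ lies in $\ker_{L^2}\Delta_q^M$, the subsequential limit of $v_\eps$ produces an element of that kernel. A dimension count combining both limiting procedures shows the total multiplicity at each $\mu \in {\rm Sp}(\Delta_q^{\Omega_0})$ (and at $\mu = 0$) is at most what the upper bound produces, forcing equality.

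\textbf{Main obstacle.} The hard part is the lower-bound analysis in the neck region $\{x \sim \eps^\alpha\}$ in $M_\eps$: one must prevent $\phi_\eps$ from concentrating there in a way that produces a genuinely new accumulation point, and one must show that the $v_\eps$-limit process cannot lose dimensions through mass escaping to infinity in $M$. The modified Witt condition is exactly what makes both issues tractable, since it ensures $\nu_0 > 1/2$ and eliminates the polyhomogeneous modes with indicial root $0$ that could otherwise support non-$L^2$ limits (the ``extended solutions'' of \cite{at}). The precise dimension matching for the $M_q$ near-zero eigenvalues also requires a spectral gap estimate to separate them quantitatively from the rest of the spectrum, uniformly in small $\eps$; this can be arranged by showing, via the resolvent construction of Section \ref{resolvent-kernel}, that any non-$L^2$ Jacobi-field-type solution to $\Delta_q^M w = 0$ would contribute an eigenvalue of size $\gg \eps^{2\nu_0}$ rather than $o(1)$.
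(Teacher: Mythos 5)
The first thing to say is that the paper does not prove this lemma at all: it is imported verbatim from Ann\'e--Takahashi \cite{at} (Theorems A and B there), and the only contribution of the paper is to observe that the modified Witt condition kills the ``extended solutions'' (zero-resonances) appearing in their statement, which is what reduces it to the form quoted. So your proposal is not an alternative route through the paper's argument; it is an attempt to reprove the cited external result from scratch, and must be judged on its own terms.

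As such, your outline follows the standard and essentially correct strategy --- transplantation of eigenforms for the upper bound (which is indeed the method of \cite{at}, as the paper's Remark after Theorem \ref{conictorsion} notes) and extraction of limits on $K$ and on the rescaled model $(M,g)$ for the lower bound --- but the genuinely hard steps are named rather than carried out, and one quantitative claim is wrong. Concretely: (i) the modified Witt condition \eqref{Wittcond} does \emph{not} force $\nu_0>1/2$; it only gives $\alpha^2>3/4$ for the relevant eigenvalues, hence $\sqrt{1/4+\alpha^2}-1>0$ in $I^3$, i.e.\ $\nu_0>0$ and nothing more, so the mechanism you invoke to control the tip region is not available in the stated form. (ii) The entire content of the lower bound is to rule out loss of $L^2$ mass in the neck $\{x\sim\eps^\alpha\}$ and at infinity of $(M,g)$, and to verify that the weak limit on $\Omega_0$ lies in the Friedrichs domain; you correctly identify these as ``the main obstacle'' but supply no argument beyond an appeal to the Witt condition, and this is precisely where all the work in \cite{at} lies. (iii) The closing claim that a non-$L^2$ solution of $\Delta_q^M w=0$ ``would contribute an eigenvalue of size $\gg\eps^{2\nu_0}$'' is not meaningful as stated: the actual role of the no-zero-resonance hypothesis is that extended solutions would contribute \emph{additional} eigenvalues tending to $0$, inflating the count in \eqref{numbersmall} beyond $M_q$. (A minor further point: the decay rate of $\psi\in\ker_{L^2}\Delta_q^M$ is governed by $\til{\nu}$, not $\nu_0$, and the rescaling $g\mapsto\eps^2g$ multiplies Rayleigh quotients by $\eps^{-2}$, so the correct order of the small eigenvalues is $\eps^{2\til{\nu}-2}$, which is $o(1)$ since $\til{\nu}>1$; your exponent $\eps^{2\nu_0}$ is not right, though this does not affect the qualitative statement.) In summary: a reasonable outline of the known proof strategy, but not a proof; for the purposes of this paper the correct move is simply to cite \cite{at}.
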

\begin{remark}
Recall that $\Delta^q_{\Omega_0}$ is the Friedrichs extension of the Laplacian; this particular extension is singled out because of the absence of zero-resonances. In the notation of \cite{at}, the space $W$ of elements which `generate extended solutions' is $\{0\}$, and hence the Gauss-Bonnet operator $D_{1,W}$ is the minimal extension $D_{1,\textrm{min}}$. Since $\Delta_{1,W}=(D_{1,W})^*D_{1,W}$, we see that $\Delta_{1,W}$ is the Friedrichs Laplacian. 
\end{remark}

In particular, Lemma \ref{specconv} implies that the number of `small eigenvalues' of $\Delta^q_{\Omega_{\epsilon}}$ which are positive for $\epsilon>0$ but converge to zero as $\epsilon\rightarrow 0$ is precisely 
\[N_q=M_q+\dim\ker\Delta_q^{\Omega_0}-\dim\ker\Delta_q^{\Omega_{\eps_0}}.\] 
Note that the Hodge-de Rham theorem implies that $\dim\ker\Delta_q^{\Omega_\epsilon}$ is independent of $\epsilon$. We call the small eigenvalues $\mu_1(\epsilon),\ldots,\mu_{N_q}(\epsilon)$, and break off the contribution from the kernel and the small eigenvalues to write
\begin{equation}\label{decomptraHq}
\tra(H_q^{\Omega_\epsilon}(t))=
\dim \ker\Delta_q^{\Omega_\epsilon}+\sum_{i=1}^{N_q}e^{-t\mu_i(\epsilon)}+\tra'(H_q^{\Omega_\epsilon}(t))
\end{equation}
where the last term is defined by the expression and correspond to the sum over eigenvalues bounded below 
by a positive constant as $\eps\to 0$. 
With this notation, we now prove:
\begin{proposition}\label{htconve}
For any fixed positive $t$, as $\eps \to 0$
\[{\rm Tr}'(H^{\Omega_\epsilon}_q(t))\rightarrow {\rm Tr}(H^{\Omega_0}_q(t))-\dim\ker\Delta^{\Omega_0}_q.\]
Moreover, there are constants $C<\infty$, $\delta>0$, and $\mu>0$ such that for any $\epsilon<\delta$ and any $t\geq 1$,
\begin{equation}\label{unifexp}
|{\rm Tr}'(H^{\Omega_\epsilon}_q(t))|\leq Ce^{-\mu t}.
\end{equation}
\end{proposition}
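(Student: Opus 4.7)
The plan is to combine two complementary ingredients: the a priori bounds from the heat trace structure theorem (Theorem \ref{structure}) and the spectral convergence result of Ann\'e--Takahashi (Lemma \ref{specconv}). The structure theorem supplies uniform control in $\eps$, while the spectral convergence identifies the pointwise limit.

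First I would prove the uniform exponential bound \eqref{unifexp}. By Theorem \ref{structure}, $\tra(H_q^{\Omega_\eps}(t))$ is polyhomogeneous conormal on $Q_0$ with leading order $0$ at the face $R$, so there exist $\delta>0$ and $C_1>0$ such that $\tra(H_q^{\Omega_\eps}(1)) \leq C_1$ for all $\eps \leq \delta$. On the other hand, by Lemma \ref{specconv}, for $\eps$ sufficiently small the spectrum of $\Delta_q^{\Omega_\eps}$ splits into the $\dim\ker\Delta_q^{\Omega_\eps}$ zeros, the $N_q$ small eigenvalues $\mu_i(\eps)\to 0$, and remaining eigenvalues bounded below by $\mu := \lambda_1/2$, where $\lambda_1>0$ is the smallest nonzero eigenvalue of $\Delta_q^{\Omega_0}$. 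For $t\geq 1$ this gives
\[
\tra'(H_q^{\Omega_\eps}(t)) \;=\; \sum_{\lambda_i(\eps)\geq\mu} e^{-t\lambda_i(\eps)}
\;\leq\; e^{-\mu(t-1)}\tra(H_q^{\Omega_\eps}(1))
\;\leq\; C_1 e^{\mu}e^{-\mu t}.
\]

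Next I would establish the pointwise limit. By the decomposition \eqref{decomptraHq}, combined with $\dim\ker\Delta_q^{\Omega_\eps} = \dim\ker\Delta_q^{\Omega_{\eps_0}}$ (constant in $\eps$), $\sum_i e^{-t\mu_i(\eps)}\to N_q$, and the identity $N_q = M_q + \dim\ker\Delta_q^{\Omega_0} - \dim\ker\Delta_q^{\Omega_{\eps_0}}$, the claim reduces to
\[
\tra(H_q^{\Omega_\eps}(t)) \;\longrightarrow\; M_q + \tra(H_q^{\Omega_0}(t)) \quad \text{as } \eps\to 0.
\]
Given any $R>0$ not belonging to ${\rm Sp}(\Delta_q^{\Omega_0})$, Lemma \ref{specconv} yields spectral convergence with multiplicity on $[0,R]$, hence
\[
\sum_{\lambda_i(\eps)\leq R} e^{-t\lambda_i(\eps)} \;\longrightarrow\; M_q + \sum_{\lambda_i(0)\leq R} e^{-t\lambda_i(0)}.
\]
The tails on both sides are controlled uniformly via Theorem \ref{structure}: for fixed $t>0$,
\[
\sum_{\lambda_i(\eps)> R} e^{-t\lambda_i(\eps)} \;\leq\; e^{-tR/2}\,\tra(H_q^{\Omega_\eps}(t/2)) \;\leq\; C(t)\, e^{-tR/2},
\]
since $\tra(H_q^{\Omega_\eps}(t/2))$ is bounded uniformly in $\eps\leq \delta$. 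The same argument handles the tail on $\Omega_0$. Letting $R\to\infty$ yields the desired convergence of $\tra(H_q^{\Omega_\eps}(t))$.

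The main obstacle is to invoke the Ann\'e--Takahashi spectral convergence cleanly: one needs convergence as sets with multiplicity (not merely in Hausdorff distance) so that both the absorbed kernel of $\Delta_q^M$ and the small eigenvalues $\mu_i(\eps)$ are accounted for correctly. Once this is in hand, the uniform tail estimate coming from Theorem \ref{structure}, which was the hard analytic input, reduces the remainder of the argument to a dominated-convergence statement for sums of exponentials.
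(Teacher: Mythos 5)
Your proof is correct, and it reaches the conclusion by a mildly but genuinely different route from the paper. Both arguments rest on the same two inputs --- the uniform bound on $\tra(H_q^{\Omega_\eps}(t))$ for $t$ in a fixed compact interval coming from the polyhomogeneity in Theorem \ref{structure}, and the Ann\'e--Takahashi spectral convergence with multiplicity --- but they deploy them differently. The paper first extracts from the uniform heat-trace bound a Weyl-type eigenvalue lower bound $\tilde\lambda_{\eps,k}\geq C'k^{2/n}$ uniform in $\eps$ (its Lemma \ref{evalbd}, via the counting estimate $\tilde N_\eps(\lambda)e^{-\lambda t}\leq Ct^{-n/2}$), and then splits $\tra'$ at a fixed eigenvalue \emph{index} $N$: the finite sum converges by spectral convergence and the tail is uniformly small by the Weyl bound. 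You instead cut off at a spectral \emph{value} $R$ and control the tail by the Chebyshev-type estimate $\sum_{\lambda_i(\eps)>R}e^{-t\lambda_i}\leq e^{-tR/2}\tra(H_q^{\Omega_\eps}(t/2))$, and you obtain the uniform exponential decay for $t\geq 1$ directly from semigroup domination, $\tra'(H_q^{\Omega_\eps}(t))\leq e^{-\mu(t-1)}\tra(H_q^{\Omega_\eps}(1))$, with $\mu$ the uniform spectral gap above the small eigenvalues. Your version is more economical in that it bypasses the eigenvalue counting lemma entirely; the paper's version yields the uniform Weyl lower bound as a by-product, which is a slightly stronger piece of information. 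One small point worth making explicit in your write-up: the uniform bound on $\tra(H_q^{\Omega_\eps}(t/2))$ for fixed $t$ and all small $\eps$ is exactly the content of the paper's Lemma \ref{prevlemma} (continuity of $t^{n/2}\tra(H_q^{\Omega_\eps}(t))$ up to the boundary of $Q_0$), and your reduction of the pointwise limit to $\tra(H_q^{\Omega_\eps}(t))\to M_q+\tra(H_q^{\Omega_0}(t))$ via \eqref{decomptraHq} and \eqref{numbersmall} is correct as stated.
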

\begin{proof}
The proof is very similar to the proof of \cite[Th.4]{s2}. The idea is to obtain a suitable uniform lower bound on the non-small eigenvalues of $\Omega_\epsilon$ and then use Lemma \ref{specconv}. The first step is the following upper bound on the heat trace:
\begin{lemma}\label{prevlemma} 
Fix $T>0$. There is a constant $C$ such that for all $\epsilon\leq 1/2$ and all $t\leq T$,
\[|{\rm Tr}(H_q^{\Omega_\epsilon}(t))|\leq Ct^{-n/2}.\] 
\end{lemma}
\begin{proof} The function $t^{n/2}$ is polyhomogeneous conormal on $Q$ and hence lifts to be so 
on $Q_0$, with leading orders $n$ at L and F and 0 at R. By Theorem \ref{structure}, 
$t^{n/2}\tra(H^{\Omega_\epsilon}_q(t))$ is polyhomogeneous conormal on $Q_0$ with leading order 0 at each boundary face, and is therefore continuous up to the boundary of $Q_0$ - and hence bounded on the compact subset $\{\epsilon\leq 1/2,t\leq T\}$.
\end{proof}

By Lemma \ref{specconv}, there exist $\lambda_0>0$ and $\epsilon_0>0$ such that for all $\epsilon\leq\epsilon_0$, all the small eigenvalues $\mu_i(\epsilon)$ are less than $\lambda_0/2$ and all other eigenvalues of $\Omega_\epsilon$ are greater than $\lambda_0$; assume without loss of generality that $\lambda_0<1$. The needed lower bound on the non-small eigenvalues is:
\begin{lemma}\label{evalbd} Let $\tilde\lambda_{\epsilon,k}$ be the $k$-th eigenvalue of $\Delta^{\Omega_\epsilon}_q$ which is greater than or equal to $\lambda_0$. Then there are constants $C'>0$ and $N_0\in\mathbb N$, both independent of $\epsilon$, such that for all $\epsilon\in [0,\epsilon_0]$, $k\geq N_0$ 
\[\tilde\lambda_{\epsilon,k}\geq C'k^{2/n}.\] \end{lemma}
\begin{proof} Let $\tilde N_{\epsilon}(\lambda)$ be the number of eigenvalues of $\Delta^q_{\Omega_\epsilon}$ which are greater than or equal to $\lambda_0$ but less than or equal to $\lambda$. Applying Lemma \ref{prevlemma} with $T=n/2\lambda_0$, we have that for all $t\leq n/2\lambda_0$,
\begin{equation}\label{counting}
\tilde{N}_{\epsilon}(\lambda)e^{-\lambda t}\leq \tra'(H^{\Omega_\epsilon}_q(t))
\leq \tra(H^{\Omega_\epsilon}_q(t))\leq Ct^{-n/2}.\end{equation}
In particular, applying this with $t=n/2\lambda$ gives that $\tilde{N}_{\epsilon}(\lambda)\leq \til{C}\lambda^{\ndemi}$ for some $\til{C}$ independent of $\eps$, and thus taking $\lambda=\tilde\lambda_{\eps,k}$ gives the result.
\end{proof}

Finally, we use Lemmas \ref{specconv} and \ref{evalbd} to prove Proposition \ref{htconve}. 
For any $N\in\mathbb N$, we may write
\[\tra'(H^q_{\Omega_\epsilon}(t))=\sum_{k=0}^N e^{-t\tilde\lambda_{\epsilon,k}}+\sum_{k=N+1}^{\infty}e^{-t\tilde\lambda_{\epsilon,k}}.\]
The first term (finite sum) decays exponentially in $t$ uniformly in $\eps$ by the uniform lower bounds on $\til{\la}_{\eps,k}$, and the second term decays exponentially as well, by Lemma \ref{evalbd}; this proves the second claim of Proposition \ref{htconve}. In addition, the 
first term converges to the analogous sum for $\Omega_0$ by Lemma \ref{specconv}, 
and the remainder may be chosen as small as we like by taking $N$ large enough and using Lemma \ref{evalbd}; this easily implies the first claim in Proposition \ref{htconve}. Note also that by taking the pointwise limit as $\epsilon$ goes to zero, the same exponential bound in Proposition \ref{htconve} holds for $\tra(H^{\Omega_0}_q(t))-\dim\ker\Delta^{\Omega_0}_q$.\end{proof}

\subsection{Zeta function and determinant}

The zeta function and determinant may now be analyzed directly, again following section 3 of \cite{s2}. Denoting 
$\mc{H}^q(\Omega_\eps)=\ker \Delta_q^{\Omega_\eps}$,  we define
\begin{equation}\label{zetafn}
\zeta_q^{\Omega_\epsilon}(s)=\frac{1}{\Gamma(s)}\int_0^{\infty}
(\tra(H^{\Omega_\epsilon}_q(t))-\dim \mc{H}^q(\Omega_\epsilon))t^{s-1}\ dt.
\end{equation}
Notice that $\dim(\mc{H}^q(\Omega_\eps))$ is independent of $\eps$ since it is the $q$-th Betti number of $\Omega_\eps$.
To analyze (\ref{zetafn}), we break it into several pieces, as in \cite{s2}. The only differences between our case and the $q=0$ case are in the projection term ($\dim\mathcal H^q(\Omega_\epsilon)$ is not equal to $1$ in general) 
and the leading-order term of $\tra(H^{\Omega_\epsilon}_q(t))$ at the face $R$ corresponding to $\eps=0$ in the blown-up space $Q_0$. Therefore, the analysis will only be different for those terms which involve the leading-order term at R and/or the projection off the kernel. We now consider each of them in turn.

\subsubsection{Long-time contribution} First  we break up \eqref{zetafn} at $t=1$ 
and analyze the integral from 1 to infinity. Rewriting it in terms of $\tra'(H_q^{\Omega_\eps}(t))$:
\begin{equation}\label{int1infty}
\frac{1}{\Gamma(s)}\int_1^{\infty}
(\tra'(H^{\Omega_\epsilon}_q(t))+\sum_{i=1}^{N_q}e^{-\mu_i(\eps) t})t^{s-1}\ dt.
\end{equation}
We apply the dominated convergence theorem, together with Theorem \ref{htconve}, to analyze the part of the integral coming from $\tra'$. That part of the integral is holomorphic near $s=0$. As in Section 3.1 of \cite{s2}, its contribution to the derivative of the zeta function at zero is precisely
\[\int_1^{\infty}
\tra'(H^{\Omega_\epsilon}_q(t))t^{-1}\ dt.\]
The uniform exponential bound \eqref{unifexp}Ê on $\tra'(H^{\Omega_\epsilon}_q(t))$ allows us to apply the dominated convergence theorem to get a limit as $\epsilon\rightarrow 0$: we see that this term contributes precisely
\[\int_1^{\infty}(\tra(H^{\Omega_0}_q(t))-\dim\ker\Delta^{\Omega_0}_q)t^{-1}\ dt+o(1)\]
to the derivative of the zeta function, and by a similar argument, precisely
\[\frac{1}{\Gamma(s)}\int_1^{\infty}(\tra(H^{\Omega_0}_q(t))-\dim\ker\Delta^{\Omega_0}_q)t^{s-1}\ dt+o(1)\] to the zeta function itself in a neighborhood of $s=0$. 
The second part of the integral \eqref{int1infty} is the function
\[\sum_{i=1}^{N_q}\frac{1}{\Gamma(s)}\int_1^{\infty}e^{-\mu_i(\eps) t}t^{s-1}\ dt=\sum_{i=1}^{N_q}\Big(\mu_i(\eps)^{-s}-\frac{1}{\Gamma(s)}\int_0^1e^{-\mu_i(\eps)t}t^{s-1}\ dt\Big).\]
For each $i$, we write
\[\frac{1}{\Gamma(s)}\int_0^1e^{-\mu_i(\eps) t}t^{s-1}\ dt=\frac{1}{\Gamma(s)}\int_0^1(e^{-\mu_i(\eps) t}-1)t^{s-1}\ dt+\frac{1}{s\Gamma(s)}.\]
The integrand in the first term is holomorphic in a neighborhood of $s=0$, and hence its contribution to the determinant is $\int_0^1(e^{-\mu_i(\eps) t}-1)t^{-1}\ dt$, which goes to zero as $\epsilon$ (and hence $\mu_i(\eps)$) goes to zero. 
Putting everything together, the long-time contribution as $\eps\to 0$ to the zeta function in a neighborhood of $s=0$ is:
\begin{equation}\label{longtimecont}
\begin{gathered}
\frac{1}{\Gamma(s)}\int_1^{\infty}
(\tra(H^{\Omega_\epsilon}_q(t))-\dim \mc{H}^q(\Omega_\eps))t^{s-1}\ dt=\\
\frac{1}{\Gamma(s)}\int_1^{\infty}(\tra(H^{\Omega_0}_q(t))-\dim\ker\Delta^{\Omega_0}_q)t^{s-1}\ dt+\sum_{i=1}^{N_q}\mu_i(\eps)^{-s}-\frac{N_q}{s\Gamma(s)}+o(1).
\end{gathered}\end{equation} 

\subsubsection{Small time contribution} 
Consider the short-time zeta function:
\[\frac{1}{\Gamma(s)}\int_0^{1}
(\tra(H^{\Omega_\epsilon}_q(t))-\dim\mathcal H^q(\Omega_\epsilon))t^{s-1}\ dt=\frac{1}{\Gamma(s)}\int_0^{1}
\tra(H^{\Omega_\epsilon}_q(t))t^{s-1}\ dt-\frac{\dim\mathcal H^q(\Omega_\epsilon)}{s\Gamma(s)}.
\]
We follow the method of Section 3.3, 3.4 and 3.5 in \cite{s2} to analyze the expansion in $\eps$ of
\[\frac{1}{\Gamma(s)}\int_0^{1}f_\eps(t)t^{s-1}\ dt \quad \textrm{with } f_\eps(t)=\tra(H^{\Omega_\epsilon}_q(t)). \]
The idea is to fix $b>0$ and to break the integral into integrals over the regions $A:=\{\sqrt t>b\epsilon\}$ and $B:=\{\sqrt t\leq b\epsilon\}$.
The region $A$ localizes near the face $R$ in $Q_0$ and the region $B$ localizes near $L$. We first write $f_\eps(t)=f_R^0(t)+\til{f}_{\eps}(t)$, where $f^{0}_R(t)$ is the leading coefficient of $f_\eps(t)$ at the face $R$ in $Q_0$;
from \eqref{decomptraHq} and Proposition \ref{htconve}, we have
\[f_R^0(t)=\tra(H^{\Omega_0}_q(t))-\dim\ker\Delta^q_{\Omega_0}+\dim\mathcal H^q(\Omega_\epsilon)+N_q.\]
Then we write 
\begin{equation}\label{smalltimedec}
\begin{split} 
\frac{1}{\Gamma(s)}\int_{b^2\eps^2}^{1}f_\eps(t)t^{s-1}dt=&
 \frac{1}{\Gamma(s)}\int_0^1(\tra(H^{\Omega_0}_q(t))-\dim\ker\Delta^q_{\Omega_0})t^{s-1}\ dt+\frac{\dim\mathcal H^q_{\Omega_\epsilon}+N_q}{s\Gamma(s)}\\
& -\frac{1}{\Gamma(s)}\int_0^{b^2\epsilon^2}f_R^0(t)t^{s-1}\ dt + \frac{1}{\Gamma(s)}\int_{b^2\epsilon^2}^1\til{f}_\eps(t)t^{s-1}dt.\end{split}\end{equation}
Using the small time expansion of $\tra(H_q^{\Omega_0}(t))$ due to Cheeger \cite{ch2}, 
we see that as $t\rightarrow 0$ 
\[f_R^0(t)=\sum_{k=0}^{n}a_{k}t^{(k-n)/2}+\mc{O}(t^\alpha), \,\, \alpha>0\] 
for some $a_k$. Again there is no $\log(t)$ coefficient due to the fact we work in odd dimension. This directly gives the $\eps$-expansion of the integral $\frac{1}{\Gamma(s)}\int_0^{b^2\epsilon^2}f_R^0(t)t^{s-1} dt$, and combining with \eqref{smalltimedec} andÊ \eqref{longtimecont}, we deduce that for $s$ near $0$, one has as $\eps\to 0$
\begin{equation}\label{zetaq1}
\begin{split}
\zeta^{\Omega_\eps}_q(s) = & \zeta_q^{\Omega_0}(s)+\sum_{i=1}^{N_q}\mu_i(\eps)^{-s}+ \frac{1}{\Gamma(s)}\Big(\int_{b^2\epsilon^2}^1\til{f}_\eps(t)t^{s-1}dt +\int_{0}^{b^2\epsilon^2}f_\eps(t)t^{s-1}dt\Big)\\
& - \frac{1}{\Gamma(s)}\sum_{k=0}^n\frac{2a_k(b\eps)^{k-n+2s}}{k-n+2s}+o(1).
\end{split}\end{equation}
Moreover we easily check (for instance see the details in \cite[Sec. 3.3]{s2}) that the $o(1)$ term is uniform in $C^1$ norm in the $s$ parameter near $s=0$. Now we use the expression \eqref{structureeq} of $f_\eps(t)$, the small time expansion of the local trace $\tra(H_q^{\Omega_0}(t z,z))$ and Proposition \ref{expansion} to express 
\[
\begin{split}
\int_{0}^{b^2\epsilon^2}f_\eps(t)t^{s-1}dt= &\int_{0}^{b^2\epsilon^2}\int_{\Omega_0}\chi_2(z)\tra(H_q^{\Omega_0}(t,z,z))dg_0(z)t^{s-1}dt\\
&+\eps^{2s}\int_{0}^{b^2}\int_{M}\chi_1(\eps z)\tra(H_q^{M}(\tau,z,z))dg_0(z)\tau^{s-1}d\tau+o(1)\\
= &\sum_{k=0}^n\frac{2\til{a}_k(b\eps)^{k-n+2s}}{k-n+2s}+ + \eps^{2s}\int_{0}^{b^2}({^R}\tra(H_q^M(\tau))+\til{R}(\eps,\tau))\tau^{s-1}d\tau+o(1)
\end{split}\]
for some $\til{a}_k\in \rr$. According to Proposition \ref{polyhomof69}, the $\til{R}(\eps,\tau)$ term is polyhomogeneous conormal in $(\eps,\tau)\in[0,1]^2$ and vanishes at $\eps=0$. By the argument of Section 3.6.1 of \cite{s2}, we see that $\frac{\eps^{2s}}{\Gamma(s)}\int_0^{b^2}\til{R}(\eps,\tau)\tau^{s-1}d\tau$ will be $o(1)$ in $\eps\to 0$ uniformly in $s$ near $0$. 

We also have from  \eqref{structureeq} that as $\eps\to 0$
\[\begin{gathered}\int_{b^2\epsilon^2}^1\til{f}_\eps(t)t^{s-1}dt=\int_{b^2\eps^2}^{1}
\Big(\int_{M}\chi_1(\eps z)\tra(H_q^{M}(\tfrac{t}{\eps^2},z,z))dg_0(z)-h(t)\Big)t^{s-1}dt+o(1),\\ 
\textrm{ where }h(t)=\lim_{\eps\to 0}\int_{M}\chi_1(\eps z)\tra(H_q^{M}(t/\eps^2,z,z))dg_0(z).\end{gathered}\]
Change variables to $\tau=t/\eps^2$; this becomes 
\[\int_{b^2\epsilon^2}^1\til{f}_\eps(t)t^{s-1}dt=\eps^{2s}\int_{b^2}^{\eps^{-2}}
\Big(\int_{M}\chi_1(\eps z)\tra(H_q^{M}(\tau,z,z))dg_0(z)-h(\eps^2\tau)\Big)\tau^{s-1}d\tau+o(1).\]
We now proceed as in Section 3.6.2 of \cite{s2}: using the expansion of Proposition \ref{expansion} at the face $F$ of $Q_0$, the polyhomogeneity statement of Proposition \ref{polyhomof69}, and Remark \ref{remimp}, we obtain that 
$^{R}{\rm Tr}(H^M_q(\tau))=f_\infty+o(1)$
for some $f_\infty\in \rr$, and 
\[h(t)=\sum_{k=0}^{n-1}l_{k}f_{k}(t)+f_{\infty}+ \lim_{\delta\to 0}\til{R}(\delta,t/\delta^2),
\]
with $\lim_{\delta\to 0}\til{R}(\delta,t/\delta^2)$ a well-defined polyhomogeneous function of $t$ which 
goes to $0$ as $t\to 0$. 
From this one gets 
\[\frac{1}{\Gamma(s)}\int_{b^2\epsilon^2}^1\til{f}_\eps(t)t^{s-1}dt=\frac{\eps^{2s}}{\Gamma(s)}\int_{b^2}^{\eps^{-2}}\Big({^R}{\rm Tr}(H^M_q(\tau))-
 f_\infty+\til{R}(\eps,\tau)-\lim_{\delta\to 0}\til{R}(\delta,\tfrac{\tau\eps^2}{\delta^2}) \Big)\tau^{s-1}
d\tau.\]
The term $\til{R}(\eps,\tau)-\lim_{\delta\to 0}\til{R}(\delta,\tfrac{\tau\eps^2}{\delta^2})$ is polyhomogeneous conormal 
on $Q_0$ (as a function of $(\eps,\sqrt{t}=\eps\sqrt{\tau})$) and vanishes at all boundary faces. It is easy to see, as in \cite{s2}, that it will contribute a uniform $o(1)$ term to the integral as $\eps\to 0$, with $C^1$ dependence in $s$ near $s=0$. As for the ${^R}{\rm Tr}(H^M_q(\tau))- f_\infty$ part (which goes to $0$ as $\tau\to \infty$), 
a straightforward computation gives as $\eps\to 0$
\[\begin{gathered}
\frac{\eps^{2s}}{\Gamma(s)}\int_{b^2}^{\eps^{-2}}({^R}{\rm Tr}(H^M_q(\tau))-f_\infty)\tau^{s-1}d\tau=\\
\frac{f_\infty (b\eps)^{2s}}{s\Gamma(s)}
+\frac{\eps^{2s}}{\Gamma(s)}\int_{b^2}^\infty {^R}{\rm Tr}(H^M_q(\tau))\tau^{s-1}d\tau+o(1),
\end{gathered}\]
with $C^1$ dependence in $s$ near $s=0$ for the remainder (again we refer to Section 3.6.2 of \cite{s2} for more details). We finally obtain 
\[\begin{split}
\zeta^{\Omega_\eps}_q(s) = & \zeta_q^{\Omega_0}(s)+\sum_{i=1}^{N_q}\mu_i(\eps)^{-s}
+ \sum_{k=0}^n\frac{2(\til{a}_k-a_k)(b\eps)^{k-n+2s}}{\Gamma(s)(k-n+2s)}+\frac{f_\infty (b\eps)^{2s}}{s\Gamma(s)}\\
&+\frac{\eps^{2s}}{\Gamma(s)}\int_{0}^\infty {^R}{\rm Tr}(H^M_q(\tau))\tau^{s-1}d\tau+o(1).
\end{split}\]
with $C^1$ dependence in $s$ near $s=0$ for the remainder. Fix $s<0$ small; since 
$\zeta^{\Omega_\eps}_q(s)$ is independent of $b$, we deduce directly that 
\[\zeta^{\Omega_\eps}_q(s) = \zeta_q^{\Omega_0}(s)+\sum_{i=1}^{N_q}\mu_i(\eps)^{-s}+
\frac{\eps^{2s}}{\Gamma(s)}\int_{0}^\infty {^R}{\rm Tr}(H^M_q(\tau))\tau^{s-1}d\tau+o(1).
\]
All these terms are $C^1$ in $s$ near $s=0$, thus we get as $\eps\to 0$: 
\[-\pl_s\zeta^{\Omega_\eps}_q(0)=-\pl_s\zeta^{\Omega_\eps}_q(0)+\sum_{i=1}^{N_q}\log \mu_i(\eps)
-2\log(\eps)(\zeta_q^M(0))-\pl_s(\zeta_q^M)(0)+o(1).\]
This completes the proof of Theorem \ref{conictorsion}.

\subsection{Cases with no small eigenvalues} The small eigenvalues in Theorem \ref{conictorsion} are the major remaining obstacle to a complete analysis of the analytic torsion under conic degeneration. However, in certain cases, it is possible to prove that $N_q=0$ for all $q$, which removes the undetermined term in Theorem \ref{conictorsion}:

\begin{lemma}\label{conditions} 
Suppose $n$ is odd and the cross-section $N$ is such that:

a) the cohomology $H^q(N)$ is trivial for all $q\in [1,n-2]$,

b) the spectrum of $\Delta_N$ on forms satisfies 
\[{\rm Sp}_{\Lambda^{(n-1)/2}}(\Delta_N)\cap [0,15/4)=\emptyset, \quad 
{\rm Sp}_{\Lambda^{(n-3)/2}}(\Delta_N|_{{\rm Im}\, d_N})\cap [0,7/4)=\emptyset.\] 
Then Theorem \ref{conictorsion} holds with $N_q=0$ for all $q$. (Note that condition b) is strictly stronger than the modified Witt condition). 

 \end{lemma}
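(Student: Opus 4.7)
The plan is to express each of the three kernel dimensions appearing in formula \eqref{numbersmall} as a topological Betti number, by means of Hodge theory, the Cheeger--Dar--Mooers theorem for manifolds with isolated conic singularities, and the Hausel--Hunsicker--Mazzeo isomorphism \eqref{Th.hhm}, and then to verify that these Betti numbers cancel pairwise thanks to hypothesis~(a).

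The first step is to confirm that (b) is strong enough to apply Theorem \ref{conictorsion} and to trigger \eqref{Th.hhm} in its cleanest form. The first inequality of (b) trivially implies ${\rm Sp}_{\Lambda^{(n-1)/2}}(\Delta_N)\cap [0,3/4]=\emptyset$, so the modified Witt condition \eqref{Wittcond} holds and Lemma \ref{wittconseq} rules out zero-resonances on $M$ in every degree. Using (a) to eliminate harmonic $p$-forms on $N$ for $p\in[1,n-2]$ (so that $\gamma_p=\lambda_p$, and so that $d_N$ intertwines the nonzero spectra of $\Delta_N$ on ${\rm Im}\,\delta_N|_{\Lambda^{p-1}}$ and ${\rm Im}\,d_N|_{\Lambda^p}$), a case-by-case inspection of formula \eqref{altnu0} in the degrees $|q-n/2|\leq 3/2$ should yield $\nu_0\geq 1$ for every $q$ from the two spectral bounds in (b); for $|q-n/2|\geq 5/2$ this is automatic because of the size of the shifts $(n/2-q)^2$. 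This is the step where the numerical constants $15/4$ and $7/4$ enter.

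The second step is the topological computation. With Theorem \ref{conictorsion} in force, Hodge theory identifies $\ker\Delta_q^{\Omega_\eps}$ with $H^q(\Omega_\eps)$ for $\eps>0$, Cheeger's theorem identifies $\ker\Delta_q^{\Omega_0}$ with $H^q(\Omega_0)$ (in the Friedrichs sense, using the Witt condition), and \eqref{Th.hhm} identifies $\ker_{L^2}(\Delta_q^M)$ with $H^q(\bbar{M},\pl\bbar{M})$ for $q<n/2$ or with $H^q(\bbar{M})$ for $q>n/2$ (the middle case is absent since $n$ is odd). I would then apply Mayer--Vietoris to the decompositions $\Omega_{\eps_0}=K\cup_N M_{\eps_0}$ (with $M_{\eps_0}$ homotopy equivalent to $\bbar{M}$) and $\Omega_0=K\cup_N C_N$ (with $C_N$ contractible). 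Hypothesis~(a) makes $H^{q-1}(N)=H^q(N)=0$ whenever $q\in[2,n-2]$; in this range both Mayer--Vietoris sequences decouple into $H^q(\Omega_{\eps_0})\simeq H^q(K)\oplus H^q(\bbar{M})$ and $H^q(\Omega_0)\simeq H^q(K)$, while the long exact sequence of the pair $(\bbar{M},\pl\bbar{M})$ collapses to $H^q(\bbar{M},\pl\bbar{M})\simeq H^q(\bbar{M})$. The three contributions then cancel, giving $N_q=0$.

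Finally I would treat the edge degrees $q\in\{0,1,n-1,n\}$ by hand. The cases $q=0,n$ are immediate from connectedness together with the vanishing of $H^0(\bbar{M},\pl\bbar{M})$ and $H^n(\bbar{M})$. For $q=1$, assuming all pieces are connected, the restriction $H^0(K)\oplus H^0(\bbar{M})\to H^0(N)$ is surjective, which kills the Mayer--Vietoris connecting homomorphism and reinstates the isomorphisms above; the same argument applied to the pair gives $H^1(\bbar{M},\pl\bbar{M})\simeq H^1(\bbar{M})$. The case $q=n-1$ is dual: Poincar\'e--Lefschetz duality $H^{n-1}(\bbar{M})\simeq H^1(\bbar{M},\pl\bbar{M})$ on the manifold with boundary, combined with Poincar\'e duality on $\Omega_{\eps_0}$ and $\Omega_0$, reduces it to the $q=1$ computation. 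I expect the main obstacle to lie in the spectral verification of $\nu_0\geq 1$ of step one, where the interplay between (a) and (b) must be tracked degree by degree through \eqref{altnu0}; the Mayer--Vietoris bookkeeping in the edge degrees is the only other point that requires care.
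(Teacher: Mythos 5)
Your overall architecture coincides with the paper's: Mayer--Vietoris for $\Omega_{\eps_0}=K\cup_N M_{\eps_0}$, the Hausel--Hunsicker--Mazzeo isomorphism \eqref{Th.hhm} combined with hypothesis (a) to identify $\ker_{L^2}(\Delta_q^M)$ with $H^q(M_{\eps_0})$, a topological identification of $\ker\Delta_q^{\Omega_0}$, and cancellation in \eqref{numbersmall}. Your verification that (b) forces $\nu_0\geq 1$ in every degree is also exactly the computation where $15/4$ and $7/4$ enter. The genuine gap is in what that inequality is \emph{for}. You attribute it to ``triggering \eqref{Th.hhm} in its cleanest form'' --- \eqref{Th.hhm} needs no such hypothesis --- and then assert that Cheeger's theorem identifies $\ker\Delta_q^{\Omega_0}$ with $H^q(\Omega_0)$ ``in the Friedrichs sense, using the Witt condition.'' The Witt condition alone does not justify this. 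The quantity $\dim\ker(\Delta_q^{\Omega_0})$ in \eqref{numbersmall} refers to the Friedrichs extension (the extension singled out by the Ann\'e--Takahashi convergence result), whereas Cheeger's theorem computes the $L^2$-cohomology; if $\Delta_q^{\Omega_0}$ is not essentially self-adjoint near the cone tip, there may be $L^2$ harmonic forms outside the Friedrichs domain and the two dimensions differ. The actual role of the strengthened bounds in (b) is to exclude this: by the Br\"uning--Seeley criterion (in the form used by Loya--McDonald--Park), essential self-adjointness in degree $q$ is equivalent to the operator $A_q+\tfrac14 I$ having no eigenvalue in $(0,1)$, and the eigenvalues of $A_q+\tfrac14 I$ are precisely the indicial set $\mc I(\Delta_q^M)$. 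Thus $\nu_0\geq 1$ for all $q$ is exactly essential self-adjointness of every $\Delta_q^{\Omega_0}$, and only then does the Friedrichs kernel compute the middle-perversity intersection cohomology, hence (via (a)) $H^q(K)$. You computed the right inequality but never connected it to the step that needs it, so as written the identification $\dim\ker\Delta_q^{\Omega_0}=\dim H^q(K)$ is unjustified.

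Two smaller remarks. First, the relevant cohomology of the singular space $\Omega_0$ is $IH^q_{\bar m}(\Omega_0)$, not the singular cohomology of the cone-point compactification; under (a) the cone formula makes the two agree in every degree, so your Mayer--Vietoris count with ``$C_N$ contractible'' produces the correct numbers, but the logic should pass through intersection cohomology. Second, your hands-on treatment of the edge degrees $q\in\{0,1,n-1,n\}$ is workable, but the paper's shortcut --- $N_q=N_{n-q}$ by duality together with $N_0=0$ --- is cleaner and avoids the extra Mayer--Vietoris bookkeeping.
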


\begin{proof}
We shall follow the discussion in section 5 of \cite{at}.
Consider the manifold $\Omega_{\eps_0}$ for $\eps_0>0$ small, 
and split it as $K\cup M_{\eps_0}$ (as in the Introduction) so that $\pl M_{\eps_0}=N=\pl K$.
Then there is a Mayer-Vietoris sequence:
\[\ldots\to H^q(\Omega_{\eps_0})\to H^q(K)\oplus H^q(M_{\eps_0})\to H^q(N)\to H^{q+1}(\Omega_{\eps_0})\to\ldots\]
Since $H^q(N)$ is trivial for all $q$ between $1$ and $n-1$, this sequence splits, and we see that for any $q$ with $2\leq q\leq n-1$,
\[H^q(\Omega_{\eps_0})\cong H^q(M_{\eps_0})\oplus H^q(K).\]

On the one hand, since $M_{\eps_0}$ is homeomorphic to $\bbar{M}$, we may apply the results of Hausel, Hunsicker, and Mazzeo \cite{hhm} to relate the $L^2$-cohomology of $\bbar{M}$ to the cohomology of $M_{\eps_0}$. In particular, for $q>n/2$, $\ker_{L^2}(\Delta_q^M)\simeq  H^q(M_{\eps_0})$, while for $q<n/2$, 
$\ker_{L^2}(\Delta_q^M)\cong H^q(M_{\eps_0},N)$. On the other hand, by condition a), the relative and absolute cohomology are isomorphic in all degrees greater than zero (from the usual long exact sequence in cohomology); so for all $q$ with $2\leq q\leq n-1$, $H^q(M_{\eps_0})\cong \ker_{L^2}(\Delta_q^M)$.

On the other hand, we can relate the cohomology of $K$ to $\dim\ker\Delta^q_{\Omega_0}$. First we need to compare the kernel of $\Delta^{\Omega_{\eps_0}}_q$ to the $L^2$ cohomology of $\Omega_0$. The problem is that there may be $L^2$ harmonic forms which are not Friedrichs if $\Delta_q^{\Omega_0}$ is not essentially self-adjoint near the cone tip. However we claim that  if the cross-section $N$ is such that the indicial set $\mc I(\Delta_q^M)$ has no elements in $(0,1)$, then the Hodge Laplacian in degree $q$ on $\Omega_0$ is essentially self-adjoint.
This fact is essentially due to Bruning-Seeley \cite{brs}, but we use the exposition in Loya-McDonald-Park \cite{lmp}: 
near the cone tip of $\Omega_0$, the metric is precisely $dr^2+r^2h_0$. Then there is a decomposition for the $q$-form bundle similar to the one near infinity on $M$, and we follow the normalization conventions of \cite{lmp}; each $q$-form $\phi$ near the cone point $r=0$ can be written as
\[\phi = r^{q+\frac{n-1}{2}}\phi_t+r^{q-1+\frac{n-1}{2}}dr\wedge\phi_n,\]
where $\phi_t\in\Lambda^q(N)$ and $\phi_n\in\Lambda^{q-1}(N)$. On this decomposition $\Lambda^{q}(N)\oplus\Lambda^{q-1}(N)$ of the $q$-form bundle near the cone point, the Laplacian is
\[-\partial_r^2+\frac{1}{r^2}A_q,\]
where, after some easy algebra,
\[A_q+\frac{1}{4}I=\left(\begin{array}{cc}
\Delta_N+(\ndemi-q-1)^2 & -2d_N\\
-2\delta_N & \Delta_N+(\ndemi-q+1)^2
\end{array}
\right).\]
Then as in \cite{lmp}, $\Delta^q_{\Omega_0}$ is essentially self-adjoint if $A_q$ 
has no eigenvalues in $(-1/4,3/4)$; i.e. if $A_q+\frac{1}{4}I$ has no eigenvalues in $(0,1)$. However, except for the signs of the off-diagonal elements, $A_q+\frac{1}{4}I$ is precisely the same matrix as we obtained in the calculation of the indicial roots of $P_b$ in \eqref{indicialop} (with $\la=0$).
We can perform the same decomposition to compute the eigenvalues. Since multiplying both off-diagonal elements by $-1$ changes neither the trace nor determinant and hence does not change the eigenvalues, the eigenvalues of $A_q+\frac{1}{4}I$ are precisely $\mc{I}(P_b)=\mc{I}(\Delta_q^M)$.

Using the form of the indicial set of $\Delta_q^M$ in \eqref{I1I2} and \eqref{i3}, we see that condition b) in Lemma \ref{conditions} above implies $\mc{I}(\Delta_q^M)\cap (0,1)=\emptyset$ for all $q$. Hence the space of harmonic forms in the Friedrichs domain of $\Delta_q^{\Omega_0}$ is isomorphic to the $L^2$ cohomology of $\Omega_0$.

However, by work of Cheeger \cite{ch3}, the Witt condition implies that the $L^2$ cohomology of $\Omega_0$ is isomorphic to the intersection cohomology of $\Omega_0$ with middle perversity. Further, the intersection cohomology of middle perversity of $\Omega_0$ is isomorphic to either the cohomology of $K$, the relative cohomology of $K$, or the image of the relative cohomology of $K$ in the absolute cohomology of $K$, depending on the degree \cite{ch3}. As before, condition a) implies that the relative and absolute cohomologies are isomorphic for all degrees between $2$ and $n-1$. So for $2\leq q\leq n-1$.
\[\dim\ker(\Delta_q^{\Omega_0})=\dim H^q(K).\] 
Putting everything together, we have that for all $q$ with $2\leq q\leq n-1$,
\[\dim H^q(\Omega_{\eps_0})=\dim\ker(\Delta^{\Omega_0}_q)+\dim \ker_{L^2}(\Delta_q^M),\]
and hence by \eqref{numbersmall}, $N_q=0$ for $2\leq q\leq n-1$. But since $N_q=N_{n-q}$ by duality and $N_0=0$ by \cite{s2}, $N_q=0$ for all $q$, which completes the proof.
\end{proof}

\textbf{Example.} The conditions of Lemma \ref{conditions} are satisfied, for instance, by taking any Riemannian compact manifold $N$ with metric $h_0$ which satisfies $H^{q}(N)=0$ for $q\in[1,n-2]$ and then rescaling: write $h_\la=\la^{2}h_0$. The manifold $(N,h_\la)$ satisfies the conditions of the Lemma for $\la>0$ small enough, since the spectrum on all $q$-forms with $q\in[1,n-2]$ scales by $\la^{-2}$. We also notice that the case of the canonical sphere $(N=\sph^{n-1},d\theta^2)$ with $n\geq 5$ satisfies the conditions of Lemma \ref{conditions}, since $\nu_0=n/2-1>1$ for all $q$.

\end{document}